\documentclass[a4paper]{article}
\usepackage[T1]{fontenc}
\usepackage[utf8]{inputenc}
\usepackage{lmodern,microtype}
\usepackage[margin=26.5mm]{geometry}
\usepackage{amsmath,amssymb,amsthm,mathtools,mathrsfs}
\usepackage{enumitem}
\usepackage{xcolor}
\usepackage[colorlinks=true,linkcolor=blue,citecolor=blue,urlcolor=blue]{hyperref}

\newcommand{\dd}[1]{\mathrm{#1}}
\newcommand{\norm}[1]{\Vert#1\Vert}

\def\R{\mathrm{I\kern-0.21emR}}
\def\N{\mathrm{I\kern-0.21emN}}
\def\Z{\mathbb{Z}}
\newcommand{\C} {\mathbb{C}}
\newcommand{\T}{\mathbb{T}}

\newcommand{\Tobs}{T_{\mathrm{obs}}}
\newcommand{\dsr}{d_{\mathrm{sR}}}
\newcommand{\re}{\operatorname{Re}}
\newcommand{\im}{\operatorname{Im}}
\newcommand{\supp}{\operatorname{supp}}
\newcommand{\A}{\mathcal{A}}

\newcommand{\E}{\mathcal{E}}
\newcommand{\obs}{\mathcal{O}_T(u)}

\renewcommand{\geq}{\geqslant}
\renewcommand{\leq}{\leqslant}

\newtheorem{thm}{Theorem}[section]
\newtheorem{prop}[thm]{Proposition}
\newtheorem{cor}[thm]{Corollary}
\newtheorem{lem}[thm]{Lemma}
\theoremstyle{definition}
\newtheorem{hyp}[thm]{Assumption}
\theoremstyle{remark}
\newtheorem{remark}[thm]{Remark}
\theoremstyle{plain}
\newtheorem{maintheorem}{Theorem}

\setlist[enumerate]{itemsep=2pt,topsep=3pt}
\allowdisplaybreaks[2]

\title{Minimal time for heat observability\\ on almost-Riemannian surfaces}

\author{Vincent Boulard\footnote{CERMICS, CNRS, \'Ecole nationale des ponts et chauss\'ees, Institut Polytechnique de Paris, Marne-la-Vall\'ee, France (\texttt{vincent.boulard@enpc.fr}).}
\and
Amaury Hayat\footnote{CERMICS, CNRS, \'Ecole nationale des ponts et chauss\'ees, Institut Polytechnique de Paris, Marne-la-Vall\'ee, France \& Korea Institute for Advanced Study, Seoul, South Korea (\texttt{amaury.hayat@enpc.fr}).}
\and
Emmanuel Tr\'elat\footnote{Sorbonne Universit\'e, Universit\'e Paris Cit\'e, CNRS, Inria, Laboratoire Jacques-Louis Lions, LJLL, F-75005 Paris, France (\texttt{emmanuel.trelat@sorbonne-universite.fr}).}}

\date{}

\begin{document}
\maketitle

\begin{abstract}
We determine the minimal observation time for the heat equation on a closed almost-Riemannian surface, for a class of observation sets leaving sufficiently thin collars around simple Baouendi-Grushin circles. The structure and density are analytic near these circles, and the coefficients may depend on both variables. The formula is expressed through a complex phase describing concentration near the singular set and extends the thresholds known for separated models. Regular unobserved regions away from the circles do not change this time, provided all other singularities are observed. For singular circles of constant bracket step greater than two, local analyticity implies failure of observability at every time for observation sets at positive distance. We also construct quasimodes concentrated near simple singular circles, with exponentially small errors. Their superpositions give a lower bound for the Schr\"odinger observation time.
\end{abstract}


\section{Introduction and main results}

\paragraph{Setting and previous results.}
Let $M$ be a compact connected smooth surface without boundary, endowed with a bracket-generating almost-Riemannian structure and a smooth positive density $\nu$. The structure consists of a rank-two Euclidean bundle $E\to M$ and a smooth morphism $f:E\to TM$, of rank two on a dense open set \cite{ABS08,BL13}. Let $\triangle \geq 0$ be the self-adjoint operator associated with the closure of the horizontal Dirichlet form
$\mathcal{E}(u) = \int_M\vert \nabla_Hu\vert ^2\,\dd{d}\nu$, for $u\in C^\infty(M)$.
In a local orthonormal horizontal frame, $\triangle = X_1^*X_1 + X_2^*X_2$, with adjoints in $L^2(M,\nu)$. The density is smooth across the singular set. In particular, considering instead the singular Riemannian volume would define a different Laplace--Beltrami operator \cite{BL13}. The bracket step is the smallest bracket length for which the horizontal fields and their iterated brackets span the whole tangent plane. We denote the associated sub-Riemannian distance by $\dsr$. For background on sub-Riemannian geometry, we refer to \cite{ABB20,BR96,Mon02}.

For a nonempty open set $\omega\subset M$, heat observability in time $T>0$ means that there is $C_T>0$ such that
\begin{equation}\label{intro:observability}
\Vert e^{-T\triangle}u_0\Vert_{L^2(M,\nu)}^2 \leq C_T\int_0^T\Vert e^{-t\triangle}u_0\Vert_{L^2(\omega,\nu)}^2\,\dd{d}t \qquad\forall u_0\in L^2(M,\nu).
\end{equation}
By duality, this is equivalent to null controllability of $\partial_tu + \triangle u = \mathbf1_\omega v$ with an $L^2$ control \cite{DR77}. Set $\Tobs(\omega) = \inf\{T>0 \,\mid\, \eqref{intro:observability}\text{ holds}\}$, with $\inf\varnothing = +\infty$. Observability at one time implies observability at every larger time. Note that the infimum does not determine the validity of the estimate at the critical time.

For elliptic operators, every nonempty open set observes the heat equation at every positive time, hence $\Tobs(\omega)=0$ \cite{FI96,Im95,LR95}. This fails for subelliptic operators despite hypoellipticity \cite{H67}. For the flat Baouendi-Grushin operator $\triangle_0 = -\partial_x^2 + x^2D_y^2$, with $D_y = -i\,\partial_y$, the minimal time on a Dirichlet rectangle with symmetric observed strips at distance $a$ from the singular line is $\Tobs=a^2/2$ \cite[Theorem~1.3]{BMM15}. This positive threshold reflects concentration near the singular line.

For such Baouendi-Grushin operators of the form $-\partial_x^2 - |x|^{2\gamma}\partial_y^2$, observed from a strip at positive distance from the singular line, different degeneracies were distinguished in \cite{BCG14}: weak ($\gamma\in(0,1)$), critical ($\gamma=1$) and strong ($\gamma>1$). Exact thresholds and variable normal coefficients are treated in \cite{BDE20,BMM15}. Nonrectangular observation sets reveal additional geometric obstructions \cite{DKR23,DK20,Ko17}. In particular, holomorphic combinations of ground states and Runge approximation detect obstructions that a single eigenstate cannot detect \cite{DKR23,Ko17}. Positive minimal times also occur for the Heisenberg heat equation \cite{BC17} and the Baouendi-Grushin sphere \cite{Tam22,Tam26}.

On almost-Riemannian surfaces, observation obstructions for locally separated principal coefficients and compatible densities are proved in \cite{Van26}. Removing separation of variables is an open question in \cite[Section~6]{Van26}. Partially analytic, nonseparated Baouendi-Grushin operators already appear in the tunneling and approximate heat controllability results of \cite[Section~1.2.4]{LL22}, with tangential independence near the observation set and the nondegenerate boundary. The estimates allow a terminal error and rely on quantitative unique continuation \cite{LL19}. A different concentration argument for the half-heat and flat Baouendi-Grushin equations is given in \cite{Lis25}.

Our two main results allow coefficients depending on both variables near the singular circles. At (bracket) step two, we determine the exact observation time for sufficiently thin exterior phase collars. At higher constant step, we prove an obstruction at every time for observation sets at positive distance from an entire singular circle.

\paragraph{The phase and the observation threshold.}
Fix disjoint two-sided embedded singular circles $Z_1,\ldots,Z_m$, consisting of simple Baouendi-Grushin points, i.e. points where the horizontal rank is one and the bracket step is two. Assume that the structure and density are real analytic near these circles. The periodic normal form \cite{ABS08,BL13}, recalled in Section~\ref{sec:geometry}, gives fixed tubes with
$$
X_1 = \partial_{x_i},\qquad X_2 = x_i e^{\varphi_i(x_i,y_i)}\partial_{y_i},\qquad \varphi_i(0,y_i) = 0,\qquad y_i\in\mathbb{T}_{\ell_i}.
$$
On smaller tubes, $\vert x_i\vert = \dsr((x_i,y_i),Z_i)$. The tangential normalization leading to this normal form determines the period $\ell_i$ and makes the lowest transverse heat decay rate $k+\mathrm{o}(k)$ at momentum $k$. This normalization changes the tangential coordinate, but preserves the original time variable of the heat equation.

The geometric ingredient is the complex phase $\Phi_i$, defined on the universal cover of the tube by
\begin{equation}\label{phase-definition}
(\Phi_i)_{x_i} = i x_i e^{\varphi_i} (\Phi_i)_{y_i},\qquad \Phi_i(0,y_i) = y_i,\qquad \Phi_i = \Theta_i + i\mathcal{A}_i.
\end{equation}
It solves the principal eikonal equation $(\Phi_i)_{x_i}^2 + x_i^2e^{2\varphi_i}(\Phi_i)_{y_i}^2 = 0$ and has periodic increment $\ell_i$. Its imaginary part measures transverse decay: $\vert e^{ik\Phi_i}\vert = e^{-k\mathcal{A}_i}$. Near $Z_i$,
$$
\mathcal{A}_i(x_i,y_i) = \frac{x_i^2}{2} + \frac{\partial_{x_i}\varphi_i(0,y_i)}{3}x_i^3 + \mathrm{O}(x_i^4).
$$
The density affects lower-order terms of the operator, but not this phase.

Lemma~\ref{phase-local} constructs real analytic phase coordinates $r_i = \operatorname{sgn}(x_i)\sqrt{2\mathcal{A}_i}$ and $\theta_i = \Theta_i$ through $Z_i$. They put the principal symbol in the real normal form $c_i(r_i,\theta_i)(\xi_{r_i}^2 + r_i^2\xi_{\theta_i}^2)$, with $c_i>0$ and $c_i(0,\theta_i) = 1$. The coefficient $c_i$ may still depend on both variables, so this normal form need not be separated. This reduction connects the geometry to the two analytic parts of the proof: the realization of holomorphic heat trajectories and the observation of coupled transverse degrees.

A \emph{phase fiber} means the curve $\Theta_i = \theta$, with $\theta$ fixed. Along this curve, $r_i$ runs from one side of the circle to the other, and $\mathcal{A}_i = r_i^2/2$. The two points being compared therefore have the same phase angle, which need not mean the same original coordinate $y_i$.

Fix disjoint phase tubes defined in their respective local coordinates around $Z_i$ by $\mathcal{C}_i = \{\vert r_i\vert <R_i\}$. For continuous functions $\rho_i^\pm:\mathbb{T}_{\ell_i}\to[0,+\infty)$, prescribe the observation inside these tubes by
$$
K_i = \{-\rho_i^-(\theta_i) \leq r_i \leq \rho_i^+(\theta_i)\},\qquad \omega\cap\mathcal{C}_i = \mathcal{C}_i\setminus K_i.
$$
Thus, in each tube, both exterior sides of the collar are observed. Outside the tubes, $\omega$ need not be the whole surface. Define
\begin{equation}\label{observation-threshold}
M_\Phi(\omega) = \max_{1\leq i\leq m}\max_{\theta\in\mathbb{T}_{\ell_i}}\frac{1}{2}\min(\rho_i^+(\theta)^2,\rho_i^-(\theta)^2).
\end{equation}
For a fixed phase angle, the first observation on the two sides has attenuation exponents $\rho_i^+(\theta)^2/2$ and $\rho_i^-(\theta)^2/2$. The smaller exponent gives the stronger observed signal, since observing either side is enough. The maximum then selects the angle, and the circle, with the largest such cost. Note that the side providing the stronger signal may change with the angle.

\paragraph{Exact observation time at step two.}
Our first main result identifies $M_\Phi(\omega)$ as the exact minimal time for sufficiently thin phase collars.

\begin{maintheorem}\label{thm:exact-time}
Let $M$ be a closed connected almost-Riemannian surface with a smooth positive density. Let $Z_1,\ldots,Z_m$ be disjoint two-sided embedded singular circles consisting of simple Baouendi-Grushin points, with analytic structure and density in fixed disjoint phase tubes $\mathcal{C}_i$. There exists $r_*>0$, depending only on these tubes and their analytic data, with the following property. Suppose that $\omega\subset M$ is open, contains every singular point outside these circles, and satisfies $\omega\cap\mathcal{C}_i = \mathcal{C}_i\setminus K_i$ for some continuous nonnegative profiles $\rho_{i}^{\pm}$ with $\max_i\max(\Vert\rho_i^+\Vert_\infty,\Vert\rho_i^-\Vert_\infty)<r_*$. 

Then observability fails for $0<T<M_\Phi(\omega)$ and holds for every $T>M_\Phi(\omega)$. In particular, the minimal time is $\Tobs(\omega) = M_\Phi(\omega)$.

\end{maintheorem}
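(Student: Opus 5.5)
The proof splits into a lower bound (failure of observability for $T<M_\Phi(\omega)$) and an upper bound (observability for $T>M_\Phi(\omega)$). Both work in the phase coordinates $(r_i,\theta_i)$ of Lemma~\ref{phase-local}. There the principal symbol is $c_i(\xi_r^2+r^2\xi_\theta^2)$ with $c_i(0,\theta)=1$, and the normalized lowest transverse decay rate at momentum $k$ is $k+\mathrm{o}(k)$.

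\emph{Lower bound.} Fix the circle $Z_i$ and the angle $\theta_0$ realizing the maximum in \eqref{observation-threshold}. I would build approximate heat solutions concentrated near $Z_i$ as superpositions
$$u(t)=\sum_{k\geq k_0} a_k\, e^{-t\lambda_k}\, e^{ik\Phi_i}\, b_k,$$
where $b_k$ are analytic WKB amplitudes solving the transport equations to high order, so that $e^{ik\Phi_i}b_k$ is a quasimode with eigenvalue $\lambda_k=k+\mathrm{O}(1)$ and exponentially small error. Analyticity of the structure and density on the tube is what makes the error exponentially small; cutting off near $Z_i$ then produces errors $\mathrm{O}(e^{-ck})$ in all norms. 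I would choose the coefficients $a_k$ as Taylor coefficients of a holomorphic function, in the spirit of \cite{Ko17,DKR23}. Then $\sum_k a_k e^{-tk}e^{ik\Phi}$ becomes a holomorphic function of $z=e^{-t+i\Phi}$, whose modulus is $e^{-t-\mathcal{A}}$.

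On $\omega$, the relevant region is $\{\mathcal{A}\geq \rho^\pm(\theta)^2/2\}$. The target $\|u(T)\|$ is governed by $\mathcal{A}\approx 0$ near $\theta_0$. Hence the observability inequality would imply a uniform estimate for holomorphic functions on a domain whose image misses a neighborhood of the point $e^{-T+i\theta_0}$ exactly when $T<\min(\rho^+,\rho^-)^2/2$ at $\theta_0$. A Runge-type approximation of $1/(z-z_0)$, with $z_0$ in the excluded region, then contradicts the estimate. For the true heat flow I would apply Duhamel to absorb the $\mathrm{O}(e^{-ck})$ quasimode errors. The $\mathrm{O}(1)$ shifts in $\lambda_k$ only perturb the $a_k$ harmlessly. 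The bound $r_*$ guarantees that the collar lies inside the region where the phase is defined and where the exponential errors beat the decay $e^{-k\rho^2/2}$.

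\emph{Upper bound.} I would decompose the solution into a high tangential frequency part near the circles and a remainder. Away from the tubes, every singular point outside the circles lies in $\omega$. There, and for bounded tangential frequency, the operator behaves elliptically or with a zero cost. I would use a Lebeau--Robbiano-type spectral inequality with cost $e^{C\sqrt{\lambda}}$ at low frequencies, or a strategy of cutting off and treating the remainder as a source, to make these parts observable in arbitrarily small time.

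For the part localized in $\mathcal{C}_i$ at tangential frequency $k$, I would prove a lower bound on the observed energy of the form
$$\int_0^T\|e^{-t\triangle}u\|^2_{L^2(\omega)}\,\dd{d}t\gtrsim e^{-k(\rho^2+\varepsilon)}\,\|\text{component}\|^2 .$$
This comes from Agmon estimates in the $\mathcal{A}_i$ weight, adapted to $\theta$-dependent sides. The coupling of transverse degrees, caused by $c_i$ depending on both variables, is controlled by the analytic phase conjugation and by taking $r_*$ small. Combined with decay $e^{-kT}$, this gives observability as soon as $T>M_\Phi+\varepsilon$. Gluing the pieces uses a standard Lebeau--Robbiano iteration, or the Miller method, with the positive-time cost.

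\textbf{Main obstacle.} I expect the main obstacle to be the upper bound's observation of the nonseparated transverse modes. Without separation of variables, the tangential frequency $k$ is not conserved. One needs a uniform, angle-dependent lower bound in which the cheaper side may switch with $\theta$. I would attack this first by a microlocal partition in $\theta$ combined with FBI/Bargmann transforms along phase fibers.
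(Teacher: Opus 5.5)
\textbf{Main gap: the upper bound.} Agmon estimates in the weight $\mathcal{A}$ bound from \emph{above} the mass a mode carries away from the circle. They are an obstruction tool; the paper uses such weighted estimates for Theorem~\ref{thm:higher-obstruction}. They cannot produce your inequality $\int_0^T\|e^{-t\triangle}u\|_{L^2(\omega)}^2\,dt\gtrsim e^{-k(\cdots)}\|\cdot\|^2$. That inequality is a unique-continuation statement across the collar, needed for every solution, including superpositions of transverse levels and angular frequencies that cancel on $\omega$.

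The paper gets a first such bound from a Carleman estimate with an angular weight $e^{\psi\mathsf{O}}$ (Proposition~\ref{rough-state}). Its loss $B_0\leq Cd^2$ is far from sharp, and taking $r_*$ small does not help, because $M_\Phi(\omega)\leq r_*^2/2$ scales the same way. Reducing the loss to any $h>M_\Phi(\omega)$ needs two ingredients absent from your plan:
\begin{enumerate}
\item A time-dependent subspace over the first $J+1$ transverse degrees with exponentially small invariance defect (Proposition~\ref{adapted-subspace}). Its complement dissipates at rate $2J+3-\varepsilon$, so it sheds the rough loss in time of order $B_0/(2J+3)$.
\item Recovery of the retained degrees from values and normal derivatives at fixed phase on a single level $\mathcal{A}=A$, with $M_\Phi<A<h$ (Lemma~\ref{common-height}). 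On that level both sides have $z_\sigma=\theta+\sigma iA$, hence the same attenuation $e^{-An}$. This is exactly what lets the observed side switch with $\theta$. Values alone would not suffice, since the transverse degrees can cancel.
\end{enumerate}
A partition in $\theta$ with FBI transforms does not by itself supply either ingredient.

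\textbf{The assembly also fails as stated.} A ``component at tangential frequency $k$'' is not invariant under the coupled flow. So per-frequency observation bounds cannot simply be combined with the decay $e^{-kT}$, and ``bounded tangential frequency'' is not a spectral subspace of $\triangle$. A spectral inequality with cost $e^{C\sqrt\lambda}$ for $\triangle$ is false whenever $M_\Phi(\omega)>0$, since it would give $\Tobs(\omega)=0$. Hence the Lebeau--Robbiano iteration, which needs a cost $e^{C\lambda^a}$ with $a<1$, does not apply. The paper instead turns $\|e^{-hN}U(\tau)\|^2\leq C\obs$ into an unweighted bound. It raises the weight along $\eta(t)=-h+(1-\varepsilon)(t-s)$ and uses $\Re\mathfrak{h}_\eta(v,v)\geq(1-\varepsilon)\|N^{1/2}v\|^2-C\|v\|^2$ (Lemma~\ref{weighted-dissipation}), which needs no invariant decomposition. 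The regular region is handled by elliptic Dirichlet observability with cutoff sources and energy identities.

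\textbf{Lower bound: right strategy, one unjustified step.} Your lower bound follows the paper's route: holomorphic inputs, Runge separation on a star-shaped compact set, Duhamel. But the step you call harmless is the delicate one. The smallness on the observed set comes from cancellation in the Taylor series, not from the size of the $a_k$. So the $n$-dependent amplitudes and eigenvalue corrections cannot be absorbed into the coefficients without an operator bounded in $\sup|F|$ on a strip. The paper supplies this by Borel--Laplace summation of the factorially divergent transport series (Lemma~\ref{factorial}, Proposition~\ref{realization}). This turns $n^{-j}$ into superpositions of radial contractions $F(\zeta+i\beta)$, and places the entire defect at the shift $\beta_{\rm B}>h_{\rm loc}$. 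There the separating powers $p^k$, with $p(0)=0$, are $\mathrm{O}(\varrho^k)$. With that device your stationary-quasimode variant is plausible.
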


The minimal time is therefore determined by the observation near the selected singular circles. Regular unobserved regions elsewhere change the observation constant, but add no positive time. No analyticity is required outside the fixed disjoint tubes. Note that, at the critical time $T = M_\Phi(\omega)$, the leading exponential rates balance, hence the estimates below do not decide observability.

\begin{remark}[Attenuation and heat dissipation]\label{rem:threshold-interpretation}
The threshold $M_\Phi(\omega)$ compares spatial attenuation with heat dissipation. At large positive tangential momentum $k$, the factor $e^{ik\Phi}$ has modulus $e^{-k\mathcal{A}}$, while the lowest transverse heat decay rate is $k+\mathrm{o}(k)$. Heat dissipation reduces the squared norm at time $T$ by a leading factor $e^{-2kT}$, whereas observation at the level $\mathcal{A} = A_0$ contributes $e^{-2kA_0}$. When $T<A_0$, their ratio grows exponentially. Subsection~\ref{sec:sharp-lower} makes this comparison rigorous by localizing near a phase angle where observation is weakest.
\end{remark}

Section~\ref{sec:consequences} relates our threshold to the Agmon-distance formulas previously obtained for separated models, computes the curvature correction for distance collars, and gives examples on closed surfaces.

\begin{remark}[Size of the admissible collars]\label{rem:collar-width}
The radius $r_*$ bounds the unobserved widths in the phase coordinate. Its smallness comes from the local analytic estimates in Propositions~\ref{local-lower} and~\ref{weighted-observation}, applied in fixed tubes with observed regular strips near their boundaries. Shrinking a tube around a prescribed collar does not remove these analytic restrictions and may remove the required observed strips (Subsection~\ref{surface-global}).
The same $r_*$ works for all continuous profiles within this width range and all $T>M_\Phi(\omega)$, including profiles with arbitrarily rapid variations. Only the observation constant may depend on the profiles and on $T-M_\Phi(\omega)$.
\end{remark}

The lower bound can also detect an unobserved arc of a singular circle.
\begin{cor}\label{cor:local-obstruction}
Let $Z$ be a two-sided embedded singular circle consisting of simple Baouendi-Grushin points, with analytic structure and density near the whole circle. If $Z\not\subset\overline\omega$, then $\Tobs(\omega)>0$.
\end{cor}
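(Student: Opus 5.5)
The plan is to deduce the corollary from the lower-bound half of Theorem~\ref{thm:exact-time}, using monotonicity of observability in $\omega$: enlarging $\omega$ can only make \eqref{intro:observability} easier, so $\Tobs(\omega)\geq\Tobs(\omega')$ whenever $\omega\subset\omega'$. It therefore suffices to construct an open set $\omega'\supset\omega$ that falls within the theorem's framework and satisfies $M_\Phi(\omega')>0$. Note that only the failure of observability for $T<M_\Phi(\omega')$ is needed, not the matching upper bound.

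First, since $Z\not\subset\overline\omega$, pick a point $p\in Z$ and a neighbourhood $U$ of $p$ with $U\cap\omega=\varnothing$. Using the analytic structure near $Z$, take the phase coordinates $(r,\theta)$ of Lemma~\ref{phase-local} on a fixed phase tube $\mathcal{C}=\{\vert r\vert<R\}$ around $Z$. These coordinates come with the radius $r_*$ of Theorem~\ref{thm:exact-time}, which depends only on the tube. Shrinking $U$, we may assume $U$ contains a box $\{\vert r\vert<2\delta,\ \vert\theta-\theta_0\vert<2\delta\}$ with $0<\delta<r_*/2$.

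Next, choose continuous profiles $\rho^\pm:\mathbb{T}_\ell\to[0,\delta]$ supported in $\vert\theta-\theta_0\vert<2\delta$ with $\rho^\pm(\theta_0)=\delta$. Set $K=\{-\rho^-\leq r\leq\rho^+\}$, which is a thin collar contained in $\overline U$ away from $Z$ and reduces to $Z$ where the profiles vanish. Define
\[
\omega'=\bigl(M\setminus(\overline{\mathcal{C}'}\cup\overline{U'})\bigr)\cup(\mathcal{C}\setminus K)\cup\omega,
\]
or, more simply, $\omega'=(M\setminus K)$ adjusted so that $\omega'\cap\mathcal{C}=\mathcal{C}\setminus K$ exactly. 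Then $\omega'$ contains every singular point not lying on $Z$, which is the theorem's hypothesis with $m=1$. One must check that $\omega\subset\omega'$. This holds because $\omega\cap K\subset\omega\cap(Z\cup U)$, and the part of $K$ off $Z$ lies in $U$, which is disjoint from $\omega$. The remaining points of $K$ lie on $Z\setminus U$, where the profiles vanish, and these could belong to $\omega$. To fix this, let the profiles vanish exactly where $\omega$ meets $Z$. Since it is not obvious how to handle a general open $\omega\cap Z$ with zero-width profiles, the cleaner route is to prove the corollary with $K=\{\rho^-\leq r\leq\rho^+\}\cap\overline{U}$-type sets that retain a positive width. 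Alternatively, one can enlarge to $\omega'=M\setminus K_0$, where $K_0$ is the collar over the arc, and observe that the singular points of $Z$ outside the arc are observed by $\omega'$ anyway.

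This exposes the main obstacle: Theorem~\ref{thm:exact-time} prescribes $\omega\cap\mathcal{C}=\mathcal{C}\setminus K$ with $K\supset Z$, so the whole circle is formally unobserved. Points of $Z$ where $\rho^\pm=0$ are not in $\omega'$, yet $\omega$ may contain them. I would resolve this by invoking the localized lower bound directly, namely Proposition~\ref{local-lower} and Subsection~\ref{sec:sharp-lower}. The quasimode argument there localizes near the single phase angle $\theta_0$ where observation is weakest, with a cutoff in $\theta$. It therefore only requires $\omega$ to avoid a neighbourhood of the collar over a small arc around $\theta_0$, and the behaviour at other angles costs only lower-order exponential errors. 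Applying that argument with $A_0=\delta^2/2$ yields failure for every $T<\delta^2/2$, hence $\Tobs(\omega)\geq\delta^2/2>0$.
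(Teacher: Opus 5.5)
Your final route is valid, but not for the reason you give, and you miss the one-line resolution of what you call the main obstacle.

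\textbf{The null-set remark.} The smooth curve $Z$ is $\nu$-null, so $\omega$ and the open set $\omega\setminus Z$ have the same observation integral in \eqref{intro:observability}, hence the same observability. Since $\omega$ misses the box over the arc, $\omega\setminus Z\subset M\setminus K$, where $K$ is your collar over the arc, extended by $Z$ elsewhere. This is exactly the paper's argument: adding or deleting the rest of the circle changes no $L^2$ norm, and the lower bound of Theorem~\ref{thm:exact-time} is then applied to the exterior of the gap.

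\textbf{Your intermediate fixes.} Neither works as stated. Making the profiles vanish where $\omega$ meets $Z$ still leaves those points in $K$. The set $M\setminus K_0$ does not have the form $\mathcal{C}\setminus K$ that Theorem~\ref{thm:exact-time} requires.

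\textbf{Your fallback through Proposition~\ref{local-lower}.} This is legitimate, and it even avoids enlarging $\omega$. However, your description of its mechanism is wrong. That proof uses no angular cutoff. It works with full-circle periodic inputs $F_k(\zeta)=P_k(e^{i\alpha\zeta})$ and obtains angular concentration from Runge separation on the compact set $\mathscr{K}$. An angular cutoff would in fact produce only polynomially small errors, because $\im\Phi=0$ on $Z$; the proof of Theorem~\ref{thm:periodic-quasimodes} makes exactly this point. So ``the behaviour at other angles costs only lower-order exponential errors'' is not what happens.

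\textbf{The correct verification.} The proposition applies because its hypothesis is just the pointwise inequality $\underline m(\Theta(p))\leq\A(p)$ for $p\in\omega\cap\operatorname{supp}\chi$. Take $\underline m=\min((\rho^+)^2,(\rho^-)^2)/2$, which vanishes off the arc.
\begin{enumerate}
\item Wherever $\underline m(\Theta(p))=0$, the inequality holds trivially since $\A\geq0$. This covers in particular every point of $\omega\cap Z$ off the arc.
\item On the arc, $\omega$ misses the box, so any $p\in\omega$ there has $\vert r\vert\geq2\delta$. Hence $\A(p)\geq2\delta^2\geq\underline m(\Theta(p))$.
\end{enumerate}
Since $\delta^2/2<r_*^2/8<h_{\rm loc}$, the proposition gives failure for every $T<\delta^2/2$, so $\Tobs(\omega)\geq\delta^2/2>0$.

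With this check in place of the cutoff heuristic, your argument is complete. The two routes differ only in where the work goes. The paper keeps Theorem~\ref{thm:exact-time} as a black box at the cost of the null-set remark. Yours applies the local proposition to $\omega$ itself, using that its hypothesis is vacuous where $\underline m=0$.
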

Indeed, choose continuous nonnegative profiles supported on a small arc around an unobserved point, with their gap disjoint from $\omega$. Extend the profiles by zero elsewhere and apply the lower bound to the enlarged observation set given by the whole exterior of this gap. Adding or deleting the remaining part of the circle changes no $L^2$ observation norm. Clearly, disjointness from $Z$ alone is insufficient, since $M\setminus Z$ observes at every positive time. In separated models, related obstructions are known when observation stays away from a transverse segment \cite[Theorem~1.3]{DKR23} or from a neighbourhood of one singular point \cite[Theorem~1.5(i)]{Van26}.

\paragraph{Bounds for more general observation sets.}
The local estimates also allow holes inside a thin collar. Suppose that the full annuli $d_i<\vert r_i\vert<R_i$ are observed, with $0<d_i<r_*$. Inside $\vert r_i\vert\leq d_i$, the open set $\omega$ is arbitrary. Define
\begin{align*}
m_i(\theta) &= \inf\{\mathcal{A}_i(p)\mid p\in\omega\cap\mathcal{C}_i,\ \Theta_i(p)=\theta\}, \\
\underline m_i(\theta) &= \liminf_{\theta'\to\theta}m_i(\theta'),\qquad
\mathfrak{m}(\omega)=\max_i\sup_\theta\underline m_i(\theta).
\end{align*}
The upper bound uses a common action level observed at every angle, on at least one side:
$$
\mathfrak{A}(\omega)=\max_i\inf\big\{A\in(0,R_i^2/2)\mid
\forall\theta,\ (\sqrt{2A},\theta)\in\omega\ \text{or}\ (-\sqrt{2A},\theta)\in\omega\big\}.
$$
The points in this formula are read in the corresponding phase tube. The observed annuli make both quantities finite, with $\mathfrak{m}(\omega)\leq\mathfrak{A}(\omega)\leq\max_i d_i^2/2$.

\begin{cor}\label{cor:general-bounds}
Keep the geometric and analytic hypotheses of Theorem~\ref{thm:exact-time} and the observation of every singular point outside $Z_1\cup\cdots\cup Z_m$. Replace the exterior-collar condition by the observed annuli above. Then
$$
\mathfrak{m}(\omega)\leq\Tobs(\omega)\leq\mathfrak{A}(\omega).
$$
For continuous exterior phase collars, both bounds equal $M_\Phi(\omega)$.
\end{cor}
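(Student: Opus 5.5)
We would obtain Corollary~\ref{cor:general-bounds} by rerunning the two halves of the proof of Theorem~\ref{thm:exact-time}, namely the sharp lower bound of Subsection~\ref{sec:sharp-lower} (built on Proposition~\ref{local-lower}) and the weighted observation estimate of Proposition~\ref{weighted-observation} together with the gluing of Subsection~\ref{surface-global}. We only change which geometric quantity of $\omega$ enters each half. The observed annuli $d_i<\vert r_i\vert<R_i$ with $d_i<r_*$ play the role of the observed regular strips near the tube boundaries that those propositions require. In both halves the analytic constants are therefore the same as in the theorem.

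\emph{Lower bound.} Fix $T<\mathfrak{m}(\omega)$. Choose $i$ and $\theta_0$ with $\underline m_i(\theta_0)>T'>T$. By definition of the $\liminf$, there is an arc $I\ni\theta_0$ such that every point of $\omega\cap\mathcal{C}_i$ with phase angle in $I$ satisfies $\mathcal{A}_i\geq T'$. We then use the same quasimodes/holomorphic heat trajectories as in Subsection~\ref{sec:sharp-lower}. These are localized in angle near $\theta_0$ with a cutoff supported in $I$ and built from $e^{ik\Phi_i}$ at large momentum $k$.

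On $\omega\cap\mathcal{C}_i\cap\{\Theta_i\in I\}$, their squared modulus is at most $e^{-2kT'}$ up to subexponential factors. Off the angular arc, the localization gives an error that is exponentially smaller, controlled by the analytic off-diagonal decay already used in Proposition~\ref{local-lower}. The parts outside $\mathcal{C}_i$, and the observed singular points elsewhere, only see exponentially smaller contributions. Meanwhile the left side of \eqref{intro:observability} is of size $e^{-2kT(1+\mathrm{o}(1))}$. Letting $k\to\infty$ contradicts observability, so $\Tobs(\omega)\geq\mathfrak m(\omega)$.

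The only difference from the theorem is that the observed set in the arc is now an arbitrary open set below level $T'$ rather than a collar exterior. The lower bound only used an upper bound on $\mathcal{A}_i^{-1}$ over the observed part, so the argument should go through unchanged.

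\emph{Upper bound.} Fix $T>\mathfrak{A}(\omega)$ and choose $A$ with $\mathfrak A(\omega)<A<T$ realizing the infimum condition for every $i$. Then for each angle $\theta$ one of the points $(\pm\sqrt{2A},\theta)$ lies in $\omega$. By openness and compactness of $\mathbb{T}_{\ell_i}$, there are finitely many arcs and $\varepsilon>0$ on which a fixed side $\pm$ satisfies $\{\vert r_i\mp\sqrt{2A}\vert<\varepsilon\}\subset\omega$. This gives an open subset $\omega'\subset\omega$ containing a (possibly side-switching, piecewise) thin band at level $\approx A$, together with the annuli and all other singular points. Since observability is monotone in $\omega$, it suffices to observe from $\omega'$.

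The weighted observation estimate of Proposition~\ref{weighted-observation} handles exactly profiles observed on one side per angle at level $\rho^2/2$. We apply it with profiles $\rho_i^\pm$ equal to $\sqrt{2A}\pm\varepsilon$ on the chosen side and $\approx d_i$ on the other. We must make these continuous by interpolating near arc endpoints, where both sides are observed at level $\leq A+\mathrm{O}(\varepsilon)$. This yields $M_\Phi(\omega'')\leq A+\mathrm{O}(\varepsilon)<T$ for a collar set $\omega''\subset\omega'$ up to the band, and Theorem~\ref{thm:exact-time} then gives observability at time $T$.

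The main obstacle is this last step: making the side-switching band fit the exterior-collar hypothesis $\omega''\cap\mathcal{C}_i=\mathcal{C}_i\setminus K_i$, since $\omega$ need not contain the region between level $A$ and $d_i$. We would first try to invoke the weighted estimate directly, since it presumably only needs observation of a neighbourhood of the curve $r=\pm\rho^\pm(\theta)$. If that fails, we would need to revisit its proof to allow a band rather than the full exterior.

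Finally, for continuous exterior phase collars, we have $m_i(\theta)=\tfrac12\min(\rho_i^+(\theta)^2,\rho_i^-(\theta)^2)$. This is continuous, so $\underline m_i=m_i$ and $\mathfrak m=M_\Phi$. Also $\mathfrak A=\max_i\max_\theta m_i(\theta)=M_\Phi$, because level $A$ is observed at angle $\theta$ exactly when $A>m_i(\theta)$ on one side.
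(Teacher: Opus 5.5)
Your upper bound has a genuine gap, at the step you flagged. Monotonicity only transfers observability from a subset $\omega''\subset\omega$ to $\omega$. Any exterior phase collar whose edge sits near the level $\mathcal{A}_i=A$ on the chosen side contains the whole region between that edge and the annulus $\vert r_i\vert>d_i$, and $\omega$ may have unobserved holes there. So in general there is no collar set $\omega''\subset\omega$ with threshold below $T$, and Theorem~\ref{thm:exact-time} cannot be invoked.

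The missing idea is that the upper-bound machinery never uses the full exterior. Lemma~\ref{common-height} only needs observed strips around one level $\mathcal{A}=A<h_*$, on at least one side of each phase fiber. Proposition~\ref{rough-state} only needs the unobserved set inside $\vert r\vert\leq d$ with the strips $2d<\vert r\vert<3d$ observed, and your annuli supply these. Propositions~\ref{weighted-observation} and~\ref{local-upper} are stated in exactly this level-set generality. The paper's proof runs as follows:
\begin{itemize}
\item decrease $r_*$ so that $r_*^2/2<\min_ih_{*,i}$, whence $\mathfrak{A}(\omega)\leq\max_id_i^2/2<h_*$;
\item for $T>\mathfrak{A}(\omega)$, choose in each tube an admissible level $A_i<\min(T,h_{*,i})$;
\item apply Proposition~\ref{local-upper} tube by tube and glue as in Subsection~\ref{surface-global}.
\end{itemize}
The levels must be chosen per tube. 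With holes inside $\vert r_i\vert\leq d_i$, the admissible levels of a tube need not be closed upwards below $d_i^2/2$. So a single $A\in(\mathfrak{A}(\omega),T)$ admissible in every tube, as in your proposal, may not exist.

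For the lower bound, the conclusion is right but the mechanism you describe would not work. Multiplying $e^{ik\Phi_i}$ by an angular cutoff supported in $I$ destroys holomorphy. The datum then acquires angular frequencies far below $k$ whose amplitudes are not exponentially small in $k$, since a compactly supported cutoff has no exponential Fourier decay. These slowly decaying components are observed essentially without loss and dominate both the observed integral and the terminal mass, so no exponential separation survives. The paper makes the analogous remark in Section~\ref{wkb:section}: a tangential cutoff produces only a polynomially small error.

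Proposition~\ref{local-lower} contains no off-diagonal decay argument of the kind you invoke. It localizes in angle by Runge polynomial separation on the compact set $\mathscr{K}$ built from a lower semicontinuous $\underline m$. The normalization $p(0)=0$ keeps all input frequencies in $\alpha\{k,\ldots,dk\}$ and the whole trajectory holomorphic. That proposition already accepts any lower semicontinuous $\underline m$ with $\underline m(\Theta(p))\leq\mathcal{A}(p)$ on the observed part of the cutoff support, so the paper simply applies it with $\underline m=\underline m_i$, which satisfies this by definition. You omit its restriction $T<h_{\rm loc}$. It holds because the observed annulus gives $\sup\underline m_i\leq d_i^2/2<r_*^2/2<h_{{\rm loc},i}$.

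Your identification $\mathfrak{m}(\omega)=\mathfrak{A}(\omega)=M_\Phi(\omega)$ for continuous exterior collars matches the paper.
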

The proof is given in Subsection~\ref{sec:general-observation}.

\paragraph{All-time obstruction at higher step.}
Our second main result shows that, at higher constant step, positive separation from an entire singular circle prevents observability at every time.

\begin{maintheorem}\label{thm:higher-obstruction}
Let $Z$ be a two-sided compact embedded connected component of the singular set, of horizontal rank one and constant bracket step $r \geq 3$. Assume that the structure and density are real analytic near $Z$. Then every nonempty open set $\omega$ satisfying $\dsr(\omega,Z)>0$ has $\Tobs(\omega) = +\infty$.
\end{maintheorem}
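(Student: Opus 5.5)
The plan is to disprove observability at any fixed $T>0$ with a family of initial data concentrated near $Z$ at large tangential momentum $k$. These data should dissipate at rate $\mathrm{o}(k)$ in the heat semigroup, while being exponentially small, at rate $\gtrsim k$, on $\omega$.

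\textbf{Normal form.} Near a compact connected component of horizontal rank one and constant step $r\geq 3$, the normal form argument of \cite{ABS08,BL13} recalled in Section~\ref{sec:geometry} gives analytic tube coordinates $(x,y)\in(-\delta,\delta)\times\mathbb{T}_\ell$ with
$$X_1=\partial_x,\qquad X_2=x^{r-1}e^{\varphi(x,y)}\partial_y,$$
after the same tangential normalization. By $\dsr(\omega,Z)>0$ we may take $\omega\cap\{\vert x\vert<\delta\}=\varnothing$ after shrinking $\delta$. The local model is $-\partial_x^2+x^{2(r-1)}k^2$ plus analytic perturbations. Rescaling $x=k^{-1/r}s$ shows that the lowest transverse eigenvalue at momentum $k$ is $\lambda_k\sim c\,k^{2/r}=\mathrm{o}(k)$, since $r\geq3$.

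\textbf{Quasimode construction.} As in the step-two quasimodes of the paper, one builds analytic WKB/holomorphic approximate solutions. Take the complex phase $\Phi$ solving $\Phi_x=i x^{r-1}e^{\varphi}\Phi_y$ with $\Phi(0,y)=y$. Its imaginary part satisfies $\mathcal{A}\sim x^r/r>0$ for $x\ne0$, which is where we use that $r$ is odd or even only through $\vert x\vert$.

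Set $u_k=e^{ik\Phi}a_k$, with amplitude $a_k$ solving the transport equations, truncated at order $\sim \varepsilon k$ by analyticity. Cut off in $\vert x\vert<\delta$. This gives
$$\Vert(\triangle-\lambda_k)u_k\Vert\leq Ce^{-ck}\Vert u_k\Vert,$$
with $\lambda_k=\mathrm{o}(k)$. An alternative that avoids the global phase is to use the first transverse eigenfunction in $s$, which has Agmon decay $e^{-k\vert x\vert^r/r}$, and to treat the $y$-dependence perturbatively, in the spirit of Proposition~\ref{local-lower}.

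\textbf{Conclusion.} The heat evolution satisfies
$$\Vert e^{-T\triangle}u_k-e^{-T\lambda_k}u_k\Vert\leq TCe^{-ck}\Vert u_k\Vert,$$
so $\Vert e^{-T\triangle}u_k\Vert\geq e^{-T\lambda_k}/2$ for normalized $u_k$ and $k$ large. On $\omega$, the cutoff makes $u_k$ vanish, hence
$$\int_0^T\Vert e^{-t\triangle}u_k\Vert_{L^2(\omega)}^2\,\dd{d}t\leq CT^2e^{-2ck}.$$
Since $e^{-2T\lambda_k}=e^{-\mathrm{o}(k)}$, the ratio diverges for every $T$. Hence \eqref{intro:observability} fails and $\Tobs(\omega)=+\infty$. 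Note that the smallness of the error only needs to beat $e^{-2T\lambda_k}$, which is why the plan works without a threshold.

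\textbf{Main obstacle.} The hard step is making the quasimode error exponentially small in $k$ rather than $\mathrm{O}(k^{-N})$, for coefficients depending on both variables. The decay rate $\lambda_k$ is sublinear, but the cutoff error $e^{-c\delta^r k}$ is already exponential, so the real issue is controlling the analytic amplitude series uniformly. I would first try Cauchy estimates on a complex neighbourhood of $Z$ together with optimal truncation, as in the construction for simple circles. The step-$r$ scaling $k^{-1/r}$ must replace $k^{-1/2}$ in the amplitude bounds.
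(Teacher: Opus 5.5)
Your overall comparison, heat dissipation of order $k^{2/r}=\mathrm{o}(k)$ against an observed signal of order $e^{-ck}$, is the right one. The step that carries it, however, is an exponentially accurate quasimode obtained by WKB with optimal truncation, and that construction does not exist in the form you describe at step $r\geq3$.

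Write $m=r-1\geq2$ and $\triangle=-\mathcal{D}_\rho$ in the tube, as in Section~\ref{wkb:section}. Since $\Phi_x=ix^me^{\varphi}\Phi_y$, one has $\mathcal{D}_\rho\Phi(0,\cdot)=0$. The transport operator is $-i(2\Phi_x\partial_x+2Q_m\Phi_y\partial_y+\mathcal{D}_\rho\Phi)$, and its leading part is $2x^m\partial_x+mx^{m-1}$. So there is no Euler operator and no constant term fixing an energy.

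In the amplitude equation $k(2x^m\partial_x+mx^{m-1})a-\partial_x^2a\approx\lambda a$, all three terms have size $k^{2/r}$ at the scale $x\sim k^{-1/r}$. Hence $\partial_x^2a$ is not lower order. After $s=k^{1/r}x$, this is exactly the anharmonic oscillator $-\partial_s^2+s^{2m}$ conjugated by $e^{-\vert s\vert^r/r}$. That function is not its ground state, since $(-\partial_s^2+s^{2m})e^{-\vert s\vert^r/r}=m\vert s\vert^{m-1}e^{-\vert s\vert^r/r}$.

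Consequently there is no hierarchy of $k$-independent holomorphic amplitudes, no factorial bounds, and no truncation yielding $e^{-ck}$. Replacing $k^{-1/2}$ by $k^{-1/r}$ does not restore the Euler structure used at step two. Your perturbative alternative only gives errors polynomial in $k^{-1/r}$, which, as you note yourself, cannot beat $e^{-T\lambda_k}=e^{-cTk^{2/r}}$.

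There is a further problem for odd $r$. Your phase then has $\im\Phi\approx x^r/r<0$ for $x<0$; for instance $\Phi=y+ix^3/3$ in the flat step-three model. So $e^{ik\Phi}$ grows on one side, and the two-sided decaying choice $\vert x\vert^r/r$ is not analytic at $x=0$.

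The missing idea is that no quasimode is needed.

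\emph{Terminal mass.} For a unit $u$ in the form domain, Jensen's inequality for the spectral measure gives $\Vert e^{-T\triangle}u\Vert^2\geq e^{-2T\mathcal{E}(u)}$, see \eqref{dyn:jensen}. So the crude state $c_k\chi_0(x)e^{-kF(x)}e^{iky}$, with $F\approx\vert x\vert^r/r$ and energy $\mathrm{O}(k^{2/r})$ by rescaling, already has terminal mass at least $e^{-2C_ETk^{2/r}}$.

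\emph{Observed mass.} The exponential smallness on $\omega$ must be propagated along the actual heat flow, not read off from a residual. The paper proves the weighted estimate \eqref{dyn:weighted-evolution-bound} for $B_\mu=e^{\mu(x)\vert D\vert}$ with $\mu=\min(F,\vartheta)-\vartheta$, under the inequality $\mu'^2\leq\re Q_m(x,y\mp i\mu)$. This is where tangential analyticity enters. On same-sign Fourier blocks, the coupled coefficient conjugated by the angular weight becomes an imaginary translate of $Q_m$ (Lemma~\ref{dyn:weighted-form}). Since $\mu k-kF\leq-\vartheta k$, the weighted norm of the evolved state is $\mathrm{O}(k^{1/(2r)}e^{-\vartheta k})$ up to time $T$. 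Cutting off where $\mu=0$ and using Duhamel's formula gives observed mass $\lesssim k^{1/r}e^{-2\vartheta k}$, and the ratio diverges because $2/r<1$.

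Your Duhamel conclusion would work with any quasimode error $e^{-ck^\beta}$, $\beta>2/r$. Obtaining such an error for coupled coefficients, however, essentially requires this same weighted propagation estimate.
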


The dependence on the bracket step is discussed in \cite[Section~1.2.5]{BCG14}. The conjectures recorded in \cite[Section~5.1]{Let21} predict small-time obstructions at step two and all-time obstructions at higher step. Our results establish these obstructions for unobserved singular circles under the stated geometric and analytic assumptions, and determine an exact threshold at step two. These assumptions matter: on a rank-varying structure, an arbitrary nonempty unobserved interior is insufficient: bands containing the singular set may observe at every positive time (see \cite[Remark~1 and Appendix]{BCG14} and \cite[Theorem~2.8]{Tam26}).

The constant-step normal form gives $X_2 = x^{r - 1}e^\varphi\,\partial_y$. 
The proof constructs normalized initial states concentrated near the singular circle, with energy $O(k^{2/r})$ at tangential frequency $k$. An exponentially weighted energy estimate shows that their heat evolutions have exponentially small observed mass, of order $e^{-ck}$ up to polynomial factors. Since $2/r<1$, this decay cannot be compensated by any fixed heat time.
The separated case is covered by
\cite[Theorem~1.5(ii)]{Van26}. Here the principal coefficient and the
density may depend on both variables. Constant step along the component excludes isolated tangency points. 
In fact, we show that the all-time non-observability result also holds under the weaker assumption of analyticity only in the tangential variable (Theorem~\ref{dyn:higher-order}), and even under uniform tangential Gevrey regularity of order $1<s<r/2$ (Theorem~\ref{dyn:gevrey}). 

\paragraph{Periodic spectral quasimodes.}
At step two, Section~\ref{wkb:section} also constructs periodic spectral quasimodes with exponentially small errors and an explicit limiting mass on the singular circle (Theorem~\ref{thm:periodic-quasimodes}). They yield a sequence of eigenvalues with an asymptotic expansion (Corollary~\ref{cor:spectral-sequence}).

Their superpositions give Schr\"odinger solutions concentrating near a point moving along the singular circle at unit speed in the normalized coordinate $y$. The following corollary extends the obstruction in \cite{BS22} to coupled analytic coefficients.

For an open set $\omega\subset M$, Schr\"odinger observability in time $T>0$ means that
\begin{equation}\label{schrodinger:observability}
\norm{u_0}_{L^2(M,\nu)}^2\leq C_T\int_0^T\norm{e^{-it\triangle}u_0}_{L^2(\omega,\nu)}^2\,\dd{d}t\qquad\forall u_0\in L^2(M,\nu)
\end{equation}
for some $C_T>0$.

\begin{cor}\label{cor:schrodinger-obstruction}
Let $Z$ be a two-sided compact embedded connected singular circle of rank one and bracket step two, with analytic structure and density near the whole circle. If $I\subset Z\setminus\overline\omega$ is an arc of length $L$ in the normalized coordinate $y$, then \eqref{schrodinger:observability} fails for every $0<T\leq L$. If $Z\cap\overline\omega=\varnothing$, it fails for every $T>0$.
\end{cor}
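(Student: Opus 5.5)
The plan is to build Schrödinger solutions by superposing the periodic quasimodes of Theorem~\ref{thm:periodic-quasimodes}. These solutions concentrate near a point that moves along $Z$ at unit speed in the normalized coordinate $y$. We then show that their mass in $\omega$ is negligible while the point stays inside $I$.

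\textbf{Step 1: quasimodes.} Let $v_k$ be the quasimodes of Theorem~\ref{thm:periodic-quasimodes}, with tangential momentum $k\to+\infty$. They satisfy
\[
\triangle v_k=\lambda_k v_k+\mathrm{O}(e^{-ck}), \qquad \lambda_k=k+\mu_0+\mathrm{O}(k^{-1}),
\]
where $\mu_0$ is a constant. At leading order, $v_k\approx e^{ik\Phi}a_k$, so the transverse profile is of size $e^{-k\mathcal A}$ and is localized in $|x|\lesssim k^{-1/2}$. The constant part of the eigenvalue expansion (Corollary~\ref{cor:spectral-sequence}) matters here. The linear dependence of $\lambda_k$ on $k$ is what makes the group velocity equal to $1$ in $y$. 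I expect the normalization of $y$ to be chosen exactly so that $\partial_k\lambda_k\to1$.

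\textbf{Step 2: wave packets.} For a large parameter $N$ and a width $N^{1/2+\delta}$, set
\[
u_0=\sum_k g\Big(\frac{k-N}{N^{1/2+\delta}}\Big)e^{-iky_0}v_k ,
\]
with $g$ smooth and compactly supported and $y_0$ at the start of $I$. Up to a harmless sign convention, the evolution $e^{-it\triangle}u_0$ equals $\sum_k g(\cdot)e^{-it\lambda_k}e^{-iky_0}v_k$ plus an error of size $TN\,e^{-cN}$ by Duhamel. Expanding $\lambda_k$ to second order around $N$, the phase is $k(y-y_0-t)$ plus a quadratic term of size $t(k-N)^2/N=\mathrm{O}(N^{2\delta})$. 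Taking $\delta$ small, or using a $k$-width $N^{1/2-\delta}$ and accepting a wider packet, this term has a controlled effect. Non-stationary phase in $k$ then localizes the packet in $y$ near $y_0+t$, with width $o(1)$ and rapidly decaying tails.

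The main obstacle is the interplay between the non-separated coefficients and this superposition. Both the amplitudes $a_k$ and the phase $\Phi$ depend on $y$. I would handle this by writing $v_k=e^{ik\Phi}a_k$ with $a_k$ smooth in $(x,y)$ and $k$. Integration by parts in the $k$-sum (Poisson summation) then uses the nondegenerate phase derivative $\re\Phi-y_0-t$, which vanishes only near $\Theta=y_0+t$. Any remaining $\mathrm{O}(N^{-\infty})$ tails are absorbed.

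\textbf{Step 3: conclusion.} Take $T<L$ first. At all times $t\in[0,T]$, the concentration point stays in a compact sub-arc of $I$. That sub-arc lies at positive distance from $\overline\omega$, both in $y$ and transversally on a small tube, since $\omega$ is open and $I\cap\overline\omega=\varnothing$. Transverse localization then gives mass $\mathrm{O}(e^{-cN})$ near $Z$ outside the sub-arc's neighbourhood, and tangential localization gives $\mathrm{O}(N^{-\infty})$ elsewhere. Meanwhile $\|u_0\|^2\gtrsim N^{1/2+\delta}$ by the almost-orthogonality of the $v_k$ coming from their distinct momenta. Hence the ratio in \eqref{schrodinger:observability} tends to zero, and observability fails.

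The endpoint $T=L$ follows by taking the open arc slightly larger, or by centering $y_0$ just before $I$ and using the open-set margin. If needed, one uses the fact that the obstruction for all $T<L$ plus a slight shrinking argument suffices because $I$ is relatively open with some margin. When $Z\cap\overline\omega=\varnothing$, the whole circle is unobserved. The packet keeps circulating at unit speed for any time, and the same estimates give failure for every $T$, with the error $TN e^{-cN}$ still negligible.
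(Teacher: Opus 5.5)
The step that fails is the endpoint $T=L$. The hypothesis only gives $I\subset Z\setminus\overline\omega$, so $I$ may be an open arc whose endpoints lie in $\overline\omega$. For instance, $Z\setminus\overline\omega=I$ when $\omega$ is the complement of a thin closed lens $\{\vert x\vert\leq f(y)\}$ with $f>0$ exactly on $I$. In that case there is no larger unobserved arc. A packet centred just before $I$ may be observed from the start, and relative openness of $I$ gives no margin at all. You also cannot pass to the limit from $T<L$. Observability at time $L$ does not imply observability at smaller times, so failure for every $T<L$ says nothing about $T=L$.

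The missing ingredient is unitarity: for a normalized state $\psi$, $\int_{L-\varepsilon}^{L}\Vert e^{-it\triangle}\psi\Vert_{L^2(\omega,\nu)}^2\,dt\leq\varepsilon$. Start the packet at $y_0$ just inside $I$, so that its path over $[0,L-\varepsilon]$ stays in a compact subarc of $I$. Your localization then makes the observed integral over $[0,L-\varepsilon]$ tend to zero. Observability \eqref{schrodinger:observability} at $T=L$ would give $1\leq C_L(\varepsilon+\mathrm{o}(1))$ for every $\varepsilon>0$, which is impossible. This is exactly how the paper closes the case $T=L$.

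The rest of your construction is a workable variant of the paper's, with two remarks.

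First, your dispersion estimate is wrong. Your Step~1 expansion $\lambda_k=k+\mu_0+\mathrm{O}(k^{-1})$ (your $\mu_0$ is the paper's $\mu_1$) already gives $e^{-it\lambda_k}=e^{-it(k+\mu_0)}\big(1+\mathrm{O}(T/N)\big)$ uniformly on the packet. So no term of size $t(k-N)^2/N$ appears. A term of size $N^{2\delta}$ would in fact not be controlled for any $\delta>0$.

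Second, the paper avoids Poisson summation altogether. It superposes a fixed number $2J+1$ of first-order WKB states and uses the exact spacing $\Lambda_{n+1}-\Lambda_n=\alpha$. It also uses the coherence $\Vert u_{n+m}-e^{i\alpha my}u_n\Vert\to0$ for fixed $m$, so that $e^{-it\triangle}w_{n,J}\approx e^{-it\Lambda_n}u_nF_J(y-y_0-t)$. It then lets $J\to\infty$ through the Fej\'er kernel and a diagonal argument. Your growing-width packets give a quantitative localization rate, but they require checks the paper never needs:
\begin{itemize}
\item smooth dependence on $k$ of the normalizing constants, amplitudes and energy corrections, which holds for a fixed truncation order but only up to exponentially small jumps for $N(h)=\lfloor\delta/h\rfloor$;
\item almost orthogonality of the $v_k$.
\end{itemize}
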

The proof is given in Section~\ref{wkb:section}. For Baouendi-Grushin surfaces, concentration of eigenfunctions already implies failure at every time when observation has positive distance from the entire singular set \cite[Sections~7.5 and~7.7]{CHT22}.

\paragraph{Strategy of the proofs.}
The complex phase is the common starting point for the two bounds in Theorem~\ref{thm:exact-time}. For the obstruction, we construct approximate heat solutions which concentrate near a chosen phase angle, then compare them with exact solutions. Runge approximation selects the angle where observation is weakest.

For observability, the phase gives local normal forms which retain the coupling between variables. A Carleman estimate first controls an angularly regularized solution. Separating the slowly and rapidly decaying transverse components reduces this preliminary loss to the attenuation at observed phase levels. Heat decay compensates this loss for every time larger than $M_\Phi(\omega)$.

Theorem~\ref{thm:higher-obstruction} uses a separate energy estimate. At higher step, high tangential frequencies produce too little heat decay to compensate for their weak signal on the observation set.

\paragraph{Structure of the paper.}
Section~\ref{sec:consequences} develops examples and geometric applications of Theorem~\ref{thm:exact-time}. Section~\ref{sec:geometry} establishes the local normal forms. Sections~\ref{sec:proof-a} and~\ref{dyn:section} prove Theorems~\ref{thm:exact-time} and~\ref{thm:higher-obstruction}, respectively. Subsection~\ref{sec:general-observation} proves Corollary~\ref{cor:general-bounds}, and Subsection~\ref{dyn:extensions} treats weaker regularity. Section~\ref{wkb:section} constructs periodic spectral quasimodes and derives their spectral and Schr\"odinger observation consequences. The appendix collects the analytic estimates used in these proofs.

\section{Applications}\label{sec:consequences}

We first interpret the threshold on flat and separated models, then compute the effect of curvature for distance collars and give examples on closed surfaces. All applications of Theorem~\ref{thm:exact-time} use collars within its fixed smallness range.

\subsection{Flat and separated models}\label{sec:model-comparison}

\paragraph{The flat model.}
For the flat Baouendi-Grushin operator $\triangle_0$, the phase is $\Phi = y + ix^2/2$, so phase fibers are the ordinary transverse lines $y = \theta$. Here ``flat'' denotes the standard normal model; its regular Riemannian metric has curvature $-2/x^2$. On $\R\times\T_\ell$, the normalized ground states $v_k = c_ke^{iky-kx^2/2}$ have eigenvalue $k$, for $k\in(2\pi/\ell)\N^*$. Their terminal squared mass is $e^{-2kT}$, whereas observation in $\{\vert x\vert>a\}$ has exponential rate $e^{-ka^2}$, giving the obstruction $T \geq a^2/2$.

For constant edges $-a_- \leq x \leq a_+$, Theorem~\ref{thm:exact-time} gives $\Tobs = \min(a_-^2,a_+^2)/2$ in its local setting. Equal widths recover the value $a^2/2$ from \cite[Theorem~1.3]{BMM15}. That theorem treats a Dirichlet rectangle with two symmetric observed strips and also proves failure at the critical time.

In contrast, variable edges illustrate why the shortest distance from the circle to the observation does not determine the exact time. For a symmetric flat gap with width $a(1+\varepsilon\cos\theta)$, $0<\varepsilon<1$, formula \eqref{observation-threshold} gives $\Tobs = a^2(1+\varepsilon)^2/2$, whereas the shortest distance is $a(1-\varepsilon)$. To see why the two sides must be compared before maximizing, take profiles with $(\rho^\pm)^2/2 = A_0\pm\delta\sin\theta$, where $0<\delta<A_0$. Their threshold is $A_0$, while the maximum cost of either side alone is $A_0+\delta$. All widths are chosen within the smallness range of the theorem.

\paragraph{Separated coefficients.}
For the separated operator $-\partial_x^2-q(x)^2\partial_y^2$, assume $q(0) = 0$ and write $\kappa = q'(0)>0$. The transverse Agmon distance used in \cite{DKR23} is $\mathrm{d}_{\mathrm{Agm}}(x) = \int_0^xq(s)\,ds$, positive on either side near zero. At angular frequency $n$ in the original coordinate $y$, the ground state has leading spatial decay $e^{-n\mathrm{d}_{\mathrm{Agm}}(x)}$ and heat decay $e^{-\kappa nt}$. Normalizing $Y = y/\kappa$ gives $\Phi = Y+i\,\mathrm{d}_{\mathrm{Agm}}/\kappa$, so $\mathcal{A} = \mathrm{d}_{\mathrm{Agm}}/q'(0)$ measures attenuation relative to the lowest heat decay rate. For analytic $q$ and thin exterior graphs $-a_-(y) \leq x \leq a_+(y)$, Theorem~\ref{thm:exact-time} reads
$$
\Tobs = \frac{1}{q'(0)}\max_y\min(\mathrm{d}_{\mathrm{Agm}}(-a_-(y)),\mathrm{d}_{\mathrm{Agm}}(a_+(y))).
$$
In the flat case, $\mathrm{d}_{\mathrm{Agm}} = x^2/2$, whereas the sub-Riemannian distance to the circle is $\vert x\vert$.

The geometric formulas in \cite[Corollary~3.5]{DK20} and \cite[Theorem~1.6]{DKR23} have the same interpretation, but concern an \emph{observed band} $\gamma_1(y)<x<\gamma_2(y)$ rather than an exterior collar. With $f^+ = \max(f,0)$ and $f^- = \max(-f,0)$, their thresholds are, respectively,
$$
\frac{1}{2}\max(\Vert\gamma_1^+\Vert_\infty^2,\Vert\gamma_2^-\Vert_\infty^2),\qquad \frac{1}{q'(0)}\max(\mathrm{d}_{\mathrm{Agm}}(\Vert\gamma_1^+\Vert_\infty),\mathrm{d}_{\mathrm{Agm}}(-\Vert\gamma_2^-\Vert_\infty)).
$$
Indeed, at fixed $y$, the infimum of $\mathrm{d}_{\mathrm{Agm}}$ over this observed band is the value at its nearer edge if the band lies on one side, and is zero if it meets $x = 0$. Maximizing this first observed value over $y$ gives exactly the displayed formulas. The variable-coefficient result allows $q\in C^3$ with positive derivative, while the present theorem requires joint analyticity near the circle. Because of the smallness restriction on the collars, the present theorem does not cover the larger widths or boundary geometries allowed in these earlier results. The new assertion concerns coupled coefficients: phase fibers replace straight transverse lines, and the upper proof controls interaction between angular frequencies. A mere change of coordinates to $(r,\theta)$ does not remove that interaction.

\paragraph{A metric determined by the phase.}
For coupled coefficients, integrating $xe^{\varphi(x,y)}$ at fixed $y$ does not generally solve \eqref{phase-definition}. Omit the circle index $i$. Lemma~\ref{phase-local} gives $g=c^{-1}(dr^2+r^{-2}d\theta^2)$ on the punctured tube. If $p$ denotes the cometric, then $p(d\Theta)=cr^2$, so the phase defines the auxiliary Agmon metric
$$
\widetilde g=p(d\Theta)g=r^2dr^2+d\theta^2.
$$
On each side, the coordinate $s=r^2/2$ gives $\widetilde g=ds^2+d\theta^2$. Its metric completion contains the singular circle at $s=0$, and the distance from this whole circle to $(r,\theta)$ is exactly $\mathcal{A}=r^2/2$, attained at fixed $\theta$. The distance from the circle to the observation set takes the infimum over all observed points. By contrast, $M_\Phi(\omega)$ maximizes the first observed action over the phase angles. These quantities need not coincide.

\subsection{Distance collars and curvature}\label{sec:geometric-consequences}

We use the normal forms of Section~\ref{sec:geometry} to express the threshold for collars defined by the sub-Riemannian distance.

\begin{cor}\label{cor:distance-time}
Let $Z_0$ be a singular circle satisfying the hypotheses of Theorem~\ref{thm:exact-time}, and set $\Omega_a = M\setminus\{\dsr(\cdot,Z_0) \leq a\}$. Write $b(y) = \partial_x\varphi(0,y)$, $B_{\min} = \min_{y\in\T_\ell}\vert b(y)\vert$, $S = \{y\in\T_\ell \,\mid\, \vert b(y)\vert = B_{\min}\}$ and $C_* = \frac{1}{8}\max_{y\in S}\big(\partial_x^2\varphi(0,y) + b(y)^2\big)$. One has
\begin{equation}\label{geom:quartic-time}
\Tobs(\Omega_a) = \frac{a^2}{2} - \frac{B_{\min}}{3}a^3 + C_*a^4+\mathrm{o}(a^4) ,
\end{equation}
as $a\downarrow0$.
There exist $d_*,C_4>0$ such that $\Tobs(\omega) \geq \frac{a^2}{2} - \frac{B_{\min}}{3}a^3 - C_4a^4>0$ for every $0<a \leq d_*$ and every nonempty open set $\omega$ with $\dsr(\omega,Z_0) \geq a$.
\end{cor}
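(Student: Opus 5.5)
The plan is to reduce both assertions to Theorem~\ref{thm:exact-time} and Corollary~\ref{cor:general-bounds}. This requires one computation: the phase action $\mathcal{A}$ expressed in terms of $x$ to fourth order along a phase fiber.

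\emph{Step 1.} In the tube around $Z_0$ one has $\vert x\vert=\dsr(\cdot,Z_0)$, so $\Omega_a\cap\mathcal{C}=\{\vert x\vert>a\}$. We rewrite this collar in the phase coordinates $(r,\theta)$ of Lemma~\ref{phase-local}. Solving \eqref{phase-definition} by power series in $x$ gives
$$\Theta=y+\tfrac{x^2}{2}\,\partial_y\text{-corrections}=y+\mathrm{O}(x^3)$$
and
$$\mathcal{A}(x,y)=\frac{x^2}{2}+\frac{b(y)}{3}x^3+\alpha(y)x^4+\mathrm{O}(x^5).$$
The coefficient $\alpha$ is obtained from the next order of the eikonal recursion. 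Writing $e^{\varphi}=1+bx+\tfrac12(\partial_x^2\varphi+b^2)x^2+\dots$, the imaginary part of $\Phi_x=ixe^{\varphi}\Phi_y$ gives $\mathcal{A}_x=xe^{\varphi}\Theta_y$ with $\Theta_y=1+\mathrm{O}(x^2)$. This yields $\alpha=\tfrac18(\partial_x^2\varphi+b^2)+(\text{term from }\Theta_y-1)$. The $\Theta_y$ correction comes from $\Theta_x=-xe^{\varphi}\mathcal{A}_y$, which is $\mathrm{O}(x^3)$, so $\Theta_y-1=\mathrm{O}(x^4)$ and it does not contribute at order $x^4$. Hence $\alpha=\tfrac18(\partial_x^2\varphi(0,y)+b(y)^2)$ exactly.

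\emph{Step 2.} The profiles are then computed. Along the fiber $\Theta=\theta$, the edge $x=\pm a$ corresponds to $y=\theta+\mathrm{O}(a^3)$. Therefore
$$\rho^\pm(\theta)^2/2=\mathcal{A}(\pm a,\theta)+\mathrm{O}(a^5)=\frac{a^2}{2}\pm\frac{b(\theta)}{3}a^3+\alpha(\theta)a^4+\mathrm{O}(a^5).$$
The profiles are continuous, and they are below $r_*$ for small $a$. Theorem~\ref{thm:exact-time} then gives
$$\Tobs(\Omega_a)=\max_\theta\Bigl(\frac{a^2}{2}-\frac{\vert b(\theta)\vert}{3}a^3+\alpha(\theta)a^4\Bigr)+\mathrm{O}(a^5).$$
The main obstacle is to identify the $a^4$ term of this maximum as $C_*a^4+\mathrm{o}(a^4)$ when $\vert b\vert$ is merely continuous and its minimizers form an arbitrary compact set $S$. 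Write $F_a(\theta)=-\vert b(\theta)\vert/3+\alpha(\theta)a$.
\begin{itemize}
\item For the lower bound, take $\theta\in S$ maximizing $\alpha$.
\item For the upper bound, let $\theta_a$ be a maximizer of $F_a$. Then $F_a(\theta_a)\geq -B_{\min}/3+\mathrm{O}(a)$ forces $\vert b(\theta_a)\vert-B_{\min}=\mathrm{O}(a)$, so every limit point of $\theta_a$ lies in $S$. Upper semicontinuity of $\alpha$ then gives $F_a(\theta_a)\leq -B_{\min}/3+a(\max_S\alpha+\mathrm{o}(1))$.
\end{itemize}
Together these give \eqref{geom:quartic-time}. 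The $\mathrm{O}(a^5)$ errors must be uniform in $\theta$, which holds because they are analytic remainders on a compact set.

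\emph{Step 3.} For general $\omega$ with $\dsr(\omega,Z_0)\geq a$, we have $\omega\subset\Omega_{a'}$ for every $a'<a$. Observability is monotone in $\omega$, so $\Tobs(\omega)\geq\Tobs(\Omega_{a'})$ once $\Omega_{a'}$ satisfies the hypotheses of Theorem~\ref{thm:exact-time}. That theorem also needs every other singular point to be observed, which $\Omega_{a'}$ does. Letting $a'\uparrow a$ and using Step 2, with a uniform bound $\vert\alpha\vert+\mathrm{O}(a)\leq C_4$, gives the stated inequality. Positivity for $a\leq d_*$ is immediate, since the right-hand side behaves like $a^2/2$. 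If one prefers to avoid the limit $a'\uparrow a$, the lower bound $\mathfrak{m}$ of Corollary~\ref{cor:general-bounds} can be applied directly. Along each fiber, the infimum of $\mathcal{A}$ over $\{\vert x\vert\geq a\}$ is at least $\min_\pm\mathcal{A}(\pm a,\cdot)+\mathrm{O}(a^5)$, because $\mathcal{A}$ is monotone in $\vert x\vert$ near $Z_0$.
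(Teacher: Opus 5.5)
Your proposal is correct and follows the paper's proof essentially step for step: you rewrite the edges $x=\pm a$ as graphs over the phase angle and apply Theorem~\ref{thm:exact-time} with the fourth-order expansion of $\mathcal{A}$. You then show that limit points of maximizing angles lie in $S$, evaluate at a maximizer of $(\partial_x^2\varphi+b^2)/8$ on $S$ for the reverse inequality, and deduce the general bound from $\omega\subset\Omega_s$, monotonicity and $s\uparrow a$. Two minor remarks: your cruder $Y(\pm a,\theta)=\theta+\mathrm{O}(a^3)$ suffices because $\mathcal{A}_y=\mathrm{O}(x^3)$ (the paper records $\mathrm{O}(a^5)$), and the optional shortcut through Corollary~\ref{cor:general-bounds} would first require enlarging $\omega$, by monotonicity, so that it contains the observed annuli and the other singular points required there.
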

\begin{proof}
In the normalized tube, $\vert x\vert = \dsr((x,y),Z_0)$, as stated in the introduction and proved in \eqref{geom:distance} below. Since $\Theta_y(0,y) = 1$, the map $y\mapsto\Theta(x,y)$ is a diffeomorphism for small $x$; denote its inverse by $Y(x,\cdot)$. The boundaries $x = \pm a$ are therefore graphs over the phase angle, and Theorem~\ref{thm:exact-time} gives
$$
\Tobs(\Omega_a) = \max_{\theta\in\T_\ell}\min( \A(a,Y(a,\theta)),\A(-a,Y(-a,\theta))).
$$
Set $f_\pm(\theta) = \A(\pm a,Y(\pm a,\theta))$ and $c = (\partial_x^2\varphi(0,\cdot) + b^2)/8$. Equation~\eqref{geom:phase-expansion} and $Y(\pm a,\theta) = \theta+\mathrm{O}(a^5)$ give, uniformly in $\theta$,
$\min(f_ + (\theta),f_ - (\theta)) = \frac{a^2}{2} - \frac{\vert b(\theta)\vert}{3}a^3 + c(\theta)a^4+\mathrm{O}(a^5)$.
Every limit point of maximizing angles belongs to $S$. Evaluating at a maximizer of $c$ on $S$ gives the lower bound for the coefficient of $a^4$; compactness and $\vert b\vert \geq B_{\min}$ give the matching upper bound. This proves \eqref{geom:quartic-time}. Without further assumptions on the minima of $\vert b\vert$, this argument gives a remainder $\mathrm{o}(a^4)$, not a uniform fifth-order bound. Choose $C_4>0$ and $d_*>0$ within the admissible width range so that the polynomial in the statement is positive and bounds $\Tobs(\Omega_s)$ from below for $0<s \leq d_*$. If $0<a \leq d_*$ and $\dsr(\omega,Z_0) \geq a$, then $\omega\subset\Omega_s$ for $0<s<a$. Observation monotonicity gives $\Tobs(\omega) \geq \Tobs(\Omega_s)$; letting $s\uparrow a$ proves the bound.
\end{proof}

Let $K$ be the Gaussian curvature of the regular metric $g = dx^2+q^{-2}dy^2$, with $q = xe^\varphi$. The curvature formula $K = q_{xx}/q - 2q_x^2/q^2$ \cite[Lemma~2]{ABS08} gives
$$
K(x,y) = -\frac{2}{x^2} - \frac{2b(y)}{x}+\mathrm{O}(1),\qquad d\Big\vert K + \frac{2}{d^2}\Big\vert = 2\vert b(y)\vert +\mathrm{O}(d), \quad d = \dsr((x,y),Z_0).
$$
The remainder is uniform along the circle. Thus $B_{\min}$ is half the minimum of the limiting curvature trace. Reversing the normal coorientation changes the sign of $b$ and preserves this coefficient. For distance collars around $Z_1,\ldots,Z_m$, the exact threshold is the maximum of the preceding formulas over $1 \leq i \leq m$. The quartic formula holds as well, with $B_{\min}$ and $S$ taken on the disjoint union of the selected circles.

\begin{remark}[Collars in normal coordinates]\label{operator-graphs}
In the normalized tubes of Theorem~\ref{thm:exact-time}, let $a_i,b_i \geq 0$ be continuous and sufficiently small, and set $\omega = M\setminus\bigcup_i\{-b_i(y_i) \leq x_i \leq a_i(y_i)\}$. If the boundary maps $y\mapsto\Theta_i(a_i(y),y)$ and $y\mapsto\Theta_i(-b_i(y),y)$ are orientation-preserving homeomorphisms, with inverses $y_i^+$ and $y_i^-$, each boundary meets each phase fiber once. Theorem~\ref{thm:exact-time} therefore reads
$$
\Tobs(\omega) = \max_i\max_\theta\min\big( \A_i(a_i(y_i^+(\theta)),y_i^+(\theta)), \A_i(-b_i(y_i^-(\theta)),y_i^-(\theta))\big).
$$
For $C^1$ edges, it suffices that $\Theta_y + a_i'\Theta_x>0$ on the positive edge and $\Theta_y - b_i'\Theta_x>0$ on the negative edge. Small constant widths and fixed $C^1$ profiles scaled towards the circle satisfy these conditions. Small amplitude alone does not suffice for profiles with unrestricted slopes.
\end{remark}

\subsection{Examples on closed surfaces}\label{sec:surface-examples}

\paragraph{The sphere.}
On the Baouendi-Grushin sphere, the smooth round density is $d\nu = \cos x\,dx\,dy$ and the operator is $\triangle = -\frac{1}{\cos x}\partial_x(\cos x\,\partial_x) - \tan^2x\,\partial_y^2$.
The phase is $\Phi = y-i\log(\cos x)$, with $\mathcal{A} = -\log(\cos x)$. Fix $0<b<\pi/2$ and observe $a<\vert x\vert<b$. For sufficiently small $a>0$, Theorem~\ref{thm:exact-time} applies in a fixed tube of radius less than $b$, even though the polar caps are unobserved, and yields $\Tobs = -\log(\cos a)$. This is the same threshold value as \cite[Theorem~2.6]{Tam26}. That result treats even a unilateral crown $(a,b)\times\mathbb{T}$ and proves failure at the critical time; those assertions are not consequences of our theorem. Observation of an equatorial band has zero minimal time, consistently with \cite[Theorem~2.8]{Tam26}.

\paragraph{A coupled example.}
Take $d\nu = dx\,dy$ and $X_2 = xe^{\varepsilon x\sin y}\partial_y$ near a singular circle of period $2\pi$. Its curvature is $K = -2/x^2 - 2\varepsilon\sin y/x - \varepsilon^2\sin^2y$. For $\varepsilon\ne0$, the renormalized curvature trace varies along the circle, excluding an isometry to a geometry depending only on the normal variable. For the symmetric gap $\vert x\vert \leq a$, Corollary~\ref{cor:distance-time} gives $\Tobs = a^2/2+\mathrm{o}(a^4)$: both the cubic and quartic corrections vanish at the angles where $\sin y = 0$. These local coefficients extend to a closed torus by multiplying the second field of $\partial_x,\sin x\,\partial_y$ by a positive smooth factor equal to $xe^{\varepsilon x\sin y}/\sin x$ near $x = 0$ and to one near the other singular circle; the latter is included in the observation.

\section{Normal forms near a singular circle}\label{sec:geometry}

We construct two normal forms from the complex phase at step two: a real form for the energy estimates and a holomorphic form for normal transport. Both retain the coupling between variables.

\paragraph{The normalized tube.}
We first choose a signed transverse coordinate whose absolute value is the distance to the circle.

\begin{lem}\label{geom:higher-normal-form}
Let $Z_0$ be a compact embedded two-sided singular circle of rank one and constant step $r \geq 2$. There are a periodic tube and an orthonormal frame of $E$ whose images satisfy
$$
U\simeq(-r_0,r_0)\times\mathbb{T}_\ell,\qquad X_1 = \partial_x,\qquad X_2 = x^{r - 1}e^{\varphi(x,y)}\partial_y, \qquad \varphi(0,y) = 0.
$$
On a smaller tube, $\vert x\vert = \dsr((x,y),Z_0)$. If the structure is analytic near $Z_0$, the coordinates and $\varphi$ are analytic.
\end{lem}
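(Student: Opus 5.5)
The plan is to build the frame in three steps: a transverse coordinate from the sub-Riemannian distance, then a tangential coordinate, and finally the period $\ell$ fixed by normalizing $\varphi(0,\cdot)$.

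\emph{Step 1: the transverse coordinate.} Near each point of $Z_0$ the horizontal rank is one and the distribution is tangent to no curve transverse to $Z_0$. At a point of $Z_0$, the image of $f$ is a line $L_p$ transverse to $Z_0$. Otherwise $L_p$ would be tangent to $Z_0$, and then $Z_0$ would be a horizontal curve along which all brackets stay tangent, contradicting bracket generation. So the horizontal unit vectors pointing along $L_p$ define a smooth transverse field along $Z_0$. Since $Z_0$ is two-sided, this field can be chosen with a consistent sign along the whole circle. Consider the normal geodesics $\gamma_p$ starting from $p\in Z_0$ with initial covector annihilating $T_pZ_0$, normalized so that $H=1/2$. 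Because the horizontal direction is transverse, these are regular geodesics leaving the circle at unit speed. The map $(x,p)\mapsto\gamma_p(x)$ is a local diffeomorphism at $x=0$, since its differential there is $(\dot\gamma_p(0), T_pZ_0)$. By compactness, it is a diffeomorphism of $(-r_0,r_0)\times Z_0$ onto a tube $U$.

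Set $X_1=\partial_x$, which is the horizontal unit field $\dot\gamma_p(x)$. By the Gauss lemma argument for the distance to a submanifold, where the covector is transported by the Hamiltonian flow and stays annihilating the level sets, $X_1$ is orthogonal within the fiber of $E$ to the direction mapped tangentially to the level sets $\{x=c\}$. Complete $X_1$ to an orthonormal frame $(X_1,X_2)$ of $E$. Then $f(X_2)$ is tangent to the curves $\{x=c\}$, so $X_2=q(x,y)\partial_y$ for any coordinate $y$ on $Z_0$ transported along the geodesics. The distance statement on a smaller tube follows because $x$ solves the eikonal equation $|\nabla_H x|=1$ with $x=0$ on $Z_0$. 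On a tube of half the radius, any curve from $(x,y)$ to $Z_0$ has length at least $|x|$, since $|X_1x|\le1$ and $X_2x=0$. Conversely, the geodesic $\gamma_y$ attains this length. Curves leaving the tube are longer than the tube radius, which exceeds $|x|$.

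\emph{Step 2: the order of vanishing.} We have $[X_1,X_2]^{(j)}=\partial_x^j q\,\partial_y$. Constant step $r$ along $Z_0$ means $\partial_x^jq(0,y)=0$ for $j<r-1$ and $\partial_x^{r-1}q(0,y)\ne0$ for every $y$. By Taylor's formula with smooth remainder, $q=x^{r-1}h(x,y)$ with $h$ nonvanishing. Replacing $X_2$ by $-X_2$ if needed, we may take $h>0$. This sign choice is global because $h(0,\cdot)$ does not vanish on the connected circle.

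\emph{Step 3: the tangential normalization.} Write $h(0,y)=h_0(y)$ and change the tangential variable by $\tilde y=\int_0^y h_0(s)^{-1}\,ds$. This is a diffeomorphism of the circle onto $\mathbb T_\ell$, with $\ell=\oint h_0^{-1}$. In the new variable, $X_2=x^{r-1}\,(h/h_0)\,\partial_{\tilde y}$. Setting $\varphi=\log(h/h_0)$ gives $\varphi(0,\cdot)=0$. This reparametrization preserves the level sets of $x$, so the distance property is unaffected.

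\emph{Step 4: analyticity.} If $f$, $E$ and the metric are analytic near $Z_0$, then the Hamiltonian flow is analytic. Hence the exponential map of Step 1 is an analytic diffeomorphism, and $q$, $h$, $\varphi$ are analytic. The quotient $q/x^{r-1}$ is analytic by analytic division.

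\textbf{Main obstacle.} I expect the delicate step to be Step 1: checking that the normal-exponential map is a diffeomorphism of a full periodic tube and that it produces an orthonormal frame with $X_2$ tangent to the levels. This needs the Gauss lemma in the almost-Riemannian setting, where the metric blows up on $Z_0$. I would prove it directly from the Hamiltonian equations in a local frame $(\partial_x\text{-direction},\,\text{kernel})$. The covector solution annihilates the variation field generated by moving $p$ along $Z_0$, because this holds at $x=0$ and the pairing is conserved along the flow.
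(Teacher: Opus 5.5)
Your architecture is the paper's: transversality of the horizontal line to $Z_0$, conormal geodesics and the Gauss lemma giving the unit section $e_1=f^*dx$ with $ff^*dx=\partial_x$, completion to an orthonormal frame with $f(e_2)$ tangent to the levels of $x$, the vanishing order from the step, and the reparametrization $y=\int_0^\theta a(0,s)^{-1}\,ds$. The genuine gap is the transversality claim at the start of Step~1. This is exactly where the constant-step hypothesis has to enter, and it is the part the paper spends most of its proof on.

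You argue that if $L_p$ were tangent to $Z_0$, then $Z_0$ would be a horizontal curve. That only follows if tangency holds on an open arc. Tangency at an isolated point is compatible with bracket generation. For instance, take $X_1=\partial_x$ and $X_2=(y-x^2)\partial_y$. The singular curve $\{y=x^2\}$ is tangent to the horizontal line at the origin, where the step is $3$, while the step is $2$ elsewhere on the curve. Your Step~1 never uses constant step, so it would ``prove'' transversality for every rank-one singular curve, which is false. Everything downstream depends on this point. At a tangency point the conormal covector annihilates $L_p$, so $H=0$, the normalization $H=1/2$ is impossible, and the normal exponential map degenerates. Your Step~2 cannot repair this, since it presupposes the coordinates built in Step~1.

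The paper closes the gap in four moves. Transverse points are dense, which is your open-arc argument. Constant step then forces $\delta_f$ to vanish to normal order exactly $m=r-1$ along $Z_0$. Division gives $\delta_f=\sigma^m b$. Finally, at a tangency point the integral curve of the horizontal field satisfies $\sigma=\mathrm{O}(t^2)$, hence $g=\mathrm{O}(t^{2m})$, and the step would exceed $m+1$.

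A shorter local fix is also available. Straighten $X=\partial_u$ and $Y=g\partial_v$. The step at $p\in Z_0$ is $1+\min\{j\geq1 \mid \partial_u^jg(p)\neq0\}$, so constant step gives $\partial_u^{m-1}g=0$ and $\partial_u^mg\neq0$ on $Z_0$, with $\partial_u^0g=g$. Then $Z_0$ locally coincides with the regular level set $\{\partial_u^{m-1}g=0\}$, and $X$ is transverse to it. The same observation shows that $Z_0$ is analytic in the analytic case. Your Step~4 needs this, because the conormal flow is analytic only if its initial curve is, and you do not address it.

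One more point needs justification. Completing $e_1$ to a global orthonormal frame requires $e_1^\perp$ to be trivial on the collar, and your ``replace $X_2$ by $-X_2$'' step already presupposes a global $X_2$. The paper supplies this: on a parallel regular circle, $f$ identifies $E$ with the tangent bundle of the annulus, and the collar retracts onto that circle.
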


The proof of Lemma~\ref{geom:higher-normal-form} is given in Subsection~\ref{dyn:completion}. The construction uses unit conormal geodesics \cite[Lemma~1]{ABS08}, globalized around a two-sided step-two circle in \cite[Proposition~2.3 and Remark~2.4]{BL13}. For the sharp-time proof we work at step two, where the form reads
\begin{equation}\label{geom:normal-form}
U\simeq(-r_0,r_0)\times\T_\ell,\qquad \T_\ell = \R/\ell\Z, \qquad X_1 = \partial_x,\qquad X_2 = xe^{\varphi(x,y)}\partial_y, \qquad\varphi(0,y) = 0.
\end{equation}
The tangential normalization fixes the leading coefficient at one and determines the period $\ell$.

After shrinking the tube, one has
\begin{equation}\label{geom:distance}
\dsr((x,y),Z_0) = \vert x\vert .
\end{equation}
Indeed, the normal segment has length $\vert x\vert $, while a horizontal curve in the tube satisfies $\dot x = u_1$ and has length at least $\int\vert \dot x\vert \,\dd{d}t$. A curve from $Z_0$ leaving a fixed larger tube has length at least its normal radius, so it cannot shorten the normal segment for an endpoint in a smaller tube. This argument applies at every step in Lemma~\ref{geom:higher-normal-form}.

\paragraph{Conjugation by the density.}

The density can be removed from the local $L^2$ space without changing the principal symbol. Write $\dd{d}\nu = \rho(x,y)\,\dd{d}x\,\dd{d}y$, with $\rho>0$, and set $q = xe^\varphi$ and $Q = q^2$. The map $u\mapsto w = \rho^{1/2}u$ is unitary from $L^2(U,\nu)$ to $L^2(U,\dd{d}x\,\dd{d}y)$. Integration by parts gives, for $u\in C_c^\infty(U)$,
$$
\int(\vert u_x\vert^2 + Q\vert u_y\vert^2)\rho\,\dd{d}x\,\dd{d}y = \int(\vert w_x\vert^2 + Q\vert w_y\vert^2 + V\vert w\vert^2)\,\dd{d}x\,\dd{d}y,
$$
where $V = \rho^{-1/2}(\partial_x^2\rho^{1/2} + \partial_y(Q\partial_y\rho^{1/2}))$. Thus the positive Laplacian on Lebesgue space is $\triangle_x = \rho^{1/2}\triangle\rho^{-1/2} = -\partial_x^2 - \partial_y(Q\partial_y) + V$. The density contributes only the real potential $V$, which is analytic when the coefficients and density are analytic.

\paragraph{Real normal form in phase coordinates.}
Assume now that the structure and density are analytic on $U$. The analytic Cauchy theorem gives the phase $\Phi = \Theta+i\A$ of \eqref{phase-definition} on a fixed complex neighborhood; uniqueness gives its periodic increment. On a sufficiently small real tube, $\Theta(x,\cdot):\T_\ell\to\T_\ell$ is a diffeomorphism. Denote its inverse by $Y(x,\cdot)$. Set $b(y) = \partial_x\varphi(0,y)$ and $\varphi_2(y) = \partial_x^2\varphi(0,y)$. Comparing coefficients in \eqref{phase-definition} gives, uniformly around the circle,
\begin{equation}\label{geom:phase-expansion}
\Phi(x,y) = y + \frac{i}{2}x^2 + \frac{i}{3}b(y)x^3 + \frac{i}{8}(\varphi_2(y) + b(y)^2)x^4+\mathrm{O}(x^5).
\end{equation}
In particular, $\Theta(x,y) = y+\mathrm{O}(x^5)$ and $Y(x,\theta) = \theta+\mathrm{O}(x^5)$. This expansion gives the distance-collar correction in Corollary~\ref{cor:distance-time}.

Set $r = \operatorname{sgn}(x)\sqrt{2\A}$ and $\theta = \Theta$. In these coordinates, write $\dd{d}\nu = \rho_\Phi\,\dd{d}r\,\dd{d}\theta$, and let $\mathcal{U}_\Phi$ be the coordinate change followed by multiplication by $\rho_\Phi^{1/2}$; this is a unitary map from $L^2(U,\nu)$ to $L^2(dr\,d\theta)$.

\begin{lem}[Real normal form]\label{phase-local}
On a sufficiently small fixed tube, $\Theta_y>0$, $\A\asymp x^2$, and
$$
\frac{d}{dx}\A(x,Y(x,\theta)) = q(x,Y(x,\theta))\frac{\vert \Phi_y\vert ^2}{\Theta_y}(x,Y(x,\theta)).
$$
The phase coordinates are analytic through $Z_0$ and put the principal symbol in the real normal form $c(r,\theta)(\xi_r^2+r^2\xi_\theta^2)$. The positive Laplacian $\triangle_\Phi = \mathcal{U}_\Phi\triangle\mathcal{U}_\Phi^{-1}$ has differential expression
\begin{equation}\label{phase-operator}
\triangle_\Phi = -\partial_r(c\partial_r) + D_\theta(r^2c)D_\theta + W, \qquad c(0,\theta) = 1,\qquad D_\theta = -i\,\partial_\theta,
\end{equation}
with $c>0$ and $W$ real analytic.
\end{lem}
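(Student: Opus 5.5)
The plan starts with a direct computation in the normal form \eqref{geom:normal-form}, using the expansion \eqref{geom:phase-expansion}. Since $\Phi_y(0,y)=1$ and $\Theta=y+\mathrm{O}(x^5)$, we get $\Theta_y>0$ on a small tube. We also get $\A=x^2/2+\mathrm{O}(x^3)$ uniformly in $y$, hence $\A\asymp x^2$.

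For the derivative formula, differentiate along the fiber $y=Y(x,\theta)$, where $Y_x=-\Theta_x/\Theta_y$:
$$
\frac{d}{dx}\A=\A_x-\A_y\frac{\Theta_x}{\Theta_y}.
$$
The eikonal relation $\Phi_x=iq\Phi_y$ gives $\Theta_x=-q\A_y$ and $\A_x=q\Theta_y$. Substituting,
$$
\frac{d}{dx}\A=q\,\frac{\Theta_y^2+\A_y^2}{\Theta_y}=q\,\frac{\vert\Phi_y\vert^2}{\Theta_y}.
$$

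Next, analyticity of $r=\operatorname{sgn}(x)\sqrt{2\A}$ follows by factoring. Because $\A$ is analytic and even to leading order, write $2\A=x^2h(x,y)$ with $h$ analytic and $h(0,y)=1$. Then $r=x\sqrt{h}$ is analytic, with $r_x(0,y)=1$. The Jacobian of $(x,y)\mapsto(r,\theta)$ at $x=0$ is triangular with unit diagonal, so by the inverse function theorem the phase coordinates form an analytic diffeomorphism of a thinner tube, periodic in $\theta$.

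For the principal symbol, recall that the cometric is $p(\xi)=\xi_x^2+q^2\xi_y^2$. We compute it on $dr$, $d\theta$ and the cross term:
\begin{itemize}
\item $p(d\Theta)=\Theta_x^2+q^2\Theta_y^2=q^2(\A_y^2+\Theta_y^2)=q^2\vert\Phi_y\vert^2$.
\item $p(d\A)=\A_x^2+q^2\A_y^2=q^2\vert\Phi_y\vert^2$, the same value.
\item $\langle d\Theta,d\A\rangle_p=\Theta_x\A_x+q^2\Theta_y\A_y=-q^2\A_y\Theta_y+q^2\Theta_y\A_y=0$.
\end{itemize}
These three identities are the Cauchy--Riemann content of $\Phi_x=iq\Phi_y$. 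Since $d\A=r\,dr$, set $c=q^2\vert\Phi_y\vert^2/r^2$; then $p(dr)=c$, $p(d\theta)=cr^2$ and $dr\perp d\theta$. The principal symbol is therefore $c(\xi_r^2+r^2\xi_\theta^2)$. The function $c$ is analytic and positive because $q^2/r^2=x^2e^{2\varphi}/(x^2h)$. At $r=0$ we have $c=\vert\Phi_y\vert^2=1$.

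Finally, the operator form. In any coordinates, the Dirichlet form reads $\int p(du)\,\rho_\Phi\,dr\,d\theta$. Substituting $u=\rho_\Phi^{-1/2}w$ and integrating by parts, exactly as in the density conjugation paragraph, the diagonal cometric yields the expression \eqref{phase-operator}. The potential $W$ collects the terms in derivatives of $\rho_\Phi^{1/2}$: it is real, analytic, and has the structure of $V$ with $(c,r^2c)$ in place of $(1,Q)$. The cross terms from $|w|^2$ derivatives cancel after integration by parts, since the coefficients are real.

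The main point requiring care is checking that $r$ is analytic across $x=0$ with nonvanishing derivative, that is, the factorization $2\A=x^2h$. This follows from $\A(0,y)=\A_x(0,y)=0$ and $\A_{xx}(0,y)=1$, all read off \eqref{geom:phase-expansion}.
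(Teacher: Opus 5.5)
Your proof is correct and follows the same route as the paper. The real and imaginary parts $\Theta_x=-q\A_y$, $\A_x=q\Theta_y$ of the eikonal equation give the fiber derivative together with $\langle d\A,d\Theta\rangle_p=0$ and $p(d\A)=p(d\Theta)$; analytic division $2\A=x^2h$ makes $r$ analytic; and density conjugation produces $W$. Your $c=q^2\vert\Phi_y\vert^2/r^2$ is the paper's $c=r_x^2+q^2r_y^2=p(dr)$, since $d\A=r\,dr$; note also that in the last step the cross terms do not cancel but are absorbed into $W$ by integration by parts, as your formula for $W$ already reflects.
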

\begin{proof}
The expansion gives $\Theta_y>0$ and $\A\asymp x^2$ after shrinking the tube. The real and imaginary parts of \eqref{phase-definition} are $\Theta_x = -q\A_y$ and $\A_x = q\Theta_y$. Differentiating $\Theta(x,Y(x,\theta)) = \theta$ gives $Y_x = q\A_y/\Theta_y$, hence
$$
\frac{d}{dx}\A(x,Y(x,\theta)) = q\Big(\Theta_y + \frac{\A_y^2}{\Theta_y}\Big)(x,Y(x,\theta)).
$$
Also $r = x\sqrt{2\A/x^2} = x+\mathrm{O}(x^2)$, where the quotient extends analytically across $x = 0$. Thus $(x,y)\mapsto(r,\theta)$ is an analytic coordinate change. The eikonal identities give
$r_x\Theta_x + q^2r_y\Theta_y = 0$ and $\Theta_x^2 + q^2\Theta_y^2 = r^2(r_x^2 + q^2r_y^2)$.
The first identity removes the mixed term in the transformed cometric, and the second identifies its tangential coefficient. Setting $c = r_x^2 + q^2r_y^2$ gives $c(\xi_r^2 + r^2\xi_\theta^2)$ and $c(0,\theta) = 1$. Its nonvanishing analytic Jacobian transports $\nu$ to a positive analytic density in $dr\,d\theta$. Conjugating by its square root gives \eqref{phase-operator}.
\end{proof}

Thus $\A$ increases outwards along each phase fiber, and $\vert e^{in\Phi}\vert = e^{-nr^2/2}$ for $n>0$. A heat solution $u$ is represented by $v = \mathcal{U}_\Phi u$, which satisfies $\partial_tv+\triangle_\Phi v = 0$ locally.

\paragraph{Holomorphic normal form.}
To linearize the normal transport in \eqref{phase-operator}, we keep the complex phase as one coordinate. Set $w = r$ and $z = \Phi = \theta + ir^2/2$ in the complexified tube. The field $V_+ = \partial_r - ir\partial_\theta$ satisfies $V_+ z = 0$ and becomes $\partial_w$: normal differentiation now keeps the phase fixed. The principal symbol becomes $c\xi_w(\xi_w + 2iw\xi_z)$, so the second derivative in $z$ disappears.

Write $c$ for $c(w,z - iw^2/2)$. The coefficients are periodic and holomorphic on a fixed product $\mathcal{U}_{R,s} = \{\vert w\vert <R,\ \vert \operatorname{Im}z\vert <s\}$, with $R,s>0$. The coefficient of $D_z = -i\,\partial_z$ in $\triangle_\Phi$ is $c + wc_w + 2wc\partial_w$. We linearize its vector-field part and normalize its scalar part by a holomorphic coordinate $\psi$ and a factor $g_0$:
$$
wc\psi_w = \psi,\quad \psi = w+\mathrm{O}(w^2),\qquad 2wc\partial_w\log g_0 = 1 - c - wc_w,\quad g_0(0,z) = 1.
$$
Since $c(0,z) = 1$, the apparent singularities are removable. These equations give periodic holomorphic $\psi,g_0$ on a smaller fixed product, with $g_0\ne0$. Expressing $g_0^{-1}\triangle_\Phi g_0$ in $(\psi,z)$ gives, after renaming $\psi$ as $w$,
\begin{equation}\label{normal-form}
L = (1 + 2w\partial_w)D_z + \mathscr{A},\qquad \mathscr{A} = -d_2(w,z)\partial_w^2 + d_1(w,z)\partial_w + d_0(w,z), \quad d_2(0,z) = 1.
\end{equation}
Here $L$ acts on periodic holomorphic functions, and the coefficients $d_2,d_1,d_0$ are holomorphic on a fixed product. The new normal coordinate is generally complex on real points.

For $j \geq 0$ and $n\in(2\pi/\ell)\N^*$, the Euler term gives $(2j + 1)n$ on $w^je^{inz}$. For the flat operator $-\partial_r^2 + r^2D_\theta^2$, $\mathscr{A} = -\partial_w^2$ lowers degree by two, so $L$ is triangular on polynomials of degree at most $J$ times $e^{inz}$, with these diagonal entries. They give the flat transverse heat decay rates. Coupled coefficients destroy this invariance, motivating the adapted subspace of Proposition~\ref{adapted-subspace}. Differentiation at fixed phase also provides the normal derivatives used in Lemma~\ref{common-height}. Negative angular frequencies are treated by conjugation.

\section{Proof of Theorem A}\label{sec:proof-a}

We prove the local lower and upper bounds in Subsections~\ref{sec:sharp-lower} and~\ref{sec:sharp-upper}. Subsection~\ref{surface-global} combines them on the surface and fixes an admissible collar width independent of the time margin. Subsection~\ref{sec:general-observation} then applies the local estimates to more general observation sets.

\subsection{Lower bound}\label{sec:sharp-lower}

On the flat cylinder, every periodic holomorphic function $F$ gives the heat solution $F(y + i(x^2/2 + t))$. A single exponential recovers the Gaussian state, but holomorphic combinations can concentrate at a chosen angle \cite{DKR23,Ko17}. For the coupled operator, we first realize the inputs $F(\Phi + it)$ with estimates uniform in $F$. Polynomial separation then gives the local obstruction in Proposition~\ref{local-lower}, stated and proved below.

Fix one analytic normalized tube and write $\alpha = 2\pi/\ell$.

\paragraph{Holomorphic heat trajectories.}

We seek a linear realization on one fixed complex domain. The phase removes second derivatives of the input; a scalar gauge reduces the remaining angular derivative to the drift absorbed by $\Phi + it$.

The density conjugation of Section~\ref{sec:geometry} gives $\triangle_x = \rho^{1/2}\triangle\rho^{-1/2} = -\mathcal{D}_0 + V$ locally, with $\mathcal{D}_0 = \partial_x^2 + \partial_y(q^2\partial_y)$. Since $\Phi_y(0,y) = 1$, the change $(x,y)\mapsto(x,z) = (x,\Phi(x,y))$ is biholomorphic near the circle. Fix a product $\mathcal{U}_{R_0,s_0} = \{\vert x\vert <R_0,\ \vert \Im z\vert <s_0\}$ in its image, with $z\in\mathbb{C}/\ell\Z$. Normal derivatives in these coordinates hold $z$ fixed. The eikonal equation gives
$\widetilde{\mathcal{D}}_0 = \partial_x^2 + 2\Phi_x\partial_x\partial_z + (\mathcal{D}_0\Phi)\partial_z$.
Write $\widetilde{\triangle}_x = -\widetilde{\mathcal{D}}_0 + V$ for the Laplacian in these holomorphic coordinates. The phase derivatives are computed in $(x,y)$, and all coefficients, including $V$, are composed with the inverse map. There is no $\partial_z^2$ term.

Set $x\mathcal{B} = -2i\Phi_x$ and $C = -i\mathcal{D}_0\Phi - 1$. The normalization at the circle gives $\mathcal{B}(0,z) = 2$ and $C(0,z) = 0$. After decreasing $R_0,s_0$ once, the periodic nonvanishing gauge
\begin{equation}\label{lower-gauge}
g(x,z) = \exp\Big(-\int_0^x\frac{C(s,z)}{s\mathcal{B}(s,z)}\,ds\Big)
\end{equation}
is holomorphic and satisfies $\mathcal{D}_0\Phi + 2\Phi_xg_x/g = i$. The integrand has a removable singularity at zero. Consequently,
$$
g^{-1}(\widetilde{\mathcal{D}}_0 - V)g = \partial_x^2 + 2\Phi_x\partial_x\partial_z + p\partial_x + i\,\partial_z + v, \qquad p = 2g_x/g + 2\Phi_xg_z/g,\quad v = g^{-1}\widetilde{\mathcal{D}}_0g - V.
$$
To absorb $i\,\partial_z$ into the heat evolution, set $\zeta = z + it$ and choose $R,\tau,s>0$ with $R<R_0$ and $s + \tau<s_0$. All coefficients below are holomorphic on
$$
\mathcal{P}_{R,\tau,s} = \{\vert t\vert <\tau,\ \vert x\vert <R,\ \vert \Im\zeta\vert <s\}, \qquad \zeta\in\mathbb{C}/\ell\Z.
$$
Subscript $t$ means evaluation at $z = \zeta - it$. Define
$$
R_t = \partial_x^2 + 2(\Phi_x)_t\partial_x\partial_\zeta + p_t\partial_x + v_t, \qquad\mathscr{P}_t = \partial_t - R_t,\qquad L_t = x\mathcal{B}_t\partial_x.
$$
The preceding changes give the exact identities
\begin{align}
(\partial_t + \widetilde{\triangle}_x)\{g(x,z)U(t,x,z + it)\} & = g(x,z)\mathscr{P}_tU(t,x,z + it),\label{lower-conjugation} \\
\mathscr{P}_t(bF)& = (\mathscr{P}_tb)F + (L_tb)D_\zeta F, \qquad D_\zeta = -i\,\partial_\zeta.\label{lower-product}
\end{align}
Here $F$ depends only on $\zeta$. Solving the normal transport equation determines each amplitude up to its trace at $x = 0$. The source of the next equation must vanish there, which determines this trace through a time evolution equation.

\begin{lem}\label{factorial}
There are a fixed smaller product $\mathcal{P}_{R,\tau,s}$ and periodic holomorphic functions $b_j$, $j \geq 0$, such that $\inf_{\mathcal{P}_{R,\tau,s}}\vert b_0\vert >0$ and
$L_tb_0 = 0$, $L_tb_{j + 1} + \mathscr{P}_tb_j = 0$ with $\max_{\vert \gamma\vert \leq 2}\Vert \partial^\gamma b_j\Vert_\infty \leq C\Lambda^j j!$.
The norm is the supremum on this product, with $\partial^\gamma = \partial_t^{\gamma_1}\partial_x^{\gamma_2}
\partial_\zeta^{\gamma_3}$. Its radii and $C,\Lambda>0$ depend only on
the fixed analytic coefficients and domains, and are independent of $j$.
\end{lem}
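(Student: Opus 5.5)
The construction is recursive along the transport equations. The key structural fact is that $L_t = x\mathcal{B}_t\partial_x$ with $\mathcal{B}_t(0,\zeta)=2$. Hence $L_t$ is a Fuchsian (Euler-type) operator in $x$ with nonresonant indicial root: on a holomorphic function $f$ with $f(0,\cdot)=0$, the equation $L_t b = f$ has the unique solution vanishing at $x=0$,
$$
b(t,x,\zeta)=\int_0^x\frac{f(t,s,\zeta)}{s\mathcal{B}_t(s,\zeta)}\,ds ,
$$
plus an arbitrary trace $\beta(t,\zeta)=b(t,0,\zeta)$. We take $b_0\equiv 1$, or more generally $b_0=\beta_0(t,\zeta)$ independent of $x$, so that $L_tb_0=0$ and $\inf|b_0|>0$ trivially. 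At step $j$ we write $b_{j+1}=\beta_{j+1}+\widetilde b_{j+1}$, where $\widetilde b_{j+1}$ solves $L_t\widetilde b_{j+1}=-\mathscr{P}_tb_j$ and vanishes at $x=0$. Solvability requires $(\mathscr{P}_tb_j)(t,0,\zeta)=0$. Writing $b_j=\beta_j+\widetilde b_j$ and evaluating at $x=0$, where $\Phi_x$ vanishes (since $x\mathcal{B}=-2i\Phi_x$), this becomes a first order evolution equation
$$
\partial_t\beta_j-p_t(0,\zeta)\,\partial_x\widetilde b_j(t,0,\zeta)-(\partial_x^2\widetilde b_j)(t,0,\zeta)-v_t(0,\zeta)\beta_j=\text{known terms}.
$$
Here $\beta_j$ appears without $\zeta$-derivatives, because the $\partial_x\partial_\zeta$ term carries the factor $\Phi_x$, which vanishes at $x=0$. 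Thus $\beta_j$ is obtained by solving a linear ODE in $t$ with $\zeta$ as a parameter, with $\beta_j(0,\zeta)=0$ for $j\geq1$. Holomorphy in $(t,\zeta)$ is preserved, and periodicity in $\zeta$ is automatic since all coefficients are periodic. One subtlety is that $\widetilde b_j$ depends on $\beta_{j-1}$ but not on $\beta_j$, so the recursion closes by induction: $\beta_j$ is chosen to kill the obstruction for $b_{j+1}$.

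The main obstacle is the factorial bound $C\Lambda^jj!$ uniformly in $j$. Each step applies $\mathscr{P}_t$, which is second order, then $L_t^{-1}$, which gains one derivative in $x$ via the integral but none in $\zeta$, plus a time integration. A naive iteration on a fixed domain loses derivatives, so we use a nested-domain (Cauchy--Kovalevskaya/Nirenberg--Ovsyannikov) scheme. We measure $b_j$ in sup norms on shrinking products $\mathcal{P}_{R-\delta,\tau-\delta,s-\delta}$ with weights $\delta^{j}$, and prove an inductive estimate of the form $\sup_{\mathcal{P}_\delta}|b_j|\leq C\Lambda^j j!\,\delta^{-j}$ relative to a fixed outer domain. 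The inductive step uses Cauchy estimates on the intermediate loss, the standard choice $\delta'=\delta(1-1/(j+1))$, and the fact that $L_t^{-1}$ and the $t$-ODE solution operator are bounded uniformly on each product. The problematic terms are $\partial_x^2$ and $\Phi_x\partial_x\partial_\zeta$. For the first we exploit that $L_t^{-1}\partial_x$ behaves like an order-zero operator composed with one derivative. For the second, the factor $x$ in $\Phi_x$ cancels against the $1/s$ in $L_t^{-1}$. We therefore expect a net loss of exactly one derivative per step, which yields $j!$ growth.

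To finish, we fix a final product strictly inside the one where the scheme was run, at distance comparable to the initial radii. Cauchy estimates for the $\gamma$-derivatives with $|\gamma|\leq2$ cost only a fixed constant, after which the $\delta^{-j}$ weights are absorbed into $\Lambda$. All constants depend only on the sup norms of $\mathcal{B}^{-1},p,v,\Phi_x/x$ on the fixed domain. This gives the asserted independence from $j$.
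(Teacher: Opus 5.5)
Your proposal is correct. The construction is the paper's: the same zero-trace normal inverse $\mathcal{S}_t=J_x(\mathcal{B}_t^{-1}\,\cdot)$, the same splitting $b_n=r_n+c_n$, and the same trace equation $(\partial_t-v_t(0,\zeta))c_n=\mathsf{T}_tr_n$ with $c_n(0,\zeta)=0$ for $n\geq1$. Where you differ is the proof of the factorial bound.

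One slip at the start: $b_0\equiv1$ is not admissible in general. Solvability for $b_1$ forces $\partial_tb_0=v_t(0,\zeta)b_0$, and $v(0,z)=g_{xx}(0,z)-V(0,z)$ need not vanish. Your own recursion at $j=0$ gives $b_0=\exp\int_0^tv_a(0,\zeta)\,da$. This is the paper's choice, and it is still bounded away from zero on a bounded product.

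\textbf{Your route.} You run a Nirenberg--Ovsyannikov induction in sup norms on shrinking products. It closes because
$\sup_{\mathcal{P}_\delta}\vert\mathcal{S}_t\mathscr{P}_tu\vert\leq C(\delta-\delta')^{-1}\sup_{\mathcal{P}_{\delta'}}\vert u\vert$, term by term:
\begin{itemize}
\item $\mathcal{S}_t\partial_x$ is order zero, since $J_x\partial_xw=q+J_xq$ with $q=(w-w_0-w_1x)/x$.
\item In the $x\,\partial_x\partial_\zeta$ term, cancelling $x$ leaves $\int_0^x\partial_s(\cdot)\,ds$. Integrating by parts there loses only the $\zeta$-derivative.
\item The traces $\partial_x\widetilde b_j(t,0,\zeta)$ and $\partial_x^2\widetilde b_j(t,0,\zeta)$ in the ODE forcing cost only a fixed constant. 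This is because $x=0$ stays at distance at least $R/2$ from the boundary of every disc in the scheme.
\end{itemize}
You should state the last point explicitly: charging these traces a $\delta$-loss at each step would destroy the $j!$ rate.

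\textbf{The paper's route.} The paper works on one fixed product with weighted Taylor norms $\mathcal{N}_n$, using weights $W_{n,k,d}=(2n+k+d)!/(n+k+d)!$.
\begin{itemize}
\item Derivative losses are absorbed by weight ratios, for instance $\frac{(k+2)(k+1)}{k}W_{n-1,k+2,d}/W_{n,k,d}\leq2$.
\item Trace extraction is controlled by $W_{n,j,d}/W_{n,0,d}\leq2^j$.
\item The factor $j!$ appears only at the end, through $\Vert u\Vert_\infty\leq4^nn!\,\mathcal{N}_n(u)$ on the half product.
\end{itemize}

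\textbf{Comparison.} Your argument is more elementary and fully sufficient for this linear heat hierarchy. The paper's calculus avoids shrinking the domain at every order and has the algebra property $\mathcal{N}_{a+b}(uv)\leq\mathcal{N}_a(u)\mathcal{N}_b(v)$, together with bounds for the periodic primitive $J_z$. That is what lets the same lemma also handle the stationary WKB hierarchy, with its convolution terms in the $\mu_j$ and its majorant recursion. Both approaches rely on the absence of a $\partial_\zeta^2$ term, since $\mathcal{S}_t$ gains nothing in $\zeta$.
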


The proof is given in Appendix~\ref{proof:factorial}, using the same normal inverse and factorial calculus as the stationary construction of Section~\ref{wkb:section}. For $F = e^{in\zeta}$, $n\in\alpha\N^*$, it gives the formal amplitude $\sum b_jn^{-j}$. The identity $F(\zeta + i\beta) = e^{-n\beta}F(\zeta)$ suggests replacing inverse powers of $n$ by a Laplace integral. The factorial bound makes its kernel convergent. Truncating the integral places the entire defect at one fixed positive imaginary shift. The collars below will be chosen small enough that this shift exceeds their attenuation.

\begin{prop}\label{realization}
Use the product of Lemma~\ref{factorial} and fix $\beta_{\rm B}>0$ with $\Lambda\beta_{\rm B}<1/2$. Set
$$
K(t,x,\zeta,\beta) = \sum_{j \geq 1}b_j(t,x,\zeta)\frac{\beta^{j - 1}}{(j - 1)!}, \qquad \mathcal{U}_F = b_0F(\zeta) + \int_0^{\beta_{\rm B}} K(t,x,\zeta,\beta)F(\zeta + i\beta)\,d\beta.
$$
The kernel is holomorphic on $\mathcal{P}_{R,\tau,s}\times\{\vert \beta\vert <\Lambda^{-1}\}$. For every $\ell$-periodic holomorphic $F$ on $\vert \Im\zeta\vert <s + \beta_{\rm B}$, $\mathcal{U}_F$ is periodic and holomorphic on $\mathcal{P}_{R,\tau,s}$ and
$\mathscr{P}_t\mathcal{U}_F = -L_tK(t,x,\zeta,\beta_{\rm B})F(\zeta + i\beta_{\rm B})$.
Moreover, $\Vert \mathcal{U}_F\Vert_{L^\infty(\mathcal{P}_{R,\tau,s})} \leq C\Vert F\Vert_{L^\infty(\vert \Im\zeta\vert <s + \beta_{\rm B})}$ whenever the
right-hand side is finite, with $C$ independent of $F$.
\end{prop}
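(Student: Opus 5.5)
The proof is a direct computation based on Lemma~\ref{factorial} and the product rule \eqref{lower-product}. The plan has three steps: establish holomorphy and the uniform bound, then compute $\mathscr{P}_t\mathcal{U}_F$ by differentiating under the integral.

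\emph{Holomorphy and periodicity.} The bounds $\max_{\vert\gamma\vert\leq 2}\Vert\partial^\gamma b_j\Vert_\infty\leq C\Lambda^jj!$ give
$$\Big\vert b_j\frac{\beta^{j-1}}{(j-1)!}\Big\vert\leq C\Lambda\, j\,(\Lambda\vert\beta\vert)^{j-1}.$$
This series converges locally uniformly for $\vert\beta\vert<\Lambda^{-1}$, together with all derivatives of order at most two. Hence $K$ is holomorphic on $\mathcal{P}_{R,\tau,s}\times\{\vert\beta\vert<\Lambda^{-1}\}$ and periodic in $\zeta$. In fact each $b_j$ is holomorphic, so the same convergence holds for all derivatives by Cauchy estimates on slightly smaller sets; derivatives up to order two suffice here.

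Since $\Lambda\beta_{\rm B}<1/2$, on $[0,\beta_{\rm B}]$ we get $\vert K\vert\leq C\Lambda\sum_j j2^{1-j}$. Moreover, $F(\zeta+i\beta)$ is holomorphic and periodic for $\vert\Im\zeta\vert<s$ and $\beta\in[0,\beta_{\rm B}]$. It follows that $\mathcal{U}_F$ is holomorphic and periodic, and
$$\Vert\mathcal{U}_F\Vert_\infty\leq\big(\Vert b_0\Vert_\infty+\beta_{\rm B}\sup\vert K\vert\big)\Vert F\Vert_\infty,$$
which is the stated bound with $C$ independent of $F$.

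\emph{The equation.} First, \eqref{lower-product} together with $L_tb_0=0$ gives $\mathscr{P}_t(b_0F)=(\mathscr{P}_tb_0)F$. Next, differentiate under the integral, which is justified by the uniform convergence of the derivatives up to order two. Applying \eqref{lower-product} with $F$ replaced by $F(\cdot+i\beta)$, which still depends only on $\zeta$, gives
$$\mathscr{P}_t\int_0^{\beta_{\rm B}} KF(\zeta+i\beta)\,d\beta=\int_0^{\beta_{\rm B}}\Big[(\mathscr{P}_tK)F(\zeta+i\beta)+(L_tK)D_\zeta F(\zeta+i\beta)\Big]d\beta.$$
The key identity is $D_\zeta F(\zeta+i\beta)=-i\partial_\zeta F(\zeta+i\beta)=-\partial_\beta[F(\zeta+i\beta)]$. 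Integrating by parts in $\beta$ then turns the second term into
$$-\big[L_tK\,F(\zeta+i\beta)\big]_0^{\beta_{\rm B}}+\int_0^{\beta_{\rm B}}(\partial_\beta L_tK)F(\zeta+i\beta)\,d\beta.$$

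\emph{Cancellation.} Term by term, the recursion $L_tb_{j+1}=-\mathscr{P}_tb_j$ gives
$$\partial_\beta L_tK=\sum_{j\geq 2}L_tb_j\frac{\beta^{j-2}}{(j-2)!}=-\sum_{j\geq 1}\mathscr{P}_tb_j\frac{\beta^{j-1}}{(j-1)!}=-\mathscr{P}_tK,$$
so the two integrals cancel. At $\beta=0$, $L_tK(\cdot,0)=L_tb_1=-\mathscr{P}_tb_0$. The boundary term at $0$ therefore contributes $+L_tK(0)F(\zeta)=-(\mathscr{P}_tb_0)F$, which cancels $(\mathscr{P}_tb_0)F$. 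What remains is exactly $-L_tK(t,x,\zeta,\beta_{\rm B})F(\zeta+i\beta_{\rm B})$.

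The only delicate point is justifying the termwise manipulations: interchanging $\mathscr{P}_t$ and $L_t$ with the $j$-sum and the $\beta$-integral, and differentiating $K$ in $\beta$ termwise. All of this follows from the uniform factorial bounds on second derivatives and the choice $\Lambda\beta_{\rm B}<1/2$, which leaves room inside the disc of convergence $\vert\beta\vert<\Lambda^{-1}$.
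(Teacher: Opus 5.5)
Your proof is correct and follows the paper's argument essentially step for step. The shared steps are: convergence of the Borel kernel from the factorial bounds of Lemma~\ref{factorial}; the hierarchy identities $\mathscr{P}_tK + L_t\partial_\beta K = 0$ and $L_tK\vert_{\beta=0} = -\mathscr{P}_tb_0$; the product rule \eqref{lower-product} combined with $D_\zeta F(\zeta+i\beta) = -\partial_\beta F(\zeta+i\beta)$; and integration by parts in $\beta$, which cancels the interior terms and the endpoint at zero. Your version spells out the termwise convergence estimates and the sup-norm bound, which the paper states only briefly.
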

\begin{proof}
Lemma~\ref{factorial} gives convergence with the derivatives in $\mathscr{P}_t$ and $L_t$. The hierarchy implies $\mathscr{P}_tK + L_t\partial_\beta K = 0$ and $L_tK\vert_{\beta = 0} = -\mathscr{P}_tb_0$. By \eqref{lower-product} and $D_\zeta F(\zeta + i\beta) = -\partial_\beta F(\zeta + i\beta)$, integration by parts cancels the interior terms and the endpoint at zero, leaving the stated defect. Only the input $F$ is shifted. All kernel coefficients remain on $\mathcal{P}_{R,\tau,s}$, so the specified larger input strip suffices. Uniform bounds on $b_0$ and $K$ over $0 \leq \beta \leq \beta_{\rm B}$ give the norm estimate.
\end{proof}

\paragraph{The local obstruction.}
Choose a smooth normal cutoff $\chi$, supported in the construction tube and equal to one on a smaller tube. Fix the construction product and cutoff so that, for some $t_*>0$, all $(t,x,\Phi(x,y) + it)$ with $0 \leq t \leq t_*$ and $x\in\operatorname{supp}\chi$ lie in a strictly smaller product. Set $b_\chi = \inf_{\operatorname{supp}\chi'\times\mathbb{T}_\ell}\A>0$ and choose $0<h_{\rm loc}<\frac{1}{2}\min(t_*,b_\chi,\beta_{\rm B})$. None of these choices depends on $T$ or $\underline m$.

\begin{prop}\label{local-lower}
Let $\omega$ be open and let $\underline m:\mathbb{T}_\ell\to[0,+\infty]$ be lower semicontinuous. Assume that $\underline m(\Theta(p)) \leq \A(p)$ for any $p\in\omega\cap\operatorname{supp}\chi$.
Observation fails for every $T\in(0,\min(h_{\rm loc},\sup\underline m))$. In particular, for the phase-graph exterior collar
$\omega = \{r>\rho_ + (\theta)\}\cup\{r<-\rho_ - (\theta)\}$
in the smaller tube, with the exterior fully observed, let $\rho_\pm$ be continuous and nonnegative. If $M = \max_\theta\min(\rho_ + (\theta)^2/2,\rho_ - (\theta)^2/2)<h_{\rm loc}$, then $T_{\rm obs}(\omega) \geq M$.
\end{prop}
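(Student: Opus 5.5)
We build approximate heat solutions from Proposition~\ref{realization}, with holomorphic inputs $F$ concentrating near an angle where $\underline m$ is large, and then compare them with exact solutions. Fix $T<\min(h_{\rm loc},\sup\underline m)$. By lower semicontinuity of $\underline m$, there are $\theta_0$, $A_0>T$ and an arc $I\ni\theta_0$ with $\underline m>A_0$ on $I$. By hypothesis, $\A\geq A_0$ on $\omega\cap\operatorname{supp}\chi\cap\{\Theta\in I\}$.

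Runge approximation gives periodic holomorphic inputs $F_k$ on the strip $\vert\Im\zeta\vert<s+\beta_{\rm B}$ with three properties: they have size $O(1)$ near the real direction $\theta_0$, they have size $O(e^{-\delta k})$ for real $\theta\notin I$ and on the enlarged strip, and they grow under imaginary shifts like $e^{k\,\cdot}$. Concretely, take $F_k(\zeta)=G(\zeta)^k$ with $G$ approximating a function equal to $1$ near $\theta_0$ and small elsewhere, as in \cite{DKR23,Ko17}. Alternatively, take $F_k=\sum_n c_n e^{in\zeta}$, a Gaussian-type packet in frequency. In that case $\vert F_k(\Phi+it)\vert\approx e^{-n(\A+t)}$ on the frequency shell $n\sim k$.

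Set $u_k(t)=\chi\, g\,\mathcal{U}_{F_k}(t,x,\Phi+it)$, pulled back by $\mathcal{U}_\Phi^{-1}$, and use \eqref{lower-conjugation}. Then $(\partial_t+\triangle)u_k=f_k$, and $f_k$ has two sources. The first is the commutator with $\chi$, supported where $\A\geq b_\chi>2h_{\rm loc}$. The second is the defect $L_tK\,F_k(\zeta+i\beta_{\rm B})$, damped by the shift $\beta_{\rm B}>2h_{\rm loc}$. Both sources are therefore of size $e^{-k(2h_{\rm loc}+o(1))}$ times $\sup F_k$ scaling. By Duhamel, $v_k=e^{-t\triangle}u_k(0)$ differs from $u_k$ by at most $\int_0^t\Vert f_k\Vert$, which has relative size $e^{-2kh_{\rm loc}}$ compared with the terminal mass.

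The remaining step compares norms. The terminal mass satisfies $\Vert u_k(T)\Vert^2\gtrsim e^{-2k(T+\epsilon)}$, since $\vert F_k(\theta_0+iT)\vert$ is evaluated on the fiber near the circle. The initial normalization comes from $\Vert u_k(0)\Vert\lesssim 1$ by the uniform bound in Proposition~\ref{realization}. The observed mass $\int_0^T\Vert v_k\Vert^2_{L^2(\omega)}$ is bounded as follows:
\begin{itemize}
\item on $\{\Theta\in I\}$, by $e^{-2kA_0}$, since $\A\geq A_0$ there;
\item on $\{\Theta\notin I\}$, by the smallness $e^{-\delta k}$ of $F_k$;
\item overall, plus the Duhamel error.
\end{itemize}
Since $A_0>T$ and $h_{\rm loc}>T$, the ratio of terminal mass to observed mass tends to $+\infty$, and observability fails.

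The main obstacle is designing $F_k$ so that angular localization outside $I$ beats $e^{-2kT}$ after the imaginary shift $+it$ and the shift $\beta_{\rm B}$. This requires Runge with controlled growth on the strip, choosing the arc and the error $\delta$ after $A_0-T$, and letting $I$ shrink.

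For the collar statement, set $\underline m(\theta)=\min(\rho_+^2,\rho_-^2)/2$. This is continuous, and on each phase fiber $\A=r^2/2\geq\underline m$ on $\omega$. So $T_{\rm obs}\geq\sup\underline m=M$, using $M<h_{\rm loc}$.
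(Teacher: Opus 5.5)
Your architecture matches the paper's. You realize $F_k(\Phi+it)$ by Proposition~\ref{realization}, note that the cutoff and Borel defects sit at imaginary heights $b_\chi,\beta_{\rm B}>2h_{\rm loc}$, compare with the exact flow by Duhamel, and conclude from a ratio. The reduction of the collar statement to $\underline m=\min(\rho_+^2,\rho_-^2)/2$ is also fine. The gap is the step you yourself call the main obstacle: you never construct $F_k$, and the bookkeeping you propose cannot work in general. Your bounds are a terminal mass $\gtrsim e^{-2k(T+\epsilon)}$ read off from $\vert F_k(\theta_0+iT)\vert$, and an observed mass $\lesssim e^{-2kA_0}$ over $I$. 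Both presuppose that $F_k$ has size one on the real line with frequencies in a shell $[k,(1+\epsilon)k]$, so that an imaginary shift by $s$ acts like $e^{-ks}$. Off $I$, however, $\underline m$ may vanish and the observation may reach the circle. You then need $\vert F_k\vert\leq e^{-\delta k}\sup\vert F_k\vert$ on the real set $\mathbb{T}_\ell\setminus I$ with $\delta>T$. For $F_k=e^{ik\zeta}h$ with $\deg h\leq\epsilon k$, a Remez-type inequality gives $\delta\leq C\epsilon\vert I\vert$. So $\delta>T$ forces $\epsilon\gtrsim T/\vert I\vert$, which breaks $(1+\epsilon)T<\min(A_0,h_{\rm loc})$ as soon as the arc where $\underline m>T$ is short. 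Your other option, $G^k$ with $G$ close to $1$ near $\theta_0$ and small elsewhere on the real line, aims at the wrong sets. The observed arguments $\Phi+it+i\beta$ fill all heights $\geq\underline m(\Theta)$, and the decisive large value is at $\theta_0+iT$. Since $\vert G(\theta_0+iT)\vert^k$ bears no relation to $e^{-kT}$, your split into $e^{-2kA_0}$ on $I$ and $e^{-\delta k}$ off $I$ no longer describes anything.

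The missing idea is to work in $\xi=e^{i\alpha\zeta}$, where shifts by $t,\beta\geq0$ become radial contractions. All observed arguments then lie in $\mathscr{K}=\{ae^{i\alpha\theta}\mid 0\leq a\leq e^{-\alpha\underline m_0(\theta)}\}$, with $\underline m_0=\min(\underline m,h_{\rm loc})$. This set has four properties:
\begin{itemize}
\item it is compact, precisely because $\underline m$ is lower semicontinuous;
\item its complement is connected;
\item it contains the disc $\vert\xi\vert\leq e^{-\alpha h_{\rm loc}}$, where both defects live;
\item it misses $\xi_0=e^{i\alpha\theta_0-\alpha T}$.
\end{itemize}
Runge's theorem gives a polynomial $q_0$ with $\sup_{\mathscr{K}}\vert q_0\vert<1<\vert q_0(\xi_0)\vert$; take $F_k=p^k(e^{i\alpha\zeta})$ with $p=q_0^m-q_0(0)^m$. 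The comparison is then $\varrho^{2k}$ for the observed and Duhamel terms against $P_\star^{2k}$, up to polynomial factors, for the terminal mass. Cauchy's inequality on the disc controls $P_k'$ in the cutoff commutator, and no frequency concentration is needed. Your arc-based set would also serve as the Runge compact: all moduli $\leq1$ off $I$, moduli $\leq e^{-\alpha A_0}$ over $\overline I$, with $T<A_0<h_{\rm loc}$. Finally, a pointwise value at $\theta_0+iT$ does not bound $\Vert u_k(T)\Vert$ from below, since the Borel correction $\int_0^{\beta_{\rm B}}KF_k(\zeta+i\beta)\,d\beta$ could cancel $b_0F_k$. The normalization $p(0)=0$ makes all frequencies at least $\alpha k$, so this correction is $O(H_k/k)$ in $L^2(dy)$ at $x=0$. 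One then uses $H_k\geq c_dk^{-1/2}P_\star^k$ and Bernstein's inequality to pass from the trace to the volume norm on $\vert x\vert\leq k^{-2}$.
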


\begin{proof}
Exponentiation $\xi = e^{i\alpha\zeta}$ turns heat evolution and the Borel shifts into radial contractions. We use one compact set to control observation and defects, then compare with the exact heat flow.

\emph{Polynomial separation.} For $T<\min(h_{\rm loc},\sup\underline m)$ set $\underline m_0 = \min(\underline m,h_{\rm loc})$ and
$\mathscr{K} = \{a e^{i\alpha\theta} \,\mid\, 0 \leq a \leq e^{-\alpha\underline m_0(\theta)}\}$.
Lower semicontinuity of $\underline m_0$ makes this set compact. Its complement is connected, since every exterior point can move radially to infinity. Choose $\theta_0$ with $\underline m_0(\theta_0)>T$. Then $\xi_0 = e^{i\alpha\theta_0 - \alpha T}\notin\mathscr{K}$. Runge approximation provides a polynomial $q_0$ with $\sup_{\mathscr{K}}\vert q_0\vert <1<\vert q_0(\xi_0)\vert $. Since $0\in\mathscr{K}$, taking a sufficiently large fixed power and subtracting its value at zero gives $p = q_0^m - q_0(0)^m$ such that
$p(0) = 0$, $\sup_{\mathscr{K}}\vert p\vert \leq \varrho<1$ and $P_\star = \max_{\vert \xi\vert = e^{-\alpha T}}\vert p(\xi)\vert >1$.
This is the polynomial separation used in \cite{DKR23,DK20,Ko17}. The condition $p(0) = 0$ will also force the smallest input frequency to tend to infinity, preventing the Borel correction from cancelling the terminal leading term.

\emph{Heat trajectories.} Let $d = \deg p$, $P_k = p^k$ and $F_k(\zeta) = P_k(e^{i\alpha\zeta})$. Define
$$
U_k(t,x,y) = \rho^{-1/2}\chi(x)g(x,\Phi) \mathcal{U}_{F_k}(t,x,\Phi + it), \qquad u_k(t) = e^{-t\triangle}U_k(0).
$$
The smooth functions $U_k$ extend by zero to $M$ and belong to $D(\triangle)$. The factor $\rho^{-1/2}$ restores the original density representation, so $u_k$ is the exact global trajectory in $L^2(M,\nu)$. All input frequencies lie in $\alpha\{k,\ldots,dk\}$.

\emph{Observation and defects.} At observed points of the cutoff support, $\A \geq \underline m_0(\Theta)$. Hence every argument $e^{i\alpha(\Phi + it + i\beta)}$, $\beta \geq 0$, belongs to $\mathscr{K}$. Proposition~\ref{realization} and the uniform bounds on $b_0,K$ give $\sup_{0 \leq t \leq T}\Vert U_k(t)\Vert_{L^2(\omega,\nu)} \leq C_T\varrho^k$. The disk $\vert \xi\vert \leq e^{-\alpha h_{\rm loc}}$ is contained in $\mathscr{K}$. At the Borel endpoint, the imaginary part of the input argument is at least $\beta_{\rm B}$; on the cutoff transition, it is at least $b_\chi$. Both exceed $h_{\rm loc}$, so the corresponding exponential arguments lie strictly inside this disk. Cauchy's inequality on this disk therefore bounds both $P_k$ and $P_k'$ on the transition by $C\varrho^k$, with a constant independent of $k$. In the conjugated representation, $[\triangle_x,\chi] = -2\chi'\partial_x - \chi''$, where this derivative is in the original real coordinates. Using \eqref{lower-conjugation}, restoring the density, and then applying the global contraction semigroup gives
$$
\sup_{0 \leq t \leq T}\Vert (\partial_t + \triangle)U_k(t)\Vert \leq C_T\varrho^k, \qquad\sup_{0 \leq t \leq T}\Vert u_k(t) - U_k(t)\Vert \leq C_T\varrho^k.
$$
The second estimate follows from Duhamel's formula and includes the part of the exact trajectory outside the construction tube.

\emph{Terminal mass.} Set $r_T = e^{-\alpha T}$ and $H_k = \Vert P_k(r_Te^{i\alpha y})\Vert_{L^2(\mathbb{T}_\ell,dy)}$. Parseval and the lowest frequency $k$ give
$\Vert P_k(e^{-\alpha\beta}r_Te^{i\alpha y})\Vert_{L^2} \leq e^{-\alpha k\beta}H_k$ for $\beta \geq 0$.
At $x = 0$, the multiplier $\rho^{-1/2}gb_0$ is bounded away from zero. Minkowski's inequality bounds the Borel correction by $C_TH_k\int_0^{\beta_{\rm B}}e^{-\alpha k\beta}d\beta \leq C_TH_k/k$. Thus $\Vert U_k(T,0,\cdot)\Vert_{L^2(dy)} \geq c_TH_k$ for large $k$. Finite Fourier Cauchy-Schwarz gives $H_k \geq c_dk^{-1/2}P_\star^k$.

To convert this trace into volume mass, note that $\A \geq 0$ sets every terminal input argument inside $\vert \xi\vert \leq r_T$. Bernstein's inequality for the degree-$dk$ polynomial $P_k$, together with the fixed coefficient bounds, yields $\Vert \partial_xU_k(T,x,\cdot)\Vert_{L^\infty} \leq C_T(1 + dk)P_\star^k$ on a fixed small real tube. For $\vert x\vert \leq k^{-2}$,
the $L^2(dy)$ difference from the central trace is at most $C_Tk^{-1}P_\star^k = \mathrm{o}(H_k)$. Integrating over this band, where the density is uniformly positive, gives $\Vert U_k(T)\Vert \geq c_Tk^{-1}H_k \geq c_Tk^{-3/2}P_\star^k$. The Duhamel error preserves this lower bound. Therefore
$$
\frac{\int_0^T\Vert u_k(t)\Vert_{L^2(\omega,\nu)}^2\,dt}{\Vert u_k(T)\Vert ^2} \leq C_Tk^3(\varrho/P_\star)^{2k}\longrightarrow0,
$$
as $k\to+\infty$, which contradicts observability. For the phase-graph collar take $\underline m = \min(\rho_+^2,\rho_-^2)/2$. The increase of $\mathcal{A}$ away from the circle along each phase fiber, given by Lemma~\ref{phase-local}, verifies the hypothesis and proves the second assertion.
\end{proof}

For general $\omega$, the admissible $\underline m(\theta)$ bounds from below the infimum of $\mathcal{A}$ over the observed points of the phase fiber $\Theta = \theta$ within $\operatorname{supp}\chi$. Its largest lower semicontinuous choice is
$$
m(\theta) = \inf\{\A(p) \,\mid\,  p\in\omega\cap\operatorname{supp}\chi, \ \Theta(p) = \theta\},\qquad \underline m(\theta) = \lim_{\varepsilon\downarrow0} \inf_{d(\theta',\theta)<\varepsilon}m(\theta'),
$$
where an empty infimum is $+\infty$. Only a lower obstruction is asserted for such sets.

\subsection{Upper bound}\label{sec:sharp-upper}\label{upper-functional-setting}

We work in one phase cylinder and write $M_\Phi = \max_\theta\min(\rho_+(\theta)^2/2,\rho_-(\theta)^2/2)$. The real normal form \eqref{phase-operator} is used for the weighted energy estimates; the holomorphic normal form \eqref{normal-form} is used to construct the adapted transverse subspace. We first obtain an observation estimate with an angular loss, then reduce this loss to any $h>M_\Phi$ by separating the transverse degrees. The final weighted energy estimate removes it in time arbitrarily close to $h$.

\paragraph{Angular regularization and heat dissipation.}

We first use observation in the outer strips to control the solution with an exponential angular loss. This estimate does not yet reflect the collar profile. We then show how heat decay removes an angular loss, at the cost of a corresponding waiting time.

Fix $\mathcal{C} = (-r_0,r_0)\times\T_\ell$ and $\mathscr{H} = L^2(\mathcal{C},dr\,d\theta)$, with inner products linear in the first argument. Local norms below are taken in $\mathscr{H}$ unless specified otherwise. Let $N = \vert D_\theta\vert $, so $e^{-hN}$ multiplies angular frequency $n$ by $e^{-h\vert n\vert }$. From now on, $\triangle_\Phi$ denotes the nonnegative Dirichlet realization of \eqref{phase-operator} on $\mathcal{C}$. Its form domain $\mathcal{V}$ is the completion of smooth periodic functions compactly supported in $r$ for the norm $(\Vert v\Vert ^2 + \Vert v_r\Vert ^2 + \Vert rD_\theta v\Vert ^2)^{1/2}$. Angular dependence of the coefficients couples the Fourier modes. For $\vert \eta\vert $ below a fixed analytic strip width, the formal conjugate $e^{\eta N}\triangle_\Phi e^{-\eta N}$ has a closed sectorial realization $\triangle_{\Phi,\eta}$ with form $\mathfrak{h}_\eta$ on $\mathcal{V}$. Its coefficient matrix construction is given in Appendix~\ref{app:dirichlet}. Fix an admissible width $h_*>0$ for these forms, independently of the parameters $J,\varepsilon$ and of the time margin.

To apply these cylinder estimates to the global solution, choose a fixed normal cutoff $\varkappa$, equal to one on an inner cylinder containing the gap, with its derivative supported in observed regular strips. For the local unitary representative of the global heat solution, set $U = \varkappa u$. Then
$$
U_t + \triangle_\Phi U = F_{\rm obs} = [\triangle_\Phi,\varkappa]u.
$$
For every $I\Subset(0,T)$ and integer $k \geq 0$, interior parabolic regularity on slightly larger observed regular strips gives
$$
\Vert F_{\rm obs}\Vert_{L^2(I;H^k(\mathcal{C}))} \leq C_{I,k}\obs^{1/2},\qquad \obs = \int_0^T\Vert u(t)\Vert_{L^2(\omega,\nu)}^2dt.
$$
The compact time margins are needed here; terminal bounds below use an energy identity instead of a source estimate at $T$.

\begin{prop}\label{rough-state}
If the unobserved part of $\mathcal{C}$ lies in $\vert r\vert \leq d$, then, for $d$ sufficiently small, there exists $B_0 \leq Cd^2$ such that $\sup_{t\in I}\Vert e^{-B_0N}U(t)\Vert ^2 \leq C_{d,\tau,I}\obs$ for every $\tau>0$ and $I\Subset(\tau,T)$. The loss is independent of time and of the boundary regularity of the gap.
\end{prop}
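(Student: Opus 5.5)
The plan is a Carleman estimate for the elliptic-in-$r$ structure of $\triangle_\Phi$, with an exponential weight in $r$ and a conjugation by $e^{-B_0N}$. A simpler interpretation: on the observed strips $d<|r|<r_0$, the operator \eqref{phase-operator} is uniformly elliptic. Unique continuation from these strips into the gap $|r|\le d$ costs, at angular frequency $n$, a factor $e^{C n d^2}$. This is the size of the ratio between a mode at the circle and its trace at distance $d$. Conjugating by $e^{-B_0 N}$ with $B_0\ge Cd^2$ compensates exactly this loss.

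\textbf{Step 1: conjugation and a weighted energy estimate.} Set $V=e^{-B_0N}U$. By the sectorial realization $\triangle_{\Phi,-B_0}$ (admissible since $B_0\le Cd^2<h_*$ for small $d$), $V$ solves
\[
V_t+\triangle_{\Phi,-B_0}V=e^{-B_0N}F_{\rm obs}.
\]
The conjugated form $\mathfrak h_{-B_0}$ differs from $\mathfrak h_0$ by terms of relative size $O(B_0)$, coming from the commutators of $N$ with the analytic angular coefficients. Hence, for $d$ small, its real part stays coercive on $\mathcal V$.

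\textbf{Step 2: stationary-in-time Carleman / three-region estimate in $r$.} I would apply an $r$-weighted estimate with weight $e^{\psi(r)N}$, or equivalently work frequency-wise in the model $-\partial_r^2+r^2n^2$. Take $\psi(r)=B_0-\min(r^2,d^2)/2$ style, decreasing from $B_0$ at $r=0$ to $0$ for $|r|\ge d$. Since $e^{\psi N}$ is at most one outside the gap, observed contributions are controlled by $\Vert U\Vert_{L^2(\{d<|r|<r_0\})}$. The key computation is the positivity of
\[
\re\,\mathfrak h\bigl(e^{\psi N}w,e^{\psi N}w\bigr)-\Vert \psi' N e^{\psi N}w\Vert^2 .
\]
This is nonnegative when $|\psi'|\le(1-\delta)|r|$, using the term $\Vert rD_\theta\cdot\Vert^2$ in the form and $c\ge 1-O(d)$. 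This forces $B_0\approx d^2/2(1+O(d))\le Cd^2$. Lower-order terms $W$ and $\theta$-coupling commutators are absorbed for $d$ small. The time derivative gives a harmless term after integrating against a time cutoff supported in a slightly larger interval $I'\Subset(\tau,T)$.

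\textbf{Step 3: conclusion.} Integrate in $t$ over $I'$ with a time cutoff. Estimate the source by $\Vert F_{\rm obs}\Vert_{L^2(I';H^k)}\le C\obs^{1/2}$ and the outer-strip mass directly by $\obs$. This yields $\int_{I'}\Vert V\Vert^2\le C\obs$. Finally, pass to a pointwise-in-time bound on $I$ by the energy inequality for $V$, integrating $\frac{d}{dt}\Vert V\Vert^2\le C\Vert V\Vert^2+C\Vert e^{-B_0N}F_{\rm obs}\Vert^2$ from an averaged initial time. Only $r$-weights are used, and the gap boundary is never differentiated, so the constant is independent of the gap's shape.

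\textbf{Main obstacle.} The hard step is Step 2. The weight acts through the nonlocal operator $N$ while the coefficients couple frequencies, so the commutators $[e^{\psi(r)N},c(r,\theta)]$ must be controlled. I would first try to write $e^{\psi N}$ as analytic continuation in $\theta$ by $\pm i\psi$, using the fixed analytic strip width $h_*$. The commutator then becomes a bounded perturbation of relative size $O(d)$. If this fails, I would instead localize to $|n|$ large and treat the low-frequency part by standard unique continuation, where the loss is harmless.
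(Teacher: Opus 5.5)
\textbf{The gap is in Step 2.} Your Steps 1 and 3 match the final propagation step of the paper's argument. Step 2, however, carries the whole content of the proposition, and it contains no mechanism by which the observed strips control the gap.

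The inequality you call the key computation, $\Re\mathfrak h(e^{\psi N}w,e^{\psi N}w)-\Vert\psi'Ne^{\psi N}w\Vert^2\geq0$ for $\vert\psi'\vert\leq(1-\delta)\vert r\vert$, is an Agmon-type coercivity bound for the symmetric part of the conjugated operator. It is essentially condition \eqref{dyn:eikonal} and Lemma~\ref{dyn:weighted-form}, which the paper uses in the opposite direction: to show that concentrated states leak little mass. Inserted in an energy identity, such positivity only propagates a weighted norm forward in time.

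The term you call harmless is $-\tfrac12\int\zeta'(t)\Vert e^{\psi N}V(t)\Vert^2\,dt$, produced by the time cutoff. It is the full weighted mass in the gap, with no small factor and indefinite sign, so it is the unknown itself. The paper's dissipative use of such coercivity is Lemma~\ref{weighted-dissipation}, and it takes as input an observed bound on $e^{-hN}U$ at some time, which is exactly the proposition.

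The weighted quantity your Step 2 targets also cannot be controlled. It is $e^{\psi N}V=e^{-\min(r^2,d^2)N/2}U$, which coincides with $U$ on the circle. Already in the flat model, the states $e^{-nt+in\theta-nr^2/2}$ give $\Vert e^{-\min(r^2,d^2)N/2}U(t)\Vert^2\asymp e^{-2nt}n^{-1/2}$, while $\obs\lesssim_d n^{-2}e^{-nd^2}$. So no such bound holds for $t<d^2/2$; this is the mechanism of Proposition~\ref{local-lower}.

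\textbf{What is needed instead.} You need a genuine parabolic Carleman estimate. There, positivity comes from the commutator of the symmetric and skew parts, and the time-boundary terms are removed by a time weight.

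Your symmetric weight gives no positivity for this purpose. For an angular exponent with $p=\psi'$, the leading high-frequency part of the commutator energy \eqref{rough:spatial-energy} is $p(pp'-r)N^3$. With $\psi=B_0-r^2/2$ one gets $pp'-r\equiv0$, while the normal coefficient $\Xi_r\approx p'N$ is negative. This reflects that $\psi$ is the Agmon weight of the Gaussian profile $e^{-nr^2/2}$, and it is why your argument appears to yield the sharp loss $d^2/2$ at every positive time.

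\textbf{How the paper proceeds.} Proposition~\ref{rough-carleman} conjugates by $e^{\Gamma}\mathcal W$, with two weights:
\begin{itemize}
\item the Fursikov--Imanuvilov weight $\Gamma=\lambda\upsilon(r)$, $\lambda=s[t(t_0-t)]^{-p_0}$, whose exponential vanishes at both time endpoints;
\item a monotone, strictly more convex angular exponent $\psi=\frac{\kappa_0}{2}((r+2b)^2-9b^2)$, with $\kappa_0>2$ and $b=3d$, so that $pp'-r>0$ on $\vert r\vert\leq b$.
\end{itemize}
The price is the non-sharp loss $B_0=4\kappa_0b^2$. This loss is reduced to any $h>M_\Phi$ only later, through the adapted subspace and observation on level sets.

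The main technical obstacle is then not the coefficient commutators you single out, which the Fourier matrices of Lemma~\ref{rough-comm} handle. It is the time-derivative terms of Lemma~\ref{rough-coupling-energy}, notably $[B_1,A]\partial_t^2$. That term is absorbed only through the gain \eqref{rough:commutator-gain}, which is why the angular multiplier is cut off below the Carleman parameter, $\mathsf{O}=g_\lambda(D_\theta)$.

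Your remark on shape independence is correct. Using only $r$-dependent cutoffs, with transitions in the observed strips, is how the paper makes the loss independent of the gap's boundary.
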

The proof is given in Appendix~\ref{proof:rough-state}.

The flat Dirichlet operator $\triangle_{\mathrm{osc}} = -\partial_r^2 + r^2N^2$ has lowest oscillator rate $\vert n\vert $, so its heat flow supplies the factor $e^{-t\vert n\vert }$. The next estimate gives the corresponding decay for the coupled operator. Raising a negative exponent $-h$ to zero at speed $1 - \varepsilon$ will convert control of $e^{-hN}U$ into control of $U$.

\begin{lem}\label{weighted-dissipation}
One has $\Re\mathfrak{h}_\eta(v,v) \geq (1 - \varepsilon)\Vert N^{1/2}v\Vert ^2 - C_{\varepsilon,h}\Vert v\Vert ^2$, for all $\varepsilon\in(0,1)$ and $h\in(0,h_*)$, uniformly with respect to $\eta\in[-h,0]$ and $v\in\mathcal{V}$.
An observation bound for $e^{-hN}U(s)$ therefore gives an unweighted bound after time $h/(1 - \varepsilon)$, provided this time is strictly before $T$. A bound at a fixed negative weight also propagates forward.
\end{lem}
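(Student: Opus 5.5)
The plan is to reduce the form inequality to a lower bound for the flat oscillator and then absorb the coupling and weight terms as perturbations controlled near the circle. Write $\mathfrak{h}_\eta(v,v) = \mathfrak{h}_0(v,v) + \mathfrak{e}_\eta(v,v)$, where $\mathfrak{h}_0(v,v)=\int c\vert v_r\vert^2 + r^2c\vert D_\theta v\vert^2 + W\vert v\vert^2$. The remainder $\mathfrak{e}_\eta$ comes from conjugating by $e^{\eta N}$. Since the coefficients are analytic in $\theta$ on a strip of width larger than $h_*$, the coefficient matrix construction of Appendix~\ref{app:dirichlet} writes the conjugated coefficients as bounded Fourier-multiplier perturbations. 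Their operator norms are uniform for $\vert\eta\vert\le h<h_*$.

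\emph{Step one: the flat bound.} In Fourier modes, $\int\vert v_r\vert^2 + r^2 n^2\vert v\vert^2\,dr \ge \vert n\vert\Vert v_n\Vert^2$, which is the lowest Dirichlet oscillator level on $(-r_0,r_0)$. Summing gives $\Vert v_r\Vert^2 + \Vert rNv\Vert^2\ge\Vert N^{1/2}v\Vert^2$.

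\emph{Step two: handle $c$.} Since $c(0,\theta)=1$, we have $c\ge 1-C\vert r\vert$. Choose $\delta$ so small that $c\ge 1-\varepsilon/4$ on $\vert r\vert<\delta$, and split with a partition of unity $\chi_1^2+\chi_2^2=1$ in $r$, where $\chi_1$ is supported in $\vert r\vert<\delta$. By the IMS formula the localization error is $O(\delta^{-2})\Vert v\Vert^2$. On the inner piece, $c\ge 1-\varepsilon/4$ together with step one gives $(1-\varepsilon/4)\Vert N^{1/2}\chi_1 v\Vert^2$. On the outer piece, $r^2\ge\delta^2$, so
\[
\int r^2 c\vert Nv\vert^2 \ge c_0\delta^2\Vert N\chi_2v\Vert^2 \ge \Vert N^{1/2}\chi_2 v\Vert^2 - C_\delta\Vert v\Vert^2,
\]
using $\vert n\vert\le \lambda n^2+\lambda^{-1}$. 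One also needs $\Vert N^{1/2}\chi_j v\Vert$ to recombine into $\Vert N^{1/2}v\Vert$ up to lower order. Since $\chi_j$ depends only on $r$ and commutes with $N$, this recombination is exact: $\sum_j \Vert N^{1/2}\chi_j v\Vert^2=\Vert N^{1/2}v\Vert^2$. The potential $W$ is bounded and goes into the constant.

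\emph{Step three: the weight perturbation.} This is the main obstacle. Conjugation commutes with $\partial_r$ and with multiplication by $r$ but not with $\theta$-dependent coefficients. Writing $c=1+(c-1)$, the identity part is untouched. The terms involving $(c-1)$ or $r^2c$ produce commutators $e^{\eta N}a e^{-\eta N}-a$. For analytic $a$ these are bounded by $C\vert\eta\vert$ times a zero-order operator, and they have no loss in $n$, because $e^{\eta(\vert n\vert-\vert m\vert)}\vert\hat a(n-m)\vert\le e^{h\vert n-m\vert}\vert\hat a(n-m)\vert$, which is summable.

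The resulting terms look like $\langle a_\eta\, rNv, rNv\rangle$ and $\langle a_\eta v_r,v_r\rangle$ with small real part, plus mixed terms. The danger is that the error in the $r^2 N^2$ term is not small relative to $\Vert N^{1/2}v\Vert^2$ alone. I would control it by the dominant terms $\Vert v_r\Vert^2+\Vert rNv\Vert^2$, keeping a fraction of these in reserve. To get uniformity in $h<h_*$ rather than smallness in $\eta$, the factor $(c-1)$ must also be small on the inner region: its conjugate keeps the $O(\delta)$ size in an $\ell^1$-weighted Fourier norm once the strip margin is fixed. I would need $C_{\varepsilon,h}$ to absorb the outer region, where smallness fails but $r^2$ is coercive.

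\emph{Consequences.} Apply the inequality with $\eta=-h+(1-\varepsilon)s$ along the evolution, setting $w(s)=e^{\eta(s)N}U(s)$. Differentiating $\Vert w\Vert^2$ gives
\[
\tfrac{d}{ds}\Vert w\Vert^2 = -2\Re\mathfrak{h}_\eta(w,w) + 2(1-\varepsilon)\Vert N^{1/2}w\Vert^2 + \text{source}.
\]
By the main inequality this is bounded by $C\Vert w\Vert^2$ plus source terms, and the source is controlled through the $H^k$ bounds on $F_{\rm obs}$. Gronwall's inequality then yields the unweighted bound at time $h/(1-\varepsilon)$. Forward propagation at a fixed negative weight is the case $\dot\eta=0$.
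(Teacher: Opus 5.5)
Your argument is correct, but it takes a different route from the paper.

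\textbf{The paper's route.} The paper obtains the inequality as the case $J=-1$ of Lemma~\ref{complement-coercivity}. The fixed small cylinder gives $\Re\mathfrak{h}_\eta \geq (1-\delta_0)\mathfrak{h}_{\mathrm{osc}} - C\Vert\cdot\Vert^2$ with $\delta_0<1/10$, and the loss $\delta_0$ is then removed \emph{spectrally}, by splitting off the first four oscillator levels. On these levels, Lemma~\ref{fixed-level-perturbation} shows that $\triangle_{\Phi,\eta}-\triangle_{\mathrm{osc}}$ costs only $C_L(1+N^{1/2})$, because Hermite concentration at scale $\vert n\vert^{-1/2}$ makes $c_\eta-I=\mathrm{O}(r)$ of angular order $1/2$. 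Young's inequality absorbs this cost, and the remaining levels have rate at least $9(1-\delta_0)>1$.

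\textbf{Your route.} You remove the loss \emph{spatially}, by an IMS partition in $r$ with a shrinking inner region. This is legitimate for the non-self-adjoint conjugated form. Indeed, $\chi_j(r)$ commutes with the angular matrices $c_\eta(r)$, $W_\eta(r)$ and with $N$. Hence the cross terms cancel because $\sum_j\chi_j\chi_j'=0$, and $\sum_j\Vert N^{1/2}\chi_jv\Vert^2=\Vert N^{1/2}v\Vert^2$ holds exactly.

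\textbf{Step three.} Your Step three reduces to one estimate. Since $c-1=r\widetilde c$ with $\widetilde c$ analytic on a strip wider than $h_*$, one has $\Vert(c-1)_\eta(r)\Vert\leq C\vert r\vert$ uniformly for $\vert\eta\vert\leq h<h_*$. Hence the Hermitian part of $c_\eta(r)$ is at least $(1-C\vert r\vert)I$. Near the circle this gives the factor $1-C\delta$. On the outer region it keeps the Hermitian part uniformly positive on the fixed small cylinder. You need that positivity, and only assume it implicitly, for $r^2N^2$ to dominate $N$ there. No genuine mixed terms arise, since $\eta$ is constant in $r$.

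\textbf{Comparison.} Your route is more elementary: it uses no Hermite functions, no spectral subspaces and no Lemma~\ref{fixed-level-perturbation}. However, spatial localization only sees the ground rate $1$. The paper's level splitting is what yields the rate $2J+3$ on the complement of the adapted subspace, and this lemma then comes for free as its special case.

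\textbf{Consequences.} Your energy computation matches the paper's. The source needs only its $L^2$ bound, since $e^{\eta N}$ is a contraction for $\eta\leq0$. The unweighted bound holds at time $s+h/(1-\varepsilon)$, counted from the controlled time $s$. Differentiating $\Vert w\Vert^2$ should be justified as in the paper. On compact positive-time intervals $U\in C^1(\mathscr{H})\cap C(D(\triangle_\Phi))$, Lemma~\ref{operator-graph} controls $NU$, and negative weights preserve $D(\triangle_{\mathrm{osc}})$.
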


\begin{proof}
Lemma~\ref{complement-coercivity}, with $J = -1$ and $\Pi_{-1} = 0$, gives the asserted coercivity on the fixed cylinder.

Set $\eta(t) = -h + (1 - \varepsilon)(t - s)$ until it reaches zero and $z = e^{\eta(t)N}U$. Then
$z_t + \triangle_{\Phi,\eta(t)}z = \eta'(t)Nz + e^{\eta(t)N}F_{\rm obs}$.
The coercivity estimate absorbs $\eta'N$. The source is acted on only by a contraction, so energy and Gronwall give the unweighted bound. The calculation at constant $\eta \leq 0$ propagates a weighted bound. On compact positive-time intervals, $U\in C^1(\mathscr{H})\cap C(D(\triangle_\Phi))$. Lemma~\ref{operator-graph} controls $NU$, and negative angular weights preserve $D(\triangle_{\mathrm{osc}}) = D(\triangle_\Phi)$, justifying the energy identity. The domain bounds for the moving projection give the same justification for the complement below.
\end{proof}

\paragraph{A subspace adapted to the heat evolution.}

Removing the rough loss by Lemma~\ref{weighted-dissipation} would cost time $B_0$. We first remove it on the rapidly decaying transverse modes. In the flat whole-line model, the functions $r^je^{-nr^2/2}$, $0 \leq j \leq J$, span the first $J + 1$ oscillator levels at frequency $n>0$. Their orthogonal complement has decay rate at least $(2J + 3)n$. Coupling destroys this invariant decomposition: the retained modes force the complement. The construction below deforms their span with time to cancel that forcing up to an exponentially small error.

The complementary loss is removed in time of order $B_0/(2J + 3)$. The complex domains and defect exponents are fixed before $J$, so increasing $J$ does not require a thinner collar.

Let $\mathcal{F}_J$ be the $\ell^2$ space of coefficients $f_{\sigma,j,n}$, indexed by $\sigma = \pm1$, $0 \leq j \leq J$ and $n\in(2\pi/\ell)\N^*$, $n \geq N_0$. Proposition~\ref{adapted-subspace} constructs $K_J(t):\mathcal{F}_J\to\mathscr{H}$ by summing these coefficients against the columns
\begin{equation}\label{upper-columns}
k_{\sigma,j,n}(t) = \chi(r)n^{j/2 + 1/4}g_\sigma e^{i\sigma n z_\sigma} \big(w_\sigma^j + w_\sigma^{J + 1}b_{\sigma,j,n}(t)\big), \qquad z_\sigma = \theta + \sigma ir^2/2.
\end{equation}
Here $w_+ = \psi(r,z_+)$ and $g_+ = g_0(r,z_+)$ are the coordinate and factor of the holomorphic normal form \eqref{normal-form}. The negative-sign expressions are their complex conjugates on the real cylinder. Each correction is evaluated at $(t,w_\sigma,z_\sigma)$. The fixed normal cutoff $\chi$ equals one near the gap. The leading term is a normal monomial times $e^{i\sigma n\theta - nr^2/2}$, normalized in $L^2$. The correction begins at degree $J + 1$, so it preserves the retained Taylor coefficients. It satisfies $\Vert b_{\sigma,j,n}(t)\Vert_{C^k} \leq C_{J,k}/n$ on one fixed smaller complex product, for every fixed spatial order $k$.

The norm equivalence $\Vert K_Jf\Vert \asymp_J\Vert f\Vert_{\mathcal{F}_J}$ gives closed ranges and the bounded left inverse $K_J^\dagger = (K_J^*K_J)^{-1}K_J^*:\mathscr{H}\to\mathcal{F}_J$. Let $P(t)$ be the orthogonal projection onto $\operatorname{Ran}K_J(t)$, ${P^\perp} = I - P$. The defect $E_J:\mathcal{F}_J\to\mathscr{H}$ satisfies
${P^\perp}(K_J'+\triangle_\Phi K_J - E_J) = 0$.
It holds first on finite coefficient sequences. The defect block from input frequency $\sigma n$ to output frequency $m$, as a map $\mathbb{C}^{J + 1}\to H^k((-r_0,r_0))$, is bounded by $C_{J,k}e^{-\gamma_* n - s_*\vert m - \sigma n\vert }$ for every fixed $k$. The constants $\gamma_*,s_*,b_*>0$ and the complex product are independent of $J$, and the family exists for $0 \leq (2J + 3)t<b_*$. Constants in norm bounds and the threshold $N_0$ may depend on $J$.

We split the actual localized solution as $U = U_J + v$, where $U_J = PU$ and $v = {P^\perp}U$. Lemmas~\ref{current-projections} and~\ref{complement-coercivity} give bounded $P_\eta = e^{\eta N}Pe^{-\eta N}$ and $P'_\eta = e^{\eta N}P'e^{-\eta N}$ on $\mathscr{H}$, for $\vert \eta\vert \leq s_*$, after a fixed reduction of $s_*$. On the complementary space they give
$$
P_\eta z = 0\ \Rightarrow\ \Re\mathfrak{h}_\eta(z,z) \geq (2J + 3 - \varepsilon)\Vert N^{1/2}z\Vert ^2 - C_{J,\varepsilon}\Vert z\Vert ^2 \qquad\forall z\in\mathcal{V}.
$$
Set $R = {P^\perp}E_JK_J^\dagger:\mathscr{H}\to\mathscr{H}$. The invariance identity and its adjoint yield the exact equation
$v_t + \triangle_\Phi v = -2P'v + (R^*-R)U + {P^\perp}F_{\rm obs}$.
Appendix~\ref{app:complement} justifies this identity and proves that, when $B_0<\min(\gamma_*,s_*)$,
$\Vert e^{\eta N}(R^*-R)U\Vert \leq C_J\Vert e^{-B_0N}U\Vert$ for $- B_0 \leq \eta \leq 0$.
To use the rough observation estimate, the defect must be controlled by the weighted state $e^{-B_0N}U$. Its factor $e^{-\gamma_*n}$ compensates the inverse weight $e^{B_0n}$. For the adjoint, the inequality $e^{B_0\vert m\vert}\leq e^{B_0n}e^{B_0\vert m-\sigma n\vert}$ also uses the decay in the frequency difference.

Start at $t_0>0$ and use $K_J(t - t_0)$. For $0<\varepsilon<1/4$, set
$\gamma_J = 2J + 3 - 2\varepsilon$ and $t_J = B_0/\gamma_J$.
Raise the weight from $-B_0$ to zero by $\eta(t) = -B_0 + \gamma_J(t - t_0)$, for $t_0 \leq t \leq t_0 + t_J$. Then $z = e^{\eta N}v$ satisfies $P_\eta z = 0$, and complementary coercivity absorbs the term $\eta'Nz$ in its energy equation. The rough estimate controls its initial state and defect forcing. Boundedness of $P'_\eta,{P^\perp}_\eta$ controls the other terms without a positive weight on the observed source. Energy and Gronwall therefore give $\Vert v(t)\Vert ^2 \leq C\obs$ after $t_0 + t_J$, within the construction interval.

On every compact regular strip $\omega'\Subset\mathcal{C}\cap\{\vert r\vert >0\}$, Lemma~\ref{tail-jets} then gives
$$
\Vert v\Vert_{L^2(I;H^k(\omega'))}^2 \leq C_{J,k,I,\omega'}\obs, \qquad I\Subset(t_0 + t_J,t_0 + b_*/(2J + 3))\cap(0,T).
$$
These derivative bounds allow us to recover $U_J = U-v$ from the observed solution. Increasing $J$ makes the complementary waiting time $t_J$ tend to zero, but the retained subspace still contains the slowest transverse degree. We therefore estimate this part directly on observed phase levels, with loss $h>M_\Phi$, before using heat dissipation once more.

\paragraph{Observation on level sets.}

It remains to recover $U_J = K_Jf$ with the geometric loss. Choose $A>M_\Phi$. At every phase angle, at least one of $(\sqrt{2A},\theta)$ and $(-\sqrt{2A},\theta)$ is observed. Both points have $z_\sigma = \theta + \sigma iA$, so each angular sign has the same attenuation $e^{-An}$ on either side. Only the gauge and normal amplitudes depend on the chosen side. Thus the same angular coefficients can be recovered even when the observed side varies with $\theta$.

A value alone cannot recover all retained transverse degrees, since they may cancel. The value and the first $J$ normal derivatives recover a degree-$J$ polynomial. Ordinary $r$-derivatives also differentiate the Gaussian factor. We instead differentiate the normal polynomial while holding the complex phase fixed. Define
$V_\sigma = \partial_r - \sigma ir\partial_\theta$ and $\mathcal{D}_\sigma = g_\sigma(V_\sigma w_\sigma)^{-1}V_\sigma g_\sigma^{-1}$.
Their coefficients are smooth and nonsingular on $\mathcal{C}$. Since $V_\sigma z_\sigma = 0$, this divides out the gauge, differentiates in $w_\sigma$ without differentiating the oscillation, and restores the gauge. These operators are combinations of spatial derivatives, which observed parabolic regularity controls on regular strips. We use both signs on each observed side. Decrease the fixed width $h_*$ once, independently of $J,\varepsilon$ and of the time margin, below the coefficient strip used for $K_J$ and so that $\chi = \varkappa = 1$ on $r^2/2 \leq h_*$. The following estimate combines polynomial recovery with observation on the level set $\mathcal{A} = A$.

\begin{lem}\label{common-height}
Let $0<A<h<h_*$, and suppose every angle has at least one observed point $(\pm\sqrt{2A},\theta)$. There is $\omega'\Subset\omega\cap\mathcal{C}$ such that, for every $f\in\mathcal{F}_J$ and $0 \leq (2J + 3)t<b_*$,
$$
\Vert e^{-hN}K_J(t)f\Vert ^2 \leq C_{J,h}\sum_{k = 0}^J \big(\Vert \mathcal{D}_+^kK_J(t)f\Vert_{L^2(\omega')}^2 + \Vert \mathcal{D}_-^kK_J(t)f\Vert_{L^2(\omega')}^2\big).
$$
For exterior phase graphs this holds for every $M_\Phi<h<h_*$, by choosing $M_\Phi<A<h$. The angular threshold may be enlarged after $J$ and the selected observed strips are fixed. The constant may depend on these strips.
\end{lem}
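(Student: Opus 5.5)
We reduce the estimate to a finite-dimensional recovery problem at each angular frequency, followed by an angular localization. The key facts are that the columns \eqref{upper-columns} are, up to $\mathrm{O}(1/n)$ corrections of degree at least $J+1$, products of the gauge $g_\sigma$, the oscillation $e^{i\sigma n z_\sigma}$ and a normal monomial $w_\sigma^j$, and that $V_\sigma z_\sigma=0$.

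\emph{Recovering the polynomial part.} Applying $\mathcal{D}_\sigma^k$ to $K_J(t)f$ removes the gauge on the $\sigma$-branch and differentiates only the normal polynomial: on a $\sigma$-column, $\mathcal{D}_\sigma^k$ acts as $\partial_{w_\sigma}^k$ times $g_\sigma e^{i\sigma nz_\sigma}$. On the opposite branch $-\sigma$, the operator $V_\sigma$ does hit the oscillation. This is why we use both signs. Taking $\mathcal{D}_+^k$ controls the $\sigma=+$ coefficients, while the $\sigma=-$ terms are separated by their opposite angular frequency. The contribution of the $-$ branch to the $+$ frequencies comes only from the coefficient coupling, which is exponentially small in the frequency difference.

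Fix a phase angle $\theta_1$ and an observed point $(\epsilon\sqrt{2A},\theta_1)$, with $\epsilon=\pm$. Since $\omega$ is open, a small box $\{|r-\epsilon\sqrt{2A}|<\delta,\ |\theta-\theta_1|<\delta\}$ is observed. By compactness, finitely many such boxes cover all angles, and $\omega'$ is taken to be their union. On a box, at the level $r^2/2=A$, the $\sigma$-part of $K_Jf$ is
\[
g_\sigma e^{-nA}\sum_j f_{\sigma,j,n}n^{j/2+1/4}\bigl(w_\sigma^j+\mathrm{O}(w^{J+1}/n)\bigr)e^{i\sigma n\theta}.
\]
The quantities $\mathcal{D}_\sigma^kK_Jf$, $0\le k\le J$, evaluated along the normal segment, determine the jet in $w_\sigma$. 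Since $w_\sigma=r+\mathrm{O}(r^2)$ is nonzero at $r=\pm\sqrt{2A}$, the triangular (Vandermonde-type) relation between the values $\partial_w^kw^j$ and the coefficients is invertible, with constants depending on $J$ and $A$. The $\mathrm{O}(1/n)$ corrections are absorbed for $n\ge N_0$ by enlarging the angular threshold, as the statement allows.

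\emph{Angular localization.} From this we obtain, for each box, control of the angular functions
\[
G_{\sigma,j}(\theta)=\sum_n f_{\sigma,j,n}n^{j/2+1/4}e^{-nA}e^{i\sigma n\theta}
\]
in $L^2$ of the box's angular interval. Extending this to control of the whole series $\sum_n|f_{\sigma,j,n}|^2e^{-2hn}$ with $h>A$ is the main obstacle, because local-in-$\theta$ $L^2$ control of a function does not give global Fourier control. We will use analyticity. $G_{\sigma,j}$ is the boundary value of a holomorphic function in $\Im\theta>0$ (for $\sigma=+$), and $e^{-(h-A)N}G$ corresponds to evaluating it at height $h-A$ inside the disk picture $\xi=e^{i\alpha\theta}$. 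We then apply a two-constants/Poisson estimate: a function holomorphic in the unit disk (bounded in $L^2$ on circles), whose boundary values are controlled on arcs covering the circle, is controlled on the circle of radius $e^{-\alpha(h-A)}$. Since the arcs cover the whole circle, this step is actually a direct Hardy-space estimate: the $L^2$ norm on the full circle is bounded by the sum over the arcs. We then shrink the radius, and the Fourier weights $e^{-(h-A)n}\le1$ only help.

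The real difficulty is that the sides differ between boxes. The box data recover the same coefficients $f_{\sigma,j,n}$ because both sides share $z_\sigma=\theta+\sigma iA$. Only the gauge and the normal amplitudes change, and these are inverted locally as in the first step. The gap $h-A>0$ absorbs polynomial factors $n^{j/2+1/4}$, the cutoff errors and the cross-frequency defects, the latter decaying like $e^{-s_*|m-\sigma n|}$. Finally, norm equivalence of $K_J$ gives $\|e^{-hN}K_J(t)f\|^2\lesssim\sum|f|^2e^{-2hn}$. The range $(2J+3)t<b_*$ enters only through the uniform bounds on $b_{\sigma,j,n}$. For exterior phase graphs, any $A\in(M_\Phi,h)$ satisfies the hypothesis by the definition of $M_\Phi$.
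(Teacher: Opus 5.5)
Your architecture matches the paper's. You use pointwise triangular inversion of the jet $(\mathcal{D}_\sigma^kK_J(t)f)_{k\leq J}$; its matrix has diagonal entries $g_\sigma k!$, so it is invertible whether or not $w_\sigma\neq0$. You use the fact that both sides of a level share $z_\sigma$, so either side recovers the same angular functions. You then cover the circle by finitely many boxes and apply Parseval once the arcs cover it; the analytic-continuation detour is indeed unnecessary. However, two steps are missing or would fail as stated.

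The first missing step is the passage from the area norm $L^2(\omega')$ on the right to your single-level functions $G_{\sigma,j}$, which carry $e^{-nA}$. A two-dimensional $L^2$ norm does not by itself control a restriction to the curve $r^2/2=A$. Moreover, at a nearby level $r^2/2=B$ the same inversion recovers $g_{\sigma,j,B}=\sum_n n^{j/2+1/4}e^{-Bn}f_{\sigma,j,n}e^{i\sigma n\theta}$, not $G_{\sigma,j}$. The paper proceeds as follows:
\begin{enumerate}
\item It shrinks the boxes to a common height range $A_0<A<A_1<h$.
\item On every level $B\in[A_0,A_1]$ it performs the inversion against smooth angular cutoffs with $\vartheta_+^2+\vartheta_-^2\geq1$, so that the full-circle norm of $g_{\sigma,j,B}$ appears.
\item It integrates in $B$; the Jacobian is positive since $A_0>0$.
\item By Parseval, $\Vert g_{\sigma,j,B}\Vert_{L^2(\T_\ell)}^2=\sum_n n^{j+1/2}e^{-2Bn}\vert f_{\sigma,j,n}\vert^2$ is nonincreasing in $B$, so the integral controls this coefficient norm at $B=A_1$.
\end{enumerate}
This monotonicity holds only for the full-circle norm, so the covering must be summed before integrating in $B$. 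In the paper's route, this is where $A<h$ is used (via $A_1<h$), not in absorbing defects. Your last step is also not plain norm equivalence: $e^{-hN}$ does not commute with $K_J$. One needs a Schur bound for the synthesis, using the angular decay of the columns on a strip wider than $h$; this is where $h<h_*$ enters.

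The second problem is the sign mixing. On the observed set, $\mathcal{D}_+^kK_J(t)f$ also contains the opposite-sign part, which is not small pointwise. Since $V_+e^{-imz_-}=-2mr\,e^{-imz_-}$, that part keeps the factor $e^{-im\theta}$ and gains factors up to $m^k$. Its separation from the positive frequencies is exponential only on the whole circle, but you can only observe on arcs. With sharp arcs, the cross Gram entries between inputs $+n$ and $-m$ decay only like $(n+m)^{-1}$, which does not become small as $N_0\to+\infty$. So your claim that the coupling is exponentially small in the frequency difference fails after localization. The paper instead:
\begin{enumerate}
\item uses the smooth cutoffs $\vartheta_\pm$;
\item integrates by parts $L$ times against $e^{\pm i(n+m)\theta}$, which bounds the cross entries by $C_{J,L}(n+m)^{2J-L}$;
\item after summing the matching squares over the whole circle, absorbs these terms by enlarging $N_0$ until the cross Gram norm is below $c_J/4$.
\end{enumerate}
The gap $h-A$ plays no role here, since both signs sit at the same exponential level $e^{-Bn}$. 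Likewise, the $w_\sigma^{J+1}b_{\sigma,j,n}$ corrections are absorbed through Schur sums $\mathrm{O}(N_0^{-1})$ acting on the already attenuated coefficients. With these two repairs your outline becomes the paper's proof.
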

\begin{proof}
At each angle, choose an observed point on one of the two sides at height $A$. Openness gives a product of angular and height intervals contained in $\omega$ around that point. A finite collection of these products covers the circle at height $A$. A smooth partition subordinate to their angular intervals, grouped by side and rescaled, gives cutoffs $\vartheta_\pm$ with $\vartheta_+^2 + \vartheta_-^2 \geq 1$. Shrinking the finitely many height intervals gives $0<A_0<A<A_1<h$ such that the selected strips over the cutoff supports are compactly contained in $\omega$. For exterior graphs, choose $A_0>M_\Phi$; continuity of the edges suffices.

On the level set $\mathcal{A} = B$, with $B\in[A_0,A_1]$, set $g_{\sigma,j,B} = \sum_{n \geq N_0}n^{j/2 + 1/4}e^{-Bn}f_{\sigma,j,n}e^{i\sigma n\theta}$. The leading term for angular sign $\sigma$ is $g_\sigma\sum_{j = 0}^Jw_\sigma^jg_{\sigma,j,B}$. Its value and matching derivatives act on this vector of angular coefficients by the triangular matrix defined by
$$
(\mathsf{M}_\sigma)_{kj} = g_\sigma\frac{j!}{(j - k)!}w_\sigma^{j - k}\quad\text{if}\ j \geq k, \qquad (\mathsf{M}_\sigma)_{kj} = 0\quad\text{if}\ j<k.
$$
Its determinant is $g_\sigma^{J + 1}\prod_{j = 0}^Jj!$, bounded away from zero on the compact selected strips. The higher-degree correction in
\eqref{upper-columns} gives Fourier entries
$C_Ln^{-1}(1 + \vert m - \sigma n\vert )^{-L}$; both Schur sums tend to zero as $N_0\to + \infty$.

For a mixture of angular signs, retain both families of derivatives. All differentiations are performed before restriction to $r = s\sqrt{2B}$, where $s = \pm1$ denotes the observed side. Since $V_\sigma z_\sigma = 0$, the matching derivatives differentiate only the normal amplitude in the coordinate $w_\sigma$. For the opposite sign, $V_{-\sigma}e^{i\sigma nz_\sigma} = -2nr e^{i\sigma nz_\sigma}$. Repeated differentiation therefore preserves the angular factor $e^{i\sigma n\theta}$ and introduces at most $n^k$ at derivative order $k$.

At fixed $B$, write $a_{\sigma,j,n} = n^{j/2 + 1/4}e^{-Bn}f_{\sigma,j,n}$ and $U_{J,\sigma} = K_{J,\sigma}f_\sigma$. The triangular inverse and the correction estimate imply, uniformly in $B\in[A_0,A_1]$,
$$
c_J\sum_{j,n}\vert a_{\sigma,j,n}\vert^2 \leq \sum_{k = 0}^J\Big(\int\vartheta_+^2\vert \mathcal{D}_\sigma^kU_{J,\sigma}(\sqrt{2B},\theta)\vert^2d\theta + \int\vartheta_-^2\vert \mathcal{D}_\sigma^kU_{J,\sigma}(-\sqrt{2B},\theta)\vert^2d\theta\Big)
$$
after increasing $N_0$. To see why the correction is small in the required norm, its Fourier entries are bounded by $C_Ln^{-1}(1 + \vert m - \sigma n\vert)^{-L}$; both Schur sums are $\mathrm{O}(N_0^{-1})$. This estimate acts on the already attenuated coefficients $a_{\sigma,j,n}$ and introduces no additional exponential loss.

In the squared observation norm of $U_{J,+} + U_{J,-}$, the matching squares above are accompanied by nonnegative opposite-sign squares and by cross terms. Every cross term between frequencies $n,m \geq N_0$ contains $e^{\pm i(n + m)\theta}$. The amplitudes, the selected cutoffs, and their derivatives are uniformly bounded on the compact strips, apart from polynomial factors of degree at most $2J$ in $n,m$. Integration by parts $L$ times in $\theta$ therefore bounds the cross Gram entries by $C_{J,L}(n + m)^{2J - L}$. Taking $L>2J + 2$, their two Schur sums are bounded by $C_{J,L}N_0^{2J - L + 1}$ and tend to zero. Choose $N_0$ so that this cross Gram norm is less than $c_J/4$. Cauchy-Schwarz absorbs the mixed terms into the matching squares:
\begin{multline*}
\sum_j\big(\Vert g_{+,j,B}\Vert^2 + \Vert g_{-,j,B}\Vert^2\big) 
\leq C_J\sum_{k = 0}^J\Big(\int\vartheta_+^2\big(\vert \mathcal{D}_+^kK_Jf(\sqrt{2B},\theta)\vert^2 + \vert \mathcal{D}_-^kK_Jf(\sqrt{2B},\theta)\vert^2\big)d\theta \\
+ \int\vartheta_-^2\big(\vert \mathcal{D}_+^kK_Jf(-\sqrt{2B},\theta)\vert^2 + \vert \mathcal{D}_-^kK_Jf(-\sqrt{2B},\theta)\vert^2\big)d\theta\Big).
\end{multline*}
Integrate in $B$. Parseval makes the left-hand norms decreasing in $B$; the Jacobians of the change of variables are uniformly positive. This controls the coefficient norm $\sum_{j,n}n^{j + 1/2}e^{-2A_1n}\big(\vert f_{+,j,n}\vert ^2 + \vert f_{-,j,n}\vert ^2\big)$. To obtain the asserted $\mathscr{H}$ norm, the synthesis operator from these normalized inputs has angular entries bounded by
$C_J e^{-(h - A_1 + r^2/2)n}e^{-(s - h)\vert m - \sigma n\vert }$ for $s>h$,
using its fixed coefficient strip. Both Schur sums are finite, also after integration in $r$. This proves the lemma and its extension by closure.
\end{proof}

\paragraph{The local observation estimate.}

Observed parabolic regularity controls the required derivatives of $U$, and the complementary estimates control those of $v$. Applying Lemma~\ref{common-height} to $U_J = U - v$ therefore handles possible cancellation between the two components on the observation set. This gives a weighted observation bound at any positive time. We then remove its loss by heat decay; choosing $M_\Phi<A<h<T$ leaves time for the preceding estimates.
\begin{prop}\label{weighted-observation}
For an exterior phase-graph collar with threshold $M_\Phi$, every $M_\Phi<h<h_*$ and every $\tau\in(0,T)$ satisfy
$\Vert e^{-hN}U(\tau)\Vert ^2 \leq C_{h,\tau,T}\obs$.
More generally the conclusion holds if some level $\mathcal{A} = A<h$ meets the observation set on every phase fiber as in Lemma~\ref{common-height} and the unobserved set lies in a sufficiently thin tube. For phase graphs, the allowed thickness is independent of $h - M_\Phi$.
\end{prop}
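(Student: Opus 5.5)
The proof will combine the rough estimate of Proposition~\ref{rough-state}, the complementary dissipation argument built on Proposition~\ref{adapted-subspace}, and the level-set recovery of Lemma~\ref{common-height}. Fix $M_\Phi<A<h<h_*$ and $\tau\in(0,T)$.

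\emph{Choice of parameters.} The collar lies in $\vert r\vert\leq d$, so $d$ depends only on $\max\Vert\rho_\pm\Vert_\infty$ and not on $h-M_\Phi$. We take $d$ small enough that the loss $B_0\leq Cd^2$ of Proposition~\ref{rough-state} satisfies $B_0<\min(\gamma_*,s_*,h)$. This is possible because $\gamma_*,s_*$ are independent of $J$. Next we choose $J$ so large, and $\varepsilon<1/4$, that $t_J=B_0/\gamma_J$ is small. Then we pick $t_0\in(0,\tau)$ with
$$
t_0+t_J<\tau<t_0+b_*/(2J+3)
$$
and $t_0+t_J$ bounded away from $\tau$. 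This requires $\tau-t_0<b_*/(2J+3)$, so $t_0$ is taken close to $\tau$. Since $\tau$ is arbitrary in $(0,T)$, a fixed $J$ depending on $\tau$ suffices.

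\emph{Control of the complement.} Proposition~\ref{rough-state}, applied on a compact interval around $t_0$, gives $\Vert e^{-B_0N}U(t_0)\Vert^2\leq C\obs$. It also controls the defect forcing $(R^*-R)U$ uniformly on $[t_0,\tau]$, through the appendix bound $\Vert e^{\eta N}(R^*-R)U\Vert\leq C_J\Vert e^{-B_0N}U\Vert$. The moving-weight energy argument described before the lemma then yields $\Vert v(t)\Vert^2\leq C\obs$ for $t\geq t_0+t_J$. Lemma~\ref{tail-jets} upgrades this to $L^2(I;H^k(\omega'))$ bounds on the compact observed strips $\omega'$ of Lemma~\ref{common-height}, on a small interval $I$ around $\tau$.

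\emph{Recovery of the retained part.} Interior parabolic regularity in the observed regular strips gives
$$
\Vert \mathcal{D}_\pm^kU\Vert_{L^2(I;L^2(\omega'))}^2\leq C\obs,
$$
since the $\mathcal{D}_\pm$ are smooth differential operators there. Subtracting the bound on $v$ gives the same for $U_J=U-v=K_J(t-t_0)f(t)$. Lemma~\ref{common-height} then yields $\int_I\Vert e^{-hN}U_J(t)\Vert^2dt\leq C\obs$. Adding $\Vert e^{-hN}v\Vert\leq\Vert v\Vert$ gives $\int_I\Vert e^{-hN}U(t)\Vert^2dt\leq C\obs$.

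\emph{Pointwise bound at $\tau$.} To pass to the value at $\tau$, we pick $I$ with right endpoint $\tau$. We then propagate the weighted bound forward at fixed weight $-h$, using the last assertion of Lemma~\ref{weighted-dissipation}, and average over the starting time in $I$.

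\emph{General level sets.} The argument only used that a level $\mathcal{A}=A$ is observed on every fiber and that the unobserved set lies in $\vert r\vert\leq d$. Hence the general statement follows verbatim.

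\emph{Main obstacle.} The delicate point is the time bookkeeping. The whole sequence must fit inside the construction window $(2J+3)(t-t_0)<b_*$ of $K_J$: the waiting time $t_J$, the interval $I$, and the propagation up to $\tau$. At the same time, $d$, and hence $B_0$, must be fixed before $J$. If $\tau$ falls outside this window, I would first apply the estimate at an earlier time $\tau'<\tau$ inside the window and then propagate to $\tau$ at constant weight $-h$.
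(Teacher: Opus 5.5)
Your overall route is the paper's: the rough estimate of Proposition~\ref{rough-state}, moving-weight control of the complement $v$, Lemma~\ref{tail-jets}, Lemma~\ref{common-height} applied to $U_J=U-v$ on the selected strips, then propagation at constant weight $-h$ by Lemma~\ref{weighted-dissipation}. The gap is in the parameter choice, which is exactly the step you flag as the main obstacle. The weight must climb from $-B_0$ to $0$ at speed $\gamma_J=2J+3-2\varepsilon$, which takes time $t_J=B_0/\gamma_J$. This climb, together with a further interval $I$ with positive time margins, must fit inside the existence window of $K_J(t-t_0)$, whose length is $b_*/(2J+3)$. The ratio $t_J(2J+3)/b_*=\frac{B_0}{b_*}\cdot\frac{2J+3}{2J+3-2\varepsilon}$ is essentially independent of $J$. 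So enlarging $J$, or moving $t_0$ towards $\tau$, creates no room. Your requirement $t_0+t_J<\tau<t_0+b_*/(2J+3)$ can be met only if $B_0<b_*(2J+3-2\varepsilon)/(2J+3)$. Nothing in your conditions implies this, because $b_*$ is fixed in Lemma~\ref{current-graph} independently of $\gamma_*$ and $s_*$. Your fallback of working at an earlier $\tau'$ does not help either: the obstruction is that the window is shorter than the waiting time, not where $\tau$ lies. What is missing is a $J$-independent thinness condition imposed on $d$ before $J$, for example $4B_0<b_*$ as in the paper. With it, $(2J+3)\,3t_J<4B_0<b_*$ for every $J$ and every $\varepsilon<1/4$, so $[t_0,t_0+3t_J]$ lies in the window. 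The interval $I=(t_0+2t_J,t_0+5t_J/2)$ then keeps margins of at least $t_J/2$ on both sides. $J$ is only used to make $t_0+3t_J<\tau$, and you then propagate from $I$ to $\tau$.

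Conversely, the condition $B_0<h$ in your first step must be removed. It makes the admissible thickness depend on $h$, which contradicts the last assertion you want to prove. For phase graphs it is in fact unsatisfiable near the threshold. In Appendix~\ref{rough-appendix}, $B_0=4\kappa_0b^2$ with $b=3d$ and $\kappa_0>2$, so $B_0>72d^2\geq144M_\Phi$ whenever the collar lies in $\vert r\vert\leq d$. Hence $B_0<h$ fails for every $h$ close to $M_\Phi$. Moreover, if $B_0<h$ held, the statement would follow at once from Proposition~\ref{rough-state}, since $\Vert e^{-hN}U\Vert\leq\Vert e^{-B_0N}U\Vert$. The whole adapted-subspace argument exists precisely to reduce the loss from $B_0$ to some $h<B_0$ close to $M_\Phi$. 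You never use $B_0<h$ afterwards, so the repair is to replace it by $4B_0<b_*$. Keep alongside it $B_0<\min(\gamma_*,s_*)$, $d^2/2<h_*$, and $\vert r\vert\leq 3d$ inside $\{\varkappa=1\}$. All of these are independent of $h$, which gives a thickness independent of $h-M_\Phi$.
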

\begin{proof}
Fix the domains and constants $b_*,\gamma_*,s_*$ in the preceding construction. Choose the containing tube $\vert r\vert \leq d$ thin enough that
$4B_0<b_*$, $B_0<\frac{1}{4}\min(\gamma_*,s_*)$ and $d^2/2<h_*$.
For phase graphs this gives $M_\Phi<h_*$. Decrease $d$ once more so that $\vert r\vert \leq 3d$ lies where $\varkappa = 1$. The selected observed strips lie where $\chi = \varkappa = 1$ by the definition of $h_*$. If $B_0 = 0$, the rough estimate itself suffices. Fix $\varepsilon\in(0,1/4)$ and choose $0<t_0<\tau/2$. With $t_J = B_0/(2J + 3 - 2\varepsilon)$ as above, take $J$ large enough that $t_0 + 3t_J<\tau$. Fix the selected strips around a level $\mathcal{A} = A<h$, and enlarge the angular threshold as required by their Gram estimate. Restart the construction at $t_0$ with $K_J(t - t_0)$. Proposition~\ref{rough-state} controls the initial weighted complement and the residual forcing. The family is defined throughout $[t_0,t_0 + 3t_J]$ since $(2J + 3)3t_J<4B_0<b_*$.

The complementary estimates above control $v = (I - P)U$ in $\mathscr{H}$ from $t_0 + t_J$ onwards. On $I = (t_0 + 2t_J,t_0 + 5t_J/2)$, they control every required spatial derivative in $L^2(I\times\omega')$. The interval $I$ stays at distance at least $t_J/2$ from the endpoints of $(t_0+t_J,t_0+3t_J)$, leaving the positive time margins required by interior parabolic regularity. On the selected strips $U = u$. Apply Lemma~\ref{common-height} to $U_J = PU = U - v = K_Jf$, where $f = K_J^\dagger U\in\mathcal{F}_J$, and integrate:
$$
\int_I\Vert e^{-hN}U_J(t)\Vert ^2dt \leq C\obs, \qquad \int_I\Vert e^{-hN}U(t)\Vert ^2dt \leq C\obs.
$$
Select a controlled time in $I$ and propagate its negative-weight state to $\tau$ by Lemma~\ref{weighted-dissipation} at constant weight.
\end{proof}

\begin{prop}\label{local-upper}
For every $T>M_\Phi$, the localized actual state satisfies $\Vert U(T)\Vert ^2 \leq C_T\obs$. For a general geometry as in Lemma~\ref{common-height}, the same conclusion holds for $T>A$ when the level $\mathcal{A} = A<h_*$ meets the observation set on every phase fiber and the unobserved set lies in the sufficiently thin tube of Proposition~\ref{weighted-observation}.
\end{prop}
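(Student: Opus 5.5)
The plan is to combine Proposition~\ref{weighted-observation}, which gives a weighted bound at a time strictly before $T$, with the weighted dissipation of Lemma~\ref{weighted-dissipation}, which converts a negative angular weight into an unweighted bound after a waiting time slightly larger than the weight. The terminal estimate at $T$ itself will come from an energy identity rather than from a source estimate at $T$.

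Fix $T>M_\Phi$. Choose $h$ with $M_\Phi<h<\min(T,h_*)$, and $\varepsilon\in(0,1)$ so small that $h/(1-\varepsilon)<T$. Then choose $\tau>0$ with $\tau+h/(1-\varepsilon)<T$, leaving a positive margin $\delta$ before $T$. Proposition~\ref{weighted-observation} gives $\Vert e^{-hN}U(\tau)\Vert^2\le C\obs$.

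\textbf{Unweighted bound before $T$.} Apply the moving-weight argument of Lemma~\ref{weighted-dissipation} starting at $s=\tau$. Set $\eta(t)=-h+(1-\varepsilon)(t-\tau)$ and $z=e^{\eta N}U$. The coercivity $\Re\mathfrak{h}_\eta(z,z)\ge(1-\varepsilon)\Vert N^{1/2}z\Vert^2-C\Vert z\Vert^2$ absorbs the term $\eta' Nz$. The source $e^{\eta N}F_{\rm obs}$, with $\eta\le0$, is bounded in $L^2(I;\mathscr{H})$ by $C\obs^{1/2}$ on the compact interval $I=[\tau,T-\delta/2]\Subset(0,T)$. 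Gronwall then gives $\Vert U(t_1)\Vert^2\le C\obs$ at $t_1=\tau+h/(1-\varepsilon)\le T-\delta$.

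\textbf{Propagation to $T$.} Compact time margins are unavailable near $T$, so I do not use the source bound there. Instead I propagate using the global solution. Since $e^{-t\triangle}$ is a contraction on $L^2(M,\nu)$, it suffices to bound $\Vert u(t_1)\Vert_{L^2(M,\nu)}$. I split $u(t_1)$ into its part $U(t_1)$ in the cylinder, which is already controlled, and the part outside the inner cylinder. The outer part lies in the observed strips or in the region treated by the global argument of Subsection~\ref{surface-global}. Locally, without the global statement, I argue instead on the cylinder alone. Write the energy identity $\frac{d}{dt}\Vert U\Vert^2+2\Re\mathfrak{h}_0(U,U)=2\Re\langle F_{\rm obs},U\rangle$ on $[t_1,T]$. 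The source $F_{\rm obs}=[\triangle_\Phi,\varkappa]u$ is supported in observed strips and contains only first-order derivatives of $u$. I integrate by parts to move the derivative onto $U$, and absorb it into the form $\mathfrak{h}_0$ via a Caccioppoli-type bound. The remaining term $\int_{t_1}^T\Vert u\Vert^2_{L^2(\omega)}\,dt\le\obs$ needs no interior regularity in time. Gronwall then yields $\Vert U(T)\Vert^2\le C_T\obs$.

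\textbf{General geometry.} For the second assertion, with a level $\mathcal{A}=A$ meeting every fiber, I pick $A<h<\min(T,h_*)$. The second part of Proposition~\ref{weighted-observation} then gives the same weighted bound, and the steps above repeat verbatim.

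\textbf{Main obstacle.} The delicate point is the last step: controlling the commutator source up to the terminal time $T$. I expect the absorption of $\langle[\triangle_\Phi,\varkappa]u,U\rangle$, whose $\chi'\partial_r$ term involves derivatives of $u$ on the strip, via $\Vert\partial_r U\Vert\lesssim\mathfrak{h}_0(U,U)^{1/2}$ together with Young's inequality, to be the step that requires care. Everything else is a direct concatenation of the earlier results.
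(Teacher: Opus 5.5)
Your proposal is correct and follows the paper's proof: Proposition~\ref{weighted-observation} together with the moving weight of Lemma~\ref{weighted-dissipation} gives an unweighted cylinder bound at a time $s<T$, and the step from $s$ to $T$ is an energy identity for $\varkappa u$ that uses only the observed $L^2$ mass. The obstacle you anticipate is milder than you expect: taking real parts gives exactly $2\re\langle[\triangle_\Phi,\varkappa]u,\varkappa u\rangle = 2\int c\vert\varkappa'\vert^2\vert u\vert^2$, and $\langle\triangle_\Phi(\varkappa u),\varkappa u\rangle\geq0$, so the paper needs neither Young's inequality nor Gronwall there, and your detour through global contractivity is unnecessary.
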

\begin{proof}
Choose $M_\Phi<h<\min(T,h_*)$ and $\varepsilon,\tau>0$ such that $\tau + h/(1 - \varepsilon)<T$. For a general geometry satisfying the level-set condition, replace $M_\Phi$ by $A$ in this choice. Proposition~\ref{weighted-observation} and Lemma~\ref{weighted-dissipation} give an unweighted cylinder bound at a time $s<T$. Propagate to the terminal endpoint using the cutoff identity
$$
\frac{d}{dt}\Vert \varkappa u\Vert ^2 = -2\langle \triangle_\Phi (\varkappa u),\varkappa u\rangle + 2\int c\vert \varkappa'\vert ^2\vert u\vert ^2 \leq C\Vert u\Vert_{L^2(\omega)}^2.
$$
The integral is supported where $\varkappa'$ is nonzero, hence in $\omega$. Integrating from $s$ to $T$ therefore uses only the observed $L^2$ mass and requires no estimate of spatial derivatives of $F_{\rm obs}$ at the terminal time.
\end{proof}

\subsection{Completion of the proof on the surface}\label{surface-global}

The local obstruction already uses global heat trajectories. For the upper bound, the estimates near the selected circles are combined with elliptic heat observability on the remaining regular region, over the same time interval.

Set $M_i = \max_\theta\min((\rho_i^+)^2,(\rho_i^-)^2)/2$, so $M_\Phi(\omega) = \max_iM_i$. Choose $r_*>0$ satisfying the fixed upper analytic restrictions, such that all collars lie where the construction cutoffs equal one and $r_*^2/2<\min_i h_{{\rm loc},i}$. The cutoff transition strips lie inside the fixed tubes. These choices depend only on the tubes and their analytic data, and precede the profiles and the observation time.

\begin{proof}[Proof of Theorem~\ref{thm:exact-time}]
\emph{Lower bound.} For $T<M_\Phi(\omega)$ choose $i$ with $T<M_i$. The fixed cutoff in that tube equals one on the unobserved collar. On the support of its derivative, $\mathcal{A}_i>h_{{\rm loc},i}>M_i$. Proposition~\ref{local-lower} therefore supplies global trajectories violating observation at time $T$. Since $M_\Phi(\omega) = \max_iM_i$, this proves $T_{\rm obs}(\omega) \geq M_\Phi(\omega)$.

\emph{Upper bound.} Fix $T>\max_i M_i$. Choose normal cutoffs $\varkappa_i$, supported in $\mathcal{C}_i$, equal to one near $K_i$, with transitions in observed regular strips. If $\mathcal{U}_i$ is the local coordinate and density unitary map, set $U_i = \varkappa_i\mathcal{U}_iu$. Proposition~\ref{local-upper}, with observed interior regularity for its cutoff sources, gives $\Vert U_i(T)\Vert_{L^2(\mathcal{C}_i)}^2\leq C_{i,T}\mathcal{O}_T(u)$.
The remaining singular set is a compact subset of $\omega$. Choose $0\leq\varkappa_s\leq1$ in $C_c^\infty(\omega)$, equal to one near it, and take $0\leq\varkappa_i\leq1$. The smooth function $\vartheta_0 = 1-(1-\varkappa_s)\prod_i(1-\varkappa_i)$ equals one near every singular point, and $\{0<\vartheta_0<1\}\subset\omega$. Choose regular values $0<\delta_0<\delta_1<1$ and a smooth function $\chi_e:[0,1]\to[0,1]$ equal to one on $[0,\delta_0/2]$ and to zero on $[\delta_0,1]$. Set $\varkappa_e = \chi_e(\vartheta_0)$ and $\Omega_e = \{\vartheta_0<\delta_1\}$. Then $\operatorname{supp}\varkappa_e\Subset\Omega_e$, the closure of $\Omega_e$ is contained in the regular region, and both $\partial\Omega_e$ and $\operatorname{supp}\nabla\varkappa_e$ lie in $\omega$. Compactness gives $\varkappa_e^2+\varkappa_s^2+\sum_i\varkappa_i^2\geq c_0>0$: wherever the last terms all vanish, $\varkappa_e=1$. Every connected component of $\Omega_e$ has nonempty boundary, since $M$ is connected and a nonempty singular neighbourhood has been removed; an interior collar of that boundary is observed. If $\Omega_e$ is empty, the elliptic contribution below is absent.

On $\Omega_e$, the function $y = \varkappa_eu$ satisfies Dirichlet boundary conditions and $\partial_ty+\triangle y = f = [\triangle,\varkappa_e]u$. Let $\triangle_{\Omega_e}$ be the nonnegative Dirichlet realization, which is uniformly elliptic. Fix $0<s<t_1<T$. Homogeneous heat observability holds on each component \cite{FI96,LR95}. To include the source, set $q(t)=\int_s^t e^{-(t-\tau)\triangle_{\Omega_e}}f(\tau)\,d\tau$. Contractivity gives $\sup_{s\leq t\leq t_1}\Vert q(t)\Vert^2\leq(t_1-s)\int_s^{t_1}\Vert f(t)\Vert^2dt$. Applying homogeneous observability to $y-q$ and bounding its terminal and observed $q$ terms yields
$$
\Vert y(t_1)\Vert^2 \leq C_{s,t_1,\Omega_e}\Big(\int_s^{t_1}\Vert y(t)\Vert_{L^2(\omega\cap\Omega_e)}^2dt+\int_s^{t_1}\Vert f(t)\Vert^2dt\Big).
$$
The source is supported in a compact observed regular set. Interior parabolic regularity, with time margins inside $(0,T)$, bounds its integral by $C\mathcal{O}_T(u)$. Hence $\Vert\varkappa_eu(t_1)\Vert^2 \leq C\mathcal{O}_T(u)$.

For every smooth real cutoff $\zeta$, the horizontal form product rule yields
$$
\frac{d}{dt}\Vert\zeta u(t)\Vert^2 = -2\mathcal{E}(\zeta u(t),\zeta u(t))+2\int_M\vert\nabla_H\zeta\vert^2\vert u(t)\vert^2d\nu.
$$
Applied to $\varkappa_e$, this propagates the bound from $t_1$ to $T$, since its gradient is supported in $\omega$. Applied to $\varkappa_s$, it gives the terminal bound after choosing an initial time with controlled observed mass. These identities extend to $L^2$ initial data by approximation. Multiplying $\varkappa_e^2+\varkappa_s^2+\sum_i\varkappa_i^2 \geq c_0$ by $\vert u(T)\vert^2$ and integrating over $M$, the preceding estimates and the inequality $\Vert U_i(T)\Vert_{L^2(\mathcal{C}_i)}^2\leq C_{i,T}\mathcal{O}_T(u)$ now prove observability.
%
\end{proof}

\subsection{Beyond exterior phase collars}\label{sec:general-observation}

We prove Corollary~\ref{cor:general-bounds} using the same fixed tubes and cutoffs as Theorem~\ref{thm:exact-time}.

\begin{proof}[Proof of Corollary~\ref{cor:general-bounds}]
For every observed point in tube $i$, $\underline m_i(\Theta_i(p))\leq\mathcal{A}_i(p)$. Also $\sup\underline m_i\leq d_i^2/2<h_{{\rm loc},i}$, so Proposition~\ref{local-lower} gives the lower bound.
The observed annuli give $\mathfrak{A}(\omega)\leq\max_i d_i^2/2$. Decrease the fixed $r_*$ so that $r_*^2/2<\min_i h_{*,i}$. For $T>\mathfrak{A}(\omega)$, we can therefore choose in every tube a common observed level $A_i<\min(T,h_{*,i})$. Proposition~\ref{local-upper} and the surface argument of Subsection~\ref{surface-global} give observability in time $T$.
For an exterior collar, $m_i=\underline m_i=\min((\rho_i^+)^2,(\rho_i^-)^2)/2$. Its maximum is the infimum of common observed levels, proving the last assertion.
\end{proof}

These bounds separate two questions. The obstruction uses the first observed value of $\mathcal{A}_i$ at each phase angle. The upper estimate requires a single level of $\mathcal{A}_i$ observed on at least one side at every angle. Observation sets with holes can give different bounds. Within this class, it remains to determine which observation sets satisfy $\mathfrak{m}(\omega)=\mathfrak{A}(\omega)$ beyond exterior collars. The full observed annuli are still required; the corollary gives no criterion for arbitrary open subsets of the surface.

\section{Proof of Theorem B}\label{dyn:section}

The proof uses a weighted energy estimate on a periodic tube and states concentrated near its central circle. At higher step, their observed signal decays faster than their terminal heat mass at every fixed time. Subsection~\ref{dyn:completion} transfers this obstruction to the surface and completes the proof. Subsection~\ref{dyn:extensions} then gives the step-two and Gevrey variants, under weaker regularity assumptions.

\subsection{Weighted heat evolution}

Fix a periodic tube with orthonormal frame $X_1 = \partial_x$, $X_2 = x^me^{\varphi(x,y)}\partial_y$, where $m\in\N$, $m \geq 1$ and $\varphi(0,y) = 0$. Set $r = m + 1$ and write $\dd{d}\nu = \rho\,\dd{d}x\,\dd{d}y$. We require $\varphi$ to be real and $C^2$ in $x$ with values in $C^0(\T_\ell)$, and the conjugated potential below to be real and continuous. Density conjugation gives
$\triangle_{x,m} = -\partial_x^2 + DQ_mD + V_m$ with $D = -i\,\partial_y$, $Q_m = x^{2m}e^{2\varphi}$ and $V_m = \rho^{-1/2}\big(\partial_x^2\rho^{1/2} + \partial_y(Q_m\partial_y\rho^{1/2})\big)$.

For $0<L<r_0$, let $\triangle_L$ be its nonnegative Dirichlet realization in $\mathcal{H}_L = L^2(\Omega_L,\dd{d}x\,\dd{d}y)$, where $\Omega_L = (-L,L)\times\T_\ell$. Its form $\mathfrak{q}_L$ is $\int_{\Omega_L}(\vert \partial_xw\vert ^2 + Q_m\vert Dw\vert ^2 + V_m\vert w\vert ^2)$, with domain $\mathcal{V}_L$ the completion of $C_c^\infty((-L,L);C^\infty(\T_\ell))$ for the norm
\begin{equation}\label{dyn:form-norm}
\big(\Vert w\Vert ^2 + \Vert \partial_xw\Vert ^2 + \Vert x^mDw\Vert ^2\big)^{1/2}.
\end{equation}
All local norms below are in $\mathcal{H}_L$. The argument also applies to bounded real potentials when $\triangle_L$ has a fixed lower bound.

Let $\mathcal{H}^\infty_\ell(S_\sigma)$ be the bounded periodic holomorphic functions on $S_\sigma = \{y\in\C/\ell\Z\,\mid\, \vert \im y\vert <\sigma\}$.
\begin{hyp}[Tangential analyticity]\label{dyn:regularity}
In the prescribed tube coordinates, we assume that, for some $\sigma_0>0$,
$\varphi\in C^2([-r_0,r_0];\mathcal{H}^\infty_\ell(S_{\sigma_0}))$ and $V_m\in C^0([-r_0,r_0];\mathcal{H}^\infty_\ell(S_{\sigma_0}))$.
The coefficients are real on the real tube and $\varphi(0,y) = 0$.
\end{hyp}

Shrinking the normal interval gives
$\re e^{2\varphi(x,y)} \geq c_0>0$ when $\vert x\vert \leq r_0$ and $\vert \im y\vert <\sigma_0$,
since $\varphi(0,\cdot) = 0$ on the strip and $\Vert \varphi(x,\cdot)\Vert = \mathrm{O}(\vert x\vert )$. Joint analyticity implies Assumption~\ref{dyn:regularity}, but analyticity in $x$ is unnecessary. For example, one may take $\rho = 1$ and $\varphi(x,y) = a(x)\sin(2\pi y/\ell)$ with $a$ smooth and $a(0) = 0$. Tangential analyticity concerns these coordinates and need not survive an arbitrary smooth change of variables. The analogous $C^2$ strip regularity of a positive density $\rho$ suffices for the stated regularity of $V_m$, on a smaller strip.

Write $\alpha = 2\pi/\ell$ and set $e_\kappa(y) = \ell^{-1/2}e^{i\kappa y}$. All tangential multipliers are defined in this Fourier basis. For a Lipschitz function $\mu:[-L,L]\to[-\eta,0]$, where $0<\eta<\sigma_0$, the operator $B_\mu = e^{\mu(x)\vert D\vert }$ is a contraction on $\mathcal{H}_L$. We choose $\mu$ to satisfy
\begin{equation}\label{dyn:eikonal}
\mu'(x)^2 \leq \re Q_m(x,y\mp i\mu(x)) \qquad\text{for almost every }x\text{ and every }y\in\T_\ell.
\end{equation}
This is the normal exponential-weight inequality
\cite{Ag82,Dav89}, with imaginary translations accounting for the
coupled tangential coefficient.

\begin{lem}\label{dyn:weighted-form}
For $w\in\mathcal{V}_L\cap D(\vert D\vert )$, let $W = B_\mu w$ and let $W_\pm$ be its positive and negative Fourier parts. Then $B_\mu^2w\in\mathcal{V}_L$ and
\begin{multline}\label{dyn:weighted-form-bound}
\re\mathfrak{q}_L(w,B_\mu^2w)
\geq \Vert \partial_xW\Vert ^2 - C_\eta\Vert W\Vert ^2 + \int_{\Omega_L} \big(\re Q_m(x,y-i\mu) - \mu'^2\big)\vert DW_+\vert ^2 \\
+ \int_{\Omega_L} \big(\re Q_m(x,y+i\mu) - \mu'^2\big)\vert DW_-\vert ^2.
\end{multline}
\end{lem}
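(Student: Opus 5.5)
The plan is to expand $\mathfrak{q}_L(w,B_\mu^2w)$ term by term and write each term as an integral in $W=B_\mu w$, using that $B_\mu$ is a Fourier multiplier in $y$ that commutes with $\partial_y$ and with multiplication by functions of $x$ alone. First, $B_\mu^2w\in\mathcal{V}_L$. For $w$ smooth, compactly supported in $x$ and with finitely many tangential modes this is clear. For general $w$ we argue by density, since $e^{2\mu\vert D\vert}$ is a contraction and $\partial_x(B_\mu^2w)=B_\mu^2\partial_xw+2\mu'\vert D\vert B_\mu^2w$. The extra term is controlled by $w\in D(\vert D\vert)$ and by $x^mD$ commuting with $B_\mu$.

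\emph{Normal term.} Put $\partial_x B_\mu = B_\mu\partial_x + \mu'\vert D\vert B_\mu$, and similarly for $B_\mu^2$. Writing $\partial_x w = B_\mu^{-1}(\partial_xW - \mu'\vert D\vert W)$ on finite-mode functions, we get
$$\int \partial_xw\,\overline{\partial_x(B_\mu^2w)} = \int (\partial_xW-\mu'\vert D\vert W)\overline{(\partial_xW+\mu'\vert D\vert W)}.$$
Its real part is $\Vert\partial_xW\Vert^2 - \int\mu'^2\vert\vert D\vert W\vert^2$: the mixed terms $\pm\mu'\vert D\vert W\,\overline{\partial_xW}$ combine into a purely imaginary quantity, because $\vert D\vert$ is selfadjoint in $y$ and $\mu'$ is real. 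On $W_\pm$ we have $\vert D\vert W=\pm DW_\pm$, which produces the $-\mu'^2\vert DW_\pm\vert^2$ pieces.

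\emph{Tangential term.} This is the main step. We compute $\int Q_m Dw\,\overline{DB_\mu^2w}=\int (B_\mu Q_m B_\mu^{-1})DW\,\overline{DW}$ at fixed $x$. On positive frequencies, $e^{\mu\vert D\vert}$ acts as $e^{-i\mu D}\cdot$, which is translation $y\mapsto y-i\mu$. Conjugating multiplication by $Q_m$ by this translation gives multiplication by $Q_m(x,y-i\mu)$, and this is well defined by Assumption~\ref{dyn:regularity} since $\vert\mu\vert\le\eta<\sigma_0$. On negative frequencies we obtain $Q_m(x,y+i\mu)$ instead. The difficulty is that multiplication mixes the signs of frequencies. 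So we split $DW=DW_++DW_-$. The diagonal parts give exactly $\int\re Q_m(x,y\mp i\mu)\vert DW_\pm\vert^2$ after taking real parts. The cross terms couple $W_+$ with $W_-$ through a multiplier of the form $e^{\mu(\vert\kappa\vert-\vert\kappa'\vert)}\hat Q_m(\kappa-\kappa')$ with $\kappa>0>\kappa'$, or its reverse. These I would bound by showing that the sign-changing part of $B_\mu Q_mB_\mu^{-1}$ is controlled. Holomorphy on $S_{\sigma_0}$ gives $\vert\hat Q_m(x,\kappa)\vert\le C x^{2m}e^{-\sigma_0\vert\kappa\vert}$, and $\vert\kappa\vert+\vert\kappa'\vert=\vert\kappa-\kappa'\vert$ when the signs differ. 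The cross kernel with the factors $\kappa,\kappa'$ is then bounded by $Cx^{2m}\vert\kappa\kappa'\vert e^{-(\sigma_0-\eta)(\vert\kappa\vert+\vert\kappa'\vert)}$. This is a Hilbert--Schmidt kernel acting on $W$ itself, not on $DW$, so the cross terms are $\geq -C_\eta\Vert W\Vert^2$. This is where $\eta<\sigma_0$ is used, and I expect it to be the delicate part. The zero mode contributes nothing to $DW$.

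\emph{Potential term.} The term $\int V_m w\,\overline{B_\mu^2w}$ is treated the same way, with the conjugated multiplier $B_\mu V_mB_\mu^{-1}$. The analogous decomposition, with no $D$ factors, shows it is a bounded operator with norm at most $C_\eta$. This uses the Fourier decay of $V_m$ on the strip and Schur's test with the kernel $e^{\mu(\vert\kappa\vert-\vert\kappa'\vert)}e^{-\sigma_0\vert\kappa-\kappa'\vert}$, where $\vert\vert\kappa\vert-\vert\kappa'\vert\vert\le\vert\kappa-\kappa'\vert$. Summing the three contributions and extending by density gives \eqref{dyn:weighted-form-bound}.
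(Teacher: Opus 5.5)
Your proposal is correct and follows essentially the same route as the paper. You commute $\partial_x$ past $B_\mu$ for the normal term, identify the same-sign tangential blocks with multiplication by $Q_m(x,y\mp i\mu)$, and bound the opposite-sign and potential blocks by analytic Fourier decay before extending by density. The only cosmetic difference is the cross-sign block: you use a Hilbert--Schmidt bound based on $|\kappa|+|\kappa'|=|\kappa-\kappa'|$, whereas the paper uses $|\kappa\kappa'|\leq|\kappa-\kappa'|^2/4$ together with the Schur test of Lemma~\ref{app:block-calculus}.
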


\begin{proof}
For finite Fourier sums, $\partial_xB_\mu = \mu'\vert D\vert B_\mu$ gives the normal contribution $\Vert \partial_xW\Vert ^2 - \Vert \mu'\vert D\vert W\Vert ^2$. The tangential matrix is
$\kappa\kappa'\widehat Q_{m,\kappa - \kappa'} e^{\mu(\vert \kappa\vert - \vert \kappa'\vert )}$.
On either same-sign block this is multiplication by $Q_m(x,y\mp i\mu)$ between the corresponding derivatives. On opposite-sign blocks, $\vert \kappa\kappa'\vert \leq \vert \kappa - \kappa'\vert ^2/4$. Contour translation bounds the Fourier coefficients of $Q_m,V_m$ by $Ce^{-\sigma'\vert j\vert }$ for any $\eta<\sigma'<\sigma_0$. Lemma~\ref{app:block-calculus} bounds the cross-sign and potential matrices on $L^2$, giving the remainder $-C_\eta\Vert W\Vert ^2$. Fourier truncation converges in the form norm and in $D(\vert D\vert )$. The shifted coefficients are bounded by $C\vert x\vert ^{2m}$, so the identity passes to the stated domain.
\end{proof}

\begin{prop}\label{dyn:weighted-evolution}
If \eqref{dyn:eikonal} holds, then for $w_0\in\mathcal{H}_L$ and $W(t) = B_\mu e^{-t\triangle_L}w_0$,
\begin{equation}\label{dyn:weighted-evolution-bound}
\Vert W(t)\Vert ^2 + 2\int_0^t\Vert \partial_xW(s)\Vert ^2\,\dd{d}s \leq e^{2C_\eta t}\Vert B_\mu w_0\Vert ^2.
\end{equation}
\end{prop}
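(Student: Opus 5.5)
The plan is a weighted energy identity for $W=B_\mu w$, with $w(t)=e^{-t\triangle_L}w_0$, closed by Gronwall and with the coercivity supplied by Lemma~\ref{dyn:weighted-form}. Formally,
$$\frac{d}{dt}\Vert W\Vert^2=2\re\langle B_\mu w_t,B_\mu w\rangle=-2\re\mathfrak{q}_L(w,B_\mu^2w),$$
because $B_\mu$ is a real Fourier multiplier, self-adjoint on $\mathcal{H}_L$, so $\langle B_\mu w_t,B_\mu w\rangle=\langle w_t,B_\mu^2w\rangle$. Lemma~\ref{dyn:weighted-form} together with \eqref{dyn:eikonal} makes both integrals in \eqref{dyn:weighted-form-bound} nonnegative. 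This yields
$$\frac{d}{dt}\Vert W\Vert^2+2\Vert\partial_xW\Vert^2\le 2C_\eta\Vert W\Vert^2 .$$
Multiplying by $e^{-2C_\eta s}$ and integrating on $[0,t]$ gives $e^{-2C_\eta t}\Vert W(t)\Vert^2+2\int_0^te^{-2C_\eta s}\Vert\partial_xW\Vert^2ds\le\Vert B_\mu w_0\Vert^2$. Since $e^{-2C_\eta s}\ge e^{-2C_\eta t}$ on $[0,t]$, multiplying by $e^{2C_\eta t}$ gives \eqref{dyn:weighted-evolution-bound}.

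The main work is justifying the energy identity, since Lemma~\ref{dyn:weighted-form} requires $w\in\mathcal{V}_L\cap D(\vert D\vert)$. I would regularize tangentially with a smooth cutoff $\Pi_K$ to tangential frequencies $\vert\kappa\vert\le K$, acting on the exact solution. With $W_K=B_\mu\Pi_Kw$, the operator $B_\mu\Pi_K$ is bounded, and for $t>0$ we have $w\in C^1((0,\infty);\mathcal{H}_L)\cap C((0,\infty);D(\triangle_L))$. Hence
$$\frac{d}{dt}\Vert W_K\Vert^2=-2\re\mathfrak{q}_L(w,B_\mu^2\Pi_K^2w).$$
The difficulty is that $\Pi_K$ does not commute with the form, because $Q_m$ and $V_m$ depend on $y$.

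I would try to show that the commutator contribution tends to zero as $K\to\infty$, using the same block calculus as in Lemma~\ref{dyn:weighted-form}. Two facts should drive this. First, the coefficients have exponentially decaying Fourier coefficients on the strip $S_{\sigma_0}$ with $\eta<\sigma_0$, so the weighted kernels $e^{\mu(\vert\kappa\vert-\vert\kappa'\vert)}\widehat Q_{m,\kappa-\kappa'}$ are bounded by $Ce^{-(\sigma'-\eta)\vert\kappa-\kappa'\vert}$. Second, $w(t)\in\mathcal{V}_L$ controls $x^mDw$.

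An alternative that avoids the commutator is to apply Lemma~\ref{dyn:weighted-form} with the exponent $\mu_\delta=\mu-\delta$ in place of $\mu$ and to let $\delta\downarrow0$. The difficulty is that $B_{\mu_\delta}$ regularizes $w$ only tangentially, and the inequality \eqref{dyn:eikonal} is then needed at the shifted exponent. Since $\mu$ appears inside $\re Q_m(x,y\mp i\mu)$, this is not automatic and would require an approximation argument.

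Either way, I would first establish the integrated inequality for $t\ge t_0>0$ and then let $t_0\to0$. This step uses strong continuity of the semigroup at $t=0$. When $\Vert B_\mu w_0\Vert<\infty$, it also uses the bound $\Vert B_\mu w(t_0)\Vert\le e^{C t_0}\Vert B_\mu w_0\Vert$ obtained from the regularized argument; if the right side is infinite, there is nothing to prove. The $\partial_xW$ integral passes to the limit by Fatou. I expect this approximation step, keeping the weighted commutator errors uniformly controlled, to be the main obstacle, while the Gronwall step itself is routine.
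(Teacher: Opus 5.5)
Your energy identity and Gronwall step match the paper's, but your justification takes a different route, and it does close. The paper never truncates the exact solution. It solves the heat equation for the form $\mathfrak{q}_L$ restricted to tangential frequencies $\vert\kappa\vert\leq N$, a Galerkin approximation with truncated initial datum. Since $B_\mu^2$ is a tangential multiplier, $B_\mu^2w_N$ stays in that subspace and is an admissible test function. Lemma~\ref{dyn:weighted-form} then gives the differential inequality exactly, with no commutator. The price is an abstract convergence step: truncations are dense in the norm \eqref{dyn:form-norm}, so the restricted semigroups converge strongly, and boundedness of $B_\mu$ plus weak lower semicontinuity of $\partial_xW$ pass the bound to the limit.

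Your route keeps the exact semigroup and pays with a commutator, which is easier than you expect. With $\Pi_K=\pi(D/K)$, where $\pi$ is real, smooth and $0\leq\pi\leq1$, $\Pi_K$ commutes with $\partial_x$, $D$ and $B_\mu$. Hence the normal part of the form produces no error, and the only discrepancy is
$$\langle[\Pi_K,Q_m]Dw,DB_\mu^2\Pi_Kw\rangle+\langle[\Pi_K,V_m]w,B_\mu^2\Pi_Kw\rangle.$$
Since $\vert\pi(\kappa/K)-\pi(\kappa'/K)\vert\leq C\vert\kappa-\kappa'\vert/K$, and $B_\mu^2\Pi_K$ is a contraction commuting with $x^m$ and $D$, the Schur test bounds this by $CK^{-1}(\Vert x^mDw\Vert^2+\Vert w\Vert^2)$. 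The constant involves $\sum_j(1+\vert j\vert)(\vert\widehat{e^{2\varphi}}_j\vert+\vert\widehat{V_m}_j\vert)$. No conjugation by the weight is needed, since the weight sits only on the test side. The unweighted energy identity and $e^{2\varphi}\geq c_0$ give $\int_0^t\Vert x^mDw\Vert^2\,ds\leq C_t\Vert w_0\Vert^2$. So the error vanishes in $L^1(0,t)$ as $K\to\infty$, and your limit argument goes through.

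The trade-off is modest. Yours uses only the actual solution and an explicit commutator bound, which needs some tangential Fourier decay of the coefficients, available here. The paper's avoids any commutator and is reused as is for the weights $e^{\mu\vert D\vert^\beta}$ in Theorem~\ref{dyn:gevrey}. Your second alternative, shifting $\mu$ to $\mu-\delta$, is unnecessary.

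One small correction: since $-\eta\leq\mu\leq0$, $B_\mu$ is a contraction on $\mathcal{H}_L$. So $\Vert B_\mu w_0\Vert\leq\Vert w_0\Vert$ is always finite, and $B_\mu w(t_0)\to B_\mu w_0$ follows directly from strong continuity, without any a priori weighted bound.
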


\begin{proof}
Let $w_N$ be the heat solution for the closed form restricted to $\vert \kappa\vert \leq N$, with initial value the Fourier truncation of $w_0$. The test $B_\mu^2w_N$ remains in that subspace, and
\eqref{dyn:weighted-form-bound} gives
$\frac{1}{2}\partial_t\Vert B_\mu w_N\Vert ^2+
\Vert \partial_x(B_\mu w_N)\Vert ^2 \leq C_\eta\Vert B_\mu w_N\Vert ^2$.
Integration proves the estimate uniformly in $N$. Fourier truncations are dense in \eqref{dyn:form-norm}, so convergence of the restricted closed forms gives strong convergence of their heat semigroups in $\mathcal{H}_L$. Boundedness of $B_\mu$ and weak lower semicontinuity of the normal derivative pass the estimate to the limit. Strong continuity supplies $t = 0$ for arbitrary initial data.
\end{proof}

\subsection{Concentration and the local higher-step obstruction}

The following localization argument will also be used with $e^{\mu(x)\vert D\vert ^\beta}$, $0<\beta<1$. The weight is small near the singular circle and equals one before the unobserved tube ends. Its evolution estimate controls the error from cutting the local solution off there.

Fix a nonnegative $C^1$ function $F$ on $[-L,L]$ satisfying $F(x) \geq c\vert x\vert ^r$, and, near zero, $
F(x) = \frac{\vert x\vert ^r}{r}+\mathrm{O}(\vert x\vert ^{r + 1})$ and $F'(x) = \operatorname{sgn}(x)\vert x\vert ^m+\mathrm{O}(\vert x\vert ^{m + 1})$.
Suppose $\omega$ misses $(-a_-,a_+)\times\T_\ell$, where $\max(a_+,a_ -)<L$. Choose $\vartheta\in(0,\min(F(a_+),F(-a_-)))$ and set $\mu = \min(F,\vartheta) - \vartheta$.

\begin{lem}\label{dyn:concentrated-states}
With $F,\mu,\vartheta$ as above, let $0<\beta \leq 1$. If
\eqref{dyn:weighted-evolution-bound} holds with
$B_\mu = e^{\mu(x)\vert D\vert ^\beta}$, then for positive $k\in\alpha\Z$ tending to infinity there are unit vectors $u_{0,k}$ in the form domain of $\triangle$, supported in the unobserved tube, such that
\begin{equation}\label{dyn:global-leak}
\int_0^T\Vert e^{-t\triangle}u_{0,k}\Vert_{L^2(\omega,\nu)}^2\,\dd{d}t \leq C_T k^{1/r}e^{-2\vartheta k^\beta}, \qquad \mathcal{E}(u_{0,k}) \leq C_E k^{2/r}.
\end{equation}
For $m = 1$, their energy is $k+\mathrm{o}(k)$.
\end{lem}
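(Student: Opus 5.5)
We take the concentrated states to be localized ground-state profiles at tangential frequency $k$. Set
$$
u_{0,k}=c_k\,\rho^{-1/2}\chi(x)\,e^{iky}\,e^{-kF(x)},
$$
where $\chi$ is a normal cutoff equal to one on $\{\vert x\vert\leq\delta\}$ and supported in $(-a_-,a_+)$. The constant $c_k$ normalizes the state in $L^2(M,\nu)$. After the density conjugation, the local representative is $w_{0,k}=c_k\chi e^{iky}e^{-kF}$, which has a single Fourier mode.

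\emph{Normalization and energy.} Since $F(x)\sim\vert x\vert^r/r$ near zero, we have $\int e^{-2kF}\,dx\asymp k^{-1/r}$, so $c_k^2\asymp k^{1/r}$. The normal energy is $\int k^2F'^2e^{-2kF}\asymp k^2\int\vert x\vert^{2m}e^{-2k\vert x\vert^r/r}$, normalized by $k^{-1/r}$; this gives $k^{2-2m/r}=k^{2/r}$. The tangential energy $k^2\int x^{2m}e^{2\varphi}e^{-2kF}$ is of the same order. The terms coming from $\chi'$ and from $V_m$ are $O(e^{-ck})$ and $O(1)$. This proves $\mathcal{E}(u_{0,k})\leq C_Ek^{2/r}$. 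For $m=1$, the quadratic form of $-\partial_x^2+k^2x^2e^{2\varphi}$ on $e^{-kF}$ equals $k\int(1+O(x))e^{-2kF}$, up to $k^2\int O(x^3)e^{-2kF}=O(k^{-1/2})$ relative to the mass. Since the $x$-dependence of the coefficients contributes only $o(1)$ relatively, the energy is $k+o(k)$.

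\emph{Weighted initial bound and evolution.} Because $\mu=\min(F,\vartheta)-\vartheta\leq0$, the weight $e^{\mu\vert D\vert^\beta}$ acts on the single mode $k$ as multiplication by $e^{\mu(x)k^\beta}$. Hence
$$
\Vert B_\mu w_{0,k}\Vert^2\leq c_k^2\int e^{2\mu k^\beta-2kF}\,dx .
$$
Where $F<\vartheta$, this exponent is $2(F-\vartheta)k^\beta-2kF\leq-2\vartheta k^\beta$, since $k^\beta\leq k$. Where $F\geq\vartheta$, it is $-2kF\leq-2\vartheta k\leq-2\vartheta k^\beta$. Therefore $\Vert B_\mu w_{0,k}\Vert^2\leq Ck^{1/r}e^{-2\vartheta k^\beta}$, and the hypothesis \eqref{dyn:weighted-evolution-bound} gives the same bound for $\Vert B_\mu w_k(t)\Vert^2$ with an extra factor $e^{2C_\eta t}$, where $w_k(t)=e^{-t\triangle_L}w_{0,k}$ is the Dirichlet tube solution. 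On the region $\{F\geq\vartheta\}$ we have $\mu=0$, so $B_\mu$ is the identity there. In particular, $\int_0^T\Vert w_k(t)\Vert^2_{L^2(\{\vert x\vert\geq b\})}\,dt\leq C_Tk^{1/r}e^{-2\vartheta k^\beta}$ for any $b$ with $F(\pm b)\geq\vartheta$. The same bound holds for $\int_0^T\Vert\partial_xw_k\Vert^2$ on that region, because $\partial_xB_\mu=B_\mu\partial_x$ wherever $\mu'=0$.

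\emph{Transfer to the global flow.} Choose $b$ with $\vartheta<\min(F(b),F(-b))$ and $(-a_-,a_+)\supset[-b',b']$ for some $b'>b$; this is possible because $\vartheta<\min(F(a_+),F(-a_-))$. Let $\zeta$ be a normal cutoff equal to one on $\vert x\vert\leq b$ and supported in $\vert x\vert<b'$. Then $\zeta w_k$ (undoing the density conjugation) solves the global heat equation with source $[\triangle,\zeta]w_k=-2\zeta'\partial_xw_k-\zeta''w_k$, which is supported in $\{b\leq\vert x\vert\leq b'\}$. The preceding estimates bound this source in $L^2(0,T;L^2)$ by $C_Tk^{1/(2r)}e^{-\vartheta k^\beta}$. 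For large $k$, $\zeta\chi=\chi$, so $\zeta w_{0,k}=w_{0,k}$. Duhamel's formula and contractivity of $e^{-t\triangle}$ then show that $e^{-t\triangle}u_{0,k}-\zeta w_k(t)$ is bounded in $L^2$ by $C_Tk^{1/(2r)}e^{-\vartheta k^\beta}$ uniformly in $t\in[0,T]$. Since $\zeta w_k$ is supported in the unobserved tube, it vanishes on $\omega$, and \eqref{dyn:global-leak} follows.

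\emph{Main obstacle.} The delicate step is the gradient bound on the transition strip, which requires $\partial_x(B_\mu w)=B_\mu\partial_xw$ exactly where $\mu'=0$. This holds because $\mu$ is constant (equal to zero) on $\{F\geq\vartheta\}$, and $\operatorname{supp}\zeta'$ lies inside that region. One must still check that the Dirichlet boundary of $\Omega_L$ plays no role, which holds since $\zeta$ is supported strictly inside $\vert x\vert<L$.
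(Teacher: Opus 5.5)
Your proposal is correct and is essentially the paper's proof. It uses the same states $c_k\chi_0e^{-kF}e_k$, the same case split giving $\mu k^\beta-kF\le-\vartheta k^\beta$, an outer cutoff whose derivative lies where $\mu=0$ (so that $\partial_x(B_\mu w_k)=\partial_x w_k$ there), and the same Duhamel argument. One small repair is needed: since $a_+\neq a_-$ is allowed and $F$ need not be even or monotone, a symmetric interval $\vert x\vert\le b$ with $b<\min(a_+,a_-)$ and $F(\pm b)>\vartheta$ need not exist. Instead, place the transition region of $\zeta$ asymmetrically, near $a_+$ and $-a_-$, inside the open set $\{F>\vartheta\}$ by continuity, and choose $\chi$ supported in $\{\zeta=1\}$ from the start, which has nothing to do with $k$ being large.
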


\begin{proof}
Choose real $\chi_0,\chi_1\in C_c^\infty((-a_-,a_+))$ with $\chi_0 = 1$ near zero, $\chi_1 = 1$ near $\supp\chi_0$, and $\supp\chi_1'$ contained in the interior of $\{\mu = 0\}$. For $k\in\alpha\Z$, $k\to + \infty$, normalize $w_{0,k} = c_k\chi_0e^{-kF}e_k$. Rescaling $x = k^{-1/r}s$ gives $c_k^2\asymp k^{1/r}$ and $\int\vert x\vert ^{2m}\vert w_{0,k}\vert ^2 = \mathrm{O}(k^{-2m/r})$. The cutoff terms are exponentially small. Since $\vert F'\vert \leq C\vert x\vert ^m$ on its support, $Q_m \leq C\vert x\vert ^{2m}$ and $V_m$ is bounded, the normal and tangential energies are $\mathrm{O}(k^{2/r})$. When $m = 1$, $F'^2 + \overline Q_1 = 2x^2+\mathrm{O}(\vert x\vert ^3)$, where the bar denotes the tangential average. Gaussian rescaling gives $k\int x^2\vert w_{0,k}\vert ^2\to1/2$, hence $\mathfrak{q}_L(w_{0,k},w_{0,k}) = k+\mathrm{o}(k)$.
For $k \geq 1$, $k \geq k^\beta$ and $\mu k^\beta - kF \leq (\mu - F)k^\beta \leq - \vartheta k^\beta$. The assumed evolution estimate consequently yields
\begin{equation}\label{dyn:local-leak}
\sup_{0 \leq t \leq T}\Vert B_\mu w_k(t)\Vert ^2 + \int_0^T\Vert \partial_x(B_\mu w_k(t))\Vert ^2\,\dd{d}t \leq C_Tk^{1/r}e^{-2\vartheta k^\beta}, \qquad w_k(t) = e^{-t\triangle_L}w_{0,k}.
\end{equation}

Extend $u_{0,k} = \rho^{-1/2}w_{0,k}$ and $\widetilde u_k = \rho^{-1/2}\chi_1w_k$ by zero to $M$. The first is a unit form-domain vector with the computed energy. The second has the same initial value and satisfies, weakly,
$$
(\partial_t + \triangle)\widetilde u_k = f_k = -\rho^{-1/2}(\chi_1''w_k + 2\chi_1'\partial_xw_k).
$$
There is no boundary distribution since $\chi_1$ vanishes near $\partial\Omega_L$. On the support of its derivatives, $\mu = \mu' = 0$, so \eqref{dyn:local-leak} gives $\Vert f_k\Vert_{L^2((0,T);L^2(M,\nu))} \leq C_Tk^{1/(2r)}e^{-\vartheta k^\beta}$.
Duhamel's formula and contractivity of $e^{-t\triangle}$ give
\eqref{dyn:global-leak}, since $\widetilde u_k$ vanishes on $\omega$.
\end{proof}

For these unit states, Jensen's inequality for the spectral measure of $\triangle$ gives
\begin{equation}\label{dyn:jensen}
\Vert e^{-T\triangle}u_{0,k}\Vert ^2 \geq e^{-2T\mathcal{E}(u_{0,k})}.
\end{equation}
For each fixed $T>0$, the observed integral is bounded above by $C_Tk^{1/r}e^{-2\vartheta k^\beta}$, whereas the terminal squared norm is bounded below by $e^{-2C_ETk^{2/r}}$. If $\beta>2/r$, the ratio of the former to the latter tends to zero as $k\to+\infty$, which contradicts observability.

Under Assumption~\ref{dyn:regularity}, define
$$
F_{m,\eta}(x) = \Big\vert \int_0^x \vert s\vert ^m \inf_{\substack{y\in\T_\ell\\ \vert v\vert \leq \eta}} \sqrt{\re e^{2\varphi(s,y + iv)}}\,\dd{d}s\Big\vert , \qquad 0<\eta<\sigma_0.
$$
The infimum is a positive Lipschitz function of $s$. Hence $F_{m,\eta}\in C^1$, is strictly increasing away from zero on each side, and
$F_{m,\eta}(x) = \frac{\vert x\vert ^r}{r}+\mathrm{O}(\vert x\vert ^{r + 1})$ and $F_{m,\eta}(x) \geq c\vert x\vert ^r$.
Its derivative has the expansion required in Lemma~\ref{dyn:concentrated-states}.

\begin{thm}\label{dyn:higher-order}
Let $m \geq 2$ and suppose Assumption~\ref{dyn:regularity} holds. If $\omega$ misses $(-a_-,a_+)\times\T_\ell$ for some $0<a_\pm<r_0$, then $\Tobs(\omega) = +\infty$.
\end{thm}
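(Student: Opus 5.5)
The proof will combine Proposition~\ref{dyn:weighted-evolution} with Lemma~\ref{dyn:concentrated-states} for $\beta=1$, followed by the Jensen comparison \eqref{dyn:jensen}. Since $r=m+1\geq3$, we have $2/r<1=\beta$, and the argument after \eqref{dyn:jensen} then yields failure of observability at every $T>0$.

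First, fix $0<\eta<\sigma_0$ and $0<L<r_0$ with $\max(a_+,a_-)<L$. Take $F=F_{m,\eta}$ on $[-L,L]$. The properties listed after its definition give $F\in C^1$, $F\geq c|x|^r$, $F=|x|^r/r+\mathrm{O}(|x|^{r+1})$, and $F'=\operatorname{sgn}(x)|x|^m+\mathrm{O}(|x|^{m+1})$. The last expansion holds because the infimum factor equals $1+\mathrm{O}(|s|)$, since $\varphi(0,\cdot)=0$ on the strip. Then choose $\vartheta\in(0,\min(F(a_+),F(-a_-)))$ and set $\mu=\min(F,\vartheta)-\vartheta$.

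Second, verify \eqref{dyn:eikonal}. Where $\mu$ is constant, $\mu'=0$, and $\re Q_m\geq0$ holds by $\re e^{2\varphi}\geq c_0$. Elsewhere, $|\mu'(x)|=|F'(x)|=|x|^m\inf_{y,|v|\leq\eta}\sqrt{\re e^{2\varphi(x,y+iv)}}$. Since $\mu$ takes values in $[-\vartheta,0]$, we can shrink $\vartheta$ (or increase $\eta$ within $(0,\sigma_0)$) so that $\vartheta\leq\eta$. Then $|\mu(x)|\leq\eta$, and
$$
\mu'^2\leq x^{2m}\re e^{2\varphi(x,y\mp i\mu(x))}=\re Q_m(x,y\mp i\mu(x)),
$$
using that $x^{2m}$ is real. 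The function $\mu$ is Lipschitz with values in $[-\eta,0]$, so Proposition~\ref{dyn:weighted-evolution} applies and gives \eqref{dyn:weighted-evolution-bound} with $B_\mu=e^{\mu|D|}$.

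Third, apply Lemma~\ref{dyn:concentrated-states} with $\beta=1$. This produces unit states $u_{0,k}$ satisfying
$$
\int_0^T\|e^{-t\triangle}u_{0,k}\|_{L^2(\omega,\nu)}^2\,\dd{d}t\leq C_Tk^{1/r}e^{-2\vartheta k}
\qquad\text{and}\qquad
\mathcal{E}(u_{0,k})\leq C_Ek^{2/r}.
$$
By \eqref{dyn:jensen}, $\|e^{-T\triangle}u_{0,k}\|^2\geq e^{-2TC_Ek^{2/r}}$. The ratio of the observed integral to the terminal mass is therefore at most $C_Tk^{1/r}\exp(-2\vartheta k+2TC_Ek^{2/r})$. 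This tends to $0$ because $2/r<1$, which contradicts \eqref{intro:observability} for every $T$. Hence $\Tobs(\omega)=+\infty$.

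The main point to check is the second step. The weight must satisfy the shifted eikonal inequality uniformly for both imaginary translations. This is exactly why $F_{m,\eta}$ takes the infimum over the complex strip, and why the truncation level $\vartheta$ must not exceed $\eta$. A minor additional check is that the truncation at $\vartheta<F(\pm a_\mp)$ makes $\{\mu=0\}$ contain a neighbourhood of $|x|$ near $a_\pm$ inside $(-a_-,a_+)$. This leaves room for the cutoff $\chi_1$ required by Lemma~\ref{dyn:concentrated-states}; it holds by the strict monotonicity of $F$ on each side.
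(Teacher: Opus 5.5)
Your proposal is correct and matches the paper's proof: you use the truncated weight $\mu=\min(F_{m,\eta},\vartheta)-\vartheta$ with $0<\vartheta<\min(\eta,F_{m,\eta}(a_+),F_{m,\eta}(-a_-))$ to satisfy \eqref{dyn:eikonal}, and then Lemma~\ref{dyn:concentrated-states} with $\beta=1$, \eqref{dyn:jensen} and $2/r<1$ give failure of observability at every $T>0$. One remark: secure $\vartheta\leq\eta$ by shrinking $\vartheta$ after fixing $\eta$, as the paper does, because enlarging $\eta$ lowers $F_{m,\eta}(a_+)$ and $F_{m,\eta}(-a_-)$ and could invalidate your earlier choice of $\vartheta$.
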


\begin{proof}
Choose $0<\vartheta<\min(\eta,F_{m,\eta}(a_+), F_{m,\eta}(-a_-))$. The associated weight satisfies
\eqref{dyn:eikonal}. Lemma~\ref{dyn:concentrated-states} and \eqref{dyn:jensen} give an observed norm squared at most $C_Tk^{1/r}e^{-2\vartheta k}$ and a final norm squared at least $e^{-2C_ETk^{2/r}}$. Since $2/r<1$, the ratio of terminal mass to observation tends to $+\infty$ for every fixed $T>0$.
\end{proof}

\subsection{Completion of the proof}\label{dyn:completion}
To deduce Theorem~\ref{thm:higher-obstruction}, we justify the constant-step normal form stated in Lemma~\ref{geom:higher-normal-form}.

\begin{proof}[Proof of Lemma~\ref{geom:higher-normal-form}]
Set $m = r - 1$. Straighten a local nonzero horizontal field and write generators of the horizontal module as $X = \partial_u$ and $Y = g\partial_v$. At a point of $Z_0$, the step is one plus the least $j \geq 1$ for which $\partial_u^jg\ne0$. Indeed, brackets with $X$ successively differentiate $g$, while other brackets have coefficients in the ideal generated by the preceding derivatives.

The points where the horizontal line is transverse to $Z_0$ are dense: otherwise every horizontal field and its brackets would be tangent to an open arc, contradicting bracket generation. At a transverse point, constant step gives determinant vanishing of normal order $m$. For a local defining function $\sigma$ of $Z_0$ and a determinant $\delta_f$ of $f$, the normal derivatives of order less than $m$ therefore vanish on a dense subset of $Z_0$, hence everywhere. Smooth division gives $\delta_f = \sigma^m b$. If $X$ were tangent at $p\in Z_0$, its integral curve through $p$ would satisfy $\sigma = \mathrm{O}(t^2)$. Since $g$ differs from $\delta_f$ by a nonvanishing factor, this would give $g = \mathrm{O}(t^{2m})$ and $X^jg(p) = 0$ for $j \leq m$, contrary to step $m + 1$. Thus transversality holds everywhere, and the step condition gives $b\ne0$ on $Z_0$. The punctured tube is regular after shrinking it.

The conormal-geodesic construction used in Section~\ref{sec:geometry} applies once transversality is known: the conormal flow is noncharacteristic, independently of the determinant order. The Gauss lemma gives a signed coordinate $x$ with $\vert f^*dx\vert_E = 1$ and $ff^*dx = \partial_x$, where $f^*$ is the adjoint for the bundle metric. Thus $e_1 = f^*dx$ is a global unit section of $E$. The bundle $E$ is orientable on the collar: on a parallel regular circle it is identified by $f$ with the tangent bundle of the annulus, and the collar retracts onto that circle. Complete $e_1$ to an orthonormal frame $(e_1,e_2)$. Then $dx(f(e_2)) = 0$, and the determinant order gives $f(e_2) = x^ma(x,\theta)\partial_\theta$ with $a>0$ after a sign choice. The reparametrization $y(\theta) = \int_0^\theta a(0,s)^{-1}\,\dd{d}s$ replaces $a$ by $a(x,\theta)/a(0,\theta)$, whose logarithm is $\varphi$. The distance argument of \eqref{geom:distance} applies without change.

For analytic data, choose analytic $X$ and $\delta_f$. The function $X^{m - 1}\delta_f$ vanishes on $Z_0$ and has nonzero differential there, since $X^m\delta_f\ne0$. Hence $Z_0$ is analytic. The conormal flow, the oriented orthonormal frame, division by $x^m$ and tangential reparametrization are then analytic.
\end{proof}

\begin{proof}[Proof of Theorem~\ref{thm:higher-obstruction}]
Lemma~\ref{geom:higher-normal-form} supplies an analytic periodic tube with $m = r - 1 \geq 2$. The analytic density gives the required strip regularity of $V_m$ after shrinking the tube. Positive separation from the circle leaves a smaller tube unobserved. Apply Theorem~\ref{dyn:higher-order}.
\end{proof}

\subsection{Obstructions under weaker regularity}\label{dyn:extensions}

The same energy estimates apply without joint analyticity. We record their consequences at step two and for tangential Gevrey coefficients at higher step.

\paragraph{Step two under tangential analyticity.}
For $m = 1$, write $F_\eta = F_{1,\eta}$.

\begin{thm}\label{dyn:lower-bound}
Let $m = 1$ and suppose Assumption~\ref{dyn:regularity} holds. If $\omega$ misses $(-a_-,a_+)\times\T_\ell$, with $0<a_\pm<r_0$, then
$\displaystyle \Tobs(\omega) \geq \sup_{0<\eta<\sigma_0}\min(\eta,F_\eta(a_+),F_\eta(-a_-))$.
\end{thm}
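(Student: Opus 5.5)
Fix $0<\eta<\sigma_0$ and set $\vartheta_0=\min(\eta,F_\eta(a_+),F_\eta(-a_-))$. The bound $\Tobs(\omega)\geq\vartheta_0$ follows by showing that observability fails for every $T<\vartheta_0$; taking the supremum over $\eta$ then gives the theorem. The plan follows the proof of Theorem~\ref{dyn:higher-order} with $m=1$, $r=2$, $\beta=1$. There the exponent $2/r=1$ no longer gives a gap, so the sharp energy asymptotics $k+\mathrm{o}(k)$ from Lemma~\ref{dyn:concentrated-states} must be used.

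\emph{Weight.} Given $T<\vartheta_0$, choose $\vartheta$ with $T<\vartheta<\vartheta_0$. Define $\mu=\min(F_\eta,\vartheta)-\vartheta$, so that $\mu$ takes values in $[-\vartheta,0]\subset[-\eta,0]$. This requires $\vartheta<\eta$, which is why $\eta$ enters the minimum. Next, verify \eqref{dyn:eikonal}. Where $\mu$ is constant, $\mu'=0$ and the inequality is trivial. Elsewhere, $\mu'(x)^2=F_\eta'(x)^2=x^2\inf_{y,\vert v\vert\leq\eta}\re e^{2\varphi(x,y+iv)}$. This is at most $\re Q_1(x,y\mp i\mu(x))$ because $\vert\mu\vert\leq\eta$. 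One should check that $\re\sqrt{\cdot}$ squared versus $\re$ is handled correctly; if the definition gives $(\inf\sqrt{\re e^{2\varphi}})^2$, the inequality is immediate. Proposition~\ref{dyn:weighted-evolution} then gives \eqref{dyn:weighted-evolution-bound}. The condition $\vartheta<\min(F_\eta(a_+),F_\eta(-a_-))$ puts the transition of $\mu$ strictly inside the unobserved tube, as Lemma~\ref{dyn:concentrated-states} requires.

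\emph{Concentrated states.} Apply Lemma~\ref{dyn:concentrated-states} with $F=F_\eta$ and $\beta=1$. Here $F_\eta(x)=x^2/2+\mathrm{O}(\vert x\vert^3)$ and $F_\eta'(x)=x+\mathrm{O}(x^2)$, since $\varphi(0,\cdot)=0$ makes the infimum equal to $1$ at $s=0$ with a Lipschitz correction. The lemma yields unit states with observed integral at most $C_Tk^{1/2}e^{-2\vartheta k}$. For $m=1$, their energy satisfies $\mathcal{E}(u_{0,k})=k+\mathrm{o}(k)$. Jensen's inequality \eqref{dyn:jensen} bounds the terminal mass from below by $e^{-2T(k+\mathrm{o}(k))}$. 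The ratio of observation to terminal mass is therefore at most $C_Tk^{1/2}e^{-2(\vartheta-T)k+\mathrm{o}(k)}$, which tends to $0$ since $\vartheta>T$. This contradicts \eqref{intro:observability}.

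\emph{Main obstacle.} The delicate step is the energy asymptotics $k+\mathrm{o}(k)$, and in particular the fact that the tangential energy averages $e^{2\varphi}$ correctly with only tangential analyticity. This is asserted in Lemma~\ref{dyn:concentrated-states}. Its justification uses $F_\eta'^2+\overline{Q}_1=2x^2+\mathrm{O}(\vert x\vert^3)$ together with Gaussian rescaling, which requires $F_\eta'(x)=x+\mathrm{O}(x^2)$. I would verify this expansion of $F_\eta'$ first, using the Lipschitz bound on the infimum near $s=0$. The other point to check is the passage to the global surface via the cutoff $\chi_1$, but this is already contained in the lemma.
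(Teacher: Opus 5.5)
Your proposal is correct and follows essentially the same route as the paper: you fix $\eta$ and $\vartheta$ below $\min(\eta,F_\eta(a_+),F_\eta(-a_-))$, check \eqref{dyn:eikonal} for $\mu=\min(F_\eta,\vartheta)-\vartheta$, and apply Proposition~\ref{dyn:weighted-evolution} and Lemma~\ref{dyn:concentrated-states} with $\beta=1$. You then compare the observed bound $C_Tk^{1/2}e^{-2\vartheta k}$ with the Jensen lower bound $e^{-2T(k+\mathrm{o}(k))}$, which forces $T\geq\vartheta$, exactly as in the paper.
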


\begin{proof}
Fix $\eta$ and $0<\vartheta<\min(\eta,F_\eta(a_+), F_\eta(-a_-))$. The weight $\mu = \min(F_\eta,\vartheta) - \vartheta$ satisfies
\eqref{dyn:eikonal}, so Proposition~\ref{dyn:weighted-evolution}
and Lemma~\ref{dyn:concentrated-states}, with $\beta = 1$, apply. Observability would imply $e^{-2T(k+\mathrm{o}(k))} \leq C_Tk^{1/2}e^{-2\vartheta k}$. Taking logarithms and dividing by $2k$ gives $T \geq \vartheta$. Taking the supremum over $\vartheta$ and $\eta$ proves the result.
\end{proof}

For a symmetric gap of sufficiently small width $a$, set $b(y) = \partial_x\varphi(0,y)$ and choose $\eta = a^2$. On a smaller fixed strip, $\varphi(x,y) = xb(y)+\mathrm{O}(x^2)$ and $\re b(y + iv) = b(y)+\mathrm{O}(v^2)$ uniformly. Thus $F_{a^2}(a) \geq a^2/2 + (\min b)a^3/3 - Ca^4$ and $F_{a^2}(-a) \geq a^2/2 - (\max b)a^3/3 - Ca^4$, giving
$\Tobs(\omega) \geq \frac{a^2}{2} - \frac{\Vert b\Vert_\infty}{3}a^3 - Ca^4$.
The sharper coefficient $B_{\min}$ in Corollary~\ref{cor:distance-time} uses holomorphic angular localization under joint analyticity. The present $F_\eta$ provides a lower bound under weaker regularity, rather than an exact-time formula.

\paragraph{Tangential Gevrey coefficients.}
At higher step, the same comparison permits subexponential Fourier weights. The following theorem states the regularity in the prescribed normalized coordinates.

\begin{thm}\label{dyn:gevrey}
Let $m \geq 2$ and suppose $a = e^{2\varphi}$ and $V_m$ satisfy uniform tangential Gevrey bounds of order $1<s \leq r/2$:
$$
\sup_{\vert x\vert \leq r_0} \big(\Vert \partial_y^na(x,\cdot)\Vert_\infty + \Vert \partial_y^nV_m(x,\cdot)\Vert_\infty\big) \leq CR^n(n!)^s\qquad\forall n \geq 0.
$$
If $\omega$ misses $(-a_-,a_+)\times\T_\ell$ for some $0<a_\pm<r_0$, then $\Tobs(\omega) = +\infty$ for $s<r/2$ and $\Tobs(\omega)>0$ for $s = r/2$.
\end{thm}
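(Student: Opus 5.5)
The plan is to run the scheme of Theorem~\ref{dyn:higher-order} with the subexponential weight $B_\mu = e^{\mu(x)\vert D\vert^\beta}$ for a suitable $\beta\in(0,1)$, in place of the analytic weight $e^{\mu\vert D\vert}$. Lemma~\ref{dyn:concentrated-states} is already stated for $0<\beta\leq1$. So it suffices to prove the weighted evolution estimate \eqref{dyn:weighted-evolution-bound} for this weight and then compare rates.

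Assuming that estimate, the unit states $u_{0,k}$ have observed mass at most $C_Tk^{1/r}e^{-2\vartheta k^\beta}$. By \eqref{dyn:jensen}, their terminal mass is at least $e^{-2C_ETk^{2/r}}$.
\begin{itemize}
\item If $\beta>2/r$, the ratio tends to zero for every $T$, which gives $\Tobs=+\infty$.
\item If $\beta=2/r$, the ratio tends to zero as soon as $T<\vartheta/C_E$, which gives $\Tobs>0$.
\end{itemize}
Gevrey order $s$ corresponds to Fourier decay $e^{-c\vert j\vert^{1/s}}$. The natural choice is therefore $\beta=1/s$, or any $\beta<1/s$ close to it. Then $s<r/2$ gives $\beta>2/r$, and $s=r/2$ gives $\beta=2/r$. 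This matches the two conclusions of the statement.

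The key step is an analogue of Lemma~\ref{dyn:weighted-form} for $B_\mu=e^{\mu\vert D\vert^\beta}$.

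\emph{Normal part.} Since $\partial_xB_\mu=\mu'\vert D\vert^\beta B_\mu$, the normal contribution is $\Vert\partial_xW\Vert^2-\Vert\mu'\vert D\vert^\beta W\Vert^2$. Here $\vert\kappa\vert^{2\beta}\leq\vert\kappa\vert^2$ for $\vert\kappa\vert\geq1$, so the eikonal requirement becomes $\mu'^2\leq c\,\re$-type lower bound for $Q_m$ times a constant. I would take $\mu=\min(F,\vartheta)-\vartheta$ with $F=\epsilon\vert x\vert^r$-like, normalized by $\inf a^{1/2}$. No imaginary translation is needed now, because $a$ is real.

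\emph{Tangential matrix.} The entries are $\kappa\kappa'\widehat{Q}_{m,\kappa-\kappa'}e^{\mu(\vert\kappa\vert^\beta-\vert\kappa'\vert^\beta)}$. By subadditivity of $t\mapsto t^\beta$, the exponent is bounded by $\vert\mu\vert\,\vert\kappa-\kappa'\vert^\beta\leq\eta\vert j\vert^\beta$. The Gevrey bound gives $\vert\widehat Q_{m,j}(x)\vert\leq Cx^{2m}e^{-c\vert j\vert^{1/s}}$.

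I would split this matrix as multiplication by $Q_m$, which is positive and dominates $\mu'^2$, plus a commutator-type error. The error entries carry the factor $e^{\mu(\vert\kappa\vert^\beta-\vert\kappa'\vert^\beta)}-1$. By the mean value theorem this factor is bounded by $\eta\vert j\vert\,\vert\kappa\vert^{\beta-1}e^{\eta\vert j\vert^\beta}$. Combined with the Gevrey decay and a Schur test, the error is of size $x^{2m}\vert\kappa\vert^{1+\beta}$ in the quadratic form. Comparing with $x^{2m}\vert\kappa\vert^2$, it is absorbed for large frequencies when $\beta<1$. Low frequencies go into $-C\Vert W\Vert^2$. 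The potential $V_m$ is handled by the same Schur bound. With $\beta\leq1/s$, the decay $e^{-c\vert j\vert^{1/s}}$ controls $e^{\eta\vert j\vert^\beta}$, possibly after shrinking $\eta$ or taking $\beta<1/s$ strictly. Proposition~\ref{dyn:weighted-evolution} then follows verbatim by Fourier truncation.

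I expect the main obstacle to be the endpoint $\beta=1/s$. There $e^{\eta\vert j\vert^\beta}$ and $e^{-c\vert j\vert^{1/s}}$ compete at the same scale, which forces $\eta<c$ small and hence a small $\vartheta$. This is still enough for $\Tobs>0$ in the case $s=r/2$, and for $s<r/2$ one may simply take $2/r<\beta<1/s$.
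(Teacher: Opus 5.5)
Your proposal is correct and follows essentially the paper's proof. It uses the same weight $e^{\mu(x)\vert D\vert^\beta}$, with $\beta=1/s$ and small $\eta$ at the endpoint $s=r/2$, or $2/r<\beta<1/s$ when $s<r/2$. It then absorbs the weighted tangential error into $\Vert x^mDW\Vert^2$ using $\beta<1$ together with the Gevrey Fourier decay, and concludes with Lemma~\ref{dyn:concentrated-states} and \eqref{dyn:jensen}.

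The differences are cosmetic. The paper takes the Hermitian part, where $\cosh z-1=\mathrm{O}(z^2e^{\vert z\vert})$ gives an error of order $\langle D\rangle^{2\beta}$ rather than your $\vert\kappa\vert^{1+\beta}$. It also needs no eikonal smallness on $\mu'$, so $F=\vert x\vert^r/r$ keeps the normalization of that lemma, because the normal cost $\Vert\mu'\vert D\vert^\beta W\Vert^2\leq C\Vert x^m\langle D\rangle^\beta W\Vert^2$ is absorbed in the same way.
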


\begin{proof}
Integration by parts and optimization in $n$ give Fourier decay $Ce^{-c\vert j\vert ^{1/s}}$. In fact, the proof only needs the more general condition
\begin{equation}\label{dyn:gevrey-fourier}
\sup_{\vert x\vert \leq r_0}\sum_{j\in\alpha\Z} \big((1 + \vert j\vert )^2\vert \widehat a_j(x)\vert + \vert \widehat{V_m}_j(x)\vert \big) e^{\eta_0\vert j\vert ^\beta}<+\infty, \qquad 2/r \leq \beta<1,
\end{equation}
which gives all-time failure when $\beta>2/r$ and a positive-time obstruction when $\beta = 2/r$. The Gevrey hypotheses imply this with $\beta = 1/s$ and sufficiently small $\eta_0>0$.

Fix $0<\eta<\eta_0$ and a Lipschitz $\mu\in[-\eta,0]$ with $\vert \mu'\vert \leq C_\mu\vert x\vert ^m$. Set $B_\mu = e^{\mu(x)\vert D\vert ^\beta}$ and $W = B_\mu w$. The normal form contribution is $\Vert \partial_xW\Vert ^2 - \Vert \mu'\vert D\vert ^\beta W\Vert ^2$. At fixed $x$, reality of $a$ shows that the Hermitian part of the tangential matrix differs from the unweighted form by $x^{2m}R_x$, where
$(R_x)_{\kappa,\kappa'} = \kappa\kappa'\widehat a_{\kappa - \kappa'} \left(\cosh\left(\mu(\vert \kappa\vert ^\beta - \vert \kappa'\vert ^\beta)\right) - 1\right)$.
We estimate this error relative to $\langle D\rangle^{2\beta}$, where $\langle D\rangle = (1 + D^2)^{1/2}$. Set $j = \kappa - \kappa'$ and $K = \max(\vert \kappa\vert ,\vert \kappa'\vert)$. If $\vert j\vert \leq K/2$ and $K>0$, the frequencies have the same sign and comparable magnitudes, so
$\vert \vert \kappa\vert ^\beta - \vert \kappa'\vert ^\beta \vert \leq C\vert j\vert K^{\beta - 1}$.
Using $\cosh z - 1 \leq z^2e^{\vert z\vert }/2$, this power of $K$ cancels the frequency growth after conjugation by $\langle D\rangle^{-\beta}$. If $\vert j\vert >K/2$, both frequencies are bounded by $2\vert j\vert $, and concavity gives $\big\vert \vert \kappa\vert ^\beta - \vert \kappa'\vert ^\beta\big\vert \leq \vert j\vert ^\beta$. In both cases,
$\left\vert (\langle D\rangle^{-\beta}R_x \langle D\rangle^{-\beta})_{\kappa,\kappa'}\right\vert \leq C_\eta(1 + \vert j\vert )^2\vert \widehat a_j(x)\vert e^{\eta\vert j\vert ^\beta}$.
Lemma~\ref{app:block-calculus} and \eqref{dyn:gevrey-fourier} imply $\vert \langle R_xW,W\rangle\vert \leq C_\eta
\Vert \langle D\rangle^\beta W\Vert ^2$. The weighted potential matrix
is bounded on $L^2$ by the same Fourier condition.

Since $a \geq c_0>0$ on the real tube, the tangential remainder and normal cost are bounded by $C\Vert x^m\langle D\rangle^\beta W\Vert ^2$. The inequality $\langle\kappa\rangle^{2\beta} \leq \varepsilon\kappa^2 + C_\varepsilon$, valid for $\beta<1$, absorbs them into the principal form and gives
$\re\mathfrak{q}_L(w,B_\mu^2w) \geq \Vert \partial_xW\Vert ^2 + \frac{c_0}{2}\Vert x^mDW\Vert ^2 - C\Vert W\Vert ^2$.
The bound on $\mu'$ makes $B_\mu$ and $B_\mu^2$ bounded on the form domain \eqref{dyn:form-norm}. Fourier truncation and the Galerkin argument of Proposition~\ref{dyn:weighted-evolution} therefore give
\eqref{dyn:weighted-evolution-bound} for this weight.

Take $F(x) = \vert x\vert ^r/r$ and $0<\vartheta<\min(\eta,F(a_+),F(-a_-))$. Its associated weight satisfies $\vert \mu'\vert \leq \vert x\vert ^m$. Lemma~\ref{dyn:concentrated-states} and \eqref{dyn:jensen} reduce observability to $e^{-2C_ETk^{2/r}} \leq C_Tk^{1/r}e^{-2\vartheta k^\beta}$.
This is impossible for every fixed $T>0$ when $\beta>2/r$. At equality it implies $T \geq \vartheta/C_E>0$, with the weight and energy constant fixed before $k\to + \infty$.
\end{proof}

At step two, tangential analyticity is the limiting regularity in this comparison between observation and terminal mass. At higher step, the strict inequality $s<r/2$ excludes every observation time, while equality only yields a positive lower bound. These arguments give no converse under the weaker regularity hypotheses.

It remains open here whether a smooth coupled structure on a closed Baouendi-Grushin surface can have $\Tobs(\omega) = 0$ when $\omega$ has positive distance from an entire simple singular circle. Separated models and the preceding estimates show that joint analyticity is unnecessary for an obstruction. At step two, however, tangential Gevrey regularity of order $s>1$ gives only stretched exponential Fourier decay, insufficient for the present comparison with heat dissipation. Failure of these estimates alone does not establish observability in the smooth setting.

\section{Periodic spectral quasimodes}\label{wkb:section}

At step two, the complex phase also gives stationary quasimodes concentrated near the singular circle. We use the semiclassical parameter $h = \frac{1}{k}\to0$, where $k$ is the tangential momentum. We compute their energy corrections and limiting mass, then derive consequences for the spectrum of $\triangle$ and Schr\"odinger observation.

Complex phases and factorial estimates are standard in analytic microlocal analysis \cite{HS18,Sj82}. Exponentially accurate WKB constructions include \cite{BR20,Del23,KLS22}, with transport vanishing at an isolated well in \cite[Proposition~3.4]{Del23}. Periodic wells are studied in \cite{DV21}, and separated Baouendi-Grushin quasimodes in \cite{AS23}. Here the transport vanishes on the whole singular circle. A holomorphic normal coordinate reduces it to an Euler operator; periodic compatibility fixes the traces and real energy corrections.

\begin{thm}[Periodic spectral quasimodes]\label{thm:periodic-quasimodes}
Let $Z$ be a two-sided compact connected singular component of rank one and constant step two, with analytic structure and density near $Z$. For $h>0$, set $P_h = h^2\triangle$ and let $\Phi$ be the phase in \eqref{phase-definition}, with period $\ell$. There are periodic holomorphic amplitudes $a_j$ and real coefficients $\mu_j$, with $\mu_0 = 1$ and $a_0(0,y)\ne0$, having the following properties. Along $h = h_n = \ell/(2\pi n)$, $n\to + \infty$, and for every fixed smooth normal cutoff $\chi$, compactly supported in a sufficiently small interval and equal to one near $Z$, one can choose $N(h)\asymp h^{-1}$ and $c_h>0$ such that
$$
u_h = c_h\chi(x)e^{i\Phi(x,y)/h}\sum_{j = 0}^{N(h)}h^ja_j(x,y), \qquad E(h) = h\sum_{j = 0}^{N(h)}h^j\mu_j = h+\mathrm{O}(h^2).
$$
Extended by zero, $u_h\in D(\triangle)$ and, for some $C,\beta>0$,
$$
\Vert u_h\Vert_{L^2(M,\nu)} = 1, \qquad\Vert (P_h - E(h))u_h\Vert_{L^2(M,\nu)} \leq Ce^{-\beta/h}.
$$
The probability measures $\vert u_h\vert ^2\nu$ converge weakly to the normalized measure $\dd{d}y/\ell$ on $Z$.
\end{thm}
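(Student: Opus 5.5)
We plan an analytic WKB construction in the holomorphic normal form \eqref{normal-form}, with $h=1/n$ and $n\in(2\pi/\ell)\N^*$. After density conjugation and the gauge $g_0$, the ansatz $e^{inz}\sum_j n^{-j}A_j(w,z)$ turns the eigenvalue problem $(L-\lambda)v=0$ into a hierarchy. Here $\lambda=n\sum_j\mu_j n^{-j}$ and $L=(1+2w\partial_w)D_z+\mathscr{A}$. Since $D_z e^{inz}=n e^{inz}$, the order-$n$ part is the Euler operator $(1+2w\partial_w)-\mu_0$. The choice $\mu_0=1$ makes its kernel the functions independent of $w$.

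The hierarchy reads
$$
2w\partial_wA_{j+1}=-\big((1+2w\partial_w)D_z+\mathscr{A}\big)A_j+\sum_{i\le j}\mu_{j+1-i}A_i
$$
(up to reindexing). Solvability requires the right-hand side to vanish at $w=0$. This condition is a first-order equation along $Z$ for the trace $A_j(0,z)$. Periodicity forces the constant $\mu_{j+1}$, namely its average compatibility term. The periodic solution of the trace equation is then determined up to a constant, which we normalize. Reality of $\mu_j$ will follow from self-adjointness. An alternative is to pair with the conjugate construction, which we check at the end through $\langle(P_h-E)u_h,u_h\rangle\to0$ at every order. The inverse $(2w\partial_w)^{-1}$ acts on functions vanishing at $w=0$ without loss on a fixed polydisc, since it divides the Taylor coefficient of $w^k$ by $2k$. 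The loss of derivatives comes only from $\mathscr{A}$ and $D_z$, which are second order. We follow the nested-domain factorial calculus used for Lemma~\ref{factorial} (Cauchy estimates on shrinking polydiscs) to get $\sup|A_j|\le C\Lambda^j j!$ and $|\mu_j|\le C\Lambda^j j!$.

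Next we optimally truncate at $N(h)=\lfloor 1/(e\Lambda h)\rfloor$. The residual of the truncated sum is $h^{N}\cdot$(last terms), hence $O(e^{-\beta/h})$ uniformly on a fixed real tube. We return to the original variables by undoing $\psi$, $g_0$ and the phase-coordinate map of Lemma~\ref{phase-local}. This yields the amplitudes $a_j(x,y)$ on the real tube, with $a_0(0,y)=g_0\cdot(\text{trace})\ne0$.

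The cutoff commutator $[P_h,\chi]$ is supported where $\A\ge c>0$, so it carries the factor $|e^{i\Phi/h}|=e^{-\A/h}\le e^{-c/h}$. The amplitude sum is at most $\sum h^jC\Lambda^jj!\le C$ up to $N(h)$. The commutator error is therefore exponentially small, and $u_h\in D(\triangle)$ because it is smooth with compact support in the tube. For the normalization, $|u_h|^2$ equals $c_h^2 e^{-2\A/h}|a_0+O(h)|^2\rho$. Laplace's method in the normal variable, using $\A=x^2/2+O(x^3)$, gives
$$
\int|u_h|^2d\nu\sim c_h^2\sqrt{\pi h}\int|a_0(0,y)|^2\rho(0,y)\,dy.
$$
We choose the trace normalization so that $|a_0(0,y)|^2\rho(0,y)$ is constant in $y$. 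Then $c_h\asymp h^{-1/4}$ and the weak limit is $dy/\ell$.

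The main obstacle is this last normalization. The trace of $A_0$ is fixed up to a constant by the first solvability condition, so $|a_0(0,y)|^2\rho(0,y)$ is determined and cannot be chosen freely. We must verify that it is constant from the structure of the order-one trace equation. Our plan is to rewrite that equation as a conservation law $\partial_y(|a_0|^2\rho\cdot\mathrm{Re}(\text{coefficient}))=0$, using that $\Phi_y(0,y)=1$ and that the normalization $X_2=xe^\varphi\partial_y$, $\varphi(0,\cdot)=0$, makes the transverse oscillator uniform along $Z$. This is analogous to the probability current argument. If the density-dependent terms do not cancel, we would instead state the limit as the measure $|a_0(0,y)|^2\rho(0,y)\,dy$ normalized. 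Uniform factorial bounds on the $\mu_j$ together with the trace equations are the second delicate point.
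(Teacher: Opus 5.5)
Your construction follows the paper's route: complex phase, Euler-type normal transport inverted on zero-trace sources, periodic compatibility fixing $\mu_{j+1}$ by a mean and the trace by a zero-mean primitive, truncation at $N\asymp h^{-1}$, cutoff error from $\operatorname{Im}\Phi\geq b_\chi$ on $\operatorname{supp}\chi'$, and reality of the $\mu_j$ by symmetry. The only difference is that you work in the holomorphic normal form $(w,z)$ rather than in $(x,z=\Phi)$ with the gauge $g_\rho$ of \eqref{wkb:gauge}.

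\textbf{The main gap.} The step you call the main obstacle is left open, and your fallback would prove a different theorem. In your frame the first compatibility condition is $(D_z+d_0(0,z)-\mu_1)A_0=0$. So on the real circle $\partial_\theta\vert A_0\vert^2=2\operatorname{Im}\big(d_0(0,\theta)-\mu_1\big)\vert A_0\vert^2$, and you need $d_0(0,\cdot)$ to be real \emph{pointwise}. The global test $\langle(P_h-E)u_h,u_h\rangle\to0$ only gives reality of $\mu_1$, i.e.\ of the mean of $d_0(0,\cdot)$.

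The paper settles this by computing the effective operator \eqref{wkb:effective-operator} explicitly. Its potential $\mathcal{V}=\frac54b^2-\frac34\varphi_2-\frac14d_x^2-\frac12d_{xx}$ is visibly real, and then \eqref{wkb:leading-trace} gives $\rho_0\vert v_0\vert^2=\vert C_*\vert^2$.

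In your setting there is a quick substitute. Apply the symmetry of $\triangle_\Phi$ on $L^2(dr\,d\theta)$ to $\chi g_0e^{inz}v_1(z)$ and $\chi g_0e^{inz}v_2(z)$, with arbitrary periodic trigonometric polynomials $v_1,v_2$. Since $L(e^{inz}v)=e^{inz}(nv+D_zv+d_0v)$ and $\vert e^{inz}\vert^2=e^{-nr^2}$:
\begin{itemize}
\item the terms in $n$ cancel;
\item the cutoff contributes $\mathrm{O}(e^{-cn})$;
\item Laplace's method at order $n^{-1/2}$ gives $\int_0^\ell(d_0-\overline{d_0})(0,\theta)\,v_1\overline{v_2}\,d\theta=0$.
\end{itemize}
Hence $d_0(0,\cdot)$ and $\mu_1$ are real, and $\vert A_0\vert$ is constant. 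Moreover $g_0(0,\cdot)=1$, $\psi(0,\cdot)=0$, and the phase coordinates have unit Jacobian on $Z$, so $\rho_\Phi(0,\cdot)=\rho_0$. This is exactly the constancy of $\rho_0\vert a_0(0,\cdot)\vert^2$ needed for the limit $dy/\ell$.

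\textbf{The factorial estimate.} The bookkeeping you describe does not give $j!$. If $(2w\partial_w)^{-1}$ is counted as merely bounded, and each step loses the two derivatives of $\mathscr{A}$ or of $2w\partial_wD_z$, nested Cauchy estimates give $\Lambda^j(j!)^2$. Optimal truncation then yields only $e^{-c/\sqrt h}$. One factorial per step comes from three structural facts:
\begin{itemize}
\item dividing the $w^k$ coefficient by $2k$ regains one normal derivative, so $(2w\partial_w)^{-1}\partial_w^2$ costs only one derivative;
\item $L$ has no $\partial_z^2$, and the inverse returns $2w\partial_wD_zA_j$ as $D_zA_j$ minus its trace;
\item extracting Taylor coefficients of fixed degree at $w=0$ is free on a fixed disc, while the periodic primitive regains a tangential derivative.
\end{itemize}
The paper implements this with the weights $W_{n,k,d}=(2n+k+d)!/(n+k+d)!$ on fixed domains (Lemma~\ref{app:transport-calculus}), not with shrinking polydiscs. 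It bounds the convolutions $\sum_i\mu_iA_{j+1-i}$ through a majorant series. Nested $\ell^1$ Taylor norms would also work, provided this accounting is made explicit.

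Finally, the truncated residual is not only the last term. It contains the cross terms $\sum_{j+k\geq N+1}h^{j+k+1}\mu_ja_k$, which need the bound $j!\,k!\leq N!\,m!$ for $j+k=N+m$.
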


\begin{proof}
We solve the periodic transport equations, truncate their factorial expansion and compute the limiting mass.

\smallskip
\noindent
\emph{Step 1. Phase and normal transport.} Use the analytic tube \eqref{geom:normal-form}, write $\dd{d}\nu = \rho\,\dd{d}x\,\dd{d}y$ and set
$Q = q^2 = x^2e^{2\varphi}$ and $\mathcal{D}_\rho = \rho^{-1}\partial_x(\rho\partial_x) + \rho^{-1}\partial_y(\rho Q\partial_y)$.
The operator $P_h$ acts in $L^2(M,\nu)$ with domain $D(\triangle)$, and $\triangle = -\mathcal{D}_\rho$ on smooth functions supported in the tube. All quasimode norms are taken in this space.

The principal symbol is $p(x,y;\xi,\eta) = \xi^2 + Q(x,y)\eta^2$. Its zero-energy eikonal equation is \eqref{phase-definition}. The phase satisfies $\Phi(x,y + \ell) = \Phi(x,y) + \ell$, so $e^{i\Phi/h}$ is periodic along $h = \ell/(2\pi n)$, $n\to + \infty$. By \eqref{geom:phase-expansion}, uniformly in $y$ and for bounded $t$,
\begin{equation}\label{wkb:positive-phase}
e^{i\Phi(\sqrt h\,t,y)/h} = e^{iy/h}e^{-t^2/2}(1 + \mathrm{O}(\sqrt h)),\qquad \im\Phi(x,y) \geq cx^2
\end{equation}
for some $c>0$ on a sufficiently small real tube.

The phase coordinate $z = \Phi$ used in Subsection~\ref{sec:sharp-lower} transforms the principal symbol into
$$
\widetilde p = (\xi')^2 + 2\widetilde{\Phi_x}\xi'\zeta = \xi'(\xi'+ix\mathcal{B}\zeta),\qquad \mathcal{B} = \widetilde{2e^\varphi\Phi_y},\qquad\mathcal{B}(0,z) = 2.
$$
A tilde denotes the change of coordinates. The eikonal equation removes the $\zeta^2$ term. This identity reduces the transport equations to a normal equation with a free tangential trace.

Set $\mathcal{L} = -i(2\Phi_x\partial_x + 2Q\Phi_y\partial_y+
\mathcal{D}_\rho\Phi) - 1$. Substitution of
$a\sim\sum_{n \geq 0}h^na_n$ and $E(h)\sim h\sum_{j \geq 0}h^j\mu_j$ gives
\begin{equation}\label{wkb:hierarchy}
\mathcal{L}a_0 = 0,\qquad \mathcal{L}a_n = \mathcal{D}_\rho a_{n - 1} + \sum_{j = 1}^n\mu_j a_{n - j}\quad\forall n\geq1, \qquad\mu_0 = 1.
\end{equation}
The value $\mu_0 = 1$ follows from $\mathcal{D}_\rho\Phi(0,y) = i$ and the nonzero leading trace.

On a fixed product $\mathcal{U}_{R_0,s_0} = \{\vert x\vert <R_0,\ \vert \Im z\vert <s_0\}$, with $z\in\mathbb{C}/\ell\Z$, let $\widetilde\rho$ be the scalar coefficient $\rho$ composed with the inverse of $(x,y)\mapsto(x,\Phi(x,y))$. Shrink the product once so that $\widetilde\rho$ is nonvanishing. The density conjugation $\mathcal{D}_\rho = \rho^{-1/2}(\mathcal{D}_0 - V)\rho^{1/2}$ gives $C_\rho = C + \frac{x\mathcal{B}}{2}\partial_x\log\widetilde\rho$, where the derivative is taken at fixed $z$, and $C,g$ are defined in Subsection~\ref{sec:sharp-lower} and \eqref{lower-gauge}. Thus
\begin{equation}\label{wkb:gauge}
\widetilde{\mathcal{L}} = x\mathcal{B}\partial_x + C_\rho, \qquad g_\rho(x,z) = \left(\frac{\widetilde\rho(0,z)}{\widetilde\rho(x,z)}\right)^{1/2}g(x,z), \qquad g_\rho^{-1}\widetilde{\mathcal{L}}g_\rho = x\mathcal{B}\partial_x.
\end{equation}
The square root is the holomorphic branch equal to one at $x = 0$. In particular, $g_\rho(0,z) = 1$ and $g_\rho$ is nonvanishing on this product. The transport source must vanish at $x = 0$. After division by $\mathcal{B}$, the zero-trace inverse of $x\partial_x$ divides the coefficient of $x^k$ by $k$. The normal operator has kernel consisting of functions of $z$. Appendix~\ref{app:wkb} gives the solution formula.

\smallskip
\noindent
\emph{Step 2. Periodic compatibility.} We use the trace calculation of Appendix~\ref{app:wkb}, which identifies the effective tangential operator and the leading periodic amplitude. For a periodic holomorphic trace $v$ on a complex strip, let $\mathsf{E}v$ be the homogeneous solution whose expression in the new coordinates is $g_\rho(x,z)v(z)$. The trace of $\mathcal{D}_\rho\mathsf{E}v$ determines the next compatibility equation:
$$
(\mathcal{D}_\rho\mathsf{E}v)\vert_{x = 0} = H_{\mathrm{eff}}v = iv'+\frac{i}{2}(\partial_y\log\rho_0)v + \mathcal{V}v,\qquad\rho_0(y) = \rho(0,y),
$$
Here $H_{\mathrm{eff}}$ acts in $L^2(\T_\ell,\rho_0\,\dd{d}y)$ with domain $H^1(\T_\ell)$, and $\mathcal{V}$ is the real potential in \eqref{wkb:effective-operator}. The equation $(H_{\mathrm{eff}} + \mu_1)v_0 = 0$ has the nonvanishing solutions \eqref{wkb:leading-trace}. Periodicity is equivalent to
\begin{equation}\label{wkb:main-compatibility}
\mu_1 = \frac{2\pi m}{\ell} - \frac{1}{\ell}\int_0^\ell\mathcal{V}(y)\,\dd{d}y,\qquad m\in\Z.
\end{equation}
Fix one branch, for instance $m = 0$. Since $\mathcal{V}$ and $\mu_1$ are real, $\rho_0\vert v_0\vert^2$ is constant. At every subsequent order, division by $v_0$ reduces the trace equation to $i\,\partial_y$. The next energy coefficient cancels its mean, and the zero-mean periodic primitive determines the trace. The recursions \eqref{wkb:recursion} give the induction.

The construction uses the whole circle. Cutting off a nonzero leading trace in $y$ produces only a polynomially small error, since $\im\Phi = 0$ on $Z_0$. For example, a fixed nonconstant cutoff $\chi_{\rm tan}(y)$ applied to $v_h = e^{iy/h - x^2/(2h)}$ in the flat model gives the commutator $[P_h,\chi_{\rm tan}]v_h = -2ihx^2\chi_{\rm tan}'v_h - h^2x^2\chi_{\rm tan}''v_h$, whose norm divided by $\norm{v_h}$ has order $h^2$. We therefore solve the periodic transport equations before inserting a cutoff away from $Z_0$.

\smallskip
\noindent
\emph{Step 3. Optimal truncation.} With the phase, branch and traces fixed above, Proposition~\ref{app:wkb-hierarchy} gives the factorial estimates \eqref{wkb:theorem-factorial} on one fixed complex tube, with real energy coefficients. For a fixed normal cutoff $\chi$, equal to one near zero and supported in that tube, set
$$
b_\chi = \inf_{\supp\chi'\times\T_\ell}\im\Phi>0,\qquad0<\beta<\min(\Lambda^{-1},b_\chi),
$$
where $\Lambda$ is the constant in the factorial bounds. Proposition~\ref{app:wkb-realization} gives a truncation $N(h) = \lfloor\delta/h\rfloor$, with $0<\delta<\Lambda^{-1}$, such that $a^{(N(h))} = a_0 + \mathrm{O}(h)$ uniformly and the residual before multiplication by $\chi$ is exponentially small. Set $w_h = \chi e^{i\Phi/h}a^{(N(h))}$ and $u_h = w_h/\norm{w_h}$. The commutator with $\chi$ is supported where $\im\Phi \geq b_\chi$. The Gaussian profile \eqref{wkb:positive-phase} and the nonzero leading trace give $\norm{w_h}\asymp h^{1/4}$, so normalization costs a factor of order $h^{-1/4}$. Hence
$E(h) = h + \mathrm{O}(h^2)\in\R$ and $\norm{(P_h - E(h))u_h} \leq C_\beta e^{-\beta/h}$.

\smallskip
\noindent
\emph{Step 4. Concentration.} By \eqref{wkb:positive-phase}, the change $x = \sqrt h\,t$ and dominated convergence give, for $f\in C(M)$,
$$
h^{-1/2}\int_M f\vert w_h\vert^2\,\dd{d}\nu \rightarrow \sqrt{\pi}\int_0^\ell f(0,y)\rho_0(y)\vert a_0(0,y)\vert^2\,\dd{d}y.
$$
The limit for $f = 1$ is positive. The same computation with $x^2$ gives $\norm{xu_h}_{L^2(U,\nu)} = \mathrm{O}(\sqrt h)$. Since $\rho_0\vert a_0(0,\cdot)\vert^2$ is constant, the normalized mass satisfies $\int_M f\vert u_h\vert^2\,\dd{d}\nu\longrightarrow\frac{1}{\ell}\int_0^\ell f(0,y)\,\dd{d}y$.
\end{proof}

\paragraph{Spectral consequences.}
\begin{cor}\label{cor:spectral-sequence}
Under the hypotheses of Theorem~\ref{thm:periodic-quasimodes}, there is a sequence of eigenvalues of $\triangle$ such that
$$
\lambda_n = \frac{2\pi n}{\ell} + \mu_1 + \frac{\mu_2\ell}{2\pi n} + \mathrm{O}(n^{-2}),\qquad n\to+\infty.
$$
More precisely, for $h_n = \ell/(2\pi n)$, these eigenvalues can be chosen with $\vert\lambda_n - E(h_n)/h_n^2\vert \leq Ch_n^{-2}e^{-\beta/h_n}$.
\end{cor}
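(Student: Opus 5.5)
The plan is to convert the exponentially accurate quasimodes of Theorem~\ref{thm:periodic-quasimodes} into nearby eigenvalues by the spectral theorem. We then read off the asymptotic expansion from $E(h)$. Fix $h_n = \ell/(2\pi n)$ and the unit vectors $u_{h_n}\in D(\triangle)$ with $\Vert (h_n^2\triangle - E(h_n))u_{h_n}\Vert \leq Ce^{-\beta/h_n}$. Dividing by $h_n^2$ gives
$$\Vert (\triangle - E(h_n)/h_n^2)u_{h_n}\Vert \leq Ch_n^{-2}e^{-\beta/h_n}.$$
Since $M$ is compact and the structure is bracket generating, $\triangle$ is self-adjoint with compact resolvent; this follows from subelliptic estimates \cite{H67}. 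Its spectrum is therefore a discrete set of eigenvalues. For a self-adjoint operator, $\operatorname{dist}(\lambda,\sigma(\triangle)) \leq \Vert(\triangle-\lambda)u\Vert$ for every unit $u\in D(\triangle)$, because $\Vert(\triangle-\lambda)^{-1}\Vert = \operatorname{dist}(\lambda,\sigma(\triangle))^{-1}$. We may thus choose an eigenvalue $\lambda_n$ satisfying the second assertion.

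For the expansion, we write $E(h)/h^2 = h^{-1}\sum_{j\le N(h)}h^j\mu_j$. This equals $h^{-1} + \mu_1 + \mu_2 h + h^{-1}\sum_{3\le j\le N(h)}h^j\mu_j$. The main point is to bound this tail by $\mathrm{O}(h^2)$ uniformly, despite $N(h)\asymp h^{-1}$. Here we use the factorial bounds \eqref{wkb:theorem-factorial} on the real energy coefficients from Proposition~\ref{app:wkb-hierarchy}. I expect these to give $\vert\mu_j\vert\leq C\Lambda^j j!$. With $N(h)=\lfloor\delta/h\rfloor$ and $\delta\Lambda<1$, we have $j!h^{j}\le (jh)^j\le \delta^{j}$ times harmless factors. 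More precisely, $\Lambda^jj!h^j \leq (\Lambda\delta)^{j-3}\,\Lambda^3 j^3h^3\cdot$ const for $j\le\delta/h$. The ratio of consecutive terms, $\Lambda (j+1) h\le\Lambda\delta<1$, makes the tail a geometric series dominated by its first term $C\Lambda^3 3!h^3$. Hence $h^{-1}\sum_{j\ge3}\dots = \mathrm{O}(h^2)$.

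Substituting $h_n^{-1} = 2\pi n/\ell$ and $h_n = \ell/(2\pi n)$ gives the displayed formula. The error is $\mathrm{O}(n^{-2})$, since the exponential discrepancy $h_n^{-2}e^{-\beta/h_n}$ is negligible.

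The only delicate step is the uniform tail bound. If \eqref{wkb:theorem-factorial} only controls the amplitudes and not the $\mu_j$ directly, I would first extract $\vert\mu_j\vert\le C\Lambda^jj!$ from the compatibility recursion \eqref{wkb:recursion}. There $\mu_j$ is the mean over $\T_\ell$ of a trace built from $\mathcal{D}_\rho a_{j-1}$ and lower terms, so Cauchy estimates on the fixed complex tube transfer the amplitude bounds to the energy coefficients.
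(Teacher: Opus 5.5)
Your proposal is correct and follows the paper's argument: compact resolvent from bracket generation, the spectral-theorem bound $\operatorname{dist}(E(h_n)/h_n^2,\sigma(\triangle))\leq\Vert(\triangle-E(h_n)/h_n^2)u_{h_n}\Vert$, and the factorial bounds of Proposition~\ref{app:wkb-hierarchy} to get the expansion. Those bounds already include $\vert\mu_n\vert\leq C\Lambda^nn!$, so your fallback is unnecessary, and you simply spell out the tail estimate $\sum_{3\leq j\leq N(h)}h^{j-1}\mu_j=\mathrm{O}(h^2)$ via the ratio bound $\Lambda(j+1)h\leq\Lambda\delta<1$, which the paper leaves implicit.
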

\begin{proof}
The structure is bracket-generating on the compact surface, so $\triangle$ has compact resolvent. The spectral theorem applied to the normalized quasimodes bounds $\operatorname{dist}(E(h)/h^2,\sigma(\triangle))$ by $Ch^{-2}e^{-\beta/h}$. The factorial bounds give the stated finite-order expansion of $E(h)/h^2$.
\end{proof}

For the coupled example of Subsection~\ref{sec:surface-examples}, with $\varphi(x,y) = \varepsilon x\sin y$ and $d\nu = dx\,dy$, formula \eqref{wkb:effective-operator} gives $\mathcal{V} = 5\varepsilon^2\sin^2y/4$. Corollary~\ref{cor:spectral-sequence} therefore yields $\lambda_n = n-5\varepsilon^2/8+\mathrm{O}(n^{-1})$.


\begin{remark}[Stationary quasimodes and observation]
The stationary quasimodes of Theorem~\ref{thm:periodic-quasimodes} give a direct lower bound when a tube around the whole singular circle is unobserved. Choose their cutoff $\chi$ with support disjoint from $\omega$, and let $b_\chi$ and $\Lambda$ be as in Proposition~\ref{app:wkb-realization}. Then $\Tobs(\omega) \geq \min(\Lambda^{-1},b_\chi)$. Indeed, for every $0<\beta<\min(\Lambda^{-1},b_\chi)$, Duhamel's formula bounds $\Vert e^{-t\triangle}u_h\Vert_{L^2(\omega,\nu)}$ by $C_Th^{-1}e^{-\beta/h}$ for $0 \leq t \leq T$, whereas the terminal norm is $e^{-T/h+\mathrm{O}(1)}$ if $T<\beta$. Observability therefore fails for such $T$. These states have limiting mass along the whole circle; the sharp lower bound in Theorem~\ref{thm:exact-time} uses the angular localization developed in Subsection~\ref{sec:sharp-lower}.
\end{remark}

\paragraph{Proof of Corollary~\ref{cor:schrodinger-obstruction}.}\label{sec:schrodinger-observation}

\begin{proof}
Only a finite WKB expansion is needed. Fix the phase, branch, amplitudes and cutoff in Appendix~\ref{app:wkb}. With $\alpha = 2\pi/\ell$ and $h_n = (\alpha n)^{-1}$, the normalized states $u_n = u_{h_n}^{(1)}$ of \eqref{wkb:quasimode} satisfy $\norm{(\triangle-\Lambda_n)u_n} = \mathrm{O}(n^{-1})$, where $\Lambda_n = \alpha n+\mu_1$, by \eqref{wkb:exact-residual}. Gaussian rescaling gives $\vert u_n\vert^2\nu\rightharpoonup\dd{d}y/\ell$ on $Z$ and, for each fixed $m\in\Z$,
\begin{equation}\label{schrodinger:coherence}
\norm{u_{n+m}-e^{i\alpha my}u_n}_{L^2(M,\nu)}\longrightarrow0.
\end{equation}
Indeed, the leading amplitudes agree, the normalizing constants have ratio tending to one, and $e^{i\alpha m(\Phi-y)}\to1$ on the concentrating mass.

For $y_0\in\T_\ell$ and $J\in\N$, define
$$
F_J(s)=\frac{1}{\sqrt{2J+1}}\sum_{\vert m\vert\leq J}e^{i\alpha ms},\qquad
w_{n,J}=\frac{1}{\sqrt{2J+1}}\sum_{\vert m\vert\leq J}e^{-i\alpha my_0}u_{n+m},\qquad n>J.
$$
For fixed $J$ and $T$, Duhamel's formula and \eqref{schrodinger:coherence} imply
$$
\sup_{\vert t\vert\leq T}\norm{e^{-it\triangle}w_{n,J}-e^{-it\Lambda_n}u_nF_J(y-y_0-t)}_{L^2(M,\nu)}\longrightarrow0.
$$
The mass limit gives $\norm{w_{n,J}}\to1$. Since $\vert F_J\vert^2\dd{d}y/\ell$ is a Fej\'er approximate identity, a diagonal over a countable dense subset of $C(M)$ and bounded time intervals gives normalized states $\psi_J = w_{n(J),J}/\norm{w_{n(J),J}}$ such that
\begin{equation}\label{schrodinger:packet}
\sup_{\vert t\vert\leq T}\Big\vert\int_M f\vert e^{-it\triangle}\psi_J\vert^2\,\dd{d}\nu-f(0,y_0+t)\Big\vert\longrightarrow 0  \qquad\forall f\in C(M).
\end{equation}
For $T<L$, place the trajectory for $0\leq t\leq T$ inside $I$ and choose $f\in C(M;[0,1])$ equal to zero there and one on $\overline\omega$. Then \eqref{schrodinger:packet} contradicts observability. For $T=L$, make the observed integral at most $\delta$ on $[0,L-\delta]$; unitarity bounds the remaining integral by $\delta$. If $Z\cap\overline\omega = \varnothing$, apply the same cutoff argument to $u_n$ and Duhamel's formula for any fixed time.
\end{proof}

\begin{remark}\label{rem:schrodinger-microlocalization}
At fixed $J$, the Gaussian profile of $b_{n,J}(t) = u_nF_J(y-y_0-t)$ gives $h_nD_xb_{n,J}\to0$ and $(h_nD_y-1)b_{n,J}\to0$ in $L^2$, uniformly on bounded time intervals, where $D_x = -i\partial_x$. Including these limits in the diagonal and using the $L^2$ approximation above identifies the unique semiclassical measure of $e^{-it\triangle}\psi_J$ at scale $h_{n(J)}$ as $\mu_t = \delta_{(0,y_0+t;0,1)}$ (see \cite[proof of Proposition~3.1]{AS23} for the definition). Conjugate initial states give $\delta_{(0,y_0-t;0,-1)}$.

The spacing $\Lambda_{n+1}-\Lambda_n = \alpha$ accounts for the unit speed: $L$ measures a travel time in the normalized coordinate, rather than a sub-Riemannian arc length. Although $H_p = 0$ on $\Sigma = \{x = \xi = 0,\ \eta\ne0\}$, physical time $t$ corresponds to semiclassical time $t/h$. This provides the concentration on the characteristic cone sought in \cite[Section~3.5]{Let21} for the present analytic class.
\end{remark}

\appendix

\section*{Appendix}
\addcontentsline{toc}{section}{Appendix}
\renewcommand{\thesubsection}{\Alph{subsection}}
\counterwithin{thm}{subsection}
\renewcommand{\theHsubsection}{appendix.\Alph{subsection}}
\renewcommand{\theHthm}{\theHsubsection.\arabic{thm}}

Appendices~\ref{current-appendix}--\ref{rough-appendix} supply the estimates used for the upper bound in Subsection~\ref{sec:sharp-upper}. Appendix~\ref{current-appendix} constructs the adapted transverse subspace and derives the equation and decay estimate for its complement. Appendix~\ref{graph-appendix} justifies the operator domains and the spatial derivatives required in these arguments. Appendix~\ref{rough-appendix} proves the initial weighted observation estimate. Appendix~\ref{app:factorial} supplies the factorial bounds used for the lower bound in Subsection~\ref{sec:sharp-lower} and the stationary quasimodes of Section~\ref{wkb:section}.

\subsection{Adapted transverse subspace and complementary decay}
\label{current-appendix}

In the flat whole-line model, the first $J+1$ transverse oscillator modes at angular frequency $n>0$ have decay rates $n,3n,\ldots,(2J+1)n$, while their orthogonal complement decays at rate at least $(2J+3)n$. Angular dependence destroys this invariant decomposition: the retained modes force the complement. Proposition~\ref{adapted-subspace} constructs a time-dependent subspace close to their span, whose failure of invariance is exponentially small in the angular frequency. The complement retains the leading dissipation rate $(2J+3)n$, up to an arbitrarily small relative loss and lower-order terms, and the forcing from the retained component is exponentially small. The analytic domains and defect exponents are fixed before $J$, allowing the complementary waiting time in Subsection~\ref{sec:sharp-upper} to tend to zero.

The construction has three steps. Appendix~\ref{app:compression} writes the holomorphic normal form \eqref{normal-form} as a Taylor--Fourier matrix and truncates normal degrees at a level proportional to the angular frequency. Appendix~\ref{app:evolving-graph} constructs a time-dependent graph over degrees $0,\ldots,J$, cancelling their forcing into higher degrees in this truncated system. Appendix~\ref{app:real-evaluation} evaluates the graph on the real cylinder and proves Proposition~\ref{adapted-subspace}. Appendix~\ref{app:complement} then establishes the projection, coercivity and evolution estimates for the complementary component.

We use the cylinder, operator and coefficient space of Subsection~\ref{sec:sharp-upper}. On $\mathcal{F}_J$, let $Nf = (nf_{\sigma,j,n})$ and $D(N) = \{f \,\mid\, \sum_{j,n}n^2\big(\vert f_{+,j,n}\vert ^2 + \vert f_{-,j,n}\vert ^2\big)<+\infty\}$. A synthesis block maps $\mathbb{C}^{J + 1}$ at frequency $\sigma n$ to output frequency $m$. Its Sobolev norm includes the factor $e^{im\theta}$; its angular decay is measured in $m - \sigma n$.

We first record the angular matrix estimates used throughout the appendix.

\begin{lem}\label{app:block-calculus}
Let $T = (T_{mn})$ be an operator-valued matrix between Hilbert direct sums indexed by subsets of $(2\pi/\ell)\Z$. Extend absent blocks by zero, and, for $s \geq 0$, set
$$
q_s(k) = \frac{e^{-s\vert k\vert }}{(1 + \vert k\vert )^2},\qquad \Vert T\Vert_{q_s} = \sup_{m,n}\frac{\Vert T_{mn}\Vert }{q_s(m - n)}.
$$
The operator norm is bounded by the geometric mean of the maximal row and column sums of $\Vert T_{mn}\Vert $. In particular, finite $\Vert T\Vert_{q_s}$ implies boundedness on these direct sums. Products satisfy $\Vert TU\Vert_{q_s} \leq C_\ell\Vert T\Vert_{q_s}\Vert U\Vert_{q_s}$, and adjoints have the same norm. Let $N$ act by $\vert n\vert $ on each frequency space. Then
$\Vert e^{\eta N}Te^{-\eta N}\Vert_{q_{s - \vert \eta\vert }} \leq \Vert T\Vert_{q_s}$ for $\vert \eta\vert <s$.
For $0<s'<s$, polynomial factors in $m - n$ are absorbed by the smaller exponent. With $\langle N\rangle = (1 + N^2)^{1/2}$, one also has
$\Vert \langle N\rangle^kT\langle N\rangle^{-k}\Vert_{q_{s'}} \leq C_{s - s',k,\ell}\Vert T\Vert_{q_s}$ for every $k\in\R$.
On one such direct sum, a uniformly bounded, uniformly coercive positive block-diagonal operator remains invertible after a sufficiently small perturbation in $\Vert \cdot\Vert_{q_s}$, with inverse bounded in this norm.
\end{lem}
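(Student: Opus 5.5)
The plan is to prove the assertions of Lemma~\ref{app:block-calculus} in the order stated, each reduced to scalar inequalities on the weight $q_s$.

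\emph{Operator norm.} For the bound by row and column sums we use the Schur test. For $u=(u_n)$ and $v=(v_m)$, Cauchy--Schwarz gives
$$
\Big\vert\sum_{m,n}\langle T_{mn}u_n,v_m\rangle\Big\vert \leq \sum_{m,n}\Vert T_{mn}\Vert\,\Vert u_n\Vert\,\Vert v_m\Vert \leq \Big(\sup_m\sum_n\Vert T_{mn}\Vert\Big)^{1/2}\Big(\sup_n\sum_m\Vert T_{mn}\Vert\Big)^{1/2}\Vert u\Vert\,\Vert v\Vert .
$$
Since the indices lie in the lattice $(2\pi/\ell)\Z$, the sum $\sum_k q_s(k)\leq\sum_k(1+\vert k\vert)^{-2}=C_\ell$ is finite. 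Both sums are therefore at most $C_\ell\Vert T\Vert_{q_s}$, which gives boundedness. Adjoints transpose the indices and replace each block by its adjoint, which has the same norm. Since $q_s$ is even, the adjoint has the same $q_s$-norm.

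\emph{Products.} This is the only genuinely combinatorial step. Writing $a=m-p$ and $b=p-n$, it reduces to the convolution inequality $\sum_p q_s(m-p)q_s(p-n)\leq C_\ell\,q_s(m-n)$. The exponential factors cooperate, since $e^{-s\vert a\vert}e^{-s\vert b\vert}\leq e^{-s\vert a+b\vert}$. It then remains to show
$$
\sum_{a+b=k}(1+\vert a\vert)^{-2}(1+\vert b\vert)^{-2}\leq C(1+\vert k\vert)^{-2}.
$$
For this, split according to whether $\vert a\vert\geq\vert k\vert/2$ or $\vert b\vert\geq\vert k\vert/2$. On the first part, $(1+\vert a\vert)^{-2}\leq 4(1+\vert k\vert)^{-2}$, and the sum of $(1+\vert b\vert)^{-2}$ over the lattice is finite; the second part is symmetric. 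This is the step I expect to need the most care, only because of the lattice constants, and it is routine.

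\emph{Weights.} For conjugation, the block of $e^{\eta N}Te^{-\eta N}$ is $e^{\eta(\vert m\vert-\vert n\vert)}T_{mn}$. Since $\big\vert\vert m\vert-\vert n\vert\big\vert\leq\vert m-n\vert$, its norm is at most $e^{\vert\eta\vert\vert m-n\vert}\Vert T\Vert_{q_s}q_s(m-n)=\Vert T\Vert_{q_s}q_{s-\vert\eta\vert}(m-n)$. A polynomial factor $(1+\vert k\vert)^p$ is absorbed because $(1+\vert k\vert)^pe^{-(s-s')\vert k\vert}$ is bounded by a constant depending on $s-s'$ and $p$. For $\langle N\rangle^k$, Peetre's inequality gives
$$
\langle m\rangle^k\langle n\rangle^{-k}\leq 2^{\vert k\vert/2}\langle m-n\rangle^{\vert k\vert},
$$
which is then absorbed by the smaller exponent $s'<s$.

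\emph{Invertibility.} Let $D$ be block-diagonal with $c\leq D\leq C$. Then $D^{-1}$ is block-diagonal with $\Vert D^{-1}\Vert_{q_s}\leq c^{-1}q_s(0)^{-1}=c^{-1}$. Write $D+P=D(I+D^{-1}P)$. The product estimate gives $\Vert(D^{-1}P)^j\Vert_{q_s}\leq C_\ell^{\,j-1}\Vert D^{-1}P\Vert_{q_s}^j$. So if $C_\ell c^{-1}\Vert P\Vert_{q_s}<1/2$, the Neumann series converges in $\Vert\cdot\Vert_{q_s}$, hence also in operator norm by the Schur step. Its sum inverts $I+D^{-1}P$, and $(I+D^{-1}P)^{-1}D^{-1}$ is the desired inverse, with bounded $q_s$-norm by the product estimate.
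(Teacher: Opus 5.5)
Your proposal is correct and follows the paper's proof essentially step by step. The paper uses the same ingredients: the Schur test with $\sum_k q_s(k)<+\infty$, the convolution bound $q_s*q_s\leq C_\ell q_s$ (uniform in $s\geq 0$) obtained by splitting according to which of $\vert a\vert,\vert b\vert$ exceeds $\vert k\vert/2$, the conjugation bound from $\big\vert\vert m\vert-\vert n\vert\big\vert\leq\vert m-n\vert$, the bound $\langle m\rangle/\langle n\rangle\leq C\langle m-n\rangle$ for $\langle N\rangle^k$, and a Neumann series for the perturbed block-diagonal operator. The only point the paper makes explicit that you leave implicit is that the unbounded conjugations are first defined on finite sequences and then extended by these matrix estimates; your blockwise computation assumes this.
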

\begin{proof}
The Schur test uses $\sum_k q_s(k)<+\infty$. Splitting the convolution according to $\vert j\vert \geq \vert k\vert /2$ or $\vert k - j\vert \geq \vert k\vert /2$ gives $q_s*q_s \leq C_\ell q_s$, uniformly for $s \geq 0$, hence the product bound. The conjugation estimate follows from $\big\vert \vert m\vert - \vert n\vert \big\vert \leq \vert m - n\vert $. Moreover, $\langle m\rangle/\langle n\rangle \leq C\langle m - n\rangle$, so a strict decrease of $s$ absorbs polynomial factors. The product bound then justifies the Neumann series for the inverse. The conjugations are first defined on finite sequences and extend by these estimates.
\end{proof}

\begin{prop}\label{adapted-subspace}
There are fixed smaller complex domains and constants $b_*,\gamma_*,s_*>0$ such that, for every $J \geq 0$ and sufficiently large $N_0$ depending on $J$, the columns \eqref{upper-columns} define $K_J\in C^1([0,b_*/(2J + 3)];\mathcal{L}(\mathcal{F}_J,\mathscr{H}))$. Their corrections satisfy $\Vert b_{\sigma,j,n}\Vert_{C^k} \leq C_{J,k}/n$ and $\Vert \partial_tb_{\sigma,j,n}\Vert_{C^k} \leq C_{J,k}$ on these domains. Moreover, $\Vert K_Jf\Vert \asymp_J\Vert f\Vert $ and $K_J:D(N)\to D(\triangle_\Phi)$ is bounded. There is a defect $E_J:\mathcal{F}_J\to\mathscr{H}$ such that, if ${P^\perp}$ is the orthogonal projection onto $(\operatorname{Ran}K_J)^\perp$,
\begin{equation}\label{current-intertwining}
{P^\perp}(K_J'+\triangle_\Phi K_J - E_J) = 0\quad\hbox{on }D(N),\qquad \Vert (E_J)_{m,\sigma n}\Vert_{\mathcal{L}(\mathbb{C}^{J + 1},H^k(\mathcal{C}))} \leq C_{J,k}e^{-\gamma_* n}q_{s_*}(m - \sigma n),
\end{equation}
All bounds are uniform in time. The domains and $b_*,\gamma_*,s_*$ are independent of $J$. Other constants may depend on $J,k,N_0$.
\end{prop}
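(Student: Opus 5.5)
The plan follows the three-step outline announced before the statement. The whole construction is carried out in the holomorphic normal form \eqref{normal-form}, and the result is only evaluated on the real cylinder at the end.

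\emph{Step 1: truncated matrix.} Expand a periodic holomorphic function on the fixed product $\mathcal{U}_{R,s}$ as $\sum_{j,n} c_{j,n} w^j e^{inz}$, with $n>0$; negative frequencies are handled by complex conjugation. In this basis the Euler part of $L$ is diagonal, with entries $(2j+1)n$. The remainder $\mathscr{A}$ has three kinds of entries:
\begin{itemize}
\item degree-lowering-by-two entries coming from $-d_2(0,z)\partial_w^2$;
\item degree-preserving or degree-raising entries whose Taylor coefficients in $w$ grow at most like $R^{-k}$;
\item entries decaying like $e^{-s|m-n|}$ in the frequency difference, since the coefficients are analytic in $z$.
\end{itemize}
I truncate the normal degree at $D(n)=\lfloor \delta n\rfloor$ and use the weighted norm $\sum_j |c_{j,n}|^2 j!\,n^{-j}$, which matches the $L^2$ scaling of $w^j e^{-nr^2/2}$. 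In this norm the off-diagonal part of $\mathscr{A}$ restricted to degrees $\le D(n)$ has relative size $O(\delta)$ compared with the diagonal gaps $2n$. The truncation discards a tail; Cauchy estimates on a fixed complex radius make it of size $e^{-\gamma n}$, with $\gamma$ depending only on $R$ and $\delta$. In particular $\gamma$ does not depend on $J$.

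\emph{Step 2: evolving graph.} For each $(\sigma,n)$ I look for the retained subspace as a graph $c_{\rm high}=G(t)c_{\rm low}$, where $c_{\rm low}$ collects degrees $0,\dots,J$ and $c_{\rm high}$ collects degrees $J+1,\dots,D(n)$. Invariance under $\partial_t+L$ becomes a Riccati-type equation
\[
G'=-L_{hh}G+GL_{ll}+GL_{lh}G-L_{hl},\qquad G(0)=0.
\]
I would solve it on $0\le(2J+3)t<b_*$ by a fixed point in the Schur-type norm $\|\cdot\|_{q_s}$ of Lemma~\ref{app:block-calculus}. The linear part $-L_{hh}$ generates contraction at rate at least $(2J+3)n$, while $GL_{ll}$ grows at rate at most $(2J+1)n$. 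The source $L_{hl}$ only couples degree $J$ (and degrees near $J$) to $J+1$. Its size is $O(1)$ relative to the weights, which after the $n^{-1/2}$ normalization of the weights gives $\|G\|\le C_J/n$ and $\|G'\|\le C_J$. These are exactly the bounds claimed for $b_{\sigma,j,n}$ and $\partial_t b_{\sigma,j,n}$.

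\emph{Step 3: evaluation and coupling.} Summing $G$ against $w^k$ for $k>J$ gives $w^{J+1}b_{\sigma,j,n}$, holomorphic on a smaller fixed product. Evaluating at $w_\sigma,z_\sigma$ and multiplying by $\chi g_\sigma$ produces the columns \eqref{upper-columns}. The defect $E_J$ collects three contributions:
\begin{itemize}
\item the truncation tail, of size $e^{-\gamma n}$;
\item the cutoff commutator $[\triangle_\Phi,\chi]$, supported where $r^2/2\ge b_\chi$, hence of size $e^{-b_\chi n}$;
\item the cross-frequency couplings between different $n$, which are not captured by the block-diagonal graph.
\end{itemize}

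The cross-frequency couplings are the main obstacle. I would first try to treat the whole frequency family simultaneously, inverting a perturbed block-diagonal Riccati system via the perturbation clause of Lemma~\ref{app:block-calculus}. The intended mechanism is that the frequency-changing entries $e^{-s|m-n|}$ become part of the graph, and only their tail beyond degree $D(\min(m,n))$ remains in $E_J$. This would give the bound $e^{-\gamma_*n}q_{s_*}(m-\sigma n)$ in \eqref{current-intertwining} after conjugating by a loss $s\to s_*$. The $H^k$ bounds then follow from Cauchy estimates.

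\emph{Remaining assertions.} The norm equivalence $\|K_Jf\|\asymp_J\|f\|$ comes from two facts. The leading Gaussians $n^{j/2+1/4}w^je^{-nr^2/2}$ form an almost-orthonormal family for $n\ge N_0$: their Gram matrix is block-diagonal up to $O(N_0^{-1})$, by the same cross-term integration by parts used in Lemma~\ref{common-height}. The corrections are $O(1/n)$. The mapping property $K_J:D(N)\to D(\triangle_\Phi)$ follows since $\triangle_\Phi$ applied to a column is $O(n)$. Finally, the intertwining identity \eqref{current-intertwining} is checked on finite sequences and extended by closure.
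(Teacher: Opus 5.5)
Your overall architecture matches the paper's: a Taylor--Fourier matrix, degree truncation proportional to frequency, a Riccati equation for a graph with $G(0)=0$, evaluation, and a defect from the discarded tail and the cutoff. But the step you flag as the main obstacle is left unresolved, and the mechanism you suggest for it would not work.

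\textbf{Why cross-frequency couplings break your argument.} The graph must have cross-frequency entries. In $\Lambda_{\mathrm{E},f}G-G\Lambda_{\mathrm{E},r}$, the entry from input $(j,n)$, $j\le J$, to output $(l,m)$, $l>J$, is multiplied by $(2l+1)m-(2j+1)n$. This multiplier vanishes at $m/n=(2j+1)/(2l+1)$ and is negative below that ratio.
\begin{itemize}
\item Your claim that $-L_{hh}$ contracts at rate $(2J+3)n$ while $GL_{ll}$ grows at rate $(2J+1)n$ holds only on the blocks $m=n$.
\item Off those blocks the unperturbed Sylvester operator is not coercive. So the perturbation clause of Lemma~\ref{app:block-calculus}, which requires a uniformly coercive block-diagonal operator, does not apply.
\item You cannot leave these couplings in $E_J$ either. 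For $\vert m-n\vert=\alpha$ they are $O(1)$, not $O(e^{-\gamma_*n})$.
\item Putting only the resonant or negative-gap entries into $E_J$ gives decay $e^{-2sn/(2J+3)}$ at best. Then $\gamma_*$ would shrink as $J$ grows. That is fatal: in Proposition~\ref{weighted-observation}, $B_0<\gamma_*$ is fixed before $J\to\infty$.
\end{itemize}

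\textbf{The missing idea: a time-dependent loss of angular analyticity.} The paper solves the Riccati equation as a mild fixed point in the norm $\sup m\,\vartheta^{-(l-J-1)}\vert S_{(l,m),(j,n)}(t)\vert/q_{s(t)}(m-n)$, with $s(t)=s_0-(2J+1)t$. It uses
\[
(2l+1)m-(2j+1)n\ge 2(l-J)m-(2J+1)(n-m)_+,\qquad e^{-(t-\tau)[(2l+1)m-(2j+1)n]}q_{s(\tau)}(m-n)\le e^{-2(t-\tau)(l-J)m}q_{s(t)}(m-n).
\]
The shrinking strip absorbs the growth when $m<n$. The remaining gap $2(l-J)m$ yields the factor $1/m$, hence the $C_J/n$ bound. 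Over $(2J+3)t\le b_*$ the total strip loss is below $b_*$. This is the actual reason for the time restriction in the statement, and for the $J$-independence of $s_*$ and $\gamma_*$. In your block-diagonal Step~2 nothing limits the existence time, so the condition $(2J+3)t<b_*$ appears there without justification.

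\textbf{Secondary inaccuracies.}
\begin{itemize}
\item $L_{hl}=\mathsf{C}$ is not confined to coupling degree $J$ to $J+1$. The Taylor coefficients of $d_0,d_1,d_2$ raise the degree arbitrarily, with geometric decay. It is the high-to-low block $D_1$ that only uses input degrees $J+1,J+2$.
\item With your weights $j!\,n^{-j}$, the flat lowering entry of $-\partial_w^2$ has normalized size $n\sqrt{j(j-1)}$. This is comparable to the gaps, not $O(\delta)$. The paper's contraction constant $C(\kappa+(J+3)^2/N_0)$ comes instead from the geometric weights $\vartheta^{-(l-J-1)}$, the entry bound $(1+k)^2$, the gap $2(l-J)m$, and the truncation $l\le\kappa m$.
\item The defect must also handle small and negative output frequencies $q$ created by angular coupling. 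On the real cylinder $\vert e^{iqz_+}\vert=e^{\vert q\vert r^2/2}$ grows for these. The paper absorbs this growth with $e^{-s\vert q-n\vert}$, after shrinking the cylinder so that $r_0^2/2<s/4$, in a three-range argument over $q$.
\end{itemize}
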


The proof occupies Appendices~\ref{app:compression}--\ref{app:real-evaluation}: we restrict the Taylor-Fourier matrix, construct its evolving graph, then evaluate it on the real cylinder and estimate the discarded coefficients. Appendix~\ref{app:complement} derives the complementary evolution and decay estimates. The operator-domain bounds used below are proved in Appendix~\ref{app:dirichlet}.

\subsubsection{The compressed coefficient matrix}\label{app:compression}

In the holomorphic normal form, one has
$L = (1 + 2w\partial_w)D_z + \mathscr{A}$ with $\mathscr{A} = -d_2\partial_w^2 + d_1\partial_w + d_0$,
the Euler term is diagonal on $w^je^{inz}$, with entry $(2j + 1)n$. The second normal derivative grows like $j^2$. For degrees above $J$, the diagonal gap supplies a factor $1/((j - J)n)$ at matching angular frequencies. Restricting to $j \leq \kappa n$ makes their ratio small; on a smaller normal disc the discarded degrees are exponentially small in $n$.

Fix a coefficient disc of radius $R_c$ and an angular strip. Choose $0<\rho<R_c$, set $\vartheta_0 = \rho/R_c$, and use the basis $\rho^{-j}w^je^{inz}$. For $\kappa>0$, set
$\mathcal{I}_\kappa = \{(j,n)\,\mid\, n\in\alpha\N, n \geq N_0, \ 0 \leq j \leq \lfloor\kappa n\rfloor\}$, $\mathcal{E} = \ell^2(\mathcal{I}_\kappa)$, $\alpha = 2\pi/\ell$.
The compression $\Pi_\kappa$ retains these coefficients. Its matrix is $L_\kappa = \Lambda_{\mathrm{E}} + \mathscr{A}_\kappa$, where $(\Lambda_{\mathrm{E}})_{j,n} = (2j + 1)n$. Matrix identities below are first read on finite coefficient families.

\begin{lem}\label{current-compression}
There are constants $C,s_c>0$, depending only on the chosen analytic domains, such that
\begin{equation}\label{graph-entry-bound}
\vert (\mathscr{A}_\kappa)_{(l,m),(j,n)}\vert \leq C(1 + j)^2\vartheta_0^{(l - j)_+}e^{-s_c\vert m - n\vert }, \qquad l \geq j - 2.
\end{equation}
All other entries vanish. If $\kappa N_0 \geq J + 2$, decompose
\begin{equation*}
\mathcal{E} = \mathcal{F}_J^+\oplus\mathcal{E}_{>J},\qquad
L_\kappa = \begin{pmatrix}
\Lambda_{\mathrm{E},r} + \mathscr{A}_{rr}&D_1\\
\mathsf{C}&\Lambda_{\mathrm{E},f} + \mathscr{A}_{ff}
\end{pmatrix}.
\end{equation*}
Here $\mathcal{F}_J^+$ retains degrees $0,\ldots,J$, and $\mathcal{E}_{>J}$ retains the higher degrees. The angular entries of $\mathscr{A}_{rr}$ and $D_1$ are bounded by $C_Je^{-s_c\vert m - n\vert }$; $D_1$ uses only input degrees $J + 1,J + 2$. For any fixed $\vartheta_0<\vartheta<1$, the entries of $\mathsf{C}$ satisfy
$\vert \mathsf{C}_{(l,m),(j,n)}\vert \leq C_J\vartheta^{l - J - 1}e^{-s_c\vert m - n\vert }$ for $l>J$ and $j \leq J$.
\end{lem}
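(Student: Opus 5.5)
The plan is to compute the matrix of $\mathscr{A} = -d_2\partial_w^2 + d_1\partial_w + d_0$ directly in the rescaled Taylor--Fourier basis $e_{j,n} = \rho^{-j}w^je^{inz}$, and then read off the block structure.

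\emph{Expansion of the coefficients.} On the fixed product $\{\vert w\vert<R_c,\ \vert\Im z\vert<s\}$, expand each coefficient as $d_i(w,z) = \sum_{p\geq0}\sum_{k\in\alpha\Z}d_{i,p,k}\,w^pe^{ikz}$. Cauchy estimates in $w$ on the disc of radius $R_c$, together with contour translation in $z$ by $\pm s_c$ with $s_c<s$, give
$$
\vert d_{i,p,k}\vert \leq C R_c^{-p}e^{-s_c\vert k\vert}.
$$
Applying $\mathscr{A}$ to $w^je^{inz}$ produces the terms $d_{i,p,k}\,c_{i,j}\,w^{j-2+i+p}e^{i(n+k)z}$, where $c_{2,j} = -j(j-1)$, $c_{1,j} = j$ and $c_{0,j} = 1$. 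The output degree is therefore $l = j - 2 + i + p \geq j-2$, with angular frequency $m = n+k$. This already shows that all other entries vanish.

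\emph{Entry bound.} After the rescaling, the entry at $(l,m),(j,n)$ equals $\rho^{l-j}\,d_{i,p,m-n}\,c_{i,j}$, summed over the at most three admissible values of $i$, each with $p = l-j+2-i$. Its modulus is bounded by
$$
C(1+j)^2\rho^{l-j}R_c^{-(l-j+2-i)}e^{-s_c\vert m-n\vert} \leq C'(1+j)^2\vartheta_0^{l-j}e^{-s_c\vert m-n\vert}.
$$
When $l<j$, only the finitely many values $l = j-1$ and $l = j-2$ occur. There the factors $\rho^{l-j}$ are bounded constants, since $\rho$ is fixed, and they can be absorbed into $C$. This yields \eqref{graph-entry-bound} with the exponent $(l-j)_+$. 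The compression $\Pi_\kappa$ only deletes rows and columns, so the bound persists for $\mathscr{A}_\kappa$. The hypothesis $\kappa N_0\geq J+2$ ensures that degrees up to $J+2$ belong to $\mathcal{I}_\kappa$ for every $n\geq N_0$, so the decomposition into $\mathcal{F}_J^+\oplus\mathcal{E}_{>J}$ is well defined.

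\emph{Block bounds.} For $\mathscr{A}_{rr}$, both $j$ and $l$ are at most $J$, so $(1+j)^2\leq (1+J)^2$ and $\vartheta_0^{(l-j)_+}\leq1$. This gives entries bounded by $C_Je^{-s_c\vert m-n\vert}$.

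For $D_1$, the block maps input degree $j>J$ to output degree $l\leq J$. The constraint $l\geq j-2$ forces $j\in\{J+1,J+2\}$, and the same bound $C_J e^{-s_c\vert m-n\vert}$ follows.

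For $\mathsf{C}$, the input has $j\leq J<l$. Here
$$
(1+j)^2\vartheta_0^{l-j}\leq (1+J)^2\vartheta_0^{l-J-1},
$$
because $l-j\geq l-J$. Since $\vartheta_0<\vartheta$, this is bounded by $C_J\vartheta^{l-J-1}$; in fact $\vartheta_0$ would suffice.

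\emph{Expected obstacle.} The main difficulty is bookkeeping rather than analysis. One has to make sure the constants $C$ and $s_c$ depend only on the domains, and not on $\kappa$, $J$ or $N_0$. One also has to control the downward entries $l<j$, which carry the factor $j^2$ but no geometric gain. This is handled by keeping $\rho$ fixed so that the factors $\rho^{-1}$ and $\rho^{-2}$ are absorbed into $C$. Finally, the matrix identities should first be verified on finite families, where all sums are finite.
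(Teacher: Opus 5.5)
Your proof is correct and follows the paper's route. Like the paper, you write each entry as $\rho^{l-j}\{-j(j-1)(d_2)_{l-j+2,m-n}+j(d_1)_{l-j+1,m-n}+(d_0)_{l-j,m-n}\}$, apply Cauchy estimates in $w$ and contour shifts in $z$, and read off the blocks from the downward bandwidth two. One harmless slip: the $d_i$ term sends $w^j$ to $w^{j-i+p}$, so $p=l-j+i$ rather than $l-j+2-i$; either way the factor is $\vartheta_0^{l-j}$ times a fixed power of $R_c^{-1}$, so nothing changes.
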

\begin{proof}
Writing $(d_a)_{p,k}$ for a Taylor-Fourier coefficient, with negative Taylor indices set to zero, gives
$$
(\mathscr{A}_\kappa)_{(l,m),(j,n)} = \rho^{l - j}\left\{-j(j - 1)(d_2)_{l - j + 2,m - n} + j(d_1)_{l - j + 1,m - n} + (d_0)_{l - j,m - n}\right\}.
$$
Cauchy estimates give \eqref{graph-entry-bound}. The remaining claims follow from the finite retained degrees and the downward bandwidth two.
\end{proof}

\subsubsection{The evolving graph}\label{app:evolving-graph}

The graph $g = S(t)f$ must cancel the forcing $\mathsf{C}f$ into higher degrees. At equal angular frequencies the diagonal gap is positive. For unequal frequencies it may change sign; we compensate by decreasing the angular strip at speed $2J + 1$.

\begin{lem}\label{current-graph}
There are $\kappa,b_*>0$ and a fixed smaller complex product, independent of $J$, with the following property. For every $J$ and sufficiently large $N_0$ depending on $J$, there is a bounded graph $S(t):\mathcal{F}_J^+\to\mathcal{E}_{>J}$, with $S(0) = 0$, for $0 \leq (2J + 3)t \leq b_*$. On finite coefficient families it satisfies
\begin{equation}\label{current-riccati}
S'+(\Lambda_{\mathrm{E},f} + \mathscr{A}_{ff})S - S(\Lambda_{\mathrm{E},r} + \mathscr{A}_{rr}) = -\mathsf{C} + SD_1S.
\end{equation}
After multiplying column $(j,n)$ of $(I,S)$ by $\rho^j$, its holomorphic expression on the smaller product is
$e^{inz}\{w^j + w^{J + 1}b_{j,n}(t,w,z)\}$, with $\Vert b_{j,n}\Vert_{C^k} \leq C_{J,k}/n$ and $\Vert \partial_tb_{j,n}\Vert_{C^k} \leq C_{J,k}$.
For each $j,n$ and each fixed $k$, the amplitude $b_{j,n}$ belongs to $C^1([0,b_*/(2J + 3)];C^k)$. The displayed bounds are uniform in $t,n$.
\end{lem}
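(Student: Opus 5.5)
We would solve the Riccati equation \eqref{current-riccati} by a fixed-point argument in a time-weighted matrix norm adapted to the diagonal gap. First, write $\mathcal{L}_f = \Lambda_{\mathrm{E},f}+\mathscr{A}_{ff}$ and $\mathcal{L}_r = \Lambda_{\mathrm{E},r}+\mathscr{A}_{rr}$. We split off the diagonal Euler parts and recast \eqref{current-riccati} as the Duhamel equation
$$
S(t) = -\int_0^t e^{-(t-\tau)\Lambda_{\mathrm{E},f}}\big(\mathsf{C}+\mathscr{A}_{ff}S-S\mathscr{A}_{rr}-SD_1S\big)(\tau)\,e^{(t-\tau)\Lambda_{\mathrm{E},r}}\,d\tau.
$$
On the block from input $(j,n)$ to output $(l,m)$, the conjugated Euler factor is $e^{-(t-\tau)((2l+1)m-(2j+1)n)}$. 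When $m=n$ and $l>J\ge j$ it decays, with rate at least $2(l-J)n$. When $m\neq n$ the exponent may be positive, but it is bounded by $(2J+1)\vert m-n\vert+2(l-J)(n-m)_+$ type terms. Following the hint before the lemma, we measure $S$ in a norm with angular weight $e^{-(s_c-(2J+3)t)\vert m-n\vert}$ and normal weight $\vartheta^{l-J-1}$. The shrinking strip absorbs the growth $e^{(2J+1)t\vert m-n\vert}$ as long as $(2J+3)t\le b_*<s_c/2$.

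Key steps, in order:
\begin{enumerate}
\item Define a Banach space $\mathcal{X}$ of continuous families $S(t)$ with norm
$$
\sup_t\sup\vert S_{(l,m),(j,n)}(t)\vert\,\vartheta^{-(l-J-1)}\,e^{s(t)\vert m-n\vert}(1+\vert m-n\vert)^2\,n\,\big(1+(l-J)/\kappa\big),
$$
with $s(t)=s_c-(2J+3)t$. The factor $n$ encodes the expected $C/n$ gain from the gap.
\item Bound the linear inhomogeneity. The $\tau$-integral of the equal-frequency Euler factor gives $1/((l-J)n)$. For unequal frequencies we use only $t\le b_*/(2J+3)$ together with the strip reduction. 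Combined with Lemma~\ref{current-compression} for $\mathsf{C}$, this puts the source in $\mathcal{X}$ with norm $C_J$.
\item Show that $S\mapsto\mathscr{A}_{ff}S-S\mathscr{A}_{rr}$ followed by the Duhamel integral is a contraction. This is the delicate point, since \eqref{graph-entry-bound} lets $\mathscr{A}_{ff}$ carry a factor $(1+j)^2$ with $j\le\kappa n$ and lower degree by two. After division by the gap $(l-J)n$, this leaves a relative size $C(1+j)^2/((l-J)n)\lesssim C\kappa^2 n/(l-J)$. We would control it by the Schur bounds of Lemma~\ref{app:block-calculus} in the degree index, using the geometric factor $\vartheta_0^{(l-j)_+}$ with $\vartheta_0<\vartheta$ and choosing $\kappa$ small. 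The term $S\mathscr{A}_{rr}$ costs only $C_J$ times $t$, so a large $N_0$ handles it. The quadratic term $SD_1S$ is small because $D_1$ uses only degrees $J+1,J+2$ and each $S$ brings $1/n$. The contraction then follows for $N_0$ large depending on $J$, and $S(0)=0$ holds automatically.
\item Resum the columns. We multiply column $(j,n)$ by $\rho^j$ and sum $\rho^{-l}w^l e^{imz}$ over $l>J$. On a disc of radius below $\rho\vartheta$ and a strip below $s(t)$, this converges to $e^{inz}w^{J+1}b_{j,n}$. The factor $1/n$ and the geometric weights give the $C^k$ bound via Cauchy estimates on the slightly smaller product.
\item Obtain the bound on $\partial_t b_{j,n}$ and the $C^1$ regularity directly from \eqref{current-riccati}. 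Each of $\Lambda_{\mathrm{E}}S$ and the remaining terms is $O(1)$ after the $1/n$ gain is spent.
\end{enumerate}

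The main obstacle is step 3: controlling the second-derivative coefficient $j^2$ against the gap uniformly for $j\le\kappa n$, while keeping $\kappa$, $b_*$ and the domains independent of $J$. We would try the weighted Schur test with the weight $\vartheta^{l}$ chosen so that row sums involve $\sum_{l}\vartheta_0^{l-j}\vartheta^{j-l}$ over $l\ge j-2$. The two-step downward shift costs only $\vartheta^{-2}$, which is fixed.
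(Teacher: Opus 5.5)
You have the right architecture: a Duhamel formula against the Euler diagonal, a shrinking angular strip, geometric normal weights, a $1/n$ gain, and a contraction for large $N_0$. But the two estimates that make the contraction close are missing, and Steps 2--3 fail as written.

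\textbf{The gap is discarded off the frequency diagonal.} In Step 2 you use the diagonal gap only for $m=n$ and bound blocks with $m\neq n$ by the interval length $t$. That cannot close in your norm.
\begin{itemize}
\item The source $\mathsf{C}$ carries no factor $1/n$. A block with $|m-n|=2\pi/\ell$ therefore contributes about $C_J\,t\,n$ after multiplication by your weight $n$, which is unbounded.
\item $\mathscr{A}_{ff}S$ off the frequency diagonal would cost about $R(l+3)^2t$, unbounded since $l$ ranges up to $\kappa m$.
\item $S\mathscr{A}_{rr}$ would cost $C_Jt$ with $C_J\gtrsim J^2$. No choice of $N_0$ makes this small, and for $t\leq b_*/(2J+3)$ it would force $b_*$ to depend on $J$.
\end{itemize}
What you miss is that the gap survives for every frequency pair. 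For $j\leq J<l$,
\[
(2l+1)m-(2j+1)n=2(l-j)m-(2j+1)(n-m)\geq 2(l-J)m-(2J+1)(n-m)_+ .
\]
So a strip decreasing at speed at least $2J+1$ only has to absorb $(2J+1)(n-m)_+$, and
\[
e^{-(t-\tau)[(2l+1)m-(2j+1)n]}q_{s(\tau)}(m-n)\leq e^{-2(t-\tau)(l-J)m}q_{s(t)}(m-n).
\]
The time integral then yields $1/(2(l-J)m)$ uniformly in $(m,n)$. The output weight $m$ is the natural one, and $n/m\leq 1+|m-n|/N_0$ converts it to yours. In particular, $S\mathscr{A}_{rr}$ is $O(C_JN_0^{-1})$ through the gap, not through $t$.

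\textbf{The Step 3 bound uses the wrong degree.} Your estimate $(1+j)^2/((l-J)n)\lesssim\kappa^2n/(l-J)$ is not small (take $l=J+1$). A Schur test in the degree index does not repair it: $\vartheta_0^{(l-k)_+}$ with $\vartheta_0<\vartheta$ only makes the sum over intermediate degrees $k\leq l$ converge, and does nothing against the growth $(1+k)^2$. What is needed is the band structure. Since $\mathscr{A}_{ff}$ lowers degree by at most two, $(\mathscr{A}_{ff}S)_{(l,m),(j,n)}$ involves only $J+1\leq k\leq l+2$, and
\[
\sum_{k=J+1}^{l+2}(k+1)^2\vartheta_0^{(l-k)_+}\vartheta^{k-J-1}\leq C(l+3)^2\vartheta^{l-J-1}.
\]
The growth is therefore governed by the output degree, and the compression must be applied to the output, $l\leq\kappa m$. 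After the gap factor, the linear term has norm at most
\[
C\,\frac{(l+3)^2}{(l-J)m}\leq C\Big(\kappa+\frac{(J+3)^2}{N_0}\Big).
\]
This fixes $\kappa$ independently of $J$, and then $N_0$ depending on $J$.

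\textbf{Minor points.} Start the strip at some $s_0<s_c$ rather than at $s_c$. This leaves a reserve in $\sum_pe^{-s_c|m-p|}q_s(p-n)/p\leq Cm^{-1}q_s(m-n)$ that absorbs the polynomial factors and the ratio $m/p$. With $s(0)=s_c$, the factor $(1+|m-n|)^{-2}$ in your norm is lost in this convolution. Your extra normal factor $(1+(l-J)/\kappa)$ is harmless but unnecessary.
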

\begin{proof}
Choose $0<2b_*<s_0<s_c$, all before $J$, and set $\nu = 2J + 1$ and $s(t) = s_0 - \nu t$.
We use the convolution bound of Lemma~\ref{app:block-calculus}. For a matrix family $F$, define the diagonal integral
$$
(\mathcal{T}F)_{(l,m),(j,n)}(t) = \int_0^t e^{-(t - \tau)[(2l + 1)m - (2j + 1)n]} F_{(l,m),(j,n)}(\tau)\,d\tau.
$$
For $l>J$ and $j \leq J$, we use
\begin{align*}
(2l + 1)m - (2j + 1)n & \geq 2(l - J)m - \nu(n - m)_+, \\
e^{-(t - \tau)[(2l + 1)m - (2j + 1)n]}q_{s(\tau)}(m - n) & \leq e^{-2(t - \tau)(l - J)m}q_{s(t)}(m - n).
\end{align*}
When $m<n$, the decreasing angular strip cancels the possible growth from the negative part of the diagonal gap; the remaining positive gap yields the factor $[2(l-J)m]^{-1}$ after time integration.

Take the Banach space of blockwise continuous families with norm
$$
\Vert S\Vert_* = \sup_{t,l,m,j,n} \frac{m\,\vartheta^{-(l - J - 1)} \vert S_{(l,m),(j,n)}(t)\vert }{q_{s(t)}(m - n)}.
$$
Continuity here is imposed on each matrix entry. The norm makes these families complete, and the exponential integral defining $\mathcal{T}$ preserves entrywise continuity by dominated convergence on each finite time interval. The graph equation is the fixed-point equation
$S = \mathcal{T}\left(-\mathsf{C} - \mathscr{A}_{ff}S + S\mathscr{A}_{rr} + SD_1S\right)$.
We give the summation estimate for its only term with growing normal degrees. If $R = \Vert S\Vert_*$, the normal sum is bounded using
$\sum_{k = J + 1}^{l + 2}(k + 1)^2 \vartheta_0^{(l - k)_+}\vartheta^{k - J - 1} \leq C(l + 3)^2\vartheta^{l - J - 1}$.
The strict angular reserve $s_c - s_0$ gives, for $s \leq s_0$,
$$
\sum_{p \geq N_0}e^{-s_c\vert m - p\vert }\frac{q_s(p - n)}{p} \leq \frac{C}{m} q_s(m - n).
$$
Indeed, $m/p \leq 1 + \vert m - p\vert /N_0$ and $(1 + \vert m - n\vert ) \leq (1 + \vert m - p\vert )(1 + \vert p - n\vert )$; the remaining polynomial factors are summable against $e^{-(s_c - s_0)\vert m - p\vert }$. Consequently,
$$
\vert (\mathscr{A}_{ff}S)_{(l,m),(j,n)}\vert \leq CR\frac{(l + 3)^2}{m} \vartheta^{l - J - 1}q_{s(t)}(m - n).
$$
Since $l \leq \kappa m$, the diagonal integral then yields
$\Vert \mathcal{T}(\mathscr{A}_{ff}S)\Vert_* \leq C\left(\kappa + \frac{(J + 3)^2}{N_0}\right)R$.
The finite normal block bounds from Lemma~\ref{current-compression}, together with the same angular convolution, give
$$
\Vert \mathcal{T}\mathsf{C}\Vert_* \leq C_J,\qquad \Vert \mathcal{T}(S\mathscr{A}_{rr})\Vert_* \leq C_JN_0^{-1}R,\qquad \Vert \mathcal{T}(SD_1S)\Vert_* \leq C_JN_0^{-2}R^2.
$$
The corresponding difference estimates have the same constants, with $R^2$ replaced by $2R\Vert S_1 - S_2\Vert_*$. Choose $\kappa$ so that its linear contribution is less than $1/8$, and then choose $N_0$ depending on $J$. The map is a contraction on the ball $\Vert S\Vert_* \leq 2C_J$. The coefficient sums converge absolutely. Set $\underline s = s_0-b_*>0$. Summing the geometric normal factor in the definition of $\Vert S\Vert_*$ gives $\Vert S_{m,n}(t)\Vert \leq C_Jm^{-1}q_{\underline s}(m-n)$, where the block maps the retained normal coefficients at frequency $n$ to the higher normal coefficients at frequency $m$. Let $P_L$ and $Q_L$ restrict the input and output angular frequencies to at most $L$. Schur summation gives $\sup_t\Vert S(t)-Q_LS(t)P_{2L}\Vert \leq C_J(L^{-1}+e^{-\underline sL/2})$: output frequencies above $L$ contribute the factor $L^{-1}$, whereas $m\leq L$ and $n>2L$ imply $\vert m-n\vert>L$. The compression restricts each output degree to $l\leq\kappa m$, so $Q_LS(t)P_{2L}$ is a finite matrix with continuous entries. The uniform tail bound therefore proves operator-norm continuity of $S$.

Finally, fix $\rho_1<\rho$ and $s_1<s_0 - b_*$, before $J$. Evaluation on $\vert w\vert <\rho_1$, $\vert \im z\vert <s_1$ sums the normal and angular series absolutely. Every output degree is at least $J + 1$, so division by $w^{J + 1}$ is allowed. The inequality $1/m \leq (1 + \vert m - n\vert /N_0)/n$ gives the $C^k/n$ bound after factoring $e^{inz}$; strict domain margins absorb every fixed derivative. Differentiating each mild entry gives \eqref{current-riccati}. Its diagonal term multiplies $S$ by $(2l + 1)m - (2j + 1)n$, and $n/m \leq 1 + \vert m - n\vert /N_0$. The estimates of Lemma~\ref{current-compression} and the preceding convolution bounds control each differentiated entry by $C_J(1+l)^2(1+\vert m-n\vert)^2\vartheta^{l-J-1}q_{\underline s}(m-n)$. After factoring $e^{inz}w^{J+1}$ and evaluating on the smaller product, the normal and angular margins absorb these polynomial factors and every fixed spatial derivative. The resulting summable majorant is uniform in time and in the input frequency, so it proves the stated $C_{J,k}$ bound and, by dominated convergence for each column, continuity of $\partial_tb_{j,n}$ up to $t=0$.
\end{proof}

With $B = \Lambda_{\mathrm{E},r} + \mathscr{A}_{rr}$, the graph identity is
$$
\partial_t\binom{I}{S} + L_\kappa\binom{I}{S} = \binom{I}{S}(B + D_1S).
$$
Thus the retained generator is $B + D_1S$. Before the normal cutoff, the only error in the evaluated identity comes from discarded coefficients.
\subsubsection{Evaluation and proof of Proposition~\ref{adapted-subspace}}\label{app:real-evaluation}

Evaluate the graph columns at $z_+ = \theta + ir^2/2$ and $w_+ = \psi(r,z_+)$, restore $g_+$, and multiply column $(j,n)$ by $\chi n^{j/2 + 1/4}$. Complex conjugation gives the negative sign. These are the columns \eqref{upper-columns}. The cylinder and cutoff are fixed before $J$, strictly inside the graph domains. Let $K_J^0$ denote synthesis by the flat columns $\chi n^{j/2 + 1/4}r^je^{-nr^2/2}e^{i\sigma n\theta}$.

\begin{lem}\label{current-evaluation}
The angular blocks of the evaluated graph satisfy
$$
\Vert (K_J - K_J^0)_{m,\sigma n}\Vert + \Vert (K'_J)_{m,\sigma n}\Vert \leq C_Jn^{-1/2}q_{s_e}(m - \sigma n)
$$
for a fixed $s_e>0$, with norms from $\mathbb{C}^{J + 1}$ to the output $L^2$ Fourier mode. If $\Gamma_J^0$ is the positive Gram matrix of the normalized Gaussian monomials, repeated at every frequency and sign, then $K_J^*K_J = \Gamma_J^0 + \mathrm{O}_J(N_0^{-1/2})$.
\end{lem}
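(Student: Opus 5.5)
We compare the evaluated graph columns \eqref{upper-columns} with the flat columns $\chi n^{j/2+1/4}r^je^{-nr^2/2}e^{i\sigma n\theta}$ term by term, and treat the positive sign only; the negative sign follows by complex conjugation. Write a column as $\chi n^{j/2+1/4}e^{in\theta-nr^2/2}\,G_{j,n}(t,r,\theta)$, with
$$
G_{j,n} = g_+(r,z_+)\big(\psi(r,z_+)^j+\psi(r,z_+)^{J+1}b_{j,n}(t,\psi,z_+)\big),
$$
so the flat column corresponds to $G^0_{j,n}=r^j$. The difference $G_{j,n}-r^j$ has two parts. The first is $g_+\psi^j-r^j$. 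Since $g_0(0,z)=1$ and $\psi=w+\mathrm{O}(w^2)$, and both are holomorphic on a fixed product, this part is $\mathrm{O}(r^{j+1})$ uniformly in $\theta$, with all $\theta$-derivatives on a complex strip. The second is the correction $g_+\psi^{J+1}b_{j,n}$, which is $\mathrm{O}(r^{J+1}/n)$ by Lemma~\ref{current-graph}.

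To get angular decay, we expand these differences in Fourier series in $\theta$. Holomorphy in $\theta$ on a strip of fixed width $s_e$, smaller than the graph strip, gives Fourier coefficients bounded by $C_J e^{-s_e\vert p\vert}$ at angular shift $p=m-n$. The $z_+$-dependence only shifts $\theta$ by $ir^2/2$, which lies inside the strip for small $r$. This yields the factor $q_{s_e}(m-n)$, after a slight reduction of the exponent to absorb polynomial weights.

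The normal $L^2$ norms come from Gaussian rescaling $r=n^{-1/2}s$:
$$
\Vert \chi n^{j/2+1/4}r^{j+1}e^{-nr^2/2}\Vert_{L^2(dr)}\asymp n^{-1/2},\qquad
\Vert \chi n^{j/2+1/4}r^{J+1}n^{-1}e^{-nr^2/2}\Vert_{L^2(dr)}\lesssim n^{-1}.
$$
Here $j\le J$, so $n^{j/2}r^{J+1}$ has norm $n^{(j-J-1)/2-1/4}\cdot n^{1/4}\le 1$. The cutoff transition is exponentially small. Together these give the bound for $K_J-K_J^0$.

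For the time derivative $K_J'$, only $b_{j,n}$ depends on $t$. Lemma~\ref{current-graph} gives $\Vert\partial_t b_{j,n}\Vert_{C^k}\le C_{J,k}$, and the weight $r^{J+1}$ supplies $n^{-(J+1-j)/2}\le n^{-1/2}$. The same Fourier argument then gives the decay in $m-n$.

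For the Gram matrix, first compute $(K_J^0)^*K_J^0$. It is block diagonal in the frequency and sign, because its columns at distinct $\sigma n$ have distinct angular frequencies. Opposite signs also separate, since $n,m\ge N_0>0$. Each diagonal block equals $\Gamma_J^0$ (the Gram matrix of $s^je^{-s^2/2}$) up to an exponentially small cutoff error. Then write
$$
K_J^*K_J-(K_J^0)^*K_J^0=(K_J-K_J^0)^*K_J+(K_J^0)^*(K_J-K_J^0).
$$
By Lemma~\ref{app:block-calculus}, each block matrix with entries $C_Jn^{-1/2}q_{s_e}(m-\sigma n)$ has operator norm $\mathrm{O}_J(N_0^{-1/2})$. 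Here the Schur row and column sums use that every frequency is at least $N_0$. The flat synthesis $K_J^0$ is bounded, so this gives $K_J^*K_J=\Gamma_J^0+\mathrm{O}_J(N_0^{-1/2})$.

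The main obstacle is the uniformity of the angular decay. The correction amplitudes are controlled only on the smaller product of Lemma~\ref{current-graph}. We must check that evaluating at $z_+$ for real $r$ in the fixed cylinder stays strictly inside it, and that the $n$-dependence of $b_{j,n}$ does not spoil the uniform strip width. We handle this by fixing the cylinder before $J$ and taking $s_e<s_1-r_0^2/2$.
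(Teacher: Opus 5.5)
Your proposal is correct and follows essentially the same route as the paper. You gain a factor of $r$ from $w_+ = r+\mathrm{O}(r^2)$ and $g_+ = 1+\mathrm{O}(r)$, use Gaussian rescaling to get $n^{-1/2}$, use the degree-$(J+1)$ correction with bounded $\partial_t b_{j,n}$ to get $n^{(j-J-1)/2}\leq n^{-1/2}$, take the factor $q_{s_e}$ from the reserved angular strip, and perturb the block-diagonal flat Gram matrix by Schur summation over frequencies $n\geq N_0$. The paper's proof additionally establishes the domain bound $K_J:D(N)\to D(\triangle_\Phi)$ and operator-norm $C^1$ regularity of $K_J$; these are used for Proposition~\ref{adapted-subspace}, not for the statement itself.
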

\begin{proof}
Since $w_+ = r+\mathrm{O}(r^2)$ and $g_+ = 1+\mathrm{O}(r)$, the difference from the flat column gains one factor of $r$. Rescaling $r = n^{-1/2}x$ gives the $n^{-1/2}$ bound. The correction has normal degree at least $J + 1$, so the bounds for $b_{j,n}$ and $\partial_tb_{j,n}$ give, respectively, smaller errors and $\Vert k'_{\sigma,j,n}\Vert \leq C_Jn^{(j - J - 1)/2} \leq C_Jn^{-1/2}$. The reserved angular strip gives the displayed off-diagonal bounds. Each application of $\triangle_\Phi$ costs at most $\langle n\rangle$ by the same Gaussian scaling. Angular Schur summation thus proves $\Vert K_Jf\Vert + \Vert \triangle_\Phi K_Jf\Vert \leq C_J\Vert \langle N\rangle f\Vert $. The compact normal cutoff supplies the Dirichlet boundary condition, and Lemma~\ref{operator-graph} identifies this domain with $D(\triangle_\Phi)$.

Let $B_J(t)$ denote synthesis by the differentiated columns and let $P_L$ retain input frequencies $n\leq L$ for both signs. The displayed block bound and Schur summation imply $\sup_t\Vert T(t)(I-P_L)\Vert \leq C_JL^{-1/2}$ for $T = K_J-K_J^0$ and $T = B_J$. For fixed $L$, these maps have finitely many input columns, each continuous in $L^2(\mathcal{C})$; their infinitely many output coefficients have already been summed on the smaller complex product. Hence $T(t)P_L$ is continuous in operator norm, and the uniform tail estimate proves the same for $T$. The columnwise fundamental theorem of calculus extends by density to $K_J(t)-K_J(s)=\int_s^tB_J(\tau)\,d\tau$ in operator norm. Therefore $K_J$ is $C^1$, with derivative $B_J$, as asserted in Proposition~\ref{adapted-subspace}. The flat synthesis has Gram matrix $\Gamma_J^0$, up to an exponentially small cutoff error. The bound for $K_J - K_J^0$ gives the stated Gram estimate and, for large $N_0$, the norm equivalence.
\end{proof}

We now estimate the discarded part, keeping its exponential constants independent of $J$.

\begin{lem}\label{current-defect}
The evaluated graph satisfies \eqref{current-intertwining}, with $\gamma_*,s_*>0$ chosen independently of $J$. The defect has compact normal support and satisfies its block estimate for every fixed Sobolev order $k$.
\end{lem}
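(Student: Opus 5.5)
The plan is to decompose the action of $\partial_t+\triangle_\Phi$ on the evaluated columns \eqref{upper-columns} into three parts. The first lies in $\operatorname{Ran}K_J$ and is killed by ${P^\perp}$. The second comes from the coefficients discarded by the compression $\Pi_\kappa$. The third comes from the commutator with the normal cutoff $\chi$. I treat the sign $\sigma=+$ only; the sign $\sigma=-$ follows by complex conjugation, since the coefficients are real on the real cylinder.

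\emph{Exact algebraic part.} By construction of the holomorphic normal form \eqref{normal-form}, on the real cylinder one has
\[
\triangle_\Phi\big(g_+F(w_+,z_+)\big)=g_+\,(LF)(w_+,z_+)
\]
for every $F$ holomorphic on the fixed product. Writing the graph columns as power series in $w$, I apply $L$ to the full column $(I,S)$. Its compression satisfies the graph identity stated after Lemma~\ref{current-graph}:
\[
\partial_t\binom{I}{S}+L_\kappa\binom{I}{S}=\binom{I}{S}(B+D_1S).
\]
The right-hand side, after evaluation, equals $K_J(B+D_1S)$ up to the diagonal scaling by $\chi n^{j/2+1/4}$. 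The conjugated matrix $B+D_1S$ is bounded from $D(N)$ to $\mathcal{F}_J$ by Lemmas~\ref{current-compression}, \ref{current-graph} and~\ref{app:block-calculus}; polynomial factors $n^{j/2}/m^{j/2}$ are absorbed by the angular decay. Hence this contribution lies in $\operatorname{Ran}K_J$ and disappears after applying ${P^\perp}$. I define $E_J$ as the sum of the two remaining parts, so that \eqref{current-intertwining} holds first on finite coefficient families and then on $D(N)$ by the domain bounds of Lemma~\ref{current-evaluation}.

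\emph{Discarded coefficients.} The Euler part $\Lambda_{\mathrm{E}}$ preserves degrees, so only $\mathscr{A}$ produces output degrees $l>\lfloor\kappa m\rfloor$. By \eqref{graph-entry-bound} and the bound $\vert S_{(j,n)}\vert\lesssim\vartheta^{j-J-1}$ from $\Vert S\Vert_*$, an input column $(j',n)$ produces coefficients at output $(l,m)$ with $l>\kappa m$ of size
\[
C_J(1+l)^2\,\vartheta^{l-J-3}\,q_{\underline s}(m-n).
\]
Evaluating at $\vert w_+\vert\le\rho_1<\rho$ adds a factor $(\rho_1/\rho)^l$. Summing over $l>\kappa m$ gives $C_J\vartheta^{-J}\,e^{-\kappa m\log(\rho/(\rho_1\vartheta))}$. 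I then trade $m$ for $n$ via $n\le m+\vert m-n\vert$, using part of the angular reserve. This yields $e^{-\gamma_1 n}q_{s_*}(m-n)$, where $\gamma_1$ depends only on $\kappa,\rho_1/\rho,\vartheta$ and the strips, all fixed before $J$; the factor $\vartheta^{-J}$ goes into $C_J$. The normalizations $n^{j/2+1/4}$ and every fixed $H^k$ derivative cost only powers of $n$ and of $\vert m-n\vert$. These are absorbed by slightly decreasing $\gamma_1$ and $s_*$, independently of $J$ up to constants.

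\emph{Cutoff commutator.} The term $[\triangle_\Phi,\chi]$ is supported where $r^2/2\ge b_\chi>0$. There $\vert e^{inz_+}\vert=e^{-nr^2/2}\le e^{-nb_\chi}$, while the amplitudes $w_+^j+w_+^{J+1}b_{j,n}$ and their derivatives are bounded by $C_{J,k}$. This contribution therefore has block bounds $C_{J,k}e^{-b_\chi n/2}q_{s_*}(m-n)$ in $H^k$. Both parts have compact normal support, because $\chi$ does.

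Taking $\gamma_*=\min(\gamma_1,b_\chi/2)$ completes the argument. I expect the main obstacle to be the bookkeeping in the discarded-coefficient step. The exponent $\gamma_*$ must not degrade with $J$: the geometric factor $\vartheta^{l-J-1}$ has to be split so that only the $J$-dependent prefactor enters $C_J$. In addition, the polynomial growth in $l\le\kappa m$ from second normal derivatives has to be absorbed by the strict margin $\rho_1<\rho$, rather than by any $J$-dependent choice.
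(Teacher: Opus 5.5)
Your three-way split (the part in $\operatorname{Ran}K_J$, the coefficients discarded by the compression, and the cutoff commutator) is the right one. The cutoff term and the high-normal-degree discards are handled essentially as in the paper. The gap is that you have misidentified what $I-\Pi_\kappa$ discards.

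The compression retains only the indices in $\mathcal{I}_\kappa$, that is, angular frequencies $q\geq N_0$ with normal degree $l\leq\kappa q$. Since $d_0,d_1,d_2$ depend on $z$, applying $\mathscr{A}$ to $e^{inz}\big(w^j+w^{J+1}b_{j,n}\big)$ produces modes $e^{iqz}$ at \emph{every} $q\in\alpha\Z$, with decay only of the form $e^{-s\vert q-n\vert}$. All coefficients with $q<N_0$, including $q\leq 0$ and at every normal degree, are discarded and belong to $E_J$. Your step ("only $\mathscr{A}$ produces output degrees $l>\lfloor\kappa m\rfloor$") treats only $l>\kappa m$ and never bounds these terms.

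These terms cannot be absorbed by a uniform bound on evaluated amplitudes. For $q<0$, evaluation at $z_+=\theta+ir^2/2$ gives $\vert e^{iqz_+}\vert=e^{\vert q\vert r^2/2}$, which grows with $\vert q\vert$. The sum over negative $q$ is not even controlled unless the cylinder is thin relative to the angular decay rate $s$ in the Cauchy estimates for $La_{j,n}$.

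The paper closes this by splitting the intermediate frequency $q$ into three ranges:
\begin{enumerate}
\item For $q\geq\max(N_0,n/2)$, use the normal truncation gain $(R_1/R_2)^l\leq e^{-\delta_0 n/2}$, as you essentially do.
\item For $q<n/2$, one has $D_q=n-q\geq n/2$, and for $q<0$ also $\vert q\vert\leq D_q$. After shrinking the cylinder once, before $J$, so that $r_0^2/2<s/4$, this gives $e^{-sD_q}e^{\vert q\vert r_0^2/2}\leq e^{-3sD_q/4}\leq e^{-sn/8}e^{-sD_q/2}$.
\item The range $n/2\leq q<N_0$ forces $N_0\leq n<2N_0$. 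This is a finite set, absorbed into the $J$-dependent constant by a factor $e^{2\gamma_*N_0}$.
\end{enumerate}
With this thin-cylinder condition, $\gamma_*$ comes out as a fixed fraction of $\min(\delta_0,s,A_\chi)$, where $A_\chi=\inf_{\operatorname{supp}\chi'}r^2/2$ is your $b_\chi$. It is therefore independent of $J$.

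A smaller point: the factor $q_{s_*}(m-\sigma n)$ in the output frequency requires angular Fourier decay of the evaluated factors $g_+w_+^l$. That means $\vert w_+\vert<R_1$ must hold on a complex angular strip around the real cylinder, not only the sup bound $\vert w_+\vert\leq\rho_1$ on real points.
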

\begin{proof}
We treat the positive angular sign; complex conjugation gives the other sign. Set $a_{j,n} = e^{inz}(w^j+w^{J+1}b_{j,n})$. Since $a_{j,n}$ and its time derivative belong to the compressed coefficient space, the graph identity gives the defect column
$$
E_{+,j,n} = n^{j/2+1/4}\left\{\chi g_+[(I-\Pi_\kappa)La_{j,n}](w_+,z_+)+[\triangle_\Phi,\chi]\big(g_+a_{j,n}(w_+,z_+)\big)\right\}.
$$
All estimates below are uniform over the construction time interval. Choose fixed radii $R_1<R_2$ strictly inside the graph disc, with $\vert w_+\vert<R_1$ on the real cylinder and on a slightly larger complex angular strip. Leave a further normal margin beyond $R_2$. Cauchy estimates on these fixed domains give
$$
\sum_{l \geq 0}R_2^l\vert(La_{j,n})_{l,q}\vert \leq C_J(1+n+\vert q\vert)^{a_J}e^{-s\vert q-n\vert},\qquad q\in\alpha\Z,
$$
for a fixed $s>0$. The same estimate holds for $a_{j,n}$ and every fixed derivative, with a possibly larger polynomial power. The Fourier coefficients of the evaluated factor $g_+w_+^l$, and their fixed normal derivatives, satisfy
$\Vert\partial_r^k(g_+w_+^l)_{m-q}\Vert_{L^\infty_r} \leq C_kR_1^le^{-s_e\vert m-q\vert}$
for a fixed $s_e>0$. The strict margin in $\vert w_+\vert<R_1$ absorbs the powers of $l$ produced by differentiation. The remaining factor $e^{iqz_+}$ contributes $e^{iq\theta-qr^2/2}$; its derivatives contribute only polynomial factors in $q$. Sobolev norms of the output block also introduce polynomial factors in $m$.

Write $D_q = \vert q-n\vert$ and $E_q = \vert m-q\vert$ for the angular frequency changes before and after evaluation, and set $\delta_0 = \kappa\log(R_2/R_1)>0$. Decrease the fixed cylinder so that $A_{\max} = r_0^2/2<s/4$. There are three ranges of discarded coefficients.

If $q \geq \max(N_0,n/2)$, every discarded degree satisfies $l>\kappa q$. Normal evaluation therefore supplies
$(R_1/R_2)^l \leq e^{-\delta_0q} \leq e^{-\delta_0n/2}$.
Here $q>0$, so $e^{-qr^2/2} \leq 1$. After summing the normal degrees, this part is bounded by a polynomial in $n,\vert q\vert,E_q$ times $e^{-\delta_0n/2-sD_q-s_eE_q}$.

If $q<n/2$, then $D_q = n-q \geq n/2$, and angular decay supplies the required factor in the input frequency. Negative $q$ are included: since $\vert q\vert \leq D_q$ when $q<0$, their apparent Gaussian growth satisfies
$e^{-sD_q}e^{A_{\max}(-q)_+} \leq e^{-3sD_q/4} \leq e^{-sn/8}e^{-sD_q/2}$.
No normal truncation gain is needed in this range.

The remaining range is $n/2 \leq q<N_0$. It consists of nonnegative discarded frequencies and implies $N_0 \leq n<2N_0$. There are finitely many such input and intermediate frequencies. The same fixed angular evaluation strip gives their required off-diagonal decay. Multiplying the constant by $e^{2\gamma_*N_0}$ gives the input factor $e^{-\gamma_*n}$ without changing either exponent. Dependence of this constant on $N_0$ is allowed.

For the cutoff term, set $A_\chi = \inf_{\operatorname{supp}\chi'}r^2/2>0$. The coefficients of $a_{j,n}$ have only positive frequencies $q \geq N_0$. If $q \geq n/2$, the cutoff support supplies $e^{-A_\chi q} \leq e^{-A_\chi n/2}$; if $q<n/2$, use the preceding angular estimate. The identity $[\triangle_\Phi,\chi] = -2c\chi'\partial_r-(c\chi')'$ shows that this term has the same bounds, up to polynomial factors and a fixed reduction of the evaluation strip to include $c$.

For completeness, these estimates yield both required exponentials simultaneously. Set
$\gamma_0 = \min\big(\frac{\delta_0}{2},\frac{s}{8},\frac{A_\chi}{2}\big)$, $\gamma_* = \frac{\gamma_0}{2}$, $0<s_*<\min\left(\frac{s}{16},\frac{s_e}{8}\right)$.
Outside the finite range, every contribution is bounded by a polynomial in $n,D_q,E_q$ times $e^{-\gamma_0n-sD_q/2-s_eE_q}$. This includes the column normalization, since $j \leq J$, and every fixed Sobolev derivative. Fixed halves of these three exponential reserves absorb all such polynomial factors. Summing over $q$ then uses
$\sum_q e^{-sD_q/4-s_eE_q/2} \leq C e^{-2s_*\vert m-n\vert} \leq C' q_{s_*}(m-n)$,
where $D_q+E_q \geq \vert m-n\vert$ and the unused exponential factors are summable on $\alpha\Z$. Thus
$$
\Vert(E_J)_{m,+n}\Vert_{\mathcal{L}(\mathbb{C}^{J+1},H^k(\mathcal{C}))} \leq C_{J,k}e^{-\gamma_*n}q_{s_*}(m-n).
$$
The radii, strips, $\kappa$ and cutoff were chosen before $J$. Hence $\gamma_*,s_*$ are independent of $J$ and of the fixed derivative order $k$; only the constants and $N_0$ change. Both defect terms have compact normal support. For finite input sequences, the compressed graph identity places $K_J'+\triangle_\Phi K_J-E_J$ in $\operatorname{Ran}K_J$; the normalized retained coefficients are square summable by angular exponential decay. Boundedness of $K_J:D(N)\to D(\triangle_\Phi)$, $K_J'$ and $E_J$ extends the projected identity to $D(N)$.
\end{proof}

\begin{proof}[Proof of Proposition~\ref{adapted-subspace}]
Fix the compression, complex domains and cylinder in the preceding lemmas. For each $J$, enlarge $N_0$ for the contraction and for the Gram error to be smaller than half the least eigenvalue of $\Gamma_J^0$. Lemmas~\ref{current-graph}-\ref{current-defect} give the asserted family, norm equivalence, domain bound and invariance error.
\end{proof}

\subsubsection{Projection and complementary estimates}\label{app:complement}

We pass from the constructed columns to the orthogonal projection onto their span. The projection bounds and the oscillator coercivity estimate then justify the complementary heat equation used in Subsection~\ref{sec:sharp-upper}.

Set $\Gamma_J = K_J^*K_J\in\mathcal{L}(\mathcal{F}_J)$, $K_J^\dagger = \Gamma_J^{-1}K_J^*$, $P = K_JK_J^\dagger$ and ${P^\perp} = I - P$. Let $\Pi_J$ be the flat Dirichlet oscillator projection onto levels $0,\ldots,J$ at frequencies $\vert n\vert \geq N_0$. For any of these maps, a subscript $\eta$ denotes angular conjugation by $e^{\eta N}$ on the indicated source and target spaces.

\begin{lem}\label{current-projections}
On a fixed exponent interval, independent of $J$, $P_\eta$ and $P'_\eta$ are bounded uniformly in time, and
$P_\eta z = 0 \Rightarrow \Vert N^{1/2}\Pi_Jz\Vert \leq C_J\Vert z\Vert$.
\end{lem}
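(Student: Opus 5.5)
The plan is to build everything from the Gram matrix $\Gamma_J = K_J^*K_J$. Lemma~\ref{current-evaluation} writes it as $\Gamma_J^0 + \mathrm{O}_J(N_0^{-1/2})$, where $\Gamma_J^0$ is block diagonal, uniformly bounded and coercive. I would first show that $K_J$, $K_J^*$ and $K_J'$, conjugated by $e^{\eta N}$, have angular blocks bounded in $\Vert\cdot\Vert_{q_{s}}$ for a fixed $s>0$, uniformly in time. The columns \eqref{upper-columns} are holomorphic on a fixed angular strip, chosen before $J$. Contour translation in $\theta$ therefore gives the decay $e^{-s_e\vert m-\sigma n\vert}$ used in Lemma~\ref{current-evaluation}, and the same holds for $K_J'$ through the bound on $\partial_t b_{\sigma,j,n}$. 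Since $\bigl\vert\vert m\vert-\vert n\vert\bigr\vert\leq\vert m-\sigma n\vert$ for the relevant sign, Lemma~\ref{app:block-calculus} then makes $e^{\eta N}K_Je^{-\eta N}$ and its adjoint bounded for $\vert\eta\vert<s_e$. Next, the last assertion of Lemma~\ref{app:block-calculus} inverts $\Gamma_J$ inside the $q_s$ class once $N_0$ is large, so $(\Gamma_J^{-1})_\eta$ is bounded. Writing $P_\eta = (K_J)_\eta(\Gamma_J^{-1})_\eta(K_J^*)_\eta$ gives boundedness of $P_\eta$ on the fixed interval $\vert\eta\vert\leq s_*$, after reducing $s_*$ once. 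This interval is independent of $J$, since only the strip, and not the constants, enters the exponent.

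For $P'_\eta$, I would differentiate $P = K_J\Gamma_J^{-1}K_J^*$. This gives
$$
P' = K_J'\Gamma_J^{-1}K_J^* + K_J\Gamma_J^{-1}K_J'^{*} - K_J\Gamma_J^{-1}(K_J'^*K_J + K_J^*K_J')\Gamma_J^{-1}K_J^*,
$$
and every factor is again in the $q_s$ class. The $C^1$ regularity in operator norm proved in Lemma~\ref{current-evaluation} justifies the differentiation, and conjugation by $e^{\eta N}$ preserves the product structure.

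For the implication, suppose $P_\eta z = 0$. This means $K_J^*e^{-\eta N}z = 0$ up to the invertible factors; more precisely, $y = e^{-\eta N}z$ lies in the kernel of $K_J^*$, so $y$ is orthogonal to the range of $K_J$. My strategy is to compare with the flat columns. I would write $\Pi_J = K_J^0(\Gamma_J^0)^{-1}K_J^{0*}$ up to an exponentially small cutoff error. Since the flat normalized Gaussian monomials span the oscillator levels $0,\dots,J$, it then suffices to bound $N^{1/2}K_J^{0*}z$. The key step is to use $K_J^*y = 0$ to write $K_J^{0*}z = (K_J^{0} - K_J)^*z$ plus the commutator error from the weight, and then to apply the block bound $\Vert (K_J-K_J^0)_{m,\sigma n}\Vert\leq C_Jn^{-1/2}q_{s_e}(m-\sigma n)$ from Lemma~\ref{current-evaluation}. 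The factor $n^{-1/2}$ cancels exactly the $N^{1/2}$, because $N$ acts on the coefficient side by $n$. The result is $\Vert N^{1/2}K_J^{0*}z\Vert\leq C_J\Vert z\Vert$, and since $(\Gamma_J^0)^{-1}$ is block diagonal and commutes with $N$, this gives the stated bound on $\Vert N^{1/2}\Pi_Jz\Vert$.

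I expect the main obstacle to be this last step with the weight. The condition is $P_\eta z=0$ rather than $Pz=0$, so I must handle $e^{\eta N}$ passing through the flat synthesis as well. For the flat part this is harmless, since $K_J^0$ is diagonal in frequency and commutes with $N$. For the difference I would first verify that the $n^{-1/2}$ gain survives conjugation, which is exactly what the $q_{s_e-\vert\eta\vert}$ estimate of Lemma~\ref{app:block-calculus} provides, with $\langle N\rangle^{1/2}$ moved across using $\langle m\rangle/\langle n\rangle\leq C\langle m-n\rangle$.
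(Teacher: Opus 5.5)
Your proposal is correct and follows the paper's proof. Both arguments obtain the conjugated synthesis and Gram bounds from Lemma~\ref{current-evaluation}, invert $\Gamma_J$ via Lemma~\ref{app:block-calculus}, and differentiate $P=K_J\Gamma_J^{-1}K_J^*$. For the implication, both use $(K_{J,-\eta})^*z=0$ to write $(K_J^0)^*z=(K_J^0-K_{J,-\eta})^*z$, let the $n^{-1/2}$ block gain cancel $N^{1/2}$, and then compare the flat projection with $\Pi_J$.

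One cosmetic point: for $\eta<0$ the vector $e^{-\eta N}z$ need not lie in $\mathscr{H}$, so state the kernel condition directly as $(K_{J,-\eta})^*z=0$. In that form the ``commutator error'' you mention vanishes exactly, since $K_J^0$ commutes with $N$.
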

The synthesis, Gram inverse and pseudoinverse have bounded conjugates on the same interval. Moreover, $K_J^\dagger:D(\triangle_\Phi)\to D(N)$ and $P D(\triangle_\Phi)\subset D(\triangle_\Phi)$.
\begin{proof}
The block bounds of Lemma~\ref{current-evaluation} give all conjugated synthesis and Gram bounds. Increase $N_0$ so that the Gram perturbation is small also on the fixed exponent interval. Lemma~\ref{app:block-calculus} gives the inverse and its polynomially weighted bounds. Since $K_J^*$ maps $D(N)\subset\mathscr{H}$ to $D(N)\subset\mathcal{F}_J$, Lemma~\ref{operator-graph} gives the pseudoinverse domain bound. Together with $K_J:D(N)\to D(\triangle_\Phi)$, this shows that $P$ preserves $D(\triangle_\Phi)$. Differentiating $P = K_J\Gamma_J^{-1}K_J^*$ gives the asserted bound for $P'$.

If $P_\eta z = 0$, then $(K_{J,-\eta})^*z = 0$. Consequently $(K_J^0)^*z = (K_J^0 - K_{J,-\eta})^*z$. The $n^{-1/2}$ block estimate controls this coefficient vector after multiplication by $N^{1/2}$. The flat columns span the first Hermite levels. Their cutoff errors are exponentially small, and the neighboring gaps are of order $n$, so their projection differs exponentially from $\Pi_J$. The positive flat Gram matrix now proves the last assertion.
\end{proof}

The following estimate gives rate one without excluded levels and rate $2J + 3$ on the complement.

\begin{lem}\label{complement-coercivity}
Set $\Pi_{-1} = 0$. If $J\in\{-1,0,1,\ldots\}$, $A \geq 0$ and $\Vert N^{1/2}\Pi_Jz\Vert \leq A\Vert z\Vert $, then, for every $\varepsilon>0$,
$$
\Re\mathfrak{h}_\eta(z,z) \geq (2J + 3 - \varepsilon)\Vert N^{1/2}z\Vert ^2 - C_{J,A,\varepsilon}\Vert z\Vert ^2, \qquad z\in\mathcal{V}.
$$
The exponent interval and cylinder are fixed independently of $J,A$ and $\varepsilon$.
\end{lem}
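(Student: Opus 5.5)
The plan is to compare the form $\mathfrak{h}_\eta$ with the flat Dirichlet oscillator form $\mathfrak{h}^{\rm osc}(z,z)=\Vert z_r\Vert^2+\Vert rNz\Vert^2$ on the cylinder, with error terms of relative size $\varepsilon$ plus lower order. The key inequality is then the spectral one for $\triangle_{\mathrm{osc}}$: at frequency $n$, the Dirichlet oscillator on $(-r_0,r_0)$ has eigenvalues exceeding $(2p+1)\vert n\vert$ minus exponentially small corrections. On the range of $I-\Pi_J$ this gives
$$
\mathfrak{h}^{\rm osc}(z,z)\geq (2J+3)\Vert N^{1/2}(I-\Pi_J)z\Vert^2-C_J\Vert z\Vert^2 ,
$$
and on $\operatorname{Ran}\Pi_J$ we only use $\mathfrak{h}^{\rm osc}\geq 0$. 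For $J=-1$ nothing is excluded.

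\emph{Step 1: localize near the circle.} Write $c=1+\mathrm{O}(\vert r\vert+\ldots)$; more precisely $c(0,\theta)=1$, so $\vert c-1\vert\leq C\vert r\vert$. Split $z$ with a fixed cutoff in $r$ of small width $\delta$, chosen after $\varepsilon$ (an IMS localization). Away from $r=0$ the tangential term $c r^2N^2$ dominates $N$ by any factor, at cost $C_\delta\Vert z\Vert^2$ after using $\delta^2 n^2\geq K n - C_K$. Near $r=0$ the coefficient $c$ lies within $C\delta$ of $1$, so
$$
\Re\mathfrak{h}_0\geq(1-C\delta)\mathfrak{h}^{\rm osc}-C\Vert z\Vert^2 .
$$
For $J$ fixed this relative loss becomes $(2J+3)C\delta\Vert N^{1/2}z\Vert^2$, absorbed by taking $\delta\ll\varepsilon/(2J+3)$. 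The potential $W$ is bounded.

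\emph{Step 2: angular conjugation.} Conjugation by $e^{\eta N}$ changes the coefficient multipliers $c$ and $r^2c$ into matrices whose Fourier blocks carry factors $e^{\eta(\vert m\vert-\vert n\vert)}$. Write $c=1+(c-1)$; the identity part commutes with $N$. For the remainder, split by signs as in Lemma~\ref{dyn:weighted-form}. Same-sign blocks equal multiplication by $c(r,\theta\mp i\eta)$, whose real part is still within $C(\delta+\vert\eta\vert)$ of $1$ near the circle. Opposite-sign blocks carry $\vert mn\vert\leq\vert m-n\vert^2/4$ and analytic decay, so Lemma~\ref{app:block-calculus} bounds them by $C\Vert z\Vert^2$. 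The $\partial_r(c\partial_r)$ term is unaffected apart from the same shifted coefficient. Fixing the exponent interval small, independent of $J$, the relative loss is again $\mathrm{O}(\delta+\vert\eta\vert)$. I expect this step to be the main obstacle: the $\eta$-smallness must be uniform in $J$. The fix is to let the loss be $\mathrm{O}(\vert\eta\vert)\,\mathfrak{h}^{\rm osc}$ and absorb it only after the oscillator lower bound is applied. Alternatively, $\vert\eta\vert$ can be kept fixed by instead using $\delta$ small and exploiting that the imaginary shift enters only through $\Re c(r,\theta\mp i\eta)\geq 1-C\vert r\vert-C\eta^2\vert r\vert$. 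Here $\eta$ multiplies $r$ because $c(0,\cdot)\equiv1$ on the whole complex strip, so the $\eta$ loss is also localized and controlled by $\delta$.

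\emph{Step 3: use the hypothesis.} Decompose $z=\Pi_Jz+(I-\Pi_J)z$. The oscillator form is diagonal in this splitting, so
$$
\mathfrak{h}^{\rm osc}(z)\geq(2J+3)\Vert N^{1/2}z\Vert^2-(2J+3)\Vert N^{1/2}\Pi_Jz\Vert^2-C_J\Vert z\Vert^2 .
$$
The hypothesis $\Vert N^{1/2}\Pi_Jz\Vert\leq A\Vert z\Vert$ turns the middle term into $-(2J+3)A^2\Vert z\Vert^2$. Combining this with Steps 1–2 and absorbing the localization commutators gives the claim with $C_{J,A,\varepsilon}$. The IMS error terms are $\mathrm{O}(\delta^{-2})\Vert z\Vert^2$, and the Dirichlet exponential corrections to the oscillator eigenvalues are harmless. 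The cross terms between $\Pi_J z$ and its complement from the localization are handled by Cauchy–Schwarz with $\varepsilon$.
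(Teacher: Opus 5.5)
Your argument works, but by a different mechanism from the paper's. The paper never localizes in $r$. It accepts a \emph{fixed} relative loss, $\Re\mathfrak{h}_\eta\geq(1-\delta_0)\mathfrak{h}_{\mathrm{osc}}-C\Vert\cdot\Vert^2$ with $\delta_0<1/10$ on the whole fixed cylinder. It then recovers the sharp rate by splitting at the auxiliary level $L=3J+6$. Above $L$, the crude bound already leaves $(1-\delta_0)(2L+3)>2J+3$. On the finitely many levels $0,\ldots,L$, Lemma~\ref{fixed-level-perturbation} shows that $\triangle_{\Phi,\eta}-\triangle_{\mathrm{osc}}$ costs only $C_L(1+N^{1/2})$, because these levels live at scale $\vert n\vert^{-1/2}$ where $c-1=\mathrm{O}(r)$; Young's inequality absorbs this. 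The hypothesis is then applied directly to $\Pi_Jz$.

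You instead make the coefficient error itself small, by an IMS splitting of width $\delta\ll\varepsilon/(2J+3)$ inside the fixed cylinder, and you use $r^2N^2\geq\delta^2N^2$ off the circle. This is more elementary: there is no auxiliary level and no finite-level perturbation lemma. Since the cutoffs depend only on $r$, they commute with $e^{\eta N}$, $c_\eta$ and $W_\eta$, so the IMS identity is exact with remainder $\mathrm{O}(\delta^{-2})\Vert z\Vert^2$. The price is that your cutoff does not commute with $\Pi_J$.

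That price is where your write-up needs tightening; three points follow.

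\emph{Transferring the hypothesis.} Step 3 must be applied to $\chi_1z$, whereas the hypothesis bounds $\Pi_Jz$, and Cauchy--Schwarz alone does not connect them. What you need is
$\Vert N^{1/2}\Pi_J(1-\chi_1)z\Vert\leq C_J\delta^{-1}\Vert z\Vert$.
This follows by duality on each frequency block $\Pi_{J,n}$. For $\phi\in\operatorname{Ran}\Pi_{J,n}$ one has $\Vert(1-\chi_1)\phi\Vert\leq\delta^{-1}\Vert r\phi\Vert$, and also $n^2\Vert r\phi\Vert^2\leq\langle\triangle_{\mathrm{osc},n}\phi,\phi\rangle\leq C_J\vert n\vert\,\Vert\phi\Vert^2$.

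\emph{Step 2.} Only your second alternative is valid there. Since $c(0,\cdot)\equiv1$ on the complex strip, one has $\Vert c_\eta(r)-I\Vert\leq C\vert r\vert$ uniformly on the fixed exponent interval, as recorded in Appendix~\ref{app:dirichlet}. Hence the loss on $\operatorname{supp}\chi_1$ is $\mathrm{O}(\delta)$. A loss $\mathrm{O}(\vert\eta\vert)\,\mathfrak{h}_{\mathrm{osc}}$ with $\eta$ fixed could not be absorbed into an arbitrary $\varepsilon$.

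\emph{Off the circle.} Your sign splitting only handles the tangential term. The opposite-sign blocks in $\langle c_\eta v_r,v_r\rangle$ carry no factor $mn$ and are not $\mathrm{O}(\Vert z\Vert^2)$. Use instead that the Hermitian part of $c_\eta$ is $c+\mathrm{O}(\eta^2)$ by the $\cosh$ formula, hence positive on the fixed interval.
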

\begin{proof}
Write $\mathfrak{h}_{\mathrm{osc}}(z,z) = \Vert z_r\Vert ^2 + \Vert rD_\theta z\Vert ^2$. The fixed cylinder can be chosen so that $\Re\mathfrak{h}_\eta \geq (1 - \delta_0)\mathfrak{h}_{\mathrm{osc}} - C\Vert \cdot\Vert ^2$ with $\delta_0<1/10$ throughout the fixed exponent interval. Split at the first $L + 1$ oscillator levels, where $L = 3J + 6$. On these levels Lemma~\ref{fixed-level-perturbation} bounds the perturbation and its adjoint by $C_L(1 + N^{1/2})$. Young's inequality bounds the finite-block error and the two cross terms by $\varepsilon\Vert N^{1/2}\Pi_Lz\Vert ^2 + C_{J,\varepsilon}\Vert z\Vert ^2$. On the remaining levels, the relative form bound leaves coefficient $(1 - \delta_0)(2L + 3)>2J + 3$ in front of $N$. Levels $J + 1,\ldots,L$ already have rate at least $2J + 3$, and the assumed bound controls the lower levels. The finitely many angular frequencies excluded from $\Pi_J$ cost only an additive constant.
\end{proof}

We now apply these facts to the actual localized solution. On compact positive-time intervals, $U\in H^1(\mathscr{H})\cap L^2(D(\triangle_\Phi))$. The domain bounds above justify differentiating $v = {P^\perp}U$. With $R = {P^\perp}E_JK_J^\dagger$, the projected invariance identity gives $P'P = -{P^\perp}\triangle_\Phi P + R$ on $D(\triangle_\Phi)$. Its adjoint gives $P'{P^\perp} = -P\triangle_\Phi {P^\perp} + R^*$. Since $P' = P'P + P'{P^\perp}$, we obtain
\begin{equation}\label{tail-equation}
v_t + \triangle_\Phi v = -2P'v + (R^*-R)U + {P^\perp}F_{\rm obs}.
\end{equation}
Since $R=RP$ and $R^*=R^*{P^\perp}$, the retained component $PU$ enters this equation only through $-RPU$. If $E_J=0$, the equation depends only on $v$ and the observed source, even when the coefficients depend on the angular variable.

Decrease the fixed $s_*$ below the projection exponent interval. For $B_0<\min(\gamma_*,s_*)$, the defect and its adjoint satisfy
\begin{equation}\label{weighted-defect}
\Vert e^{\eta N}(R^*-R)U\Vert \leq C_J\Vert e^{-B_0N}U\Vert ,\qquad - B_0 \leq \eta\leq0.
\end{equation}
Indeed, input exponential decay makes $E_Je^{B_0N}$ bounded, and $e^{B_0\vert m\vert } \leq e^{B_0n}e^{B_0\vert m - \sigma n\vert }$ makes $E_J^*e^{B_0N}$ bounded. The conjugated projection and pseudoinverse bounds then give \eqref{weighted-defect}.

The weight-raising calculation in Subsection~\ref{sec:sharp-upper} is therefore justified for the actual complementary state: $P$ preserves $D(\triangle_\Phi)$, and negative angular weights preserve the common domain $D(\triangle_{\Phi,\eta}) = D(\triangle_{\mathrm{osc}})$. Lemma~\ref{tail-jets} supplies the local spatial derivatives after any further positive time margin.

\subsection{Operator domains and observed derivatives}\label{graph-appendix}

Appendix~\ref{app:dirichlet} identifies the common domain of the angularly conjugated operators and their comparison with the flat oscillator on finitely many levels. These facts justify the weighted energy estimates and projection identities of Appendix~\ref{current-appendix}. Appendix~\ref{app:regular-derivatives} then recovers the spatial derivatives of the complementary state needed for observation on level sets. We retain $\mathcal{C}$, $\mathscr{H}$ and $\mathcal{V}$ from Subsection~\ref{upper-functional-setting}, and write $D = D_\theta$, $N = \vert D\vert $.

\subsubsection{Dirichlet and conjugated operators}\label{app:dirichlet}

For a periodic analytic coefficient $a$, define its angular matrix by
$(a_\eta)_{mn}(r) = e^{\eta(\vert m\vert - \vert n\vert )}\widehat a_{m - n}(r)$ for $m,n\in(2\pi/\ell)\Z$.
On a fixed compact exponent interval inside the analytic strip, the Schur test bounds these matrices and every fixed coefficient derivative. In particular, $c(0,\theta) = 1$ gives $\Vert c_\eta(r) - I\Vert \leq C\vert r\vert $. Since $c$ is real, the Hermitian part has entries
$\big(\tfrac12(c_\eta+c_\eta^*)\big)_{mn} = \widehat c_{m-n}\cosh\big(\eta(\vert m\vert-\vert n\vert)\big)$.
The difference from $c$ is bounded in operator norm by $C\eta^2$: use $\cosh t-1 \leq t^2e^{\vert t\vert}/2$, $\big\vert\vert m\vert-\vert n\vert\big\vert \leq \vert m-n\vert$, and the reserved analytic strip. Thus a small fixed exponent interval preserves ellipticity of the angular coefficient. On the fixed small cylinder the forms
$$
\mathfrak{h}_\eta(v,w) = \langle c_\eta v_r,w_r\rangle + \langle c_\eta rDv,rDw\rangle + \langle W_\eta v,w\rangle, \qquad v,w\in\mathcal{V},
$$
are closed and sectorial after a constant shift. Let $\triangle_{\Phi,\eta}$ be their associated operators. At $\eta = 0$ this is the Dirichlet realization $\triangle_\Phi$, and $\triangle_{\Phi,\eta}^* = \triangle_{\Phi,-\eta}$. The coefficient-matrix definition avoids applying an unbounded positive exponential to arbitrary data.

Let $\triangle_{\mathrm{osc}} = -\partial_r^2 + r^2N^2$ be the flat Dirichlet operator. The graph norm controls the angular derivative as follows.

\begin{lem}\label{operator-graph}
Uniformly on the fixed exponent interval, $D(\triangle_{\Phi,\eta}) = D(\triangle_{\mathrm{osc}})$. For $f\in D(\triangle_\Phi)$,
\begin{equation}\label{actual-graph}
\Vert Nf\Vert + \Vert f_{rr}\Vert + \Vert r^2N^2f\Vert + \Vert rNf_r\Vert \leq C(\Vert \triangle_\Phi f\Vert + \Vert f\Vert ).
\end{equation}
All derivatives are distributional.
\end{lem}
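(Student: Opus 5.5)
We compare $\triangle_{\Phi,\eta}$ with the flat oscillator $\triangle_{\mathrm{osc}}$, whose Dirichlet domain on $\mathcal{C}$ is known explicitly, and then perturb. Fourier expansion in $\theta$ diagonalizes $\triangle_{\mathrm{osc}}$ into the one-dimensional Dirichlet operators $-\partial_r^2+r^2n^2$ on $(-r_0,r_0)$. For these we prove the graph estimate
\[
\Vert f_{rr}\Vert^2+\Vert r^2n^2f\Vert^2+\Vert nf\Vert+\Vert rnf_r\Vert\lesssim\Vert(-\partial_r^2+r^2n^2)f\Vert^2+\Vert f\Vert^2
\]
uniformly in $n$. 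Expanding $\Vert(-\partial_r^2+r^2n^2)f\Vert^2$ and integrating by parts gives $\Vert f_{rr}\Vert^2+\Vert r^2n^2f\Vert^2+2n^2\Vert rf_r\Vert^2-2n^2\Vert f\Vert^2$. The boundary terms vanish by the Dirichlet condition, and the cross term is $2\langle f_r,(r^2n^2f)_r\rangle$. The negative term $-2n^2\Vert f\Vert^2$ is absorbed by the lowest eigenvalue bound $-\partial_r^2+r^2n^2\geq|n|$, which holds because the Dirichlet form dominates the whole-line oscillator form. This gives $\Vert nf\Vert\leq\Vert(-\partial_r^2+r^2n^2)f\Vert$, and summing over $n$ yields \eqref{actual-graph} for $\triangle_{\mathrm{osc}}$.

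Next we write $\triangle_{\Phi,\eta}-\triangle_{\mathrm{osc}}$ in the coefficient-matrix form of Appendix~\ref{app:dirichlet}:
\[
-\partial_r\big((c_\eta-I)\partial_r\big)+D\,r\,(c_\eta-I)\,r\,D+W_\eta ,
\]
together with a first-order term from $\partial_rc_\eta$. Using $\Vert c_\eta(r)-I\Vert\leq C|r|$, uniformly on the exponent interval, the top-order part is bounded by
\[
C\big(\Vert rf_{rr}\Vert+\Vert r\cdot r^2N^2f\Vert+\Vert f_r\Vert+\Vert rNf\Vert\big)+\Vert f\Vert .
\]
The fact that the angular matrices do not commute with $N$ is handled by Lemma~\ref{app:block-calculus}: commutators $[N,c_\eta]$ and $[N^2,c_\eta]$ lose at most polynomial factors in $m-n$, which the analytic strip absorbs. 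Such commutators therefore give terms like $\Vert r^2Nf\Vert$, which are of lower order. On a sufficiently small fixed cylinder, the factor $r_0$ makes the top-order perturbation relatively bounded with respect to $\triangle_{\mathrm{osc}}$ with relative bound less than $1/2$. Lower-order terms are handled by interpolation, since $\Vert f_r\Vert^2\leq\Vert f\Vert\Vert f_{rr}\Vert$ and $\Vert rNf\Vert^2\lesssim\langle\triangle_{\mathrm{osc}}f,f\rangle$. These give the a priori estimate $\Vert\triangle_{\mathrm{osc}}f\Vert\leq C(\Vert\triangle_{\Phi,\eta}f\Vert+\Vert f\Vert)$ for $f\in D(\triangle_{\mathrm{osc}})$, together with the reverse inequality.

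To identify the domains, we use Kato--Rellich type reasoning for sectorial forms. For $f\in D(\triangle_{\mathrm{osc}})$, the form identity shows $f\in D(\triangle_{\Phi,\eta})$ with $\triangle_{\Phi,\eta}f$ given by the differential expression. Conversely, $\triangle_{\mathrm{osc}}+P_\eta+\lambda$ with domain $D(\triangle_{\mathrm{osc}})$ is closed by the a priori estimate. It is also surjective for large $\lambda$, by the method of continuity along $c_\eta^{(s)}=I+s(c_\eta-I)$ with $s\in[0,1]$: the estimate is uniform in $s$, and at $s=0$ the operator is invertible. Since $\triangle_{\Phi,\eta}+\lambda$ is injective and extends this operator, the two coincide. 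Applying the flat estimate from the first paragraph then gives \eqref{actual-graph}.

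The main obstacle is the regularity step, namely showing that a form-domain solution actually lies in $D(\triangle_{\mathrm{osc}})$. The a priori estimate only applies to elements already known to be in that domain, which is why we route this step through the continuity method. A further subtlety is that the mixed-frequency matrix $c_\eta$ must be commuted past $N^2$ and $r^2$ without losing a factor $r$. We expect this to be handled by the $q_s$ calculus of Lemma~\ref{app:block-calculus} applied to the derivatives of the Fourier coefficients of $c$.
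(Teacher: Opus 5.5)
Your proposal is correct and follows the paper's proof. It uses the same integration-by-parts identity and the bound $\triangle_{\mathrm{osc},n}\geq\vert n\vert$ for the flat graph estimate, a perturbation with small relative bound on the thin cylinder, and it identifies the domains by matching a closed surjective realization on $D(\triangle_{\mathrm{osc}})$ with the injective form operator. The paper shortens both of your delicate steps: it writes $\triangle_{\Phi,\eta} = c_\eta\triangle_{\mathrm{osc}} - (c_r)_\eta\partial_r - ir^2(c_\theta)_\eta D + W_\eta$, so the principal perturbation $(c_\eta - I)\triangle_{\mathrm{osc}}$ is bounded by $Cr_0\Vert\triangle_{\mathrm{osc}}f\Vert$ with nothing to commute past $N^2$, and it inverts $I + (\triangle_{\Phi,\eta}-\triangle_{\mathrm{osc}})(\triangle_{\mathrm{osc}}+\lambda)^{-1}$ directly by a Neumann series, which makes your continuity path in $s$ unnecessary (the uniform estimate that path needs comes from the same relative bound anyway).
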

\begin{proof}
At frequency $n$, integration by parts for a Dirichlet function gives
$$
\Vert f_{rr}\Vert ^2 + \Vert n^2r^2f\Vert ^2 + 2\Vert nr f_r\Vert ^2 = \Vert (-\partial_r^2 + n^2r^2)f\Vert ^2 + 2n^2\Vert f\Vert ^2.
$$
The boundary terms vanish. The factorization $\Vert f_r + \vert n\vert rf\Vert ^2 = \langle(-\partial_r^2 + n^2r^2)f,f\rangle - \vert n\vert \Vert f\Vert ^2$ also gives $\triangle_{\mathrm{osc},n} \geq \vert n\vert $. Summing over frequencies yields
\begin{equation}\label{first-graph}
\Vert Nf\Vert + \Vert f_{rr}\Vert + \Vert r^2N^2f\Vert + \Vert rNf_r\Vert \leq C\Vert \triangle_{\mathrm{osc}}f\Vert .
\end{equation}
Fourier truncation and one-dimensional Dirichlet approximation extend this estimate to $D(\triangle_{\mathrm{osc}})$.

On its smooth Dirichlet core, one has
$\triangle_{\Phi,\eta} = c_\eta \triangle_{\mathrm{osc}} - (c_r)_\eta\partial_r - ir^2(c_\theta)_\eta D + W_\eta$.
The principal perturbation has small relative bound on the fixed cylinder. The remaining terms have arbitrarily small relative bound, using $\Vert f_r\Vert ^2 + \Vert rNf\Vert ^2 = \langle \triangle_{\mathrm{osc}}f,f\rangle$ and $\Vert r^2Df\Vert \leq r_0\Vert rNf\Vert $. For large $\lambda$, the Neumann inverse of $I + (\triangle_{\Phi,\eta} - \triangle_{\mathrm{osc}})(\triangle_{\mathrm{osc}} + \lambda)^{-1}$ therefore gives a closed realization on $D(\triangle_{\mathrm{osc}})$. Its form is $\mathfrak{h}_\eta$, so it agrees with the associated sectorial operator. At $\eta = 0$, one has $D(\triangle_\Phi) = D(\triangle_{\mathrm{osc}})$ and $\Vert \triangle_{\mathrm{osc}}f\Vert \leq C(\Vert \triangle_\Phi f\Vert + \Vert f\Vert )$, which proves \eqref{actual-graph}.
\end{proof}

Finitely many oscillator levels concentrate on the scale $\vert n\vert ^{-1/2}$. On these levels the coefficient error $c_\eta - I = \mathrm{O}(r)$ therefore has angular order $1/2$, rather than order one.

\begin{lem}\label{fixed-level-perturbation}
Let $\Pi_L$ retain oscillator indices $0,\ldots,L$ for $\triangle_{\mathrm{osc},n}$ at every angular frequency. For every fixed $L$, uniformly on the fixed exponent interval,
$$
\Vert (\triangle_{\Phi,\eta} - \triangle_{\mathrm{osc}})\Pi_Lf\Vert + \Vert (\triangle_{\Phi,\eta}^*-\triangle_{\mathrm{osc}})\Pi_Lf\Vert \leq C_L(\Vert N^{1/2}f\Vert + \Vert f\Vert )\qquad\forall f\in D(N^{1/2}).
$$
The products are initially defined on finite Fourier sums and extended by this estimate.
\end{lem}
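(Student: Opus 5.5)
We prove Lemma~\ref{fixed-level-perturbation} frequency by frequency, and then sum over frequencies with the block calculus of Lemma~\ref{app:block-calculus}. On the smooth Dirichlet core, Lemma~\ref{operator-graph} gives
\[
\triangle_{\Phi,\eta}-\triangle_{\mathrm{osc}} = (c_\eta - I)\triangle_{\mathrm{osc}} - (c_r)_\eta\partial_r - ir^2(c_\theta)_\eta D + W_\eta .
\]
For the adjoint, $\triangle_{\Phi,\eta}^* = \triangle_{\Phi,-\eta}$ has the same structure with $\eta$ replaced by $-\eta$. Hence it suffices to bound each of the four terms applied to $\Pi_L f$. Write $f = \sum_n f_n$ in angular Fourier modes, and let $\Pi_L f_n = \sum_{j\le L} \alpha_{j,n} h_{j,n}$, where $h_{j,n}$ are the normalized Dirichlet oscillator eigenfunctions at frequency $n$, with eigenvalues $\lambda_{j,n}$. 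Since the cylinder is fixed, $\lambda_{j,n} = (2j+1)|n| + \mathrm{O}(e^{-c|n|})$. The eigenfunctions agree with rescaled Hermite functions $|n|^{1/4}H_j(|n|^{1/2}r)e^{-|n|r^2/2}$ up to exponentially small errors. For small $|n|$, which is a finite set, the bound holds trivially with a constant.

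The key one-dimensional estimate is the scaling bound
\[
\Vert r^a\partial_r^b h_{j,n}\Vert_{L^\infty_\theta L^2_r}\le C_{L,a,b}\langle n\rangle^{(b-a)/2}, \qquad j\le L,
\]
with the weighted variant $\Vert r^a h_{j,n}\Vert \le C\langle n\rangle^{-a/2}$. We then use these bounds term by term.

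\begin{itemize}
\item Principal term: $(c_\eta-I)\triangle_{\mathrm{osc}}\Pi_L f = (c_\eta - I)\sum \lambda_{j,n}\alpha_{j,n}h_{j,n}$. Since $c(0,\theta)=1$, we write $c_\eta - I = r\,\tilde c_\eta(r)$, where the angular matrix $\tilde c_\eta$ is bounded in $\Vert\cdot\Vert_{q_{s}}$ uniformly in $r$; this uses the reserved analytic strip and the identity $c - 1 = r\int_0^1 c_r(tr,\theta)\,dt$. The factor $r$ against $h_{j,n}$ gains $\langle n\rangle^{-1/2}$, and $\lambda_{j,n}$ costs $\langle n\rangle$. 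This term therefore has size $\langle n\rangle^{1/2}|\alpha_{j,n}|$ at input frequency $n$.
\item First-order term $(c_r)_\eta\partial_r$: it costs $\langle n\rangle^{1/2}$ directly.
\item Term $r^2(c_\theta)_\eta D$: it costs $\langle n\rangle^{-1}\cdot\langle n\rangle = \mathrm{O}(1)$.
\item Potential $W_\eta$: it is bounded.
\end{itemize}

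Each input block, from frequency $n$ to output frequency $m$, is thus bounded in $\mathcal{L}(\mathbb{C}^{L+1},L^2_r)$ by
\[
C_L\langle n\rangle^{1/2}q_{s'}(m-n).
\]
The operator-valued coefficient acts as multiplication in $r$, and the $L^2_r$ bounds above are taken uniformly in $r$ in the coefficient, so the estimate is unaffected. Writing the operator as $T\langle N\rangle^{1/2}$, with $T$ bounded by the Schur test of Lemma~\ref{app:block-calculus}, we obtain the bound $C_L(\Vert N^{1/2}f\Vert+\Vert f\Vert)$. Here we use $\sum_{j\le L}|\alpha_{j,n}|^2\le\Vert f_n\Vert^2$. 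The estimate then extends from finite Fourier sums by density.

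The main obstacle is the principal term: one must make sure that the full $\triangle_{\mathrm{osc}}$ factor, of size $n$, is genuinely compensated by the vanishing of $c_\eta-I$ at $r=0$. This compensation must hold uniformly after angular conjugation, even though the conjugated coefficient mixes frequencies. We handle this by factoring $r$ out at the level of Fourier coefficients, $\widehat{(c-1)}_k(r)=r\,\widehat{\tilde c}_k(r)$, with $|\widehat{\tilde c}_k(r)|\le Ce^{-s|k|}$ by Cauchy estimates on the analytic strip. The multiplier $r$ commutes with the angular conjugation, so the gain survives. Only the exponentially small deviation of Dirichlet eigenfunctions from whole-line Hermite functions remains, and it is absorbed into the constant.
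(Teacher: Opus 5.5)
Your proposal is correct and follows essentially the same route as the paper: the rescaled bounds $\Vert r^p\partial_r^q\Pi_{L,n}f\Vert\leq C_L\langle n\rangle^{(q-p)/2}\Vert f\Vert$ on the Dirichlet spectral subspaces, the factorization $c_\eta-I=r\widetilde c_\eta$ to offset the $\langle n\rangle$ cost of $\triangle_{\mathrm{osc}}$, input blocks of size $C_L\langle n\rangle^{1/2}$ with exponential decay in $\vert m-n\vert$ summed by the weighted Schur test, and $\eta\mapsto-\eta$ for the adjoint. The one-dimensional bounds also follow directly from min-max ($\lambda_{j,n}\leq C_L\langle n\rangle$ for $j\leq L$) together with the graph estimate \eqref{first-graph}, which is how the paper justifies them, so your exponential comparison with whole-line Hermite functions is convenient but not essential.
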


\begin{proof}
Rescaling $r = \vert n\vert ^{-1/2}s$ gives the Hermite estimates
$\Vert r^p\partial_r^q\Pi_{L,n}f\Vert \leq C_L\langle n\rangle^{(q - p)/2}\Vert f\Vert$, $p + q \leq 3$, $q \leq 2$.
They hold for the Dirichlet spectral subspaces as well: cutoff Hermite functions have exponentially small residuals, and the neighboring oscillator gaps are of order $\vert n\vert $. Min-max and
\eqref{first-graph} control the corresponding spectral subspaces
and their first two normal derivatives. Finitely many frequencies, including zero, are absorbed into $C_L$.

In the perturbation formula used above, write $c_\eta - I = r\widetilde c_\eta$. Each input Fourier block is then bounded by $C_L\langle n\rangle^{1/2}$. Exponential decay of the coefficient matrices in $\vert m - n\vert $ gives the asserted bound by the weighted Schur test. Replacing $\eta$ by $-\eta$ proves the adjoint estimate.
\end{proof}

\subsubsection{Derivatives on regular strips}\label{app:regular-derivatives}

Although the projections are nonlocal, their Gaussian columns make them smoothing on strips separated from $r = 0$.

\begin{lem}\label{tail-jets}
Let $v = {P^\perp}U$ satisfy \eqref{tail-equation}, and let $0<B_0<\min(\gamma_*,s_*)$. For compact time intervals $I\Subset I_0$ inside the construction interval and every $\omega'\Subset\mathcal{C}\cap\{\vert r\vert >0\}$,
\begin{equation}\label{tail-graphs}
\Vert v\Vert_{L^2(I;H^k(\omega'))} \leq C_{J,k,I,\omega'}\left( \Vert v\Vert_{L^2(I_0;\mathscr{H})} + \Vert e^{-B_0N}U\Vert_{L^2(I_0;\mathscr{H})} + \Vert F_{\rm obs}\Vert_{L^2(I_0;H^k(\mathcal{C}))}\right)
\end{equation}
for every integer $k \geq 0$. In particular, after the complementary energy estimate, the right-hand side is bounded by $C\obs^{1/2}$.
\end{lem}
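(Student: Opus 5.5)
The plan is to prove Lemma~\ref{tail-jets} by interior parabolic regularity for $v$ on the regular strips, treating the two nonlocal terms of \eqref{tail-equation} as sources. Away from $r=0$ the operator $\triangle_\Phi$ is uniformly elliptic with analytic coefficients, so local higher regularity is standard once every source is known in $L^2(I_0;H^k)$ on a slightly larger strip. Fix a nested family of strips $\omega'\Subset\omega_k\Subset\cdots\Subset\omega_0\Subset\mathcal{C}\cap\{\vert r\vert>0\}$ and intervals $I\Subset I_k\Subset\cdots\Subset I_0$. We then argue by induction on $k$, using spatial cutoffs $\zeta_j$ and time cutoffs.

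\emph{Step 1: smoothing of the nonlocal sources.} The term ${P^\perp}F_{\rm obs}=F_{\rm obs}-PF_{\rm obs}$ is split. The local part $F_{\rm obs}$ is bounded by hypothesis. For $PF_{\rm obs}$, $P'v$ and $(R^*-R)U$, use that each factors through the columns $k_{\sigma,j,n}$: we have $P=K_JK_J^\dagger$ and $P'=K_J'K_J^\dagger+\cdots$, and $R$ has range in ${P^\perp}E_J$, while $R^*=(K_J^\dagger)^*E_J^*{P^\perp}$. On a strip $\vert r\vert\geq r_1>0$, the column $k_{\sigma,j,n}$ and every $C^k$ derivative of it carries the factor $e^{-nr_1^2/2}$, up to polynomials in $n$ and the angular off-diagonal decay of Proposition~\ref{adapted-subspace} and Lemma~\ref{current-evaluation}. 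Hence $\zeta K_J$ and $\zeta K_J'$ map $\mathcal{F}_J$ boundedly into $H^k$ for every $k$. Combined with the $L^2$ bounds on $K_J^\dagger$ from Lemma~\ref{current-projections}, this gives
\[
\Vert \zeta PF\Vert_{H^k}+\Vert\zeta P'v\Vert_{H^k}\leq C_k(\Vert F\Vert+\Vert v\Vert).
\]
For $R$, the block bound \eqref{current-intertwining} gives $\Vert \zeta E_J e^{B_0N}\Vert_{\mathcal{F}_J\to H^k}<\infty$, since $B_0<\gamma_*$. Together with the conjugated pseudoinverse bound, this yields $\Vert\zeta RU\Vert_{H^k}\leq C\Vert e^{-B_0N}U\Vert$.

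\emph{Step 2: the adjoint term.} The term $\zeta R^*U$ is the main obstacle, because $R^*$ has range in the span of $(K_J^\dagger)^*$, which is again synthesis by the Gaussian columns through $\Gamma_J^{-1}$. So $\zeta(K_J^\dagger)^*$ is smoothing on regular strips, exactly as in Step~1. It then remains to bound $E_J^*{P^\perp}U$ in $\mathcal{F}_J$ by $\Vert e^{-B_0N}U\Vert$. This follows from \eqref{weighted-defect}-type reasoning: write ${P^\perp}U=e^{B_0N}({P^\perp})_{-B_0}e^{-B_0N}U$, note that the conjugated projection is bounded, and use $e^{B_0\vert m\vert}\leq e^{B_0n}e^{B_0\vert m-\sigma n\vert}$ to show that $E_J^*e^{B_0N}$ is bounded, with the off-diagonal reserve $s_*>B_0$. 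I expect to spend the most care here, checking that the Sobolev-order blocks and the $N$-weights combine via Lemma~\ref{app:block-calculus}.

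\emph{Step 3: induction.} Write $G=-2P'v+(R^*-R)U+{P^\perp}F_{\rm obs}$. By Steps~1--2, $\Vert \zeta G\Vert_{L^2(I_0;H^k)}$ is bounded by the right-hand side of \eqref{tail-graphs}. Applying the local energy estimate to $\zeta_j v$ gains one spatial derivative per step of shrinking strips and times, and the commutator $[\triangle_\Phi,\zeta_j]$ involves only derivatives already controlled on the larger strip. Repeating $k$ times, with the justification on compact positive-time intervals supplied by Lemma~\ref{current-projections} and $U\in H^1(\mathscr H)\cap L^2(D(\triangle_\Phi))$, gives \eqref{tail-graphs}. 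The final assertion then follows by inserting the complementary energy bound $\Vert v\Vert^2\leq C\obs$, Proposition~\ref{rough-state}, and the observed-source estimate for $F_{\rm obs}$.
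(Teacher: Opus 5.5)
Your proposal is correct and follows essentially the same route as the paper. The shared steps are: smoothing of the Gaussian syntheses $K_J,K_J'$ on regular strips to handle $P'v$, $PF_{\rm obs}$ and $R^*=K_J\Gamma_J^{-1}E_J^*{P^\perp}$; boundedness of $E_Je^{B_0N}$ and of $E_J^*e^{B_0N}$ (the latter via $e^{B_0\vert m\vert}\leq e^{B_0n}e^{B_0\vert m-\sigma n\vert}$), combined with the bounded angular conjugates of ${P^\perp}$ and $K_J^\dagger$; and finally nested interior parabolic estimates. The one point worth stating explicitly is the term $RU$ with $R={P^\perp}E_JK_J^\dagger$: split it as $\zeta RU=\zeta E_JK_J^\dagger U-\zeta PE_JK_J^\dagger U$, use the global $H^k(\mathcal{C})$ block bound on the first part and the smoothing of $\zeta P$ on the second, exactly as the paper does.
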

\begin{proof}
Choose $\omega''$ with $\omega'\Subset\omega''\Subset\mathcal{C}\cap\{\vert r\vert >0\}$. The column formula \eqref{upper-columns} and its time derivative give $K,K':\mathcal{F}_J\rightarrow H^k(\omega'')$ boundedly for every fixed $k$.
Indeed, on $\vert r\vert \geq \delta>0$, each spatially differentiated column is bounded by $C_{J,k}n^{j/2 + 1/4 + k}e^{-n\delta^2/4}$. Its squared sum over the input frequencies is finite. The same argument applies to $K'$, since the spatial derivatives of $\partial_t b_{\sigma,j,n}$ are uniformly bounded. Writing $\Gamma_J = K^*K$, differentiation gives
$P' = K'\Gamma_J^{-1}K^*+K\Gamma_J^{-1}K'^* -K\Gamma_J^{-1}\Gamma_J'\Gamma_J^{-1}K^*$.
All coefficient-space factors are bounded, so $\Vert P'v\Vert_{H^k(\omega'')} \leq C_{J,k}\Vert v\Vert $.

The exponential block estimate for $E$ gives $Ee^{B_0N}:\mathcal{F}_J\to H^k(\mathcal{C})$ and $E^*e^{B_0N}:\mathscr{H}\to\mathcal{F}_J$ boundedly. For the adjoint this uses both exponents, since $e^{B_0\vert m\vert } \leq e^{B_0n}e^{B_0\vert m - \sigma n\vert }$. The fixed angular conjugates of ${P^\perp}$ and $K^\dagger$ are bounded. Consequently
$\Vert EK^\dagger U\Vert_{H^k(\mathcal{C})} + \Vert E^*{P^\perp}U\Vert_{\mathcal{F}_J} \leq C_{J,k}\Vert e^{-B_0N}U\Vert$.

Using $R = {P^\perp}EK^\dagger$ and $R^* = K\Gamma_J^{-1}E^*{P^\perp}$, the local bounds for $K$ imply
$\Vert (R^*-R)U\Vert_{H^k(\omega'')} \leq C_{J,k}\Vert e^{-B_0N}U\Vert$.
Similarly, ${P^\perp}F_{\rm obs} = F_{\rm obs} - K\Gamma_J^{-1}K^*F_{\rm obs}$ is controlled locally in $H^k$ by the corresponding source norm. Thus the entire right-hand side of \eqref{tail-equation} lies in $L^2(I_0;H^k(\omega''))$, with the displayed bound.

The operator is uniformly elliptic on $\omega''$. Interior parabolic estimates on nested spatial and time intervals give \eqref{tail-graphs}: first use the $L^2$ estimate, then differentiate in space and absorb the commutators on the larger intervals. No higher time derivative of $K$ is required. The rough estimate controls $e^{-B_0N}U$, and observed parabolic regularity controls all fixed spatial derivatives of $F_{\rm obs}$.
\end{proof}

\subsection{The rough weighted observation estimate}\label{rough-appendix}
We prove Proposition~\ref{rough-state}, which supplies the first observation estimate in Subsection~\ref{sec:sharp-upper}. Appendix~\ref{app:angular-weight} controls the commutators of the angular weight, and Appendix~\ref{app:carleman} proves the Carleman estimate. Appendix~\ref{proof:rough-state} applies it to the heat solution. The resulting angular loss $B_0 \leq Cd^2$ depends on the thickness of the unobserved tube, independently of time and of the collar edges.

Recall that $\triangle_\Phi = -\partial_r(c\partial_r) + D_\theta(r^2c)D_\theta + W$, $N = \vert D_\theta\vert$, $c(0,\theta) = 1$.
We use the Dirichlet realization and form domain of Subsection~\ref{upper-functional-setting}. The coefficients are real analytic on a fixed tube, with $c>0$. If the unobserved set lies in $\vert r\vert \leq d$, the strips $2d<\vert r\vert <b$, where $b = 3d$, are fully observed. All coefficient bounds below hold on a fixed smaller complex angular strip; $d$ is chosen so that the exponential weights fit within this strip.

\subsubsection{The angular weight}\label{app:angular-weight}
All Fourier indices belong to $(2\pi/\ell)\Z$. We use $e_n(\theta) = \ell^{-1/2}e^{in\theta}$ and $\widehat a_j(r) = \ell^{-1}\int_0^\ell a(r,\theta)e^{-ij\theta}d\theta$. The angular weight is exponential at high frequencies and vanishes below the Carleman parameter, where the scalar weight provides coercivity.

Fix $\kappa_0>2$ and a smooth function $f:[0,+\infty)\to\R$ with $f(z) = 0$ for $z \leq 1$, $f(z) = z$ for $z \geq 2$, and $0 \leq f(z) \leq z$. For $\lambda \geq 1$, set
$g_\lambda(\nu) = \lambda f(\vert \nu\vert /\lambda)$, $\mathsf{O} = g_\lambda(D_\theta)$ and $\Lambda_\lambda = \lambda + N$.

Thus $\mathsf{O} = 0$ below $\lambda$ and $\mathsf{O} = N$ above $2\lambda$, while $\vert g_\lambda'\vert \leq \ell_g$ and $\vert g_\lambda''(\nu)\vert \leq C/(\lambda + \vert \nu\vert )$. The operators $\mathsf{O}$ and $\Lambda_\lambda$ are selfadjoint on $D(N)$. The cutoff is needed in \eqref{rough:commutator-gain}: after normalization of the normal second derivative, $[C_R,C_I]$ multiplies $\partial_t^2$. Its absorption requires a gain $(\lambda + \vert \nu\vert )^{-1}$, which $\vert \nu\vert $ alone does not provide near zero. On the block $\vert m\vert ,\vert n\vert \leq \lambda$, conjugation preserves selfadjointness. Elsewhere, its cost $e^{B\ell_g\vert m - n\vert }$ is absorbed by analytic Fourier decay.

\begin{lem}\label{rough-comm}
Let $a$ be a real periodic analytic coefficient and let $\psi\in\R$ satisfy $\vert \psi\vert \leq B$, with $B\ell_g$ smaller than its analytic strip width. Define $C$ initially on finite Fourier sums by $C = e^{\psi\mathsf{O}}ae^{-\psi\mathsf{O}}$, and extend it through the matrix $C_{mn} = \widehat a_{m - n}e^{\psi(g_\lambda(m) - g_\lambda(n))}$. This matrix defines a bounded operator on $L^2(\mathbb{T}_\ell)$. Set $C_R = (C + C^*)/2$ and $C_I = (C - C^*)/(2i)$. Then
\begin{equation}\label{rough:commutator-gain}
\Vert\Lambda_\lambda^{1/2}[C_R,C_I]\Lambda_\lambda^{1/2}\Vert_{\mathcal{L}(L^2(\mathbb{T}_\ell))} \leq C\vert \psi\vert \Big(\sum_{j\ne0}(1 + \vert j\vert )^3 e^{B\ell_g\vert j\vert }\vert \widehat a_j\vert \Big)^2.
\end{equation}
The constant is uniform in $\lambda \geq 1$ and on regions where $a$ has fixed analytic bounds. An additional strip weight $e^{\eta\vert m - n\vert }$ is allowed if $B\ell_g + \eta$ stays below the coefficient strip width, adding $e^{\eta\vert j\vert }$ to the Fourier sum.
\end{lem}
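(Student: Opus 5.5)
The plan is to work entirely at the level of the Fourier matrix. Write $\omega_{mn} = \psi(g_\lambda(m) - g_\lambda(n))$, so that $C_{mn} = \widehat a_{m-n}e^{\omega_{mn}}$. Since $a$ is real, $\widehat a_{-j} = \overline{\widehat a_j}$, and therefore $(C^*)_{mn} = \overline{C_{nm}} = \widehat a_{m-n}e^{-\omega_{mn}}$. This gives the explicit forms
$$
(C_R)_{mn} = \widehat a_{m-n}\cosh\omega_{mn},\qquad (C_I)_{mn} = -i\,\widehat a_{m-n}\sinh\omega_{mn}.
$$
Because $\vert g_\lambda(m)-g_\lambda(n)\vert \leq \ell_g\vert m-n\vert$, every entry is bounded by $\vert\widehat a_{m-n}\vert e^{B\ell_g\vert m-n\vert}$. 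Boundedness of $C$ on $L^2(\mathbb{T}_\ell)$ then follows from the Schur test in Lemma~\ref{app:block-calculus}, using the analytic decay of $\widehat a_j$ inside the strip.

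Next I compute the commutator entrywise:
$$
[C_R,C_I]_{mn} = -i\sum_p \widehat a_{m-p}\widehat a_{p-n}\big(\cosh\omega_{mp}\sinh\omega_{pn} - \sinh\omega_{mp}\cosh\omega_{pn}\big) = -i\sum_p \widehat a_{m-p}\widehat a_{p-n}\sinh(\omega_{pn}-\omega_{mp}).
$$
Two observations organize this sum. First, the terms with $p=m$ or $p=n$ involve $\widehat a_0$. For such a term the hyperbolic factor is $\sinh(\pm\omega_{mn})$, and the two contributions cancel: at $p=n$ one gets $\widehat a_{m-n}\widehat a_0\sinh(-\omega_{mn})$, and at $p=m$ one gets $\widehat a_0\widehat a_{m-n}\sinh(\omega_{mn})$. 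This cancellation is why only $j\ne0$ enters the right-hand side. Second, the remaining argument is the second difference
$$
\omega_{pn}-\omega_{mp} = \psi\big(2g_\lambda(p)-g_\lambda(m)-g_\lambda(n)\big).
$$
I will bound it by $\vert\psi\vert\,\vert p-m\vert\,\vert p-n\vert\,\sup\vert g_\lambda''\vert$ taken over the segment spanned by $m,n,p$. Combined with $\vert\sinh x\vert\leq\vert x\vert e^{\vert x\vert}$, this produces the factor $\vert\psi\vert$ in \eqref{rough:commutator-gain}.

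The main obstacle is the gain $\Lambda_\lambda^{-1}$, which is needed to absorb $\Lambda_\lambda^{1/2}\cdots\Lambda_\lambda^{1/2}$. I plan to split into two cases.

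\emph{Case 1: the segment between $\min(m,n,p)$ and $\max(m,n,p)$ does not cross zero.} Then $g_\lambda(\nu)=\lambda f(\vert\nu\vert/\lambda)$ is smooth there with $\vert g_\lambda''\vert\leq C/(\lambda+\vert\nu\vert)$. Writing the second difference as a double integral of $g_\lambda''$ gives a bound of the form $C\vert p-m\vert\vert p-n\vert/(\lambda+\min\vert\nu\vert)$. I then use $\lambda+\vert m\vert\leq(\lambda+\min\vert\nu\vert)(1+\vert m-p\vert+\vert p-n\vert)$, and similarly for $n$. This converts the gain into $(\lambda+\vert m\vert)^{-1/2}(\lambda+\vert n\vert)^{-1/2}$, at the price of polynomial factors $(1+\vert j\vert)^{3}$ distributed over the two Fourier indices $j=m-p$ and $j'=p-n$.

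\emph{Case 2: the segment crosses zero.} Since $g_\lambda$ vanishes on $[-\lambda,\lambda]$, the second difference vanishes when all three frequencies lie in that block. Otherwise some index lies outside $[-\lambda,\lambda]$ while zero lies in the segment, so $\vert m-p\vert+\vert p-n\vert\gtrsim\lambda$. Using $\lambda+\vert m\vert\lesssim\lambda+\vert m-p\vert+\vert p-n\vert\lesssim 1+\vert m-p\vert+\vert p-n\vert$, together with the crude Lipschitz bound $\vert\omega\vert\leq2\vert\psi\vert\ell_g(\vert m-p\vert+\vert p-n\vert)$, the gain is again paid by the polynomial factors. I must check that the powers stay within $(1+\vert j\vert)^3$ per factor. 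The plan is to give each factor at most one power from the second difference, one from the $\Lambda$ comparison, and one spare.

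In both cases the exponential $e^{\vert x\vert}$ is at most $e^{B\ell_g(\vert j\vert+\vert j'\vert)}$. The matrix $\Lambda_\lambda^{1/2}[C_R,C_I]\Lambda_\lambda^{1/2}$ is therefore dominated entrywise by a convolution of $h(j)=(1+\vert j\vert)^3e^{B\ell_g\vert j\vert}\vert\widehat a_j\vert\mathbf 1_{j\ne0}$ with itself. The Schur test bounds its norm by $\Vert h\Vert_{\ell^1}^2$, which is the claimed estimate. All constants depend only on $f$, and not on $\lambda$, because $\vert g_\lambda''\vert\leq C/(\lambda+\vert\nu\vert)$ is scale invariant. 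Finally, an additional weight $e^{\eta\vert m-n\vert}$ is absorbed by $\vert m-n\vert\leq\vert j\vert+\vert j'\vert$, which only inserts $e^{\eta\vert j\vert}$ into $h$.
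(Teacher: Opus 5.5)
\textbf{There is a genuine gap, at exactly the step you flagged as the main obstacle.}

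Your matrix formulas for $C_R$, $C_I$ and the identity
$[C_R,C_I]_{mn} = -i\sum_p \widehat a_{m-p}\widehat a_{p-n}\sinh\big(\psi(2g_\lambda(p)-g_\lambda(m)-g_\lambda(n))\big)$ are correct. The argument fails at the next step.

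\emph{The claimed bound is false.} The quantity $2g_\lambda(p)-g_\lambda(m)-g_\lambda(n)$ is not a second difference unless $p$ is the midpoint of $m$ and $n$. On $[2\lambda,\infty)$ one has $g_\lambda(\nu)=\nu$, so there it equals $2p-m-n=(p-n)-(m-p)$, which does not vanish. Take $m=n\geq 2\lambda$ and $p=m+2\pi/\ell$. Then $g_\lambda''=0$ on the segment, so your bound $\vert\psi\vert\,\vert p-m\vert\,\vert p-n\vert\sup\vert g_\lambda''\vert$ is zero. Yet the argument of $\sinh$ is $4\pi\psi/\ell$.

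\emph{Consequence.} Individual terms are only $\mathrm{O}(\vert\psi\vert(\vert j_1\vert+\vert j_2\vert))$, with no decay in $\lambda+\vert m\vert$. Your entrywise absolute-value bound for $\Lambda_\lambda^{1/2}[C_R,C_I]\Lambda_\lambda^{1/2}$ is therefore of size $\vert\psi\vert(\lambda+\vert m\vert)$ on the diagonal in the linear region, and no Schur bound uniform in $m$ follows. Case~1 collapses precisely where $g_\lambda$ is affine.

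Case~2 is also unnecessary, though harmless. $g_\lambda$ vanishes identically on $[-\lambda,\lambda]$, so it is smooth on $\R$ with $\vert g_\lambda''\vert\leq C/(\lambda+\vert\nu\vert)$ everywhere.

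\emph{The missing idea} is a cancellation between intermediate indices, which is what the paper uses. The product $\widehat a_{m-p}\widehat a_{p-n}$ is invariant under $p\mapsto m+n-p$. Symmetrize the sum and use $\sinh A+\sinh B=2\sinh\frac{A+B}{2}\cosh\frac{A-B}{2}$. This gives
$[C_R,C_I]_{mn}=-i\sum_p\widehat a_{m-p}\widehat a_{p-n}\sinh(\psi u)\cosh(\psi v)$, with $u=g_\lambda(p)+g_\lambda(m+n-p)-g_\lambda(m)-g_\lambda(n)$ and $v=g_\lambda(p)-g_\lambda(m+n-p)$.

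Now set $j_1=m-p$, $j_2=p-n$ and $\mathfrak{j}=\vert j_1\vert+\vert j_2\vert$. Then $u$ is a genuine second difference:
$u=\int_0^{j_2}\big(g_\lambda'(n+t)-g_\lambda'(m-j_2+t)\big)\,dt$, where the two evaluation points differ by $j_1$. Hence $\vert u\vert\leq\vert j_1j_2\vert\sup\vert g_\lambda''\vert$ over the hull of $\{m,n,p,m+n-p\}$. This is at most $C\vert j_1j_2\vert(1+\mathfrak{j})/\sqrt{(\lambda+\vert m\vert)(\lambda+\vert n\vert)}$. Separately, $\vert u\vert+\vert v\vert\leq\ell_g\mathfrak{j}$ controls the exponential factors.

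Your cancellation of the terms $p=m$ and $p=n$ is exactly the special case $p=n\leftrightarrow p=m$ of this pairing. Once you pair every $p$, the rest of your argument goes through as you describe: the comparison $\lambda+\vert m\vert\leq(\lambda+\vert\nu\vert)(1+\vert m-\nu\vert)$, the restriction to $j\neq0$ via the factor $j_1j_2$, the polynomial bookkeeping, and the Schur test on the convolution.
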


\begin{proof}
Write $g = g_\lambda$. The two entries are
$(C_R)_{mn} = \widehat a_{m - n}\cosh(\psi(g(m) - g(n)))$ and $(C_I)_{mn} = -i\widehat a_{m - n}\sinh(\psi(g(m) - g(n)))$.
Pairing the intermediate index $q$ with $m + n - q$ gives
$[C_R,C_I]_{mn} = -i\sum_q\widehat a_{m - q}\widehat a_{q - n} \sinh(\psi u)\cosh(\psi v)$,
where $u = g(q) + g(m + n - q) - g(m) - g(n)$ and $v = g(q) - g(m + n - q)$. For $j_1 = m - q$, $j_2 = q - n$, $\mathfrak{j} = \vert j_1\vert + \vert j_2\vert $, the second-difference formula and the bound on $g_\lambda^{\prime\prime}$ imply
$$
\vert u\vert \leq \frac{C\vert j_1j_2\vert (1 + \mathfrak{j})}{ \sqrt{(\lambda + \vert m\vert )(\lambda + \vert n\vert )}}, \qquad \vert u\vert + \vert v\vert \leq \ell_g\mathfrak{j}.
$$
The Schur bound of Lemma~\ref{app:block-calculus} proves \eqref{rough:commutator-gain}. The factors $j_1j_2$ remove the angular mean; a further fixed strip weight costs only $e^{\eta\mathfrak{j}}$.
\end{proof}

\subsubsection{The Carleman estimate}\label{app:carleman}
A scalar Carleman weight alone leaves the negative term $-r\lambda\upsilon'N^2$ uncontrolled on $r>0$ at high angular frequencies. We add an angular exponent $\psi$ with $p=\psi'>0$ and $p'=\kappa_0>0$. In the spatial energy, its leading high-frequency contribution is $p(pp'-r)N^3$, which is positive for the choice below. A quadratic $\psi$ has size $\mathrm{O}(b^2)$ on the tube, so the resulting angular loss has that size. The scalar exponential controls the low frequencies and vanishes at both time endpoints.

On a time interval $(0,t_0)$ choose $p_0>2$ and a parameter $s \geq 1$, and set
$$
\lambda = s[t(t_0 - t)]^{-p_0},\quad p(r) = \kappa_0(r + 2b),\quad \psi(r) = \frac{\kappa_0}{2}\big((r + 2b)^2 - 9b^2\big),\quad B_0 = 4\kappa_0b^2.
$$
Take $s$ large enough that $\min_t\lambda(t) \geq 1$. On $\vert r\vert \leq b$, $p>0$ and $-B_0 \leq \psi \leq 0$. Choose $K \geq 1$ later and set $\upsilon' = e^{Kr}$, $\upsilon = K^{-1}e^{Kr} - E_0$, with $E_0>e$. Thus $\upsilon<0$ whenever $Kb \leq 1$. Define
$\Gamma = \lambda\upsilon$ and $\mathcal{W} = e^{\psi\mathsf{O}}$.
The multipliers $\mathcal{W}(t,r)$ are contractions on $L^2(\mathbb{T}_\ell)$. The test class is $\mathcal{D} = C_c^\infty((0,t_0)\times(-b,b);C^\infty(\mathbb{T}_\ell))$.

\begin{prop}\label{rough-carleman}
There exists $K_*>0$, depending on the coefficients, $\kappa_0$ and $p_0$, such that, for every $K \geq K_*$, one can choose $0<b_1(K) \leq K^{-1}$ with the following property. For $0<b<b_1(K)$ and $t_0>0$, there are $s_1>0$ and $c_*>0$ such that, for all $s \geq s_1$ and $v\in\mathcal{D}$, setting $z = e^\Gamma\mathcal{W}v$ gives
$$
\Vert e^\Gamma\mathcal{W}(\partial_t + \triangle_\Phi)v\Vert^2 \geq c_*\big(\Vert\Lambda_\lambda^{1/2}z_r\Vert^2+ \Vert\lambda^{3/2}z\Vert^2 + b^2\Vert N^{3/2}z\Vert^2\big).
$$
The norms are those of $L^2((0,t_0)\times(-b,b)\times\mathbb{T}_\ell)$. The constants $s_1,c_*$ may depend on $b,K,t_0$ and the fixed coefficient bounds. The choice of $b_1(K)$ is independent of $t_0$ and $s$.
\end{prop}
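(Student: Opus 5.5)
We conjugate the operator by the weight. Write $\mathcal{P} = \partial_t + \triangle_\Phi$ and $z = e^\Gamma\mathcal{W}v$. Since $\Gamma$ depends only on $(t,r)$ and $\mathcal{W} = e^{\psi(r)\mathsf{O}}$ is a Fourier multiplier with $r$-dependent exponent, we compute $e^\Gamma\mathcal{W}\mathcal{P}\mathcal{W}^{-1}e^{-\Gamma}$ explicitly. The normal derivative becomes $\partial_r - \Gamma_r - \psi'\mathsf{O} = \partial_r - \lambda\upsilon' - p\,\mathsf{O}$. The coefficients $c$, $r^2c$ and $W$ are replaced by their conjugated matrices $C = e^{\psi\mathsf{O}}ce^{-\psi\mathsf{O}}$ and analogues, which are bounded by Lemma~\ref{rough-comm} because $B_0\ell_g$ lies below the analytic strip once $b$ is small. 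The time derivative contributes $\partial_t - \Gamma_t$, and $\mathcal{W}$ commutes with it since $\mathsf{O}$ depends on $t$ only through $\lambda$, producing a term $\psi\,\partial_t\mathsf{O}$. We then split the conjugated operator into a formally symmetric part $S$ and an antisymmetric part $A$, the latter including $\partial_t$. As usual,
\[\Vert(S+A)z\Vert^2 \geq \Vert Sz\Vert^2+\Vert Az\Vert^2 + \langle [S,A]z,z\rangle.\]
The proof therefore reduces to showing that the commutator term is bounded below by $c_*(\Vert\Lambda_\lambda^{1/2}z_r\Vert^2+\Vert\lambda^{3/2}z\Vert^2+b^2\Vert N^{3/2}z\Vert^2)$, up to errors absorbed by $\Vert Sz\Vert^2$.

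\textbf{Flat principal part.} First freeze $c=1$ and $W=0$ to identify the leading positive terms. The symmetric part contains $-\partial_r^2 + r^2N^2 - (\lambda\upsilon'+p\mathsf{O})^2 - \Gamma_t$. The antisymmetric part contains $\partial_t$ and $2(\lambda\upsilon'+p\mathsf{O})\partial_r + (\lambda\upsilon''+p'\mathsf{O})$. Their commutator gives the following terms:
\begin{itemize}
\item a normal-energy term $(\lambda\upsilon''+p'\mathsf{O})|z_r|^2 \gtrsim \Lambda_\lambda|z_r|^2$, using $\upsilon''=Ke^{Kr}>0$, $p'=\kappa_0$, and the fact that $\mathsf{O}\geq N-2\lambda$;
\item a scalar term $\lambda^3K^3e^{3Kr}|z|^2$;
\item the high-frequency term $p(pp'-r)N^3$, with $pp'-r \geq \kappa_0^2 b - b>0$ on $|r|\le b$ since $p\ge\kappa_0 b$ and $\kappa_0>2$;
\item the dangerous term $-r\lambda\upsilon' N^2$, coming from $[\lambda\upsilon'\partial_r, r^2N^2]$.
\end{itemize}
The dangerous term is split by frequency. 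For $N\le 2\lambda$ it is dominated by $\lambda^3K^3$ once $K\ge K_*$ and $Kb\le1$. For $N\ge2\lambda$ it is dominated by $p^2p'N^3\gtrsim b^2N^3$ after an interpolation in $N$. Mixed terms such as $\lambda\upsilon' p\,\mathsf{O}$ are treated by Young's inequality. On the transition band $\lambda\le N\le 2\lambda$, all quantities are comparable to powers of $\lambda$, so the scalar weight controls them.

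\textbf{Time terms.} Since $\upsilon<0$, $\Gamma\to-\infty$ at $t=0$ and $t=t_0$, so $\Gamma_t$ terms vanish at the endpoints. We use $|\partial_t\lambda|\lesssim s^{-1/p_0}\lambda^{1+1/p_0}$. Terms such as $\Gamma_{tt}$ and $\partial_t\Gamma\cdot\Gamma_r$ are therefore $o(\lambda^3)$ as $s\to\infty$, because $1+1/p_0<3$ and $p_0>2$. The term $\psi\,\partial_t\mathsf{O}$ is bounded by $|\psi\,\partial_t\lambda\, g_\lambda'|\lesssim b^2 \lambda^{1+1/p_0}$, localized near $N\sim\lambda$, and is absorbed in the same way.

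\textbf{Variable coefficients (main obstacle).} The conjugated coefficient $C$ is no longer selfadjoint. Its imaginary part $C_I$ is of size $O(\psi)=O(b^2)$ in the appropriate norm, since it is produced by $\sinh(\psi(g(m)-g(n)))$. Through $C_I$, part of $-\partial_r C\partial_r$ enters the antisymmetric operator. The commutator $[C_R\partial_r^2, C_I\partial_r^2]$ involves $[C_R,C_I]\partial_r^2$, and $\partial_r^2$ is controlled only through $\Vert Sz\Vert^2$. Estimate \eqref{rough:commutator-gain} provides exactly the gain $\Lambda_\lambda^{-1}$ needed to bound this term by $Cb^2\Vert\Lambda_\lambda^{-1/2}\partial_r^2 z\Vert^2$. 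That quantity is in turn bounded by $\varepsilon\Vert Sz\Vert^2$ plus the already positive terms, using the ellipticity of $S$ in $r$ relative to $\Lambda_\lambda$.

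The other coefficient errors are handled as follows:
\begin{itemize}
\item $C_R - 1 = O(r) = O(b)$, so it multiplies the principal positive terms by a factor $1+O(b)$;
\item $r^2c$ contributes an extra $r^2c_r$ inside $[\partial_r, r^2N^2]$, which is $O(b)$ relative to the main term;
\item lower-order potentials are bounded operators, hence absorbed by $\lambda^3$.
\end{itemize}
Choosing $b_1(K)\le K^{-1}$ small enough that all $O(b)$ corrections are dominated, and then $s_1$ large, closes the estimate. This commutator bookkeeping, and the requirement that every constant be chosen independently of $t_0$ and $s$, is where we expect the main difficulty.
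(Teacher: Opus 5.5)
Your architecture is the paper's: conjugation by $e^\Gamma\mathcal{W}$, a symmetric/skew splitting, a frequency split at $2\lambda$, the high-frequency term $p(pp'-r)N^3$, and the gain \eqref{rough:commutator-gain} for $[C_R,C_I]$. The difference is where you put the non-selfadjointness of $\widetilde c=\mathcal{W}c\mathcal{W}^{-1}$.

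\emph{The paper's choice.} It first multiplies by $\mathfrak{a}=\widetilde c^{-1}$. Since the principal symbol is $c(\xi_r^2+r^2\xi_\theta^2)$, this makes the conjugated spatial part exactly flat, up to a first-order remainder $\mathcal{R}$. That remainder is discarded crudely, because $\E$ carries an extra factor $\Lambda_\lambda$. The spatial commutator is then exactly $4\E(z)-\langle\Xi_{rrr}z,z\rangle$ (Lemma~\ref{rough-spatial-energy}). All coefficient dependence sits in $\mathfrak{a}\partial_t$, so $[B_1,A]$ multiplies $\partial_t^2$. One must then control $\Vert\Lambda_\lambda^{-1/2}z_t\Vert$ by solving the skew identity for $z_t$; this is \eqref{rough:temporal-derivative}.

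\emph{Your choice.} You keep $\partial_t$ with unit coefficient, so no estimate of $z_t$ is needed. Instead $[C_R,C_I]$ multiplies $\partial_r^4$; its form is $\langle[C_R,C_I]z_{rr},z_{rr}\rangle$, not $\partial_r^2$ as you write. Solving $Sz$ for $z_{rr}$ does give
$\Vert\Lambda_\lambda^{-1/2}z_{rr}\Vert^2\leq C(\lambda_{\min}^{-1}\Vert Sz\Vert^2+\E(z))$, because $\Vert\Lambda_\lambda^{-1/2}\Xi C_R\Xi z\Vert^2\lesssim\E(z)$. So this dual route is viable, but it is not lighter. The spatial commutator now also contains:
\begin{itemize}
\item third-order terms involving $[\Xi,C_R]$;
\item second-order terms such as $\Xi[C_R,C_I]\Xi\,\partial_r^2$, which also need the gain, since $\Vert\Xi z_r\Vert$ is not controlled by $\E$;
\item commutators of the skew part of $r^2D\widetilde cD$.
\end{itemize}
The paper's normalization avoids all of these.

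Two steps fail as written.

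\emph{Intermediate frequencies.} For $2\lambda\leq N\ll\lambda/b$, the term $r\lambda\upsilon'N^2\sim b\lambda N^2$ is not dominated by interpolating between $\lambda^3$ and $b^2N^3$. At $N=\lambda b^{-3/5}$ it has size $b^{-1/5}\lambda^3$, whereas both controls are $\mathrm{O}(\lambda^3)$ for fixed $K$, and $b\to0$ must be allowed. What saves it is the cross term you propose to handle by Young's inequality. Every term of $\Xi^2\Xi_r$ is nonnegative, and for $N\geq2\lambda$ one has $\Xi\Xi_r\geq\kappa_0pN^2\geq\kappa_0^2bN^2$. Hence $\Xi^2\Xi_r-r\Xi N^2\geq(1-\kappa_0^{-2})\Xi^2\Xi_r$.

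\emph{Role of $K$.} The coefficient corrections are not all $\mathrm{O}(b)$: only $c-1$ is small, while $\partial_rC_R=\mathrm{O}(1)$. Commutators in which $\partial_r$ falls on $C_R$ produce terms like $\langle(C_R)_r\Xi z_r,z_r\rangle$. Compared with $\langle\Xi_rz_r,z_r\rangle$, their relative size is $\Xi/\Xi_r\leq K^{-1}+3b$. This is what forces $K\geq K_*$ before $b$ is shrunk; it is the paper's term $C(K^{-1}+b)\E(z)$ in \eqref{rough:mixed-commutators}. The low-frequency term you invoke to motivate $K_*$ is already handled by small $b$.

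Also, the scalar part of $\Xi^2\Xi_r$ is $\lambda^3Ke^{3Kr}$, not $\lambda^3K^3e^{3Kr}$.
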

We first compute the conjugated operator. Lemma~\ref{rough-spatial-energy} gives the positive spatial energy, and Lemma~\ref{rough-coupling-energy} controls the terms involving time derivatives. The proof of Proposition~\ref{rough-carleman} follows these two lemmas.

\paragraph{Conjugation.}
All norms below are space-time norms on this cylinder, and angular inner products are integrated in $(t,r)$. Compact support removes the boundary terms. For fixed $t_0$, $\vert \lambda'\vert \lesssim\lambda^{1 + 1/p_0}$, $\vert \lambda''\vert \lesssim\lambda^{1 + 2/p_0}$ and $\Vert\mathsf{O}_t\Vert\lesssim\lambda^{1 + 1/p_0}$, since $\partial_\lambda g_\lambda = f(z) - zf'(z)$ is supported in $1 \leq z \leq 2$.

Set $\Xi = \lambda\upsilon'+p\mathsf{O}$ and $F = \Gamma_t + \psi\mathsf{O}_t$. For fixed $t\in(0,t_0)$, $\Xi(t,r)$ has domain $D(N)$ and $F(t,r)$ is bounded.

On a fixed smaller complex tube, $c^{-1}$ is analytic. Define $\widetilde c = \mathcal{W}c\mathcal{W}^{-1}$ and $\mathfrak{a} = \mathcal{W}c^{-1}\mathcal{W}^{-1} = \widetilde c^{-1}$ by their bounded Fourier matrices, as in Lemma~\ref{rough-comm}, and set $A = (\mathfrak{a} + \mathfrak{a}^*)/2$, $B_1 = (\mathfrak{a} - \mathfrak{a}^*)/(2i)$. Since $\mathcal{W}^{-1}$ is unbounded, conjugation is first computed on functions with finite angular Fourier support. The resulting differential expression extends to $\mathcal{D}$ by the Fourier estimates; no inverse weight is applied to general test functions. Since $\widetilde c_r = \mathcal{W}c_r\mathcal{W}^{-1}+[\Xi,\widetilde c]$, expansion gives
\begin{equation}\label{rough:conjugated-operator}
Q_\Gamma = \mathfrak{a} e^\Gamma\mathcal{W}(\partial_t + \triangle_\Phi)\mathcal{W}^{-1}e^{-\Gamma} = \mathfrak{a}\partial_t - \partial_r^2 + 2\Xi\partial_r + \Xi_r + V_0 + \mathcal{R} - \mathfrak{a} F
\end{equation}
with $V_0 = r^2N^2 - \Xi^2$ and $\Vert\mathcal{R}z\Vert \leq C\big(\Vert z_r\Vert + \Vert(\lambda + bN)z\Vert + \Vert z\Vert\big)$.
Explicitly, on $\mathcal{D}$,
$$
\mathcal{R} = -h\partial_r + h\Xi + r^2\mathfrak{a}[N,\widetilde c]N + r^2\mathfrak{a}(D_\theta\widetilde cD_\theta - N\widetilde cN) + \mathfrak{a}\mathcal{W}W\mathcal{W}^{-1},
$$
with $h = \mathfrak{a}\mathcal{W}c_r\mathcal{W}^{-1}$.
The matrix $D_\theta\widetilde cD_\theta - N\widetilde cN$ vanishes for equal frequency signs, and for opposite signs $2\vert mn\vert \leq \vert m - n\vert ^2/2$. Fourier moments give the stated bound on $\mathcal{R}$.

To estimate \eqref{rough:conjugated-operator}, set
$S_0 = -\partial_r^2 + V_0$, $K_0 = 2\Xi\partial_r + \Xi_r$, $T_A = A\partial_t + A_t/2$, $T_B = B_1\partial_t + (B_1)_t/2$.
Then $S = S_0 + iT_B$ is symmetric, $\mathcal{K} = K_0 + T_A$ is skew-symmetric, and the spatial commutator is expressed through
\begin{equation}\label{rough:spatial-energy}
\E(z) = \int\langle \Xi_rz_r,z_r\rangle + \int\langle(\Xi^2\Xi_r - r\Xi N^2)z,z\rangle.
\end{equation}

\paragraph{Spatial coercivity.}
\begin{lem}\label{rough-spatial-energy}
For fixed $K$ and sufficiently small $b$, there is $c_K>0$ such that, for every $z\in\mathcal{D}$,
\begin{equation}\label{rough:energy-coercivity}
\E(z) \geq c_K\big(\Vert\Lambda_\lambda^{1/2}z_r\Vert^2+ \Vert\Lambda_\lambda^{1/2}(\lambda + bN)z\Vert^2\big) 
\geq c'_K\big(\Vert\Lambda_\lambda^{1/2}z_r\Vert^2+ \Vert\lambda^{3/2}z\Vert^2 + b^2\Vert N^{3/2}z\Vert^2\big).
\end{equation}
Moreover, $\langle[S_0,K_0]z,z\rangle = 4\E(z) - \langle \Xi_{rrr}z,z\rangle$, and the final term is $\mathrm{o}_s(1)\E(z)$ for fixed $K,b$ as $s\to + \infty$.
\end{lem}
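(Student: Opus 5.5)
We must prove Lemma~\ref{rough-spatial-energy}: coercivity of $\E(z)$ in \eqref{rough:spatial-energy}, and the commutator identity for $[S_0,K_0]$.

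\emph{Symbol-level positivity.} Since $\mathsf{O}$ and $N$ are Fourier multipliers, $\Xi=\lambda\upsilon'+p\mathsf{O}$ and $\Xi_r=\lambda K e^{Kr}+\kappa_0\mathsf{O}$ are diagonal in $e_n$. So $\E$ splits into scalar one-dimensional integrals at each frequency $\nu$, with symbols
\[
\xi=\lambda e^{Kr}+p\,g_\lambda(\nu),\qquad \xi_r=\lambda K e^{Kr}+\kappa_0 g_\lambda(\nu).
\]
On $|r|\le b\le K^{-1}$ we have $e^{Kr}\asymp1$ and $p\asymp\kappa_0 b$. Three bounds are needed:
\begin{itemize}
\item $\xi_r\gtrsim \lambda+g_\lambda(\nu)$, which is $\gtrsim_K\Lambda_\lambda$ since $g_\lambda(\nu)=|\nu|$ for $|\nu|\ge2\lambda$. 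This gives the $z_r$ term.
\item $\xi\gtrsim \lambda+b\,g_\lambda\gtrsim \lambda+b|\nu|$, after comparing the cases $|\nu|\le 2\lambda$ and $|\nu|\ge2\lambda$.
\item The main term $\xi^2\xi_r-r\xi\nu^2=\xi(\xi\xi_r-r\nu^2)$.
\end{itemize}

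\emph{The main term.} Split by frequency.
\begin{itemize}
\item For $|\nu|\ge 2\lambda$ we have $g=|\nu|$. Then $\xi\xi_r\ge p\kappa_0\nu^2+\lambda K(\dots)$, and $p\kappa_0=\kappa_0^2(r+2b)\ge \kappa_0^2 b\ge 2|r|$ when $|r|\le b$, since $\kappa_0>2$. So $\xi\xi_r-r\nu^2\ge c\,b\,\nu^2+\lambda K\xi$, which is $\gtrsim \Lambda_\lambda(\lambda+b|\nu|)$.
\item For $|\nu|\le 2\lambda$ the term $-r\nu^2$ is at most $4b\lambda^2$. Meanwhile $\xi\xi_r\ge K\lambda^2 e^{2Kr}\ge K\lambda^2/e^2$. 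Fixing $K\ge K_*$ large first and then $b\le b_1(K)$ small, the bad term is absorbed. This leaves $\gtrsim K\lambda^2\asymp\Lambda_\lambda(\lambda+bN)$.
\end{itemize}
Multiplying by $\xi\gtrsim\lambda+b|\nu|$ gives $\Lambda_\lambda(\lambda+b|\nu|)^2$, which is the first inequality of \eqref{rough:energy-coercivity}. The second follows from
\[
\Lambda_\lambda(\lambda+b|\nu|)^2\ge \lambda^3+b^2|\nu|^3,
\]
after shrinking the constant. The main obstacle is the intermediate range $\lambda\le|\nu|\le2\lambda$, where $g_\lambda$ interpolates. There I would use $g\ge0$ together with the $\lambda$-part of $\xi\xi_r$, as in the low range, since $\nu^2\le4\lambda^2$ still holds.

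\emph{Commutator identity.} All operators are multipliers in $\theta$, so the computation is one-dimensional in $r$ with operator-valued (commuting) coefficients. Take $S_0=-\partial_r^2+r^2N^2-\Xi^2$ and $K_0=2\Xi\partial_r+\Xi_r$. The standard computation gives:
\begin{itemize}
\item $[-\partial_r^2,K_0]=-4\Xi_r\partial_r^2-4\Xi_{rr}\partial_r-\Xi_{rrr}$;
\item $[V_0,K_0]=-2\Xi\,\partial_rV_0=-2\Xi(2rN^2-2\Xi\Xi_r)$.
\end{itemize}
Pair with $z$ and integrate by parts using compact support. The first gives $4\int\langle\Xi_r z_r,z_r\rangle-\langle\Xi_{rrr}z,z\rangle$; the second gives $4\int\langle(\Xi^2\Xi_r-r\Xi N^2)z,z\rangle$. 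Together these yield $4\E(z)-\langle\Xi_{rrr}z,z\rangle$.

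Finally, $\Xi_{rrr}=\lambda K^3e^{Kr}$, because $p''=0$. Hence
\[
|\langle\Xi_{rrr}z,z\rangle|\le CK^3\lambda\|z\|^2\le CK^3 s^{-2}\|\lambda^{3/2}z\|^2 ,
\]
using $\lambda\ge s$. By \eqref{rough:energy-coercivity} this is $\mathrm{o}_s(1)\E(z)$ for fixed $K,b$.
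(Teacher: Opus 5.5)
Your proposal follows the paper's proof: you diagonalize frequencywise, split at $\vert\nu\vert=2\lambda$, and dominate $r\nu^2$ by $\kappa_0^2b\nu^2$ at high frequency and by $K\lambda^2e^{2Kr}$ at low frequency. The commutator identity comes from the same integration by parts, and you bound $\Xi_{rrr}=\lambda K^3e^{Kr}$ by the $\lambda^3$ term as the paper does.

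\textbf{One step needs repair.} In the range $\vert\nu\vert\ge2\lambda$ you keep only $c\,b\nu^2+\lambda K\xi$ from $\xi\xi_r-r\nu^2$. That lower bound does not dominate $\Lambda_\lambda(\lambda+b\vert\nu\vert)\asymp\lambda\vert\nu\vert+b\nu^2$ with a constant depending only on $K$.
\begin{itemize}
\item Since $\lambda K\xi\lesssim K\lambda^2+Kb\lambda\vert\nu\vert$ and $Kb\le1$, the choice $\vert\nu\vert=b^{-1/2}\lambda$ makes the ratio of your bound to the target of order $(1+K)b^{1/2}$.
\item This is precisely the intermediate range $\lambda\ll N\ll\lambda/b$. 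Lemma~\ref{rough-coupling-energy} relies on exactly this range when it uses the stronger form $\Vert\Lambda_\lambda^{1/2}(\lambda+bN)z\Vert^2\lesssim\mathcal{E}(z)$.
\end{itemize}

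\textbf{The fix.} Keep the cross term you discarded. Since $\xi\xi_r\ge\kappa_0\lambda e^{Kr}\vert\nu\vert+\kappa_0p\nu^2$, you get $\xi\xi_r-r\nu^2\ge\kappa_0e^{-1}\lambda\vert\nu\vert+(\kappa_0^2-1)b\nu^2$. Alternatively, do as the paper does: show $r\nu^2\le\frac12\xi\xi_r$ in both ranges, then combine the separate bounds $\xi\gtrsim\lambda+b\vert\nu\vert$ and $\xi_r\gtrsim_K\Lambda_\lambda$.

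\textbf{Minor point.} The bound $\lambda\ge s$ holds only when $t_0\le2$. In general $\lambda\ge(4/t_0^2)^{p_0}s$, which is harmless for fixed $t_0$ and still gives the $\mathrm{o}_s(1)$ conclusion.
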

\begin{proof}
Integration by parts gives the commutator identity. For positivity, we compare the two ranges of the angular multiplier. Writing $n$ for the absolute value of an angular frequency, for $n \geq 2\lambda$ one has $\mathsf{O}_n = n$ and $\Xi\Xi_r \geq \kappa_0^2bn^2$; for $n \leq 2\lambda$, $rn^2 \leq 4b\lambda^2$ and $\Xi\Xi_r \geq \lambda^2\upsilon'\upsilon''$. Thus, for small $b$, one has
$\Xi^2\Xi_r - r\Xi N^2 \geq \frac{1}{2}\Xi^2\Xi_r$, $\Xi_r\asymp_K\Lambda_\lambda$, and $\Xi/\Xi_r \leq K^{-1} + 3b$.
Since $\Xi \geq c_K\lambda$ everywhere and $\Xi \geq \kappa_0b n$ for $n \geq 2\lambda$, the first two inequalities imply \eqref{rough:energy-coercivity}. For $n \leq 2\lambda$, its angular term follows from the scalar $\lambda^3$ term. Since $\Xi_{rrr} = \lambda\upsilon^{(4)}$ and $\E$ controls $\Vert\lambda^{3/2}z\Vert^2$, its commutator contribution is lower order as $s\to + \infty$.
\end{proof}

\paragraph{Time derivatives and mixed commutators.}
The spatial energy $\E(z)$ does not control time derivatives. We keep the positive term $\Vert\mathcal{K}z\Vert^2$ and use the identity defining $\mathcal{K}$ to estimate $\Lambda_\lambda^{-1/2}z_t$ in Lemma~\ref{rough-coupling-energy}. This is the norm needed to control the mixed commutators. Their coefficient bounds are expressed in the weighted Schur norms
$$
\Vert T\Vert_{\eta,q} = \max\Big(\sup_m\sum_n e^{\eta\vert m - n\vert }(1 + \vert m - n\vert )^q\vert T_{mn}\vert , \ \sup_n\sum_m e^{\eta\vert m - n\vert }(1 + \vert m - n\vert )^q\vert T_{mn}\vert \Big).
$$
Here $\eta>0$ is a fixed strip width and $q$ is any fixed nonnegative integer. Lemma~\ref{app:block-calculus} controls operator norms by these norms. Put $u_{mn} = g_\lambda(m)-g_\lambda(n)$. The entries of $A$ and $B_1$ are $\widehat{c^{-1}}_{m-n}\cosh(\psi u_{mn})$ and $-i\widehat{c^{-1}}_{m-n}\sinh(\psi u_{mn})$. The nonconstant Fourier coefficients of $c^{-1}$ are $\mathrm{O}(b)$ because $c(0,\theta) = 1$. Thus the cosh correction is $\mathrm{O}(b^5)$ and the sinh term is $\mathrm{O}(b^3)$, since $\psi = \mathrm{O}(b^2)$. Normal differentiation uses $\psi' = \mathrm{O}(b)$ and $\psi'' = \kappa_0$; time differentiation uses
$$
\vert (u_{mn})_t\vert \leq C\frac{\vert\lambda'\vert}{\lambda}\vert m-n\vert,\qquad \vert (u_{mn})_{tt}\vert \leq C\left(\frac{\vert\lambda''\vert}{\lambda}+\frac{\vert\lambda'\vert^2}{\lambda^2}\right)\vert m-n\vert.
$$
The extra frequency differences are absorbed by fixed Fourier moments. This proves the following $\mathrm{O}(\cdot)$ bounds, uniform in $r$:
\begin{equation}\label{rough:coefficient-bounds}
\begin{gathered}
\begin{array}{c@{\qquad}ccccc} \text{coefficient}&\text{size}&\partial_r&\partial_r^2&\partial_t&\partial_t^2 \\
\hline A - I&b&1&1&b^5\lambda^{1/p_0}&b^5\lambda^{2/p_0} \\
B_1&b^3&b^2&b&b^3\lambda^{1/p_0}&b^3\lambda^{2/p_0} \end{array} \\[4pt]
(A_t)_r = \mathrm{O}(b^4\lambda^{1/p_0}),\qquad (A_t)_{rr} = \mathrm{O}(b^3\lambda^{1/p_0}),\qquad (B_1)_{rt} = \mathrm{O}(b^2\lambda^{1/p_0}).
\end{gathered}
\end{equation}
The corresponding commutator bounds are 
$[\Xi,A] = \mathrm{O}(b^2)$, $[\Xi,B_1] = \mathrm{O}(b^4)$, $[\Xi_r,B_1] = \mathrm{O}(b^3)$, 
$\Vert\Lambda_\lambda^{1/2}[A,B_1]\Lambda_\lambda^{1/2}\Vert \leq Cb^4$, $\Vert\Lambda_\lambda^{1/2}[V_0,A]z\Vert \leq C_Kb^2\E(z)^{1/2}$.
The first three use the same Fourier moments; the bound on $[A,B_1]$ uses \eqref{rough:commutator-gain}. For the last bound expand $[V_0,A] = r^2[N^2,A]-[\Xi^2,A]$; its order-one coefficient is $\mathrm{O}(b^2\lambda + b^3N)$. The energy controls $\Vert\Lambda_\lambda^{1/2}(\lambda + bN)z\Vert^2$ by retaining $\Xi^2\Xi_r$ in \eqref{rough:spatial-energy}. This proves the bound also in the intermediate range $\lambda\ll N\ll\lambda/b$.

\begin{lem}\label{rough-coupling-energy}
For fixed $K$, sufficiently small $b$, and $z\in\mathcal{D}$, the temporal derivative satisfies
\begin{equation}\label{rough:temporal-derivative}
\mathcal{T}_t = \Vert\Lambda_\lambda^{-1/2}z_t\Vert^2 \leq C_K\big(\lambda_{\min}^{-1}\Vert\mathcal{K}z\Vert^2 + \E(z)\big),
\qquad\lambda_{\min} = \min_t\lambda(t).
\end{equation}
For every fixed $\varepsilon>0$, the mixed commutators satisfy
\begin{equation}\label{rough:mixed-commutators}
\begin{aligned}
\vert \langle[S_0,T_A]z,z\rangle\vert & \leq \varepsilon\Vert\mathcal{K}z\Vert^2+ [C(K^{-1} + b) + C_Kb^2+\mathrm{o}_s(1)]\E(z), \\
\vert \langle[iT_B,K_0]z,z\rangle\vert & \leq \mathrm{o}_s(1)\Vert\mathcal{K}z\Vert^2+[C_Kb^2+\mathrm{o}_s(1)]\E(z), \\
\vert \langle[iT_B,T_A]z,z\rangle\vert & \leq \mathrm{o}_s(1)\Vert\mathcal{K}z\Vert^2+[C_Kb^4+\mathrm{o}_s(1)]\E(z).
\end{aligned}
\end{equation}
Here $\mathrm{o}_s(1)\to0$ as $s\to + \infty$ after $K,b$ have been fixed. The constant in $C/K$ is uniform when $Kb \leq 1$.
\end{lem}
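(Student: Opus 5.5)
The time-derivative bound comes first, because the commutator estimates then reduce to it. Write $\mathcal{K}z = 2\Xi z_r + \Xi_r z + A z_t + \tfrac12 A_t z$ and solve for $z_t$. Since $A - I = \mathrm{O}(b)$ by \eqref{rough:coefficient-bounds}, the operator $A$ is invertible for small $b$, with $A^{-1}$ bounded in the weighted Schur norms. This gives
$$
z_t = A^{-1}\mathcal{K}z - 2A^{-1}\Xi z_r - A^{-1}\Xi_r z - \tfrac12 A^{-1}A_t z .
$$
Apply $\Lambda_\lambda^{-1/2}$ to each term.

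\begin{itemize}
\item The first term gives $\lambda_{\min}^{-1}\Vert\mathcal{K}z\Vert^2$, using $\Lambda_\lambda \geq \lambda \geq \lambda_{\min}$ and the fact that $\Lambda_\lambda^{\pm1/2}$ commutes with $A^{-1}$ up to Schur-bounded errors (Lemma~\ref{app:block-calculus}).
\item For the second and third terms, the symbol bounds $\Xi \lesssim_K \Lambda_\lambda$ and $\Xi_r \asymp_K \Lambda_\lambda$ give $\Vert\Lambda_\lambda^{-1/2}\Xi z_r\Vert \lesssim \Vert\Lambda_\lambda^{1/2}z_r\Vert$ and $\Vert\Lambda_\lambda^{-1/2}\Xi_r z\Vert \lesssim \Vert\Lambda_\lambda^{1/2}z\Vert$. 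Both are controlled by $\E(z)$ through \eqref{rough:energy-coercivity}.
\item The last term carries $A_t = \mathrm{O}(b^5\lambda^{1/p_0})$, so it is dominated by $\Vert\lambda^{3/2}z\Vert^2$, since $p_0 > 2$.
\end{itemize}
Together these give \eqref{rough:temporal-derivative}.

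For the mixed commutators I would expand each bracket by integration by parts in $t$ and $r$. The resulting terms fall into three types:
\begin{itemize}
\item terms with no time derivative of $z$, whose coefficients are derivatives of $A$ or $B_1$;
\item terms containing one factor $z_t$, paired against spatial quantities;
\item the single term $\langle A_t\partial_t z, \cdot\rangle$-type coupling.
\end{itemize}

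For $[S_0,T_A]$, the main pieces are $\langle[-\partial_r^2,A]z_t,z\rangle$ and $\langle[V_0,A]z_t,z\rangle$. I would write each as $\langle \Lambda_\lambda^{-1/2}z_t,\Lambda_\lambda^{1/2}(\cdots)z\rangle$ and apply Cauchy--Schwarz with \eqref{rough:temporal-derivative}. The commutator $[\partial_r^2,A]$ involves $A_r = \mathrm{O}(1)$, and here I expect the genuine loss $C(K^{-1}+b)$. The factor $A_r$ hits $z_r$, and we must compare $\Vert\Lambda_\lambda^{1/2}z_r\Vert\,\Vert\Lambda_\lambda^{-1/2}z_t\Vert$ with $\E$. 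Substituting the expression for $z_t$ produces $\Xi z_r$ terms, which are bounded against $\Xi_r$ using $\Xi/\Xi_r \leq K^{-1}+3b$. The $\mathcal{K}z$ part is handled by Young's inequality with $\varepsilon$. For $[V_0,A]$ I use the stated bound $\Vert\Lambda_\lambda^{1/2}[V_0,A]z\Vert \leq C_Kb^2\E^{1/2}$, which gives the $C_Kb^2$ contribution. The terms carrying $A_t$ or $A_{tt}$ have the extra $\lambda^{2/p_0}$ growth. They are lower order against $\lambda^3$ and contribute $\mathrm{o}_s(1)\E$.

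For $[iT_B,K_0]$, the coefficients $B_1$ are $\mathrm{O}(b^3)$, and the commutators $[\Xi,B_1]=\mathrm{O}(b^4)$ and $[\Xi_r,B_1]=\mathrm{O}(b^3)$ hold. The term $2\langle[B_1\partial_t,\Xi\partial_r]z,z\rangle$ contains $B_1\Xi_t z_r$ and $[B_1,\Xi]\partial_t\partial_r$. The first has $\Xi_t \lesssim \lambda^{1/p_0}\Lambda_\lambda$, which gives $\mathrm{o}_s(1)$. For the second, one integration by parts moves $\partial_r$ and leaves $\Vert\Lambda_\lambda^{-1/2}z_t\Vert$ times $\Vert\Lambda_\lambda^{1/2}z_r\Vert$ with a $b^4$ factor, so this piece is fine. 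Any residual $\Vert\mathcal{K}z\Vert^2$ comes with $\lambda_{\min}^{-1}$, hence is $\mathrm{o}_s(1)$. Finally, $[iT_B,T_A]$ reduces to $[B_1,A]\partial_t^2$ plus lower-order time-derivative terms. Integrating one $\partial_t$ by parts gives $\langle\Lambda_\lambda^{1/2}[A,B_1]\Lambda_\lambda^{1/2}\Lambda_\lambda^{-1/2}z_t,\Lambda_\lambda^{-1/2}z_t\rangle \leq Cb^4\mathcal{T}_t$, by \eqref{rough:commutator-gain}. The remaining terms carry $t$-derivatives of the coefficients, which are lower order.

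The main obstacle is the first inequality in \eqref{rough:mixed-commutators}. It is the only place where an $\mathrm{O}(1)$ coefficient ($A_r$) meets $z_t$. There I must verify that substituting the $\mathcal{K}$-identity yields exactly the structure $\Xi/\Xi_r$, so that the loss is $C(K^{-1}+b)$ with $C$ uniform when $Kb\le1$. I would first try symmetrizing $\langle A_r z_r,\Xi z_r\rangle$-type terms and checking the high-frequency range $N\ge 2\lambda$ separately, where $\Xi\approx pN$ and $\Xi_r\approx \kappa_0N$.
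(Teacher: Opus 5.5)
Your proposal is correct and follows the paper's proof essentially step for step. You solve the $\mathcal{K}$-identity for $z_t$ using a Neumann-series inverse of $A$, then substitute it into $2\langle A_rz_t,z_r\rangle$: the $\Xi z_r$ piece gives the loss $C(K^{-1}+b)$ through $\Xi/\Xi_r\leq K^{-1}+3b$, and the $\mathcal{K}z$ piece is absorbed by Young's inequality. Every other single-$z_t$ term is paired with $\mathcal{T}_t$, and the gain \eqref{rough:commutator-gain} handles $[B_1,A]\partial_t^2$.

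One bookkeeping correction concerns $2[B_1\partial_t,\Xi\partial_r]$, which also contains the term $-2\Xi(B_1)_r\partial_t$ that you omit. This term is the actual source of the $C_Kb^2$ in the second line: since $(B_1)_r=\mathrm{O}(b^2)$ and $\Vert\Lambda_\lambda^{1/2}\Xi z\Vert\lesssim_K\E(z)^{1/2}$, the same Cauchy--Schwarz argument bounds it by $C_Kb^2\E(z)^{1/2}\mathcal{T}_t^{1/2}$.
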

\begin{proof}
We first prove \eqref{rough:temporal-derivative}. The Schur bounds in \eqref{rough:coefficient-bounds}, together with $\langle m\rangle/\langle n\rangle \leq 1 + \vert m - n\vert $, imply that $\Lambda_\lambda^{\pm1/2}A^{-1}\Lambda_\lambda^{\mp1/2}$ are uniformly bounded: a Neumann series applies to $A - I = \mathrm{O}(b)$ in the same weighted matrix norms. Substitute $z_t = A^{-1}(\mathcal{K}z - 2\Xi z_r - \Xi_rz - A_tz/2)$. The first term costs $C\lambda_{\min}^{-1}\Vert\mathcal{K}z\Vert^2$; the others are bounded by \eqref{rough:energy-coercivity}, since $\Xi^2/\Lambda_\lambda\lesssim_K\Lambda_\lambda$. This proves \eqref{rough:temporal-derivative}.

We now prove the three bounds in \eqref{rough:mixed-commutators}. Write $E = \E(z)$ and $X = \Vert\mathcal{K}z\Vert$. Besides \eqref{rough:temporal-derivative}, we use the two consequences of \eqref{rough:energy-coercivity}: 
$\Vert\Lambda_\lambda^{1/2}z\Vert \leq C_K\lambda_{\min}^{-1}E^{1/2}$ and $\Vert\lambda^{3/2}z\Vert \leq C_KE^{1/2}$.
All matrix estimates below include the fixed Fourier moments needed to move $\Lambda_\lambda^{1/2}$ across a coefficient.

\smallskip
\noindent
\emph{The commutator $[S_0,T_A]$.} Expanding gives
$[S_0,T_A] = -2A_r\partial_r\partial_t-A_{rr}\partial_t+[V_0,A]\partial_t-A(V_0)_t+\frac{1}{2}[S_0,A_t]$.
Integrating the first two terms in $r$ leaves $2\langle A_rz_t,z_r\rangle+\langle A_{rr}z_t,z\rangle$. Substitute the formula for $z_t$ in the first term. The four resulting contributions satisfy
\begin{align*}
2\vert\langle A_rA^{-1}\mathcal{K}z,z_r\rangle\vert &\leq \varepsilon X^2+C_\varepsilon\lambda_{\min}^{-1}E,\\
4\vert\langle A_rA^{-1}\Xi z_r,z_r\rangle\vert &\leq C(K^{-1}+b)E,\\
2\vert\langle A_rA^{-1}\Xi_rz,z_r\rangle\vert+\vert\langle A_rA^{-1}A_tz,z_r\rangle\vert &\leq C_{K,b}\lambda_{\min}^{-1+1/p_0}E.
\end{align*}
The second bound uses $\Xi/\Xi_r \leq K^{-1}+3b$; its constant is uniform for $Kb \leq 1$. The other terms containing $z_t$ are bounded by
$\vert\langle A_{rr}z_t,z\rangle\vert \leq C_K\lambda_{\min}^{-1}E^{1/2}\mathcal{T}_t^{1/2}$ and $\vert\langle[V_0,A]z_t,z\rangle\vert \leq C_Kb^2E^{1/2}\mathcal{T}_t^{1/2}$.
For the remaining terms, $(V_0)_t = -2\Xi\Xi_t$ and
$\frac{1}{2}[S_0,A_t] = -(A_t)_r\partial_r-\frac{1}{2}(A_t)_{rr}+\frac{1}{2}[V_0,A_t]$.
Since $\vert\Xi_t\vert\lesssim_K\lambda^{1+1/p_0}$, \eqref{rough:energy-coercivity}-\eqref{rough:coefficient-bounds} give
$\vert\langle A(V_0)_tz,z\rangle\vert+\frac{1}{2}\vert\langle[S_0,A_t]z,z\rangle\vert \leq C_{K,b}\lambda_{\min}^{-1+1/p_0}E$.
In the last term, expand $[V_0,A_t] = r^2[N^2,A_t]-[\Xi^2,A_t]$ as above; its order-one coefficients are bounded by $C_{K,b}\lambda^{1/p_0}(\lambda+N)$. These bounds and \eqref{rough:temporal-derivative} prove the first line of \eqref{rough:mixed-commutators}.

\smallskip
\noindent
\emph{The commutator $[iT_B,K_0]$.} Its full expansion is
\begin{multline*}
[iT_B,K_0]/i = \big(2[B_1,\Xi]\partial_r-2\Xi(B_1)_r+[B_1,\Xi_r]\big)\partial_t \\
+ B_1(2\Xi_t\partial_r+(\Xi_r)_t)+[(B_1)_t,\Xi]\partial_r-\Xi(B_1)_{rt}+\frac{1}{2}[(B_1)_t,\Xi_r].
\end{multline*}
One integration in $r$ transforms the first line of its quadratic form into the sum of the three terms $-2\langle[B_1,\Xi]z_t,z_r\rangle$ and $-2\langle\Xi(B_1)_rz_t,z\rangle$ and $\langle([B_1,\Xi_r]-2[B_1,\Xi]_r)z_t,z\rangle$.
Their absolute values are bounded, respectively, by $C_Kb^4E^{1/2}\mathcal{T}_t^{1/2}$, $C_Kb^2E^{1/2}\mathcal{T}_t^{1/2}$ and $C_Kb^3\lambda_{\min}^{-1}E^{1/2}\mathcal{T}_t^{1/2}$.
Here $[B_1,\Xi]_r = [(B_1)_r,\Xi]+[B_1,\Xi_r] = \mathrm{O}(b^3)$. Each of the four terms in the second line of the expansion contains at most one normal derivative. Their forms are bounded in sum by $C_{K,b}\lambda_{\min}^{-1+1/p_0}E$, using \eqref{rough:coefficient-bounds} and $\vert\Xi_t\vert+\vert(\Xi_r)_t\vert\lesssim_K\lambda^{1+1/p_0}$. Substitution of \eqref{rough:temporal-derivative} proves the second line of \eqref{rough:mixed-commutators}.

\smallskip
\noindent
\emph{The commutator $[iT_B,T_A]$.} Here a second time derivative occurs:
\begin{equation*}
[iT_B,T_A] = i[B_1,A]\partial_t^2 + i([B_1,A]_t/2 + B_1A_t - A(B_1)_t)\partial_t  + i(B_1A_{tt} - A(B_1)_{tt})/2 + i[(B_1)_t,A_t]/4.
\end{equation*}
After one time integration, its form is the sum of $-i\langle[B_1,A]z_t,z_t\rangle$, $i\langle F_1z_t,z\rangle$ and $i\langle F_0z,z\rangle$, where
$F_1 = B_1A_t-A(B_1)_t-\frac{1}{2}[B_1,A]_t$ and $F_0 = \frac{1}{2}(B_1A_{tt}-A(B_1)_{tt})+\frac{1}{4}[(B_1)_t,A_t]$.
The gain in \eqref{rough:commutator-gain} is essential for the first term. Together with \eqref{rough:coefficient-bounds}, it gives
$$
\vert\langle[B_1,A]z_t,z_t\rangle\vert\leq Cb^4\mathcal{T}_t,\quad
\vert\langle F_1z_t,z\rangle\vert\leq C_Kb^3\lambda_{\min}^{-1+1/p_0}E^{1/2}\mathcal{T}_t^{1/2},\quad
\vert\langle F_0z,z\rangle\vert\leq C_Kb^3\lambda_{\min}^{-3+2/p_0}E.
$$
Indeed $F_1 = \mathrm{O}(b^3\lambda^{1/p_0})$ and $F_0 = \mathrm{O}(b^3\lambda^{2/p_0})$ in the weighted Schur norms. Applying \eqref{rough:temporal-derivative} proves the last line of \eqref{rough:mixed-commutators}. All negative powers of $\lambda_{\min}$ tend to zero as $s\to+\infty$ with $K,b$ fixed.
\end{proof}

\begin{proof}[Proof of Proposition~\ref{rough-carleman}]
Apply $\Vert(S + \mathcal{K})z\Vert^2 = \Vert Sz\Vert^2 + \Vert\mathcal{K}z\Vert^2+
\langle[S,\mathcal{K}]z,z\rangle$.
Lemma~\ref{rough-spatial-energy} gives the positive spatial term, and Lemma~\ref{rough-coupling-energy} bounds the other three commutators. Fix a small $\varepsilon$ in \eqref{rough:mixed-commutators}, then choose $K$ to make the term $C/K$ small. With $K$ fixed, decrease $b$ so that $Kb\leq1$ and the terms $Cb$, $C_Kb^2$ and $C_Kb^4$ can be absorbed by the spatial energy. Finally increase $s$ to absorb the remaining $\mathrm{o}_s(1)$ terms into $\E(z)$ and $\Vert\mathcal{K}z\Vert^2$. The difference $Q_\Gamma - (S + \mathcal{K}) = \mathcal{R} - \mathfrak{a} F - (A_t + i(B_1)_t)/2$ has squared norm $\mathrm{o}_s(1)\E(z)$, since $2 + 2/p_0<3$. We obtain, for every $z\in\mathcal{D}$,
\begin{equation}\label{rough:conjugated-carleman}
\Vert Q_\Gamma z\Vert^2 \geq c\big(\Vert Sz\Vert^2 + \Vert\mathcal{K}z\Vert^2 + \E(z)\big).
\end{equation}
For $z = e^\Gamma\mathcal{W}v$, the conjugation identity gives $Q_\Gamma z = \mathfrak{a} e^\Gamma\mathcal{W}(\partial_t + \triangle_\Phi)v$. The operator $\mathfrak{a}$ is bounded uniformly in $s$, so \eqref{rough:energy-coercivity} and \eqref{rough:conjugated-carleman} prove the stated estimate.
\end{proof}

\subsubsection{Proof of Proposition~\ref{rough-state}}\label{proof:rough-state}
A cutoff in the observed strips makes \eqref{rough:conjugated-carleman} applicable to the heat solution. A local energy estimate controls its source; propagation at a fixed negative angular weight then proves Proposition~\ref{rough-state}.

Fix $0<t_0<\tau$. Choose $\chi_d\in C_c^\infty(-b,b)$ equal to one on $\vert r\vert \leq 2d$, with derivative support strictly inside observed strips, where the fixed localization cutoff $\varkappa$ equals one. Let $J_d\Subset\{2d<\vert r\vert <b\}$ contain $\operatorname{supp}\chi_d'$. For the local unitary representative $u$ of the global solution $e^{-t\triangle}u_0$, with $u_0\in L^2(M,\nu)$, set $z = e^\Gamma\mathcal{W}\chi_du$. Then
$Q_\Gamma z = \mathfrak{a} e^\Gamma\mathcal{W}[\triangle_\Phi,\chi_d]u$ and $[\triangle_\Phi,\chi_d]u = -2c\chi_d'u_r - (c\chi_d')_ru$.
At each $r$, $\mathcal{W}$ is a contraction. With $\varpi (t) = e^{2\lambda(t)\upsilon_{\max}}$, where $\upsilon_{\max} = \max_{[-b,b]}\upsilon<0$, the source is bounded by $C\int_0^{t_0}\varpi \Vert u_r,u\Vert_{L^2(J_d\times\T_\ell)}^2dt$. Choose $\eta\in C_c^\infty(-r_0,r_0)$, supported in the fully observed strips and equal to one on $J_d$. Testing the heat equation with $\varpi \eta^2\bar u$ gives
\begin{equation}\label{rough:observed-energy}
\int_0^{t_0}\varpi \Vert u_r\Vert_{L^2(J_d\times\T_\ell)}^2dt \leq C\int_0^{t_0}(\varpi + \vert \varpi '\vert )\Vert u\Vert_{L^2(\omega)}^2dt \leq C\obs.
\end{equation}
The angular form is nonnegative because $\eta$ depends only on $r$. Compact support removes the normal boundary terms; $\varpi$ removes the temporal terms. Both $\varpi$ and $\vert \varpi'\vert $ are bounded.

We justify the application of \eqref{rough:conjugated-carleman} in three steps.

\smallskip
\noindent
\emph{Time regularization.} Replace $u(t)$ by $u_\delta(t) = e^{-(t+\delta)\triangle}u_0$, with $\delta>0$. Then $\varkappa u_\delta\in C^1([0,t_0];\mathscr{H})\cap C([0,t_0];D(\triangle_\Phi))$: its form energy is finite and $\triangle_\Phi (\varkappa u_\delta) = [\triangle_\Phi,\varkappa]u_\delta-\varkappa\partial_tu_\delta\in L^2$. Lemma~\ref{operator-graph} gives $\Vert v_{rr}\Vert+\Vert Nv\Vert \leq C(\Vert \triangle_\Phi v\Vert+\Vert v\Vert)$. Moreover, $b_d = -\max_{\operatorname{supp}\chi_d}\psi>0$. On this support, $\mathcal{W}$ exponentially smooths frequencies above $2\lambda$; the remaining frequencies satisfy $N \leq 2\lambda$. Every fixed angular or weight derivative therefore costs only a fixed polynomial in $\lambda$.

\smallskip
\noindent
\emph{Fourier truncation and convolution.} On a compact time interval in $(0,t_0)$, truncate the angular Fourier series of $z_\delta = e^\Gamma\mathcal{W}\chi_du_\delta$ and convolve in $(t,r)$, keeping support in the interior. This approximates $z_\delta$ simultaneously for $\Vert z\Vert+\Vert Q_\Gamma z\Vert+\Vert Sz\Vert+\Vert\mathcal{K}z\Vert+\E(z)^{1/2}$. The preceding derivative bounds and the Fourier moments of the coefficients control each term. Time cutoffs may then approach the endpoints, since $e^\Gamma$ dominates both their derivative costs and the fixed polynomials in $\lambda$.

\smallskip
\noindent
\emph{Passage to the limit.} Apply \eqref{rough:conjugated-carleman} to these test functions and pass through the approximations. Letting $\delta\downarrow0$ uses strong semigroup continuity for the observed solution in $L^2((0,T)\times\omega)$ and for the weighted solution on compact interior time intervals; Fatou's lemma gives the left-hand estimate. This proves \eqref{rough:conjugated-carleman}-\eqref{rough:observed-energy} for the original $L^2$ initial datum.

On $I_0 = [t_0/3,2t_0/3]$, $\Gamma$ is bounded below and $e^{\Gamma + \psi\mathsf{O}} \geq c_{d,t_0}e^{-B_0N}$ frequencywise. Equations \eqref{rough:energy-coercivity}, \eqref{rough:conjugated-carleman}-\eqref{rough:observed-energy} imply
$\int_{I_0}\Vert e^{-B_0N}\chi_du(t)\Vert^2dt \leq C\obs$.
The difference $U - \chi_du = (\varkappa - \chi_d)u$ is supported in observed normal strips, preserved by angular contraction. The same average therefore holds for $e^{-B_0N}U$.

To propagate the estimate, use the closed form realization $\triangle_{\Phi,-B_0}$ on $\mathcal{V}$ defined in Subsection~\ref{upper-functional-setting}. For $v = e^{-B_0N}U$, the equation $v_t + \triangle_{\Phi,-B_0}v = e^{-B_0N}F_{\rm obs}$ holds in $\mathcal{V}'$ on compact time intervals in $(0,T)$. The symmetric coefficient of $e^{-B_0N}ce^{B_0N}$ equals $c+\mathrm{O}(B_0^2)$ in angular operator norm, by the same cosh formula. For small $d$ its normal and angular principal forms remain positive; the bounded potential gives $\Re\mathfrak{h}_{-B_0}(v,v) \geq - C\Vert v\Vert^2$. The source remains contracted. Ordinary local parabolic regularity on its fully observed support yields $\int_{t_0/3}^{\sup I}\Vert F_{\rm obs}\Vert^2dt \leq C\obs$. Energy and Gronwall, averaged over the initial time in $I_0$, give
$\sup_{t\in I}\Vert e^{-B_0N}U(t)\Vert^2 \leq C_{d,\tau,I}\obs$.
Here $F_{\rm obs}\in L^2(\mathscr{H})$ and $v\in L^2(\mathcal{V})\cap H^1(\mathcal{V}')$, so time regularization justifies the energy identity. The loss $B_0 = 4\kappa_0b^2 \leq Cd^2$ is independent of $\tau$, $t_0$ and $I$.

\subsection{Factorial bounds for heat transport and stationary WKB}\label{app:factorial}
This appendix supplies the factorial bounds of Lemma~\ref{factorial} and the quasimode estimates used in Theorem~\ref{thm:periodic-quasimodes}. The two constructions use the same normal inverse, whose undetermined trace is fixed by a time evolution equation for heat and by periodic compatibility for stationary WKB. After the common estimates in Appendix~\ref{app:normal-inversion}, Appendix~\ref{proof:factorial} proves the heat transport bound. Appendices~\ref{app:wkb} and~\ref{app:exponential-quasimodes} construct the stationary hierarchy and truncate it to obtain exponentially small errors.

\subsubsection{Normal inversion and factorial weights}\label{app:normal-inversion}
On the fixed complex product
$\mathcal{P}_{R_0,\tau_0,s_0} = \{\vert x\vert<R_0,\ \vert t\vert<\tau_0,\ \vert\im z\vert<s_0\}$, $z\in\C/\ell\Z$,
all functions are holomorphic and periodic in $z$. The stationary construction omits $t$. For a nonvanishing holomorphic coefficient $\mathcal{B}$, define
$$
J_x\sum_{k \geq 0}x^kf_k = \sum_{k \geq 1}\frac{x^k}{k}f_k,\qquad \mathcal{S}f = J_x(\mathcal{B}^{-1}f).
$$
These maps preserve the product and have zero trace. Since
$x\mathcal{B}\partial_x\mathcal{S}f = f - \frac{\mathcal{B}(x,t,z)}{\mathcal{B}(0,t,z)}f(0,t,z)$,
$\mathcal{S}$ is the zero-trace inverse on zero-trace sources. Defining it on arbitrary sources also permits estimates of summands whose sum alone has zero trace.

Fix $R<R_0$, $\tau<\tau_0$ and $s<s_0$. For $u = \sum_{k,m \geq 0}x^kt^mu_{k,m}(z)$, set
$$
W_{n,k,d} = \frac{(2n + k + d)!}{(n + k + d)!},\qquad \mathcal{N}_n(u) = \sum_{k,m,p \geq 0}\frac{R^k\tau^ms^p}{W_{n,k,m + p}}\frac{\norm{\partial_z^pu_{k,m}}_{L^\infty(\T_\ell)}}{p!}.
$$
Take $m = 0$ in the stationary case. Write $\mathcal{N}_n^{\mathrm{tr}}$ for functions independent of $x$, and $\langle g\rangle = \ell^{-1}\int_0^\ell g(y)\dd{d}y$ for the stationary mean. The smaller radii make the coefficient norms finite; the weights will absorb the derivative losses without further shrinking at each order.

\begin{lem}\label{app:transport-calculus}
Let $\mathscr{P} = \partial_x^2 + xa\partial_x\partial_z + b\partial_x + c\partial_z + d + e\partial_t$ have fixed holomorphic coefficients on the larger product. There is $C>0$, independent of the hierarchy indices, such that
\begin{gather*}
\mathcal{N}_{n + 1}(u) \leq \mathcal{N}_n(u),\qquad \mathcal{N}_{a + b}(uv) \leq \mathcal{N}_a(u)\mathcal{N}_b(v),\qquad \mathcal{N}_n^{\mathrm{tr}}(u_j) \leq (2/R)^j\mathcal{N}_n(u), \\
\mathcal{N}_n(\mathcal{S}u) \leq C\mathcal{N}_n(u),\qquad \mathcal{N}_n(\mathcal{S}\mathscr{P}u) \leq C\mathcal{N}_{n - 1}(u)\quad(n\geq1),\qquad \norm{u}_{\infty,\mathcal{P}_{R/2,\tau/2,s/2}}\leq4^nn!\mathcal{N}_n(u).
\end{gather*}
Here $u_j$ is the coefficient of $x^j$. For a zero-trace function $r$, the map $\mathsf{T}r = (\mathscr{P}r)\vert_{x = 0} = 2r_2 + b(0,t,z)r_1$ satisfies $\mathcal{N}_n^{\mathrm{tr}}(\mathsf{T}r) \leq C\mathcal{N}_n(r)$. Time integration from $0$ to $t$ has norm at most $\tau$ on traces. The stationary mean has norm at most one, and the periodic primitive $J_z$ defined by $i\,\partial_zJ_zg = g - \langle g\rangle$, $\langle J_zg\rangle = 0$, is uniformly bounded in $\mathcal{N}_n^{\mathrm{tr}}$.
\end{lem}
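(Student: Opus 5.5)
The plan is to verify each claim directly on the coefficient representation $u=\sum x^kt^mu_{k,m}(z)$, using elementary properties of the weights $W_{n,k,d}=(2n+k+d)!/(n+k+d)!$. The key facts are four. First, $W_{n,k,d}$ is increasing in $n$, since $W_{n+1,k,d}/W_{n,k,d}=(2n+k+d+2)(2n+k+d+1)/(n+k+d+1)\ge1$; this gives $\mathcal{N}_{n+1}\le\mathcal{N}_n$. Second, there is a supermultiplicativity $W_{a,k,d}W_{b,k',d'}\le W_{a+b,k+k',d+d'}$. This holds because $W_{n,k,d}=\prod_{i=1}^{n}(n+k+d+i)$ is a product of $n$ consecutive integers, and the interleaved products can be compared factor by factor. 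Combined with Leibniz in $z$, Cauchy products in $(x,t)$ and $\binom{p}{p_1}$ absorbed by $p!$, this gives the algebra property. Third, the trace bound follows from $W_{n,j,d}\le 2^{j}W_{n,0,d}$, which holds because raising $k$ by one multiplies $W$ by $(2n+k+d+1)/(n+k+d+1)\le2$.

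Next come the inversion estimates. $J_x$ divides the $x^k$ coefficient by $k\ge1$, hence has norm at most one. Multiplication by $\mathcal{B}^{-1}$ is bounded by the algebra property with $\mathcal{N}_0(\mathcal{B}^{-1})<\infty$, which is finite on the smaller radii by Cauchy estimates with the geometric weights. Together these give $\mathcal{N}_n(\mathcal{S}u)\le C\mathcal{N}_n(u)$. The key estimate is $\mathcal{N}_n(\mathcal{S}\mathscr{P}u)\le C\mathcal{N}_{n-1}(u)$, which I check term by term, with all coefficient multiplications handled by the algebra property at index $0$.

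For $\partial_x^2$, the composite $J_x\partial_x^2$ sends the $x^{k+2}$ coefficient $(k+2)(k+1)u_{k+2}$ to degree $k$, or degree $k\ge1$ after $J_x$ divides by $k$; the effective factor is $\lesssim k+2$. The weight ratio needed is $W_{n,k,d}\,R^{-2}/W_{n-1,k+2,d}$, up to a factor $k$. A direct computation gives
\[
\frac{W_{n-1,k+2,d}}{W_{n,k,d}}=\frac{1}{2n+k+d}\cdot\frac{(2n+k+d)!}{(n+k+d+1)!}\cdot\frac{(n+k+d)!}{(2n+k+d)!}\times(\ldots),
\]
and in fact
\[
\frac{W_{n,k,d}}{W_{n-1,k+2,d}}=\frac{n+k+d+1}{1}\cdot\frac{1}{\,n+k+d+1\,}\times\frac{(2n+k+d)!}{(2n+k+d)!}\ \ldots
\]
I will verify carefully that this ratio is $\gtrsim (k+1)/(2n+k+d)$-compatible. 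That is precisely why $J_x$ is needed, and it is the main obstacle: the factor $1/k$ must cancel one power of $k$ while the index shift $n\to n-1$ pays the other.

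The remaining terms are handled the same way. For $xa\partial_x\partial_z$, the factor $x$ preserves the degree, $\partial_x$ costs $k$, which cancels against $J_x$, and $\partial_z$ costs $p+1$ while raising $d$ by one; the shift from $W_{n-1,\cdot,d+1}$ to $W_{n,\cdot,d}$ pays the factor $(p+1)/s$ through $W_{n,k,d}/W_{n-1,k,d+1}\ge(n+k+d+1)/1\cdot$ const. The terms $b\partial_x$, $c\partial_z$ and $e\partial_t$ are lower order: $\partial_t$ costs $(m+1)/\tau$ and raises $d=m+p$. The term $d$ is trivially bounded by $\mathcal{N}_n\le\mathcal{N}_{n-1}$.

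The sup bound follows by evaluating on the half domain. Each term is at most $2^{-k-m-p}W_{n,k,d}/p!\cdot\|\partial_z^pu\|$ times the Taylor factor $(s/2)^p$. Since $W_{n,k,d}\le 2^{2n+k+d}n!$, summing the series gives $4^n n!\,\mathcal{N}_n$.

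For the trace map, $\mathsf{T}r=2r_2+b(0)r_1$, and the bound follows from the trace estimate together with the algebra property. Time integration divides the $t^m$ coefficient by $m+1$ and raises $m$; since $W$ is increasing in $d$, we get the factor $\tau$. The mean is a contraction. Finally, $J_z$ divides the Fourier coefficients by $j\ne0$; bounding $\|\partial_z^pJ_zg\|_\infty$ by Cauchy estimates on $\|\partial_z^{p-1}g\|$, via Hilbert-transform-type boundedness on the strip after a uniform shrinking, which is reserved once, gives uniform boundedness in $\mathcal{N}_n^{\mathrm{tr}}$.
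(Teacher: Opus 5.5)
Your route is the paper's, since everything is checked coefficientwise through explicit ratios of the weights $W_{n,k,d}$. Your treatment of monotonicity, the algebra property, extraction, $\mathcal{S}$, $\mathsf{T}$, time integration, the mean and the supremum bound is correct. For the algebra property, your factor-by-factor comparison of $W_{n,k,d}=\prod_{i=1}^n(n+k+d+i)$ is a fine substitute for Vandermonde.

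The gap is in the estimate that carries the lemma, $\mathcal{N}_n(\mathcal{S}\mathscr{P}u)\le C\mathcal{N}_{n-1}(u)$. Your two displays for the $\partial_x^2$ ratio do not compute it: the first carries a spurious factor $1/(2n+k+d)$, and the explicit factors of the second multiply to $1$. The announced target $(k+1)/(2n+k+d)$ is the $\partial_x$ ratio, and the check itself is postponed. The correct value is $W_{n-1,k+2,d}=(2n+k+d)!/(n+k+d+1)!$, so
\[
\frac{(k+2)(k+1)}{k}\,\frac{W_{n-1,k+2,d}}{W_{n,k,d}}=\frac{(k+2)(k+1)}{k\,(n+k+d+1)}\le\frac{k+1}{k}\le 2\qquad(n\ge1,\ k\ge1).
\]
The remaining ratios are $W_{n-1,k+1,d}/W_{n,k,d}=W_{n-1,k,d+1}/W_{n,k,d}=1/(2n+k+d)$. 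These pay for $\partial_x$ and for the factors $p+1,\,m+1\le d+1$ coming from $\partial_z$ and $\partial_t$.

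Moreover, you cannot handle the coefficients ``by the algebra property at index $0$'' and then treat $J_x\partial_x^2$ as a composite. In $\mathcal{S}\mathscr{P}u=J_x(\mathcal{B}^{-1}\partial_x^2u+\cdots)$ the division by $k$ acts at the output degree, after multiplication. Bounding the product first would require $\mathcal{N}_n(\partial_x^2u)\le C\mathcal{N}_{n-1}(u)$. That is false, because $(k+2)(k+1)/(n+k+d+1)$ is unbounded in $k$; likewise $k(p+1)/(2n+k+d)$ is unbounded for $xa\partial_x\partial_z$.

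What you need is that coefficients only raise the normal degree. A coefficient of normal degree $l$ sends degree $k'$ of $\partial_x^2u$ to degree $k=k'+l$, so $1/k\le 1/\max(k',1)$. Monotonicity of $W$ in $k$ and $d$ then reduces everything to the ratio above. The surviving degree-zero part $2u_2$ (when $l\ge1$) costs $2W_{n-1,2,d}/W_{n,0,d}=2/(n+d+1)\le1$. This is the paper's bookkeeping, in which a coefficient of normal degree $l$ lowers the input indices to $(k-l+2,m-a,p-q)$.

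Your argument for $J_z$ should also be replaced. The norm $\mathcal{N}_n^{\mathrm{tr}}$ is built on $L^\infty(\mathbb{T}_\ell)$ along the real circle with a fixed $s$. A shrink of the strip therefore changes the norm, and it would recur at every order of the WKB recursion. Hilbert-transform bounds also fail in $L^\infty$.

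No such tool is needed. For $p\ge1$ one has $\partial_z^pJ_zg=-i\,\partial_z^{p-1}(g-\langle g\rangle)$. Hence the $p$-th term of $J_zg$ equals $(s/p)\,W_{n,0,p-1}/W_{n,0,p}\le s$ times the $(p-1)$-st term of $g-\langle g\rangle$. The zero-mean primitive satisfies $\Vert J_zg\Vert_{L^\infty(\mathbb{T}_\ell)}\le 2\ell\,\Vert g-\langle g\rangle\Vert_{L^\infty(\mathbb{T}_\ell)}$, and subtracting the mean costs at most $2$. This gives a bound independent of $n$ with no loss of domain.
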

\begin{proof}
The weights increase with $n,k,d$. Since $W_{n,k,d} = n!\binom{2n + k + d}{n}$, Vandermonde's identity gives
$W_{a,k_1,d_1}W_{b,k_2,d_2} \leq W_{a + b,k_1 + k_2,d_1 + d_2}$.
Taylor convolution and Leibniz's formula prove the product bound. The inequality $W_{n,j,d}/W_{n,0,d} \leq 2^j$ proves coefficient extraction and the bound for $\mathsf{T}$. Multiplication by $\mathcal{B}^{-1}$ is bounded, and division by $k \geq 1$ bounds $\mathcal{S}$.

For $\mathcal{S}\mathscr{P}$, absorb $\mathcal{B}^{-1}$ into the coefficients. At output normal degree $k \geq 1$, with $d = m + p$, the derivative ratios are
\begin{align*}
\frac{(k + 2)(k + 1)}{k}\frac{W_{n - 1,k + 2,d}}{W_{n,k,d}}& = \frac{(k + 2)(k + 1)}{k(n + k + d + 1)}\leq2, \\
\frac{k + 1}{k}\frac{W_{n - 1,k + 1,d}}{W_{n,k,d}}& = \frac{k + 1}{k(2n + k + d)}\leq1, \\
(p + 1)\frac{W_{n - 1,k,d + 1}}{W_{n,k,d}}& = \frac{p + 1}{2n + k + d}\leq1,\qquad (m + 1)\frac{W_{n - 1,k,d + 1}}{W_{n,k,d}}\leq1.
\end{align*}
A coefficient of normal degree $l$, time degree $a$ and angular derivative order $q$ lowers the input indices to $(k - l + 2,m - a,p - q)$ for $\partial_x^2$, and similarly for the other terms. Its weights and derivative factors are no larger than those just estimated. For $x\partial_x\partial_z$, the remaining factor $(k - l)/k$ is at most one. Summing the convolutions costs the finite coefficient $\mathcal{N}_0$ norms and fixed factors $R^{-2},R^{-1},s^{-1},\tau^{-1}$. This proves the transport bound. The absence of $\partial_z^2$ is essential.

Time integration raises $m$ and divides by $m + 1$. For the periodic primitive, subtraction of the mean costs at most two. If $\langle g\rangle = 0$ and $v = J_zg$, then $\norm{v}_\infty \leq 2\ell\norm{g}_\infty$ and, for $p \geq 1$, one has $\frac{v^{(p)}}{p!} = \frac{g^{(p - 1)}}{ip(p - 1)!}$ and $\frac{W_{n,0,p - 1}}{W_{n,0,p}} \leq 1$.
This gives a bound independent of $n$. Finally, $W_{n,k,d} \leq 4^n2^{k + d}n!$ and Taylor expansion about $\re z$ prove the supremum estimate.
\end{proof}

\subsubsection{Proof of Lemma~\ref{factorial}}\label{proof:factorial}
Work on a product strictly inside the coefficient domain of Subsection~\ref{sec:sharp-lower}, where $\mathcal{B}_t^{-1}$ is holomorphic. The normal differential operator is
$R_t = \partial_x^2 + xg_1\partial_x\partial_\zeta + p_t\partial_x + v_t$, where $g_1 = 2(\Phi_x)_t/x$ is holomorphic since $\Phi_x(0,z) = 0$. For zero-trace $r$, set $\mathsf{T}_tr = 2r_2 + p_t(0,\zeta)r_1$. Define successively
\begin{align*}
b_0(t,\zeta)& = \exp\Big(\int_0^t v_a(0,\zeta)\dd{d}a\Big),\qquad r_n = \mathcal{S}_t(R_t - \partial_t)b_{n - 1}, \\
c_n(t,\zeta)& = b_0(t,\zeta)\int_0^t b_0(a,\zeta)^{-1}\mathsf{T}_ar_n(a,\zeta)\dd{d}a,\qquad b_n = r_n + c_n\quad(n\geq1).
\end{align*}
Extend $b_0,c_n$ constantly in $x$. The first source has zero trace by the equation for $b_0$. At every subsequent order,
$(\mathscr{P}_tb_n)\vert_{x = 0} = (\partial_t - v_t(0,\zeta))c_n - \mathsf{T}_tr_n = 0$.
Thus the normal inverse solves the hierarchy, with $b_0(0,\zeta) = 1$ and $c_n(0,\zeta) = 0$ for $n \geq 1$.

All fixed coefficients and $b_0^{\pm1}$ have finite $\mathcal{N}_0$. Lemma~\ref{app:transport-calculus} gives
$\mathcal{N}_n(r_n) \leq C\mathcal{N}_{n - 1}(b_{n - 1})$, $\mathcal{N}_n(c_n) \leq C\mathcal{N}_n(r_n)$, and $\mathcal{N}_n(b_n) \leq C_1^n\mathcal{N}_0(b_0)$.
The supremum estimate yields $\norm{b_n}_\infty \leq C\Lambda^nn!$ on one fixed smaller product. A further fixed Cauchy shrink gives derivatives through order two. The exponential formula bounds $b_0$ away from zero there. Every domain choice is independent of $n$.

\subsubsection{The periodic WKB hierarchy}\label{app:wkb}

We now determine the stationary traces and energy corrections by periodic compatibility. The resulting factorial bounds hold on one fixed complex tube and allow truncation at an order proportional to the inverse semiclassical parameter.

\begin{prop}\label{app:wkb-hierarchy}
Fix a branch in \eqref{wkb:main-compatibility}, a nonzero leading constant, and the zero-mean normalization of every higher trace after division by the leading trace. The hierarchy \eqref{wkb:hierarchy} has periodic holomorphic solutions $a_n$ on a common complex neighborhood of $Z_0$ and real coefficients $\mu_n$, with $\mu_0 = 1$. On a fixed smaller complex tube, there are $C,\Lambda>0$ such that
\begin{equation}\label{wkb:theorem-factorial}
\sum_{\vert\alpha\vert\leq2}\norm{\partial^\alpha a_n}_{L^\infty} + \vert\mu_n\vert \leq C\Lambda^nn!\qquad\forall n\geq0.
\end{equation}
The leading trace is \eqref{wkb:leading-trace}. In particular, $\rho_0\vert a_0(0,\cdot)\vert^2$ is constant.
\end{prop}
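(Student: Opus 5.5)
We will follow the heat-transport argument of Appendix~\ref{proof:factorial}, with the time evolution of traces replaced by periodic compatibility. We work in the holomorphic coordinates $(x,z)$ with $z=\Phi$, where \eqref{wkb:gauge} conjugates $\widetilde{\mathcal{L}}$ to $x\mathcal{B}\partial_x$. We write each amplitude as $a_n = g_\rho(r_n + \mathsf{E}v_n)$, with $r_n$ of zero trace and $v_n(z)$ a periodic trace. The source of order $n$ is $\mathcal{D}_\rho a_{n-1}+\sum_{j\ge1}\mu_ja_{n-j}$. It must vanish at $x=0$ before we apply the normal inverse $\mathcal{S}$ of Lemma~\ref{app:transport-calculus}. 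After conjugation by $g_\rho$ and division by $\mathcal{B}$, the normal inverse then gives $r_{n}=\mathcal{S}(\cdots)$. The operator $\mathcal{D}_\rho$ has no $\partial_z^2$ term in these coordinates, since the eikonal equation removes it. The transport bound $\mathcal{N}_n(\mathcal{S}\mathscr{P}u)\le C\mathcal{N}_{n-1}(u)$ therefore applies with $\mathscr{P}=g_\rho^{-1}\mathcal{D}_\rho g_\rho$, a stationary operator of the form allowed in the lemma with $e=0$.

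\emph{Trace equations.} The trace condition at order $n+1$ reads
\[
(H_{\mathrm{eff}}+\mu_1)v_n = -\mathsf{T}r_n - \sum_{j\ge2}\mu_j v_{n+1-j} - (\text{traces of } \mu_j r_{n+1-j}),
\]
where $\mathsf{T}$ is the trace map of Lemma~\ref{app:transport-calculus} applied to the zero-trace part. At order one this is $(H_{\mathrm{eff}}+\mu_1)v_0=0$. Its periodic solvability is \eqref{wkb:main-compatibility}, and solving it gives the leading trace \eqref{wkb:leading-trace}. For $n\ge1$ we factor $v_n = v_0 w_n$. Since $v_0$ solves the homogeneous equation, $(H_{\mathrm{eff}}+\mu_1)(v_0w_n)=iv_0w_n'$. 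Dividing by $v_0$ reduces the trace equation to $i\partial_y w_n = G_n - \mu_{n+1}$, where $G_n$ collects the known terms divided by $v_0$ and we isolate the term $\mu_{n+1}v_0$. Periodicity forces $\mu_{n+1}=\langle G_n\rangle$, and we take $w_n=J_z(G_n)$ with the zero-mean normalization.

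\emph{Reality of $\mu_n$.} Reality is the main point not covered by the heat case. We plan to derive it from formal selfadjointness. Pair the truncated quasimode identity $(P_h-E)u=\mathrm{O}(h^{K})$ with $u$ in $L^2(\nu)$. The Gaussian concentration \eqref{wkb:positive-phase} then gives $\operatorname{Im}E(h)=\mathrm{O}(h^{K})$ order by order, because $\langle P_hu,u\rangle$ is real and $\|u\|^2\asymp h^{1/2}$. An alternative is to check directly that $\langle G_n\rangle$ is real with respect to $\rho_0|v_0|^2\,dy$, which is constant, by using the symmetry of $H_{\mathrm{eff}}$ in $L^2(\rho_0\,dy)$. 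We expect the first route to be cleaner, since the order-by-order argument only needs finite truncations.

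\emph{Factorial bounds.} We will prove by induction that $\mathcal{N}_n(r_n)+\mathcal{N}^{\mathrm{tr}}_n(w_n)+|\mu_n|/n!\le C_1^n$. The transport bound controls $r_n$ by the previous amplitudes at index $n-1$. The bound $\mathcal{N}_n^{\mathrm{tr}}(\mathsf{T}r)\le C\mathcal{N}_n(r)$ handles $\mathsf{T}$, and $J_z$ and the mean are uniformly bounded. The convolution terms $\mu_ja_{n-j}$ are the main obstacle: the naive bound loses a combinatorial factor. We will handle them with the product inequality $\mathcal{N}_{a+b}(uv)\le\mathcal{N}_a\mathcal{N}_b$ and the monotonicity of $\mathcal{N}_n$ in $n$, encoding $\mu_j$ as constants with $\mathcal{N}_j$-norm $|\mu_j|/j!$. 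A majorant series $\sum C_1^n\lambda^n$ should then absorb the sum, provided we choose $C_1$ large and use that $\sum_j (C/C_1)^j$ is small. Finally, the supremum estimate $\|u\|_\infty\le 4^nn!\,\mathcal{N}_n(u)$ together with one Cauchy shrink converts these bounds into \eqref{wkb:theorem-factorial} on a fixed smaller tube. Constancy of $\rho_0|a_0(0,\cdot)|^2$ follows from the explicit form of $v_0$ and the reality of $\mathcal{V}$ and $\mu_1$.
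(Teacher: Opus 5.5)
Everything up to the factorial bounds is the paper's route. That includes the gauge $g_\rho$, the zero-trace inverse $\mathcal{S}$ (usable because $\widetilde{\mathcal{D}}_\rho$ has no $\partial_z^2$ term), compatibility through $H_{\mathrm{eff}}$, division of higher traces by $v_0$, the mean fixing $\mu_{n+1}$ with $J_z$ fixing the zero-mean trace, and reality of the $\mu_n$ by pairing truncated quasimodes and using symmetry of $P_h$. (Note that $a_n=g_\rho(r_n+\mathsf{E}v_n)$ applies the gauge twice; you mean $g_\rho(r_n+v_n)$ in $(x,z)$.)

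The gap is in closing the factorial estimates, and it has two parts. First, your normalization of the energy coefficients does not match the weighted norms. A constant $\mu$ has $\mathcal{N}_j(\mu)=|\mu|/W_{j,0,0}$ with $W_{j,0,0}=(2j)!/j!$, not $|\mu|/j!$. Moreover, $\mu_{n+1}$ is the mean of a trace controlled only in $\mathcal{N}_n^{\mathrm{tr}}$, so within these norms you only get $|\mu_{n+1}|\le W_{n,0,0}\,\mathcal{N}_n^{\mathrm{tr}}(G_n)$. Since $W_{n,0,0}/(n+1)!=\binom{2n}{n}/(n+1)$ grows like $4^n n^{-3/2}$, your inductive claim $|\mu_{n+1}|/(n+1)!\le C_1^{n+1}$ cannot be recovered from $\mathcal{N}_n^{\mathrm{tr}}(G_n)\lesssim C_1^n$. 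The quantity that actually propagates is $V_n=|\mu_{n+1}|/W_{n,0,0}$, converted to a factorial bound only at the end via $W_{n,0,0}\le 4^n n!$.

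Second, once the normalization is fixed, there is no factor $(C/C_1)^j$ to sum. One has $V_n\le C\,\mathcal{N}_n(r_n)$, so in these norms the energy coefficients are as large as the amplitudes. The trace equation contains the same-level convolution $\sum_{j=1}^{n-1}\mu_{j+1}c_{n-j}$ (your $\mu_j w_{n+1-j}$), bounded by $\sum_{j=1}^{n-1}V_j\,\mathcal{N}^{\mathrm{tr}}_{n-j}(c_{n-j})$. Under a geometric hypothesis $t_k\le DC_1^k$ this is of size $(n-1)D^2C_1^n$, with no gain in $C_1$. The induction therefore fails by a factor $n$, however large you take $C_1$.

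The paper handles this with a genuinely quadratic majorant. It sets $t_0=1$ and $t_n=\mathcal{N}_n(b_n)+V_n$, and derives
\[
t_n\le K_0\Big(t_{n-1}+\sum_{j=1}^{n-1}t_jt_{n-1-j}+\sum_{j=1}^{n-1}t_jt_{n-j}\Big).
\]
It then dominates the $t_n$ by the nonnegative Taylor coefficients of the holomorphic solution of $\mathfrak{T}=K_0t(1+\mathfrak{T})^2+K_0\mathfrak{T}^2$, $\mathfrak{T}(0)=0$, which the implicit function theorem provides. A direct induction would also work, but only with Catalan-type weights such as $t_k\le DC_1^k(1+k)^{-2}$, choosing $D$ small first and then $C_1$ large. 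The supremum estimate and one Cauchy shrink then give \eqref{wkb:theorem-factorial}.
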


\begin{proof}
\emph{Normal transport.}
Set $b = \partial_x\varphi(0,\cdot)$ and $\varphi_2 = \partial_x^2\varphi(0,\cdot)$. The phase expansion \eqref{geom:phase-expansion} gives \eqref{wkb:positive-phase}. Direct conjugation gives
$$
e^{-i\Phi/h}(P_h - h\mu)e^{i\Phi/h}a = p(x,y,\dd{d}\Phi)a + h\mathcal{L}a - h^2\mathcal{D}_\rho a - h(\mu - 1)a.
$$
The eikonal equation and coefficient comparison give \eqref{wkb:hierarchy}.

Use the product $\mathcal{U}_{R_0,s_0}$ of \eqref{wkb:gauge}, its inverse image $\widehat U$ under $(x,y)\mapsto(x,\Phi)$, and the strip $S_{s_0} = \{\vert\im z\vert<s_0\}\subset\C/\ell\Z$. Write $\mathcal{O}_\ell$ for periodic holomorphic functions on the indicated domain and $\mathcal{O}_{\ell,0}$ for those with zero normal trace. For $f\in\mathcal{O}_{\ell,0}(\widehat U)$ and $v\in\mathcal{O}_\ell(S_{s_0})$, the unique solution of $\mathcal{L}u = f$, $u(0,\cdot) = v$, is
$$
\widetilde u(x,z) = g_\rho(x,z)\Big(v(z) + \int_0^x\frac{\widetilde f(t,z)}{t\mathcal{B}(t,z)g_\rho(t,z)}\dd{d}t\Big).
$$
The source has zero trace, so the integrand is holomorphic at zero. For $f = 0$, this defines the extension $\mathsf{E}v(x,y) = g_\rho(x,\Phi(x,y))v(\Phi(x,y))$.

\smallskip
\noindent
\emph{Periodic compatibility.}
Set $\rho_0(y) = \rho(0,y)$ and
$d_x = (\partial_x\log\rho)(0,\cdot)$, $d_{xx} = (\partial_x^2\log\rho)(0,\cdot)$, $d_y = (\partial_y\log\rho)(0,\cdot)$.
In the original coordinates,
$$
\mathcal{L} = (2x + 2bx^2)\partial_x - 2ix^2\partial_y + (2b + d_x)x + \kappa_2x^2+\mathrm{O}(x^3),\qquad \kappa_2 = \frac{3}{2}(\varphi_2 + b^2) + d_xb + d_{xx} - id_y,
$$
where the remainder is a first-order operator whose coefficients vanish to order three. Writing $\mathsf{E}v = v + xv_1 + x^2v_2+\mathrm{O}(x^3)$ gives $2v_1 + (2b + d_x)v = 0$ and $4v_2 + (4b + d_x)v_1 - 2iv'+\kappa_2v = 0$, hence
$$
v_1 = -(b + d_x/2)v,\qquad v_2 = \frac{i}{2}v'+\Big(\frac{5}{8}b^2 + \frac{1}{2}d_xb + \frac{1}{8}d_x^2 - \frac{3}{8}\varphi_2 - \frac{1}{4}d_{xx} + \frac{i}{4}d_y\Big)v.
$$
The tangential terms of $\mathcal{D}_\rho$ vanish at $x = 0$, so
\begin{equation}\label{wkb:effective-operator}
H_{\mathrm{eff}}v = (\mathcal{D}_\rho\mathsf{E}v)\vert_{x = 0} = iv'+\frac{i}{2}d_yv + \mathcal{V}v,\qquad \mathcal{V} = \frac{5}{4}b^2 - \frac{3}{4}\varphi_2 - \frac{1}{4}d_x^2 - \frac{1}{2}d_{xx}.
\end{equation}
Conjugation by $\rho_0^{1/2}$ identifies this operator on $L^2(\T_\ell,\rho_0\dd{d}y)$, with domain $H^1(\T_\ell)$, with the self-adjoint operator $i\,\partial_y + \mathcal{V}$ on $L^2(\T_\ell,\dd{d}y)$. For $a_0 = \mathsf{E}v_0$, solvability at the next order requires
$(H_{\mathrm{eff}} + \mu_1)v_0 = 0$.
Its nonzero solutions are
\begin{equation}\label{wkb:leading-trace}
v_0(y) = C_*\rho_0(y)^{-1/2}\exp\Big(i\int_0^y(\mathcal{V}(t) + \mu_1)\dd{d}t\Big),\qquad C_*\neq0.
\end{equation}
Periodicity gives \eqref{wkb:main-compatibility}; therefore $\mu_1$ is real and $\rho_0\vert v_0\vert^2 = \vert C_*\vert^2$.

After a fixed shrink, $\widetilde a_0 = g_\rho v_0$ is nonvanishing. Set $\widetilde a_n = \widetilde a_0b_n$ and $\mathcal{Q} = (\widetilde a_0)^{-1}\widetilde{\mathcal{D}}_\rho\widetilde a_0 + \mu_1$. Then $(\widetilde a_0)^{-1}\widetilde{\mathcal{L}}\widetilde a_0 = x\mathcal{B}\partial_x$. The eikonal equation gives
$$
\widetilde{\mathcal{D}}_\rho = \partial_x^2 + 2\widetilde{\Phi_x}\partial_x\partial_z + \widetilde{\partial_x\log\rho}\partial_x + \widetilde{\mathcal{D}_\rho\Phi}\partial_z,
$$
with no $\partial_z^2$ term. Consequently,
$\mathcal{Q} = \partial_x^2 + xg_1\partial_x\partial_z + g_2\partial_x + g_3\partial_z + g_4$, with $g_3(0,z) = i$ and $g_4(0,z) = 0$,
where all coefficients are fixed and holomorphic. The trace identities follow from $(\mathcal{Q}c)\vert_{x = 0} = ic'$ for $c = c(z)$.

The hierarchy now reads
$$
x\mathcal{B}\partial_xb_n = \mathcal{Q}b_{n - 1} + \sum_{j = 1}^{n - 1}\mu_{j + 1}b_{n - 1 - j},\qquad b_0 = 1.
$$
Write $b_n = c_n(z) + r_n(x,z)$, with $r_n\vert_{x = 0} = 0$, $c_0 = 1$, $r_0 = 0$, and impose $\langle c_n\rangle = 0$ for $n \geq 1$. For $r = \sum_{k \geq 1}x^kr_k$, set
$\mathsf{T}_0r = (\mathcal{Q}r)\vert_{x = 0} = 2r_2 + g_2(0,z)r_1$.
The successive equations are
\begin{equation}\label{wkb:recursion}
r_n = \mathcal{S}\Big(\mathcal{Q}b_{n - 1} + \sum_{j = 1}^{n - 1}\mu_{j + 1}b_{n - 1 - j}\Big),\qquad c_n = -J_z\Big(\mathsf{T}_0r_n + \sum_{j = 1}^{n - 1}\mu_{j + 1}c_{n - j}\Big).
\end{equation}
The first normal source has zero trace because $g_4(0,z) = 0$. Subsequently the compatibility equation is $ic_n'+\mathsf{T}_0r_n + \mu_{n + 1} + \sum_{j = 1}^{n - 1}\mu_{j + 1}c_{n - j} = 0$. Its mean gives $\mu_{n + 1} = -\langle\mathsf{T}_0r_n\rangle$, and its zero-mean part fixes $c_n$. Both inverses preserve the domains, so every $b_n$ is holomorphic on $\mathcal{U}_{R_0,s_0}$.

\smallskip
\noindent
\emph{Factorial bounds.}
Use the stationary norms of Lemma~\ref{app:transport-calculus}. Set $A_0 = 1$ and, for $n \geq 1$,
$$
A_n = \mathcal{N}_n(b_n),\qquad R_n = \mathcal{N}_n(r_n),\qquad C_n = \mathcal{N}_n^{\mathrm{tr}}(c_n),\qquad V_n = \frac{\vert\mu_{n + 1}\vert}{W_{n,0,0}}.
$$
The zero trace of $r_n$ gives $A_n = R_n + C_n$. The common estimates applied to \eqref{wkb:recursion} give
$$
R_n \leq C\Big(A_{n - 1} + \sum_{j = 1}^{n - 1}V_jA_{n - 1 - j}\Big),\qquad V_n \leq CR_n,\qquad C_n \leq C\Big(R_n + \sum_{j = 1}^{n - 1}V_jC_{n - j}\Big).
$$
The first convolution is bounded at level $n - 1$, then at level $n$ by monotonicity; the last is bounded at level $n$. Thus $t_0 = 1$, $t_n = A_n + V_n$ satisfy
$$
t_n \leq K_0\Big(t_{n - 1} + \sum_{j = 1}^{n - 1}t_jt_{n - 1 - j} + \sum_{j = 1}^{n - 1}t_jt_{n - j}\Big).
$$
The majorant equation $\mathfrak{T} = K_0t(1 + \mathfrak{T})^2 + K_0\mathfrak{T}^2$, $\mathfrak{T}(0) = 0$, has a holomorphic solution near zero by the implicit function theorem. Its nonnegative Taylor coefficients dominate $t_n$, so $A_n + V_n \leq C_0C_1^n$. Hence
$\norm{b_n}_{\infty,\mathcal{U}_{R/2,s/2}} \leq 4^nn!A_n$ and $\vert\mu_{n + 1}\vert \leq 4^nn!V_n$.
Multiplication by $\widetilde a_0$, return to the original coordinates and one fixed Cauchy shrink prove \eqref{wkb:theorem-factorial} after increasing $\Lambda$.

\smallskip
\noindent
\emph{Reality of the energy coefficients.}
Fix a smooth cutoff $\chi$, supported in the construction tube and equal to one near the whole circle. For fixed $N$, set $a^{(N)} = \sum_{j = 0}^Nh^ja_j$, $E_N(h) = h\sum_{j = 0}^Nh^j\mu_j$, and
\begin{equation}\label{wkb:quasimode}
w_h^{(N)} = \chi e^{i\Phi/h}a^{(N)},\qquad u_h^{(N)} = w_h^{(N)}/\norm{w_h^{(N)}}.
\end{equation}
For $h = \ell/(2\pi n)$, these functions are periodic, extend smoothly by zero to $M$ and belong to $D(\triangle)$. The hierarchy gives the exact residual
\begin{equation}\label{wkb:exact-residual}
e^{-i\Phi/h}(P_h - E_N)e^{i\Phi/h}a^{(N)} = -h^{N + 2}\mathcal{D}_\rho a_N - \sum_{\substack{1 \leq j \leq N,\ 0 \leq k \leq N\\j + k \geq N + 1}}h^{j + k + 1}\mu_ja_k.
\end{equation}
Gaussian rescaling $x = \sqrt{h}t$ yields
\begin{equation}\label{wkb:normalization-limit}
h^{-1/2}\norm{\chi e^{i\Phi/h}a^{(N)}}^2\longrightarrow\sqrt{\pi}\int_0^\ell\rho_0(y)\vert v_0(y)\vert^2\dd{d}y>0.
\end{equation}
For fixed $N$, the residual gains the same factor $\mathrm{O}(h^{1/4})$ as the normalization. Cutoff derivatives are supported away from $x = 0$, hence contribute an exponentially small error. Therefore $\norm{(P_h - E_N)u_h^{(N)}} \leq C_Nh^{N + 2}$. Symmetry of $P_h$ gives $\vert\im E_N(h)\vert \leq C_Nh^{N + 2}$. If $\mu_0,\ldots,\mu_{N - 1}$ are real, then $\vert\im\mu_N\vert \leq C_Nh$. Letting the quantized sequence tend to zero proves reality by induction.
\end{proof}

\subsubsection{Truncation and exponential error}\label{app:exponential-quasimodes}

We truncate the hierarchy of Proposition~\ref{app:wkb-hierarchy} and include the cutoff error to obtain the normalized quasimodes of Theorem~\ref{thm:periodic-quasimodes}.

\begin{prop}\label{app:wkb-realization}
Let the hierarchy and $\Lambda$ be as in Proposition~\ref{app:wkb-hierarchy}. Let $\chi$ be a fixed smooth cutoff supported in the real slice of the tube where \eqref{wkb:theorem-factorial} holds, equal to one near the whole circle, and set $b_\chi = \inf_{\supp\dd{d}\chi}\im\Phi>0$. For every $0<\beta<\min(\Lambda^{-1},b_\chi)$, there is $0<\delta<\Lambda^{-1}$ such that $N(h) = \lfloor\delta/h\rfloor$ gives real energies $E(h) = E_{N(h)}(h) = h+\mathrm{O}(h^2)$ and
$\norm{(P_h - E(h))u_h} \leq C_\beta e^{-\beta/h}$, with $h = \ell/(2\pi n)\to 0$,
for the normalized states \eqref{wkb:quasimode}. The tube, hierarchy and $\Lambda$ are independent of $\beta$. Moreover, $\norm{xu_h}_{L^2(U,\nu)} = \mathrm{O}(\sqrt h)$ and the probability measures $\vert u_h\vert^2\nu$ converge weakly to $\dd{d}y/\ell$ on the singular circle.
\end{prop}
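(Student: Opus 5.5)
The plan is a standard Borel--Laplace optimal truncation of the factorial hierarchy of Proposition~\ref{app:wkb-hierarchy}, combined with the exact residual identity \eqref{wkb:exact-residual}.

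\emph{Choice of $\delta$ and the bulk residual.} Fix $\beta<\min(\Lambda^{-1},b_\chi)$, pick $\beta<\beta'<\Lambda^{-1}$ and set $\delta = \beta'/\Lambda\cdot\Lambda^{-1}\cdot\Lambda$; concretely, choose $\delta\in(0,\Lambda^{-1})$ with $\delta\log(1/(\Lambda\delta))+\delta>\beta$. This is possible since $\delta\mapsto \delta(1+\log(1/(\Lambda\delta)))$ increases to $\Lambda^{-1}$ as $\delta\uparrow\Lambda^{-1}$. With $N = N(h) = \lfloor\delta/h\rfloor$, the factorial bound \eqref{wkb:theorem-factorial} and Stirling give
\[
h^{N}\Lambda^{N}N!\leq C\sqrt{N}\,(\Lambda\delta)^{N}e^{-N}\leq C h^{-1/2}e^{-\beta'' /h},\qquad \beta''=\delta\big(1+\log(1/(\Lambda\delta))\big)>\beta.
\]
The first term of \eqref{wkb:exact-residual}, $h^{N+2}\mathcal{D}_\rho a_N$, is then bounded in sup norm on the tube by $Ch^{3/2}e^{-\beta''/h}$, using the second-derivative control in \eqref{wkb:theorem-factorial}. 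The double sum consists of at most $N^2$ terms. Each satisfies $|\mu_j|\,\|a_k\|\leq C^2\Lambda^{j+k}j!k!\leq C^2\Lambda^{j+k}(j+k)!$ with $N+1\leq j+k\leq 2N$. The function $m\mapsto h^m\Lambda^m m!$ is decreasing for $m\leq 1/(\Lambda h)$, and $2N\leq 2\delta/h$ may exceed this. So I would restrict to $\delta<1/(2\Lambda)$, or bound $(\Lambda h)^m m!\leq (2\Lambda\delta)^m$-type estimates directly, to keep every term at most $e^{-\beta''/h}$ up to polynomial factors. Adjusting $\delta$ so that the same exponent exceeds $\beta$ is the main bookkeeping obstacle: one must check that both constraints $\delta<1/(2\Lambda)$ (or a sharper convolution bound) and $\beta''>\beta$ can be met. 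If they cannot, I would instead bound the double sum by the majorant series $\sum_{m>N}(\Lambda h)^m m!\,(m+1)$, truncated at $2N$, and use $m!\leq (2N)^{m-N}N!$ to reduce to the same estimate at $m=N$.

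\emph{Cutoff term and normalization.} Since $\chi$ is independent of $y$, we have $[P_h,\chi]=h^2(-\chi''-2\chi'\partial_x-\chi'\partial_x\log\rho)$. On $\supp\chi'$, $|e^{i\Phi/h}|\leq e^{-b_\chi/h}$, while $a^{(N)}$ and $\partial_x(e^{i\Phi/h}a^{(N)})$ are bounded by $Ch^{-1}\sum_j h^j\Lambda^j j!=\mathrm{O}(h^{-1})$ because $N\Lambda h\leq\Lambda\delta<1$. The cutoff error is therefore $\mathrm{O}(e^{-b_\chi/h})$. For the normalization, $a^{(N)}=a_0+\mathrm{O}(h)$ uniformly by the same geometric sum, so the Gaussian rescaling of \eqref{wkb:normalization-limit} gives $\|w_h\|\asymp h^{1/4}$. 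Dividing costs only $h^{-1/4}$, which is absorbed since $\beta<\min(\beta'',b_\chi)$.

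\emph{Energies and concentration.} Reality of each $\mu_j$ is part of Proposition~\ref{app:wkb-hierarchy}, so $E(h)$ is real. Moreover, $E(h)-h = h\sum_{j\geq1}h^j\mu_j=\mathrm{O}(h^2)$ by the geometric bound. The estimate $\|xu_h\|=\mathrm{O}(\sqrt h)$ and the weak limit follow exactly as in Step~4 of the proof of Theorem~\ref{thm:periodic-quasimodes}, using \eqref{wkb:positive-phase}, dominated convergence after $x=\sqrt h t$, and the constancy of $\rho_0|v_0|^2$ from \eqref{wkb:leading-trace}.
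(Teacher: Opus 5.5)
Your overall scheme matches the paper's: truncation at $N=\lfloor\delta/h\rfloor$, Stirling giving the rate $F_\Lambda(\delta)=\delta(1-\log(\Lambda\delta))$ for $\tau_N=(\Lambda h)^NN!$, a cutoff commutator of size $h e^{-b_\chi/h}$, and a normalization cost of $h^{-1/4}$. The genuine gap is the double sum in \eqref{wkb:exact-residual}, which you leave open. Neither remedy you offer reaches the full range of $\beta$.

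Both remedies amount to requiring $\delta<1/(2\Lambda)$. For the fallback, with $m=j+k\in[N+1,2N]$, the inequality $m!\leq(2N)^{m-N}N!$ gives $(\Lambda h)^mm!\leq\tau_N(2\Lambda\delta)^{m-N}$. This factor is exponentially large in $1/h$ as soon as $2\Lambda\delta>1$, so it does not reduce to the estimate at $m=N$. Under $\delta<1/(2\Lambda)$ the best rate you can reach is $F_\Lambda(1/(2\Lambda))=(1+\log 2)/(2\Lambda)<\Lambda^{-1}$. The proposal therefore fails for $\beta\in[(1+\log 2)/(2\Lambda),\min(\Lambda^{-1},b_\chi))$ whenever this interval is nonempty.

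The loss comes from the merge $j!k!\leq(j+k)!$, not from the residual itself. At $j=k=N$ it replaces the true size $h\tau_N^2$ by $(\Lambda h)^{2N}(2N)!$. The rate of the latter, $2\delta(1-\log(2\Lambda\delta))$, is only about $0.61\,\Lambda^{-1}$ as $\Lambda\delta\uparrow1$.

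The missing idea is to keep the constraint $j,k\leq N$ separately rather than merging factorials. Write $j+k=N+m$ with $1\leq m\leq N$; this forces $m\leq j,k\leq N$. Log-convexity of $n\mapsto n!$ at fixed sum then gives $j!k!\leq N!\,m!$, hence $\tau_j\tau_k\leq\tau_N\tau_m$ with $\tau_m=(\Lambda h)^mm!$. Since $\tau_{m+1}/\tau_m=\Lambda h(m+1)\leq\Lambda\delta<1$ for $m<N$, one has $\tau_m\leq\Lambda h$. So each of the at most $N(N+1)$ terms $h^{j+k+1}\vert\mu_j\vert\,\Vert a_k\Vert$ is at most $Ch^2\tau_N$, exactly like the differentiated term $h^{N+2}\mathcal{D}_\rho a_N$. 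The whole residual is then $\mathrm{O}(h^2(1+N^2)\tau_N)$, with rate $F_\Lambda(\delta)$ for every $\delta<\Lambda^{-1}$. Choosing $\delta$ with $F_\Lambda(\delta)>\beta_1>\beta$ closes the argument.

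The rest of your proposal is fine. This includes the $y$-independent cutoff simplification, which extends verbatim through the extra term $2Q\chi_y\partial_y$ supported where $\operatorname{Im}\Phi\geq b_\chi$.
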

\begin{proof}
Fix $0<\delta<\Lambda^{-1}$ and $N = \lfloor\delta/h\rfloor$. For $1 \leq j \leq N$, the bound $j! \leq N^{j - 1}$ gives $(\Lambda h)^jj! \leq \Lambda h(\Lambda\delta)^{j - 1}$. Summation yields, uniformly with derivatives through order two,
\begin{equation}\label{wkb:uniform-truncation}
a^{(N(h))} = a_0+\mathrm{O}(h),\qquad E(h) = h+\mathrm{O}(h^2)\in\R.
\end{equation}
Write $\tau_m = (\Lambda h)^mm!$. The differentiated term of \eqref{wkb:exact-residual} is bounded by $Ch^2\tau_N$. In its double sum, write $j + k = N + m$, $1 \leq m \leq N$. Factorial convexity gives $j!k! \leq N!m!$, hence $\tau_j\tau_k \leq \tau_N\tau_m$. Also $\tau_m \leq \Lambda h$ for $1 \leq m \leq N$, since $\tau_{m + 1}/\tau_m \leq \Lambda\delta<1$. There are at most $N(N + 1)$ pairs. Thus the residual before multiplication by the phase has supremum norm at most $Ch^2(1 + \Lambda N(N + 1))\tau_N$. Stirling's formula gives
$\log\tau_N = -\frac{F_\Lambda(\delta)}{h}+\mathrm{O}(\log(1/h))$ with $F_\Lambda(\delta) = \delta(1 - \log(\Lambda\delta))$.
It follows that this residual is bounded by $C_{\beta_1}e^{-\beta_1/h}$ for every $\beta_1<F_\Lambda(\delta)$. Moreover, $F_\Lambda$ increases to $\Lambda^{-1}$ as $\delta\uparrow\Lambda^{-1}$.

The commutator is
$[P_h,\chi]f = -h^2\left(2\chi_x\partial_xf + 2Q\chi_y\partial_yf + (\mathcal{D}_\rho\chi)f\right)$.
By \eqref{wkb:uniform-truncation}, its norm on the truncated WKB state is at most $Ch e^{-b_\chi/h}$. The same estimate makes \eqref{wkb:normalization-limit} valid for $N = N(h)$. The interior residual gains a Gaussian factor $\mathrm{O}(h^{1/4})$, which cancels normalization, so
$$
\norm{(P_h - E(h))u_h} \leq Ce^{-\beta_1/h} + Ch^{3/4}e^{-b_\chi/h}.
$$
Choose $\beta<\beta_1<\Lambda^{-1}$ and then $\delta$ with $F_\Lambda(\delta)>\beta_1$. This proves the claim when $\beta<b_\chi$. Gaussian rescaling also gives concentration and the mass limit, since $\rho_0\vert v_0\vert^2$ is constant.
\end{proof}


\begin{thebibliography}{99}
\small
\setlength{\itemsep}{0pt}

\bibitem{Ag82}
S.~Agmon.
\newblock {\em Lectures on Exponential Decay of Solutions of Second-Order
Elliptic Equations: Bounds on Eigenfunctions of {$N$}-Body Schr{\"o}dinger Operators}.
\newblock Number~29 in Mathematical Notes. Princeton University Press,
Princeton, 1982.

\bibitem{ABB20}
A.~Agrachev, D.~Barilari, and U.~Boscain.
\newblock {\em A Comprehensive Introduction to Sub-Riemannian Geometry}.
\newblock Number~181 in Cambridge Studies in Advanced Mathematics. Cambridge
University Press, Cambridge, 2020.

\bibitem{ABS08}
A.~A. Agrachev, U.~Boscain, and M.~Sigalotti.
\newblock A {Gauss-Bonnet}-like formula on two-dimensional almost-{Riemannian}
manifolds.
\newblock {\em Discrete Contin. Dyn. Syst.}, 20(4):801-822, 2008.

\bibitem{AS23}
V.~Arnaiz and C.~Sun.
\newblock Sharp resolvent estimate for the damped-wave {Baouendi-Grushin} operator and applications.
\newblock {\em Comm. Math. Phys.}, 400(1):541-637, 2023.

\bibitem{BC17}
K.~Beauchard and P.~Cannarsa.
\newblock Heat equation on the {Heisenberg} group: observability and
applications.
\newblock {\em J. Differential Equations}, 262(8):4475-4521, 2017.

\bibitem{BCG14}
K.~Beauchard, P.~Cannarsa, and R.~Guglielmi.
\newblock Null controllability of {Grushin}-type operators in dimension two.
\newblock {\em J. Eur. Math. Soc.}, 16(1):67-101, 2014.

\bibitem{BDE20}
K.~Beauchard, J.~Dard{\'e}, and S.~Ervedoza.
\newblock Minimal time issues for the observability of {Grushin}-type
equations.
\newblock {\em Ann. Inst. Fourier}, 70(1):247-312, 2020.

\bibitem{BMM15}
K.~Beauchard, L.~Miller, and M.~Morancey.
\newblock {2D Grushin}-type equations: minimal time and null controllable data.
\newblock {\em J. Differential Equations}, 259(11):5813-5845, 2015.

\bibitem{BR96}
A.~Bella{\"i}che and J.-J.~Risler, editors.
\newblock {\em Sub-Riemannian Geometry}.
\newblock Number~144 in Progress in Mathematics. Birkh{\"a}user, Basel, 1996.

\bibitem{BR20}
Y.~Bonthonneau and N.~Raymond.
\newblock {WKB} constructions in bidimensional magnetic wells.
\newblock {\em Math. Res. Lett.}, 27(3):647-663, 2020.

\bibitem{BL13}
U.~Boscain and C.~Laurent.
\newblock The {Laplace-Beltrami} operator in almost-{Riemannian} geometry.
\newblock {\em Ann. Inst. Fourier}, 63(5):1739-1770, 2013.

\bibitem{BS22}
N.~Burq and C.~Sun.
\newblock Time optimal observability for {Grushin} {Schr{\"o}dinger} equation.
\newblock {\em Anal. PDE}, 15(6):1487-1530, 2022.

\bibitem{CHT22}
Y.~Colin~de~Verdi{\`e}re, L.~Hillairet, and E.~Tr{\'e}lat.
\newblock Spectral asymptotics for sub-{Riemannian} {Laplacians}.
\newblock \href{https://arxiv.org/abs/2212.02920}{Preprint, arXiv:2212.02920}, 2022.

\bibitem{DKR23}
J.~Dard{\'e}, A.~Koenig, and J.~Royer.
\newblock Null-controllability properties of the generalized two-dimensional
{Baouendi-Grushin} equation with non-rectangular control sets.
\newblock {\em Ann. H. Lebesgue}, 6:1479-1522, 2023.

\bibitem{Dav89}
E.~B. Davies.
\newblock {\em Heat Kernels and Spectral Theory}.
\newblock Number~92 in Cambridge Tracts in Mathematics. Cambridge University
Press, Cambridge, 1989.

\bibitem{Del23}
A.~Deleporte.
\newblock {WKB} eigenmode construction for analytic {Toeplitz} operators.
\newblock {\em Pure Appl. Anal.}, 5(2):213-260, 2023.

\bibitem{DV21}
A.~Deleporte and S.~V{\~u}~Ng{\d{o}}c.
\newblock Uniform spectral asymptotics for semiclassical wells on phase space
loops.
\newblock {\em Indag. Math.}, 32(1):3-32, 2021.

\bibitem{DR77}
S.~Dolecki and D.~L. Russell.
\newblock A general theory of observation and control.
\newblock {\em SIAM J. Control Optim.}, 15(2):185-220, 1977.

\bibitem{DK20}
M.~Duprez and A.~Koenig.
\newblock Control of the {Grushin} equation: non-rectangular control region and
minimal time.
\newblock {\em ESAIM Control Optim. Calc. Var.}, 26:article~3, 2020.

\bibitem{FI96}
A.~V. Fursikov and O.~{\relax Yu}. Imanuvilov.
\newblock {\em Controllability of Evolution Equations}.
\newblock Number~34 in Lecture Notes Series. Research Institute of Mathematics,
Seoul National University, Seoul, 1996.

\bibitem{HS18}
M.~Hitrik and J.~Sj{\"o}strand.
\newblock Two minicourses on analytic microlocal analysis.
\newblock In {\em Algebraic and Analytic Microlocal Analysis}, volume 269 of
{\em Springer Proceedings in Mathematics \& Statistics}, pages 483-540. Springer, Cham, 2018.

\bibitem{H67}
L.~H{\"o}rmander.
\newblock Hypoelliptic second order differential equations.
\newblock {\em Acta Math.}, 119:147-171, 1967.

\bibitem{Im95}
O.~{\relax Yu}. Imanuvilov.
\newblock Controllability of parabolic equations.
\newblock {\em Sb. Math.}, 186(6):879-900, 1995.

\bibitem{Ko17}
A.~Koenig.
\newblock Non-null-controllability of the {Grushin} operator in {2D}.
\newblock {\em C. R. Math. Acad. Sci. Paris}, 355(12):1215-1235, 2017.

\bibitem{KLS22}
K.~Krupchyk, T.~Liimatainen, and M.~Salo.
\newblock Linearized {Calder{\'o}n} problem and exponentially accurate
quasimodes for analytic manifolds.
\newblock {\em Adv. Math.}, 403:article~108362, 2022.

\bibitem{LL19}
C.~Laurent and M.~L{\'e}autaud.
\newblock Quantitative unique continuation for operators with partially
analytic coefficients. {Application} to approximate control for waves.
\newblock {\em J. Eur. Math. Soc.}, 21(4):957-1069, 2019.

\bibitem{LL22}
C.~Laurent and M.~L{\'e}autaud.
\newblock Tunneling estimates and approximate controllability for hypoelliptic
equations.
\newblock {\em Mem. Amer. Math. Soc.}, 276(1357), 2022.
\newblock vi+95~pp.

\bibitem{LR95}
G.~Lebeau and L.~Robbiano.
\newblock Contr{\^o}le exact de l'{\'e}quation de la chaleur.
\newblock {\em Comm. Partial Differential Equations}, 20(1-2):335-356, 1995.

\bibitem{Let21}
C.~Letrouit.
\newblock Exact observability properties of subelliptic wave and
{Schr{\"o}dinger} equations.
\newblock {\em S{\'e}m. Th{\'e}or. Spectr. G{\'e}om.}, 36:51-102, 2019-2021.

\bibitem{Lis25}
P.~Lissy.
\newblock A non-controllability result for the half-heat equation on the whole line based on the prolate spheroidal wave functions and its application to the {Grushin} equation.
\newblock {\em J. Differential Equations}, 433:article 113306, 2025.

\bibitem{Mon02}
R.~Montgomery.
\newblock {\em A Tour of Subriemannian Geometries, Their Geodesics and Applications}.
\newblock Number~91 in Mathematical Surveys and Monographs. American
Mathematical Society, Providence, RI, 2002.

\bibitem{Sj82}
J.~Sj{\"o}strand.
\newblock {\em Singularit{\'e}s analytiques microlocales}.
\newblock Number~95 in Ast{\'e}risque. Soci{\'e}t{\'e} Math{\'e}matique de
France, Paris, 1982.
\newblock iii+166~pp.

\bibitem{Tam22}
C.~Tamekue.
\newblock Null controllability of the parabolic spherical {Grushin} equation.
\newblock {\em ESAIM Control Optim. Calc. Var.}, 28:article~70, 2022.

\bibitem{Tam26}
C.~Tamekue.
\newblock Minimal time for null controllability of the parabolic spherical
{Baouendi-Grushin} equation.
\newblock \href{https://arxiv.org/abs/2604.19700v1}{Preprint, arXiv:2604.19700v1}, 2026.

\bibitem{Van26}
R.~Vanlaere.
\newblock Non null-controllability properties of the {Grushin}-like heat
equation on {2D}-manifolds.
\newblock \href{https://arxiv.org/abs/2503.00997v3}{Preprint, arXiv:2503.00997v3}, 2026.

\end{thebibliography}
\end{document}